\theoremstyle{plain}
\newtheorem{theorem}{Theorem}[section]
\newtheorem{lemma}[theorem]{Lemma}
\newtheorem{proposition}[theorem]{Proposition}
\newtheorem{corollary}[theorem]{Corollary}
\newtheorem{question}[theorem]{Question}
\newtheorem{questions}[theorem]{Questions}
\newtheorem{conjecture}[theorem]{Conjecture}
\theoremstyle{definition}
\newtheorem{definition}[theorem]{Definition}
\newtheorem{definition/construction}[theorem]{Definition/Construction}
\newtheorem{construction}[theorem]{Construction}
\newtheorem{remark}[theorem]{Remark}
\newtheorem{acknowledgements}[theorem]{Acknowledgements}
\newtheorem{remarks}[theorem]{Remarks}
\DeclareMathOperator{\Diff}{Diff}
\DeclareMathOperator{\id}{id}
\DeclareMathOperator{\rel}{rel}
\DeclareMathOperator{\inte}{int}
\newcommand{\BC}{\mathbb C}
\newcommand{\BP}{\mathbb P}
\newcommand{\BR}{\mathbb R}
\newcommand{\BZ}{\mathbb Z}
\newcommand{\cptwo}{\BC\BP^2}
\newcommand{\cpone}{\BC\BP^1}
\newcommand{\stwostwo}{S^2\times S^2}
\newcommand{\stwosone}{S^2\times S^1}
\newcommand{\Rs}{R^{std}}
\newcommand{\Gs}{G^{std}}
\newcommand{\mA}{\mathcal{A}}
\newcommand{\mF}{\mathcal{F}}
\newcommand{\mH}{\mathcal{H}}
\newcommand{\mR}{\mathcal{R}}
\begin{document}

\title{The 4-dimensional Light Bulb Theorem}

\author{David Gabai}\address{Department of Mathematics\\Princeton
University\\Princeton, NJ 08544}

\thanks{Version 3.05, June 8, 2020\\Partially supported by NSF grants DMS-1006553, 1607374}
 
 \email{gabai@math.princeton.edu}

\begin{abstract} For embedded 2-spheres in a 4-manifold sharing the same embedded transverse sphere homotopy implies isotopy, provided the ambient 4-manifold has no $\BZ_2$-torsion in the fundamental group.  This gives a generalization of the classical light bulb trick to 4-dimensions, the uniqueness of spanning discs for a simple closed curve in $S^4$ and  $\pi_0(\Diff_0(S^2\times D^2)/\Diff_0(B^4))=1$.  In manifolds with $\BZ_2$-torsion, one surface can be put into a normal form relative to the other.  
\end{abstract}

\maketitle

\setcounter{section}{0}

\section{Introduction}\label{S0}  In his seminal work on immersions \cite{Sm1} Steven Smale classified regular homotopy classes of immersions of 2-spheres into Euclidean space and more generally into orientable smooth manifolds.  In \cite{Sm2} he gave the regular homotopy classification of immersed spheres in $\BR^n$ and asked:

\begin{question} (\textrm{Smale}, P. 329 \cite{Sm2}) Develop an analogous theory for imbeddings. Presumably this will be quite hard. However, even partial results in this direction would be interesting.\end{question}

This paper works in the smooth category and addresses the question of isotopy of spheres in 4-manifolds.  In that context Smale's results \cite{Sm1} show that two embeddings are homotopic if and only if they are regularly homotopic. Given that 2-spheres can knot in 4-space, isotopy is a much more restrictive condition than homotopy.  Indeed, the author is aware of only one unconditional positive result and that was proved more than 50 years ago: A 2-sphere in a 4-manifold that bounds a 3-ball is isotopic to a standard inessential 2-sphere, \cite{Ce1} p. 231, \cite{Pa}.

Recall that a \emph{transverse sphere} $G$ to a surface $R$ in a 4-manifold is a sphere with trivial normal bundle that intersects R exactly once and transversely.  The following are the main results of this paper.  

\begin{theorem}\label{main}  Let $M$ be an orientable 4-manifold such that $\pi_1(M)$ has no 2-torsion.  Two embedded 2-spheres with common transverse sphere $G$ are homotopic if and only if they are ambiently isotopic.  If they coincide near $G$, then the isotopy can be chosen to fix a neighborhood of $G$ pointwise. \end{theorem}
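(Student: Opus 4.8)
The plan is to convert the given homotopy into a controlled form and then ``straighten'' it using the transverse sphere, the new isotopy move being a four–dimensional analogue of the light‑bulb trick: a finger of a sheet (or of a Whitney disc) may be pushed across $G$ and back. First I would reduce to the relative statement. Since $R_0$ and $R_1$ each meet $G$ transversely in a single point, a small isotopy supported near $G$ makes them coincide on a neighborhood $\nu(G)\cong S^2\times D^2$, with $G=S^2\times 0$ and each $R_i\cap\nu(G)$ a standard normal disc (if they already coincide near $G$ this step is vacuous and the construction below is done rel $\nu(G)$ throughout); it then suffices to find an ambient isotopy fixing $\nu(G)$ pointwise. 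Next, take a generic homotopy from $R_0$ to $R_1$ rel $\nu(G)$, written as a concatenation of ambient isotopies, finger moves, and Whitney moves supported in the complement of $\nu(G)$, and, by a standard but delicate general–position rearrangement, push all the finger moves to the front. This produces an immersed sphere $\bar R$ — still meeting $G$ once transversely, hence still admitting $G$ as a transverse sphere — together with a clean, correctly framed, disjointly embedded Whitney system $\mathcal V$ (the reversed finger discs) whose surgery recovers $R_0$, and a second Whitney system $\mathcal W$, pairing all the double points of $\bar R$, whose surgery yields $R_1$.

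The heart of the argument is then to modify $\mathcal W$, by operations on the pair $(\bar R,\mathcal W)$ that are realized by ambient isotopies of $M$ supported off $\nu(G)$, until its discs are disjointly embedded, correctly framed, and have interiors disjoint from $\bar R$. Interior intersections of a disc with $\bar R$ are removed by the Norman trick of tubing into a parallel copy of $G$ — legitimate precisely because $G\cdot\bar R=1$ geometrically — and framing defects are absorbed by boundary–twisting followed by further such tubing; both operations trade these defects for intersections among the discs of $\mathcal W$ themselves. Removing those mutual and self intersections is the delicate step: one attacks each by a light‑bulb move, pushing a finger of a Whitney disc across $G$, and a careful accounting of the group elements carried by the relevant double–point loops shows the intersection can always be cancelled by an ambient isotopy unless the associated element $g\in\pi_1(M)$ satisfies $g^2=1$. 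The hypothesis that $\pi_1(M)$ has no $2$-torsion is exactly what makes all of these obstructions vanish, so $\mathcal W$ can be brought into clean form. I expect this to be the main obstacle: bookkeeping the framings and the $\pi_1$-elements picked up every time a Whitney–disc finger is pushed through $G$, and pinning down precisely the point at which a $2$-torsion element would force a residual, unremovable intersection.

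Finally, with $\mathcal W$ (and $\mathcal V$) clean, surgering $\bar R$ along either system is realized by an ambient isotopy supported near the discs, producing $R_1$ and $R_0$ respectively. It remains to compare the two outcomes: one invokes a uniqueness principle to the effect that any two clean, correctly framed Whitney systems for the double points of a $2$-sphere carrying a transverse sphere are related by an ambient isotopy (reconciling the two pairings by sliding double points along $\bar R$ and across $G$), a principle that once more consumes the no-$2$-torsion hypothesis through the light‑bulb move. Consequently the result of surgering $\bar R$ along $\mathcal W$ is ambiently isotopic, rel $\nu(G)$, to the result of surgering along $\mathcal V$; that is, $R_0$ is ambiently isotopic to $R_1$ fixing a neighborhood of $G$ pointwise, as claimed. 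Compared with the middle step, these reductions and the closing uniqueness–and–assembly argument are essentially formal once the Whitney discs are in clean form.
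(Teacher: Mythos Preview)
Your setup through the reduction rel $\nu(G)$ and the fingers-then-Whitneys factorization is fine, but two load-bearing steps are not proofs. First, the cleaning operations you propose for $\mathcal W$ are \emph{not} ambient isotopies of the pair $(\bar R,\mathcal W)$: tubing a Whitney disc into a parallel copy of $G$ (the Norman trick) changes the disc topologically --- the count $|W\cap\bar R|$ drops, an invariant of the pair --- so you owe a separate argument that Whitney surgery along the modified system still yields a surface isotopic to $R_1$. That argument is essentially the content of the paper's tubed-surface machinery (\S\ref{shadow}): the paper never cleans Whitney discs at all, but instead \emph{shadows} each finger and Whitney move of the regular homotopy by an isotopy of an embedded ``tubed surface'' (the immersed stage with tubes running to parallel copies of $G$), so that at the end $R_0$ is already known to be isotopic to $R_1$ decorated with an explicit system of tubes. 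The remaining work (\S\ref{from}--\S8) is then to remove those tubes by further isotopies.

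Second, and more seriously, the closing ``uniqueness principle'' --- that any two clean framed Whitney systems for $\bar R$ are related by an ambient isotopy --- is asserted without argument and is essentially as hard as the theorem itself. Two clean systems can pair the double points differently and carry different $\pi_1$-data along their boundary arcs; the paper never needs such a statement precisely because shadowing maintains an isotopy to $R_0$ throughout. Relatedly, your placement of the $2$-torsion obstruction is off: in the paper it arises not from intersections among Whitney discs but from \emph{crossed} Whitney moves (those whose boundary arcs pair an $x_i$ with a $y_j$), which produce ``double tubes'' indexed by elements of $\pi_1(M)$; Proposition~\ref{double to single} cancels such tubes in inverse pairs via an explicit local isotopy, leaving only classes with $[\lambda]^2=1$. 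Without a mechanism that both preserves the isotopy class of the surgered surface under your cleaning moves and replaces the uniqueness principle by something concrete, the argument does not close.
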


For a fundamental group with 2-torsion, i.e. having a nontrivial element of order 2, the methods of this paper yield the following.

\begin{theorem}\label{2-torsion}  Let $M$ be an orientable 4-manifold and $R_1, R_0$ be homotopic embedded spheres which coincide near the common transverse sphere $G$.  Then $R_1$ can be put into a normal form with respect to $R_0$ via an isotopy fixing a neighborhood of $G$ pointwise.  (See Definition \ref{normal form} for the description.)  Here $R_1$ has double tubes representing elements $\{[\lambda_1], \cdots, [\lambda_n]\}$ where the $[\lambda_i]$'s are distinct nontrivial 2-torsion elements and $R_1=R_0$ if this set is empty.  Any finite set of distinct 2-torsion elements gives rise to a $R_1$ in normal form with double tubes representing this set and two such $R_1$'s are isotopic if the corresponding set of $[\lambda_i]$'s are equal.  \end{theorem}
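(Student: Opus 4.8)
The plan is to run the same machinery that underlies Theorem \ref{main}, but, lacking the $2$-torsion hypothesis, to \emph{read off} the residual obstruction instead of killing it: where the proof of Theorem \ref{main} would force every tube label to be trivial, here one is left with double tubes carrying $2$-torsion labels, which is exactly the normal form of Definition \ref{normal form}. The first step is to put $R_1$ into tubed form over $R_0$ rel a neighborhood of $G$. After a general position isotopy $R_1 \cap R_0$ is a finite set of transverse points away from $G$; using the transverse sphere $G$ (which meets $R_0$ once) via the Norman trick — tube an intersection point of $R_1$ with $R_0$ to a parallel copy of $G$ — one removes all of $R_1 \cap R_0$, and a further isotopy supported away from $G$ exhibits $R_1$ as $R_0$ with a disjoint family of embedded tubes $T_1,\dots,T_k$ (annuli running along $R_0$), each carrying a sign and a class $g_i \in \pi_1(M)$. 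This step uses nothing about $\pi_1(M)$, only that $R_1$ and $R_0$ share $G$ and agree near it.

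Next comes the tube calculus, all of whose moves are supported away from a fixed neighborhood of $G$: a tube with trivial label spins off and is deleted; a tube's label is inverted when its orientation is reversed and is changed by conjugation (with its sign adjusted accordingly) as the tube is dragged along $R_0$ and around $G$; and — the crucial point — a matched pair of tubes with common label $g$ can be cancelled by a Whitney-type move routed through $G$ precisely when one can exploit $g \ne g^{-1}$, i.e.\ when $g$ is not $2$-torsion, while two double tubes carrying the \emph{same} nontrivial $2$-torsion label cancel against each other. Verifying that a suitable complexity strictly decreases under these moves, the process terminates with $R_1$ differing from $R_0$ by double tubes labeled by a set $\{[\lambda_1],\dots,[\lambda_n]\}$ of distinct nontrivial $2$-torsion elements, empty exactly when the whole tubed surface trivializes and $R_1$ is isotoped onto $R_0$; since each double tube is a homotopically inessential modification, this also records that $R_1$ is homotopic to $R_0$. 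This yields the first two assertions.

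For realization, given a prescribed finite set of distinct $2$-torsion elements, carry out in pairwise disjoint balls disjoint from a neighborhood of $G$ the explicit double-tube modification of Definition \ref{normal form} along a loop representing each element; the result is embedded by construction, is in normal form, and its double tubes represent the prescribed classes. If $R_1$ and $R_1'$ are two normal forms with the same label set, then, because double tubes supported in disjoint balls do not interact, an isotopy relocates and reorders them, so it suffices to compare them one label at a time; and any two double tubes carrying a fixed $2$-torsion class $[\lambda]$ are ambiently isotopic rel a neighborhood of $G$ — a relative light-bulb statement for a single double tube, proved by the tube calculus above, with the indeterminacy of the representing loop up to conjugation and inversion being exactly what the allowed moves absorb. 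Hence $R_1$ and $R_1'$ are isotopic, which completes the proof.

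I expect the main obstacle to be the cancellation step: showing that away from $2$-torsion a matched pair of tubes can always be deleted by a Whitney move steered through $G$ even when the obvious Whitney disk is obstructed, and organizing this so that the algorithm provably halts at the stated normal form — this is essentially the engine of Theorem \ref{main} run so as to expose, rather than annihilate, its $2$-torsion kernel. A persistent but lower-order nuisance is the bookkeeping of signs on the tubes and the requirement that every move be supported in the complement of a fixed neighborhood of $G$.
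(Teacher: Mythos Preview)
Your first step contains a genuine gap that undermines the rest. Tubing off the transverse intersections $R_1\cap R_0$ with parallel copies of $G$ (the Norman trick) produces a sphere isotopic to $R_1$ that is \emph{disjoint} from $R_0$; it does not exhibit $R_1$ as ``$R_0$ with a disjoint family of embedded tubes.'' The phrase ``a further isotopy supported away from $G$ exhibits $R_1$ as $R_0$ with tubes'' is doing all of the work and is unjustified --- indeed, it is essentially the content of the theorem. There is no mechanism here by which $R_1$ comes to coincide with $R_0$ on the complement of some tubes.

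The paper's route to this is entirely different and is where the real content lies. One first uses Smale to get a generic \emph{regular homotopy} from $R_1$ to $R_0$ supported away from $G$ (finger moves followed by Whitney moves), and then proves that such a regular homotopy can be \emph{shadowed by tubed surfaces} (Theorem~\ref{shadow isotopy}): each finger or Whitney move on the immersed track is mirrored by an explicit isotopy of an embedded surface that adds tubes connecting to copies of $G$. The output is a tubed surface $\mA$ whose associated surface is $R_0$ and whose realization is isotopic to $R_1$. Only at this point does one have $R_1$ expressed as $R_0$ decorated with tubes, and the decoration comes in two essentially different flavors --- \emph{single} tubes (from uncrossed Whitney moves) and \emph{double} tubes (from crossed Whitney moves) --- a distinction absent from your calculus. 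The cancellation step is then Proposition~\ref{double to single}: when $[\lambda_i]=[\lambda_j]^{-1}$ a pair of double tubes is converted into a pair of single tubes by an explicit local isotopy, and single tubes are later killed by Lemma~\ref{local unknotting}. Your description (``Whitney-type move routed through $G$ when $g\neq g^{-1}$'') does not match this mechanism and would not by itself yield the normal form. Finally, the uniqueness of normal forms requires showing independence of the \emph{framing} on each $\lambda_i$, which the paper handles by introducing an auxiliary cancelling pair of double tubes and running Proposition~\ref{double to single} twice; your one-label-at-a-time ``relative light-bulb statement'' does not address this.
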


\begin{remark} Question 10.14 of the first version of this paper asked about the necessity of the 2-torsion condition.  Subsequently, two groups independently established the necessity of that condition.   Hannah Schwartz \cite{Sch} first found an explicit 4-manifold with a pair of homotopic but not topologically isotopic spheres with a common transverse sphere.  Rob Schneiderman and Peter Teichner \cite{ST}  used the Freedman - Quinn obstruction of Theorem 10.5 \cite{FQ}  to give an obstruction to topological isotopy and applied it to specific examples. \footnote{Note added in proof: R. Schneiderman \& P. Teichner, \emph{Homotopy verses isotopy: Spheres with duals in 4-manifolds}, preprint, shows that Freedman - Quinn is the obstruction.} Theorem \ref{2-torsion} leaves open the possibility that distinct normal form surfaces are isotopic.\end{remark}

\vskip 8pt

Generalizations of these results to multiple pairs of spheres is given in \S 10.  This generalization implies that if $R_0, R_1$ are  embedded spheres with a common transverse sphere and $M_1\to M$ is a finite cover such that $\pi_1(M_1)$ has no 2-torsion, then the preimages of $R_1$ are simultaneously isotopic to the preimages of $R_0$, though perhaps not equivariantly.    
\vskip 8pt
Here are some applications.

\begin{theorem}  A properly embedded disc in $S^2\times D^2$ is properly isotopic to a fiber if and only if its boundary is isotopically standard.\end{theorem}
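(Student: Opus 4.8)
The ``only if'' direction is immediate: a proper isotopy of $D$ to a fiber $\{p\}\times D^2$ restricts on the boundary to an isotopy of $\partial D$ to $\{p\}\times S^1$ inside $S^2\times S^1$, so $\partial D$ is standard.

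For the converse the plan is to cap $D$ off to an embedded $2$-sphere in $S^2\times S^2$ and invoke Theorem~\ref{main}. Write $S^2=D^2_+\cup_{S^1}D^2_-$ and $S^2\times S^2=W_+\cup_\Sigma W_-$ with $W_\pm=S^2\times D^2_\pm$ and $\Sigma=S^2\times S^1$, identifying $S^2\times D^2$ with $W_+$. Since $\partial D$ is standard, an ambient isotopy of $W_+$ puts $D$ in the form $\partial D=\{p\}\times S^1$ with $D$ equal to the product $\{p\}\times(\text{collar of }S^1\text{ in }D^2_+)$ near $\partial D$. Fix a point $q$ in the interior of $D^2_-$, and set $G=S^2\times\{q\}$ and $R_0=\{p\}\times S^2$; then $G$ is a transverse sphere to $R_0$ and $R_0\cap W_+=\{p\}\times D^2_+=:F_+$ is a fiber of $S^2\times D^2$. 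Next I would choose a properly embedded disc $E\subset W_-$ with $\partial E=\{p\}\times S^1$, a product near its boundary, meeting $G$ only in the single transverse point $(p,q)$, and --- adjusting $E$ rel $\partial$ by tubing on parallel copies of $S^2\times\{\mathrm{pt}\}$ if necessary (these realize all of $H_2(W_-)\cong\BZ$) --- arranged so that $R_1:=D\cup E$ is homologous to $R_0$. Then $R_1$ is an embedded $2$-sphere, $G$ is a transverse sphere to $R_1$ as well, and $R_0$, $R_1$ coincide on a neighborhood of $G$.

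Since $S^2\times S^2$ is simply connected, $\pi_2=H_2$, so $R_1\simeq R_0$; and $\pi_1(S^2\times S^2)$ has no $2$-torsion, so Theorem~\ref{main} supplies an ambient isotopy $\Phi_t$ of $S^2\times S^2$, fixing a neighborhood of $G$ pointwise, with $\Phi_0=\mathrm{id}$ and $\Phi_1(R_1)=R_0$. If $\Phi_t$ can be taken to preserve the splitting $W_+\cup_\Sigma W_-$ (equivalently, to carry $\Sigma$ to itself for every $t$), then $\Phi_t|_{W_+}$ is a proper isotopy of $S^2\times D^2$ carrying $D=R_1\cap W_+$ to $R_0\cap W_+=F_+$, a fiber, which finishes the argument.

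The one genuinely non-formal point --- and the step I expect to be the main obstacle --- is precisely making $\Phi_t$ respect this splitting, i.e. descending the ambient isotopy of $S^2\times S^2$ to a proper isotopy of $S^2\times D^2$. To handle it I would choose $q$ so that $W_-$ is a tubular neighborhood of $G$, arrange that Theorem~\ref{main} is applied so that $\Phi_t$ is supported in $W_+$ together with a collar of $\Sigma$ in $W_-$ (using the ``coincides near $G$'' refinement to keep $\Phi_t$ the identity on the rest of $W_-$), and then isotope $\Phi_t$ off the collar; alternatively, one argues directly that two properly embedded discs in $S^2\times D^2$ with the same standard boundary that become ambiently isotopic in the double are already properly isotopic, exploiting that the complementary copy $W_-$ of $S^2\times D^2$ sits standardly in $S^2\times S^2$. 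Either route requires care, but the homotopy-implies-isotopy heart of the statement is supplied by Theorem~\ref{main}.
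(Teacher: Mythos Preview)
Your overall strategy---double to $S^2\times S^2$, cap $D$ to a sphere $R_1$ with transverse sphere $G$, and invoke Theorem~\ref{main}---is exactly the paper's. The gap is where you place the homology correction, and it is precisely what creates the ``main obstacle'' you flag.

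The paper's key observation, which you are missing, is this: since the goal is a \emph{proper} isotopy of $D$ (not rel $\partial$), you are allowed to move $\partial D$. Dragging $\partial D$ once around the $S^2$-factor of $S^2\times S^1$ is a proper isotopy of $D$ whose effect on the class $[D]\in H_2(S^2\times D^2,\partial D)$ is to add $[S^2\times\mathrm{pt}]$. Iterating, you can arrange $[D]=[F]$ before you ever double. Now cap with the \emph{standard} disc $E_0=\{p\}\times D^2_-$; then $R_0$ and $R_1$ agree on all of $W_-$, which is literally a tubular neighborhood $N(G)$, and Theorem~\ref{main} (equivalently Theorem~\ref{boundaries coincide}) hands you an isotopy fixing $W_-$ pointwise. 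It restricts to $W_+$ for free---there is no obstacle to overcome.

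By contrast, your correction tubes $E$ to parallel copies of $S^2\times\{\mathrm{pt}\}$, so $R_0$ and $R_1$ genuinely differ on $W_-$ outside a small neighborhood of $G$. Your proposed fixes do not close this. If the tubing is pushed into a collar of $\Sigma$ and $\Phi_t$ fixes the complement, then $\Phi_1$ still has to carry the tubed part of $E$ to the standard annulus inside that collar; it cannot be supported in $W_+$, and ``isotoping $\Phi_t$ off the collar'' would have to undo that motion. Rephrasing in terms of an enlarged $W_+'=W_+\cup(\text{collar})$, what you actually prove is that $D' := R_1\cap W_+'$ is properly isotopic to a fiber; but $D'$ and the trivial extension of $D$ differ in $H_2(W_+',\partial D')$ by exactly the class you tubed on, so relating them is the original problem again. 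Your second alternative (``isotopic in the double implies properly isotopic'') is likewise circular without control of the splitting. The fix is simply to do the homology adjustment on $D$, not on $E$.
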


\begin{theorem}  Two properly embedded discs $D_0$ and $D_1$ in $S^2\times D^2$ that coincide near their standard boundaries are properly isotopic rel boundary if and only if they are homologous in  $H_2(S^2\times D^2, \partial D_0)$.\end{theorem}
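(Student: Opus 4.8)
The plan is to cap the two discs off into a pair of embedded $2$-spheres in $S^2\times S^2$ that share a common transverse sphere, and then quote Theorem~\ref{main}. The ``only if'' direction is free: a proper isotopy rel boundary preserves the relative homology class, so only the converse needs work.

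For the converse I would first arrange, by an ambient isotopy of $S^2\times D^2$ supported near the boundary (one isotopy suffices, since $D_0$ and $D_1$ already coincide there), that near $\partial(S^2\times D^2)$ both discs equal $\{x\}\times(\text{collar of }\partial D^2)$, where $\{x\}\times S^1$ is the standard boundary curve. Then write $S^2\times S^2 = A\cup B$ as the union of two copies $A,B$ of $S^2\times D^2$ glued along $\partial = S^2\times S^1$, with $A$ the given copy, let $F = \{x\}\times D^2_B\subset B$ be a fiber disc, and set $R_i = D_i\cup_{\{x\}\times S^1}F$. This is a smooth embedded $2$-sphere in $S^2\times S^2$, and $R_0,R_1$ coincide on all of $B$. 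Fixing an interior point $y$ of $D^2_B$, the sphere $G = S^2\times\{y\}$ has trivial normal bundle, meets each $R_i$ transversally in the single point $(x,y)\in F$, and $R_0,R_1$ coincide with $\{x\}\times(\text{disc about }y)$ on a neighborhood of $G$; so $G$ is a common transverse sphere near which $R_0$ and $R_1$ agree.

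Next I would check that the homological hypothesis translates upstairs. The difference cycle $R_0 - R_1$ is supported in $A$ (the $F$ parts cancel) and equals the closed cycle $D_0\cup(-D_1)$ obtained by gluing the two discs along their common boundary; since $H_2(A)\to H_2(S^2\times S^2)$ is injective, $[R_0] = [R_1]$ in $H_2(S^2\times S^2)$ exactly when $[D_0\cup(-D_1)]=0$ in $H_2(A)$, i.e.\ when $D_0$ and $D_1$ are homologous in $H_2(S^2\times D^2,\partial D_0)$. Since $S^2\times S^2$ is simply connected and $\pi_2\cong H_2$, this forces $R_0\simeq R_1$. Now $\pi_1(S^2\times S^2)=1$ has no $2$-torsion, so Theorem~\ref{main} produces an ambient isotopy $\phi_t$ of $S^2\times S^2$ with $\phi_0=\id$, $\phi_1(R_0)=R_1$, fixing a neighborhood of $G$ pointwise. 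Choosing a tubular neighborhood $\nu(G)$ inside that fixed region and restricting to $W := S^2\times S^2\setminus\nu(G)\cong S^2\times D^2$ (the complement of a fiber neighborhood) yields an ambient isotopy of $W$ rel $\partial W$ carrying $R_0\setminus\nu(G)$ to $R_1\setminus\nu(G)$. Finally $W = A\cup(B\setminus\nu(G))$, where $B\setminus\nu(G)\cong S^2\times S^1\times I$ is an external product collar on $\partial A$, and $R_i\setminus\nu(G)$ is $D_i$ with a product annulus over $\partial D_i$ attached; collapsing this collar converts the isotopy above into a proper ambient isotopy of $S^2\times D^2$ rel boundary from $D_0$ to $D_1$.

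I expect the main obstacle to be this last descent step: one must choose the diffeomorphism $W\cong S^2\times D^2$ so that it simultaneously straightens $R_0\setminus\nu(G)$ onto $D_0$ and respects the standard collar structure that $R_0$ and $R_1$ share near $G$, so that $R_1\setminus\nu(G)$ is carried to a disc isotopic to $D_1$ rel boundary. This collar bookkeeping, together with verifying the homological equivalence above, is where the care lies; the geometric heart of the argument is supplied entirely by Theorem~\ref{main}.
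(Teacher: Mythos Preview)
Your proposal is correct and follows essentially the same approach as the paper: double $S^2\times D^2$ to $S^2\times S^2$, cap the $D_i$ with the fibre disc in the second copy to get spheres $R_i$ sharing the transverse sphere $G=S^2\times\{y\}$, translate the relative homology hypothesis into $[R_0]=[R_1]$, and invoke Theorem~\ref{main}. The paper's descent is slightly cleaner than yours: it takes the entire second copy $d(S^2\times D^2)$ as $N(G)$ and uses that the isotopy furnished by Theorem~\ref{main} can be chosen to fix \emph{all} of $N(G)$ (since $R_0=R_1$ there and the proof of Theorem~\ref{main} produces an isotopy supported away from any such neighborhood), so the restriction to the original $S^2\times D^2$ is immediate and your collar-collapsing bookkeeping is unnecessary.
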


Let $\Diff_0(X)$ denote the group of diffeomorphisms of the compact manifold $X$ that are properly homotopic to the identity.

\begin{corollary}  $\pi_0(\Diff_0(S^2\times D^2)/\Diff_0(B^4))=1$.\end{corollary}

\begin{remark}  In words, modulo diffeomorphisms of the 4-ball, homotopy implies isotopy for diffeomorphisms of $S^2\times D^2$. \end{remark}

The classical \emph{light bulb} theorem states that a knot in $S^2\times S^1$ that intersects a $S^2\times y$ transversely and in exactly one point is isotopic to the standard vertical curve, i.e. a  $x\times S^1$.    The next result is the 4-dimensional version.

\begin{theorem}  (\textrm{4D-Lightbulb Theorem})\label{light bulb}  If $R$ is an embedded 2-sphere in $S^2\times S^2$, homologous to $x_0\times S^2$,  that intersects $S^2\times y_0$ transversely and only at the point $(x_0, y_0)$, then $R$ is isotopic to $x_0\times S^2$ 
via an isotopy fixing $S^2\times y_0$ pointwise.\end{theorem}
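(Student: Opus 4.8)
The plan is to recognize $S^2\times y_0$ as a common transverse sphere for $R$ and for $R_0:=x_0\times S^2$, to observe that $R$ and $R_0$ are homotopic, and then to invoke Theorem \ref{main}. Set $G_0:=S^2\times y_0$. It has trivial normal bundle in $S^2\times S^2$, by hypothesis it meets $R$ transversely in the single point $(x_0,y_0)$, and it meets $R_0$ transversely in the single point $(x_0,y_0)$ as well; so $G_0$ is a transverse sphere for each of $R$ and $R_0$. Since $S^2\times S^2$ is simply connected, the Hurewicz homomorphism $\pi_2(S^2\times S^2)\to H_2(S^2\times S^2;\BZ)$ is an isomorphism, and free homotopy classes of maps $S^2\to S^2\times S^2$ are in bijection with $\pi_2$; as $R$ is homologous to $R_0$, it follows that $R$ and $R_0$ are homotopic as maps $S^2\to S^2\times S^2$.

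I would next arrange, by a preliminary isotopy fixing $G_0$ pointwise, that $R$ coincides with $R_0$ on a neighborhood of $G_0$. Near $(x_0,y_0)$ each of $R$ and $R_0$ is an embedded disc transverse to $G_0$ through that point, so a standard local isotopy supported in a small ball about $(x_0,y_0)$ and fixing $G_0$ pointwise --- first rotate $T_{(x_0,y_0)}R$ to agree with $T_{(x_0,y_0)}R_0$, which an ambient isotopy fixing $G_0$ is still free to do since it may rotate the directions normal to $G_0$ at $(x_0,y_0)$, and then interpolate the resulting graphs --- makes $R$ agree with $R_0$ on some ball about $(x_0,y_0)$. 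Since $R$ is then disjoint from $G_0$ away from that ball, any sufficiently thin tubular neighborhood of $G_0$ meets $R$ only inside the ball, where $R=R_0$; hence $R$ and $R_0$ coincide on a neighborhood of $G_0$.

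Now Theorem \ref{main} applies directly: $M=S^2\times S^2$ is orientable with trivial, hence $2$-torsion free, fundamental group, $G_0$ is a common transverse sphere for $R$ and $R_0$, the two spheres are homotopic, and they coincide near $G_0$. Therefore there is an ambient isotopy carrying $R$ to $R_0=x_0\times S^2$ fixing a neighborhood of $G_0$ --- in particular all of $S^2\times y_0$ --- pointwise. Composing with the preliminary isotopy, which also fixes $S^2\times y_0$ pointwise, gives the conclusion.

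As for where the work lies: essentially all of it is already contained in Theorem \ref{main}, so granting that theorem the present statement is a short corollary with no serious obstacle. The only delicate points are bookkeeping ones: that ``homologous to $x_0\times S^2$'' really does upgrade to ``homotopic to $x_0\times S^2$'', which is exactly where simple connectivity of $S^2\times S^2$ enters, and that the preliminary isotopy can be chosen to fix $S^2\times y_0$ pointwise, which is what yields the refined conclusion ``via an isotopy fixing $S^2\times y_0$ pointwise'' rather than merely an unrestricted ambient isotopy; without that refinement the preliminary step could be omitted entirely.
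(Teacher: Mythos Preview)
Your argument is correct and is exactly the derivation the paper records in \S10, where the Light Bulb Theorem is restated as ``an immediate consequence of our main result'' with a \qed. So as a formal deduction from Theorem~\ref{main} there is nothing to criticize.

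That said, the paper gives \emph{two} proofs, and the substantive one is the direct argument of \S3 (Theorem~\ref{4d lightbulb}): first isotope $R$ to a \emph{vertical} light bulb by putting it in normal form with respect to the slicing $S^2\times S^1\times[-\infty,\infty]$ and cancelling excess critical points via the Light Bulb Lemma, then invoke Hatcher's theorem to straighten a vertical light bulb. The reason this matters is a point of logical order you may not have seen from the statement alone: the paper's proof of Theorem~\ref{main} (in \S8) uses Theorem~\ref{uniqueness}, which is itself a corollary of the \S3 proof of the Light Bulb Theorem. So deducing the Light Bulb Theorem from Theorem~\ref{main}, as you do, is circular \emph{as the paper is written}; the paper flags this and outlines in Remark~\ref{dependence} how to eliminate the dependence. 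Your deduction is therefore fine once one accepts either the independent \S3 argument or the workaround of Remark~\ref{dependence}, but it does not stand on its own as a first proof.
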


In 1985, under the above hypotheses, Litherland \cite{Li} proved that there exists a diffeomorphism \emph{pseudo-isotopic} to the identity that takes $R$ to $x_0\times S^2$ and  proved the full light bulb theorem for smooth $m$-spheres in $S^2\times S^m$ for $m>2$.   (There is an additional necessary condition in that case.)  Another version of the light bulb theorem was proven in 1986 by Marumoto \cite{Ma}.  He showed that two locally flat PL $m$-discs in an $n$-sphere, $n>m$ with the same boundary are topologically isotopic rel boundary.    Here we prove that theorem for discs in $S^4$ in the smooth isotopy category.

\begin{theorem} (Uniqueness of Spanning Discs) If $D_0$ and $D_1$ are  discs in $S^4 $ such that $\partial D_0 = \partial D_1=\gamma$, then there exists an isotopy of $S^4$ taking $D_0$ to $D_1$ that fixes $\gamma$ pointwise.\end{theorem}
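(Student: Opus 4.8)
The plan is to reduce the statement to the $4$-dimensional light bulb theorem (Theorem~\ref{main}/Theorem~\ref{light bulb}) by capping the two discs off to $2$-spheres inside $S^2\times S^2$ that share a common transverse sphere. First I would normalize things near $\gamma$. Since a smoothly embedded circle in $S^4$ is smoothly unknotted (codimension $\ge 3$; Haefliger), an ambient isotopy of $S^4$ lets me assume $\gamma=S^1\times\{0\}$ inside $S^1\times D^3\subset S^4=(S^1\times D^3)\cup_{S^1\times S^2}(D^2\times S^2)$, with $\nu(\gamma)=S^1\times D^3$. Because $\gamma$ is a circle its normal framing in $S^4$ is unique up to isotopy ($\pi_1(S^2)=0$), so after shrinking $\nu(\gamma)$ and an isotopy fixing $\gamma$ pointwise I may arrange that $D_0$ and $D_1$ agree on $\nu(\gamma)$, where $D_i\cap\nu(\gamma)$ is the standard collar $S^1\times\{tv_0:0\le t\le 1\}$ for a fixed $v_0\in S^2=\partial D^3$.

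Next I would cap off. Set $X:=S^4\setminus\nu(\gamma)$, a compact $4$-manifold with $\partial X=S^1\times S^2$; unknottedness of $\gamma$ gives $X\cong D^2\times S^2$, and attaching a second copy $Y:=D^2\times S^2$ along the boundary yields $S^2\times S^2$, which I write as $(D^2_-\cup D^2_+)\times S^2$ with $X=D^2_-\times S^2$ and $Y=D^2_+\times S^2$. Choose $g_0$ in the interior of $D^2_+$ and put $G:=\{g_0\}\times S^2\subset Y$, an embedded $2$-sphere with trivial normal bundle disjoint from $X$. Let $\bar D_i:=D_i\cap X$ (a properly embedded disc with $\partial\bar D_i=S^1\times\{v_0\}$), let $E:=D^2_+\times\{v_0\}\subset Y$, and set $R_i:=\bar D_i\cup E$. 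The normalization makes $\bar D_i$ agree with $E$ near $\partial X=\partial Y$, so each $R_i$ is a smoothly embedded $2$-sphere; $R_0$ and $R_1$ coincide on the neighborhood $Y$ of $G$; and $G$ meets each $R_i$ transversally in the single point $(g_0,v_0)$, so $G$ is a common transverse sphere. A routine homology computation shows $[R_0]=[R_1]$, namely the fiber class $[S^2\times\mathrm{pt}]$: this uses that $\bar D_i$, a properly embedded disc in $X\cong D^2\times S^2$ whose boundary is the $S^1\times\mathrm{pt}$ curve, represents $[D^2\times\mathrm{pt}]$ in $H_2(X,\partial X)$ for \emph{both} $i$ (the connecting map $H_2(X,\partial X)\to H_1(\partial X)$ is injective because $\{0\}\times S^2$ has trivial normal bundle in $X$), hence $[R_i]$ is the same regardless of how $D_i$ is knotted. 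Since $S^2\times S^2$ is simply connected, Hurewicz gives $R_0\simeq R_1$.

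Then Theorem~\ref{main} applies with $M=S^2\times S^2$ (orientable, $\pi_1$ has no $2$-torsion): $R_0$ and $R_1$ are homotopic, share the transverse sphere $G$, and coincide near $G$, so there is an ambient isotopy of $S^2\times S^2$ carrying $R_0$ to $R_1$ and fixing a neighborhood of $G$ pointwise. Shrinking that neighborhood to a tubular neighborhood $\nu(G)=\Delta\times S^2$ (with $\Delta\subset D^2_+$ a small disc about $g_0$), the isotopy is supported in $Z:=S^2\times S^2\setminus\nu(G)$ and fixes $\partial Z$ pointwise; since $Z=X\cup\big((D^2_+\setminus\Delta)\times S^2\big)$ is just $X$ with a boundary collar attached, there is a collar-absorbing diffeomorphism $h\colon Z\to X=S^4\setminus\nu(\gamma)$, and — because near $\partial X$ all of $R_i$ is the standard radial disc at $S^2$-coordinate $v_0$, matching $D_i$ there — $h$ can be chosen to carry $R_i\cap Z$ onto $D_i\cap X$ for both $i$ at once. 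Conjugating the given isotopy by $h$ and extending by the identity over $\nu(\gamma)$ produces an ambient isotopy of $S^4$ fixing $\nu(\gamma)$, hence $\gamma$, pointwise and carrying $D_0$ to $D_1$.

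All the real content is imported from Theorem~\ref{main} (equivalently Theorem~\ref{light bulb}); within the argument above the work is bookkeeping, and the step I expect to demand the most care is the descent: choosing $h$ so that the isotopy it induces on $S^4$ genuinely carries $D_0$ onto $D_1$ rather than merely carrying the capped-off spheres to one another. The secondary points to nail down are that the two discs can indeed be made to agree on $\nu(\gamma)$ (uniqueness of collars plus the framing remark) and that $[R_i]$ really is the fiber class independently of the knotting of $D_i$.
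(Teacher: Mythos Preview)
Your approach is the paper's: remove a neighborhood of $\gamma$, identify the complement with $D^2\times S^2$, cap off in $S^2\times S^2$, and invoke Theorem~\ref{main}. The descent via collar absorption is fine. But there is a genuine gap in the homology step, and it is exactly the point you yourself flagged as needing care.

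The claim that $[R_0]=[R_1]$ does not follow from your computation. You correctly observe that $H_2(X,\partial X)\to H_1(\partial X)$ is injective, hence $[\bar D_0]=[\bar D_1]$ in $H_2(X,\partial X)$. But this does not pin down $[R_i]\in H_2(S^2\times S^2)$. The difference $\bar D_0-\bar D_1$ is an \emph{absolute} $2$-cycle in $X$, and the map $H_2(X)\to H_2(X,\partial X)$ is the zero map (your own observation that $H_2(\partial X)\to H_2(X)$ is onto forces this), so the relative class tells you nothing about $[\bar D_0-\bar D_1]\in H_2(X)\cong\BZ$. Meanwhile $H_2(X)\to H_2(S^2\times S^2)$ is injective: the generator $[\mathrm{pt}\times S^2]$ becomes $[G]$. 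Concretely, if $\bar D_1$ is the standard fiber disc tubed to a parallel copy of $\mathrm{pt}\times S^2$ inside $X$, then $\bar D_1$ extends over the fixed collar to a disc $D_1\subset S^4$ satisfying all your normalizations, yet $[R_1]=[R_0]+[G]\neq[R_0]$.

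The paper meets this head-on. After the collar normalization one has $[R_i]=[S^2\times\mathrm{pt}]+n_i[G]$ with possibly $n_0\neq n_1$; the repair is a further isotopy of $D_0$ in $S^4$ that fixes $\gamma$ pointwise but drags the inner boundary circle $\partial\bar D_0$ around the $S^2$ factor of $\partial X=S^1\times S^2$ a total of $n_1-n_0$ times, extending over the collar annulus in $\nu(\gamma)$. This shifts $[\bar D_0]\in H_2(X,\partial\bar D_0)$ by the required multiple of $[\mathrm{pt}\times S^2]$. With that one extra move inserted before you cap off, your argument goes through and coincides with the paper's.
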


\begin{remark}  The analogous result for 1-discs in $S^4$ is well known using general position.  The result for 3-discs in $S^4$ implies the smooth 4D-Schoenflies conjecture.\end{remark}

This paper gives two proofs of the 4D-Light Bulb Theorem.  The first proof has two steps.  First we give a direct argument showing that $R$ is isotopic to a \emph{vertical sphere}, i.e. viewing $S^2\times S^2$  as $S^2\times S^1\times [-\infty,\infty]$ where each $z\times S^1\times \infty$ and each $z\times S^1\times -\infty$ is identified with a point, then after isotopy $R$ is transverse to each $S^2\times S^1\times t$ and intersects each such space in a single component.  This involves an analogue of the normal form theorem of \cite{KSS} and repeated use of $S^2\times 0$ as a transverse 2-sphere.  The second step invokes Hatcher's \cite{Ha} theorem (the Smale conjecture: $\Diff^+(S^3)\simeq SO(4)$) to straighten out these intersections.  

The proof of Theorem \ref{main}, and hence a somewhat different one for $S^2\times S^2$ makes use of Smale's results on regular homotopy of 2-spheres in 4-manifolds \cite{Sm1}.   We show that if $R_0$ is homotopic to $R_1$ and both are embedded surfaces, then the homotopy from $R_0$ to $R_1$ is \emph{shadowed by tubed surfaces}, i.e. there is an isotopy taking $R_0$ to something that looks like $R_1$ embroidered with a complicated system of tubes together with parallel copies of the transverse sphere. Through various geometric arguments we show that these tubes can be reorganized and eventually isotoped away.  The proof formally relies on the first proof of the Light Bulb theorem at the very last step, though we outline how to eliminate the dependence in Remark \ref{dependence}.  The proof uses the fact that $R_0$ is a 2-sphere.  The $\BZ_2$-condition is used in Proposition \ref{double to single}.

Both arguments make use of the 4D-Light Bulb Lemma, which is the direct analogue of the 3D-version where one can do  a  crossing change using the transverse sphere. 

More is known in other settings.  In the topological category a locally flat 2-sphere in $S^4$ is topologically equivalent to the trivial 2-knot if and only if its complement has fundamental group $\BZ$ \cite{Fr}, \cite{FQ}.  There are topologically isotopic smooth 2-spheres in 4-manifolds that are not smoothly isotopic, yet become smoothly isotopic after a stabilization with a single $\stwostwo$ \cite{AKMR}, \cite{Ak}.

\vskip 8 pt
The paper is organized as follows.  \S2 recalls some classical uses of transverse spheres and proves the Light Bulb Lemma.  The Light Bulb theorem is proven in \S3.  Basic facts about regular homotopy are recalled in \S4.  The definition of tubed surface, basic operations on tubed surfaces, the notion of shadowing a homotopy by basic operations and normal form for a surface are given in \S5.  The reader is cautioned that \emph{tubes} are used in two contexts here; as tubes that follow curves lying in the surface and as tubes that follow arcs with endpoints in the surface. The latter  fall into two types; \emph{single} and \emph{double} tubes.   In \S6 it is shown how to transform certain pairs of double tubes into pairs of single tubes. If there is no $\BZ_2$-torsion, then in the end all but at most one of the double tubes remains and that one is homotopically inessential.   If $\pi_1(M)$ has 2-torsion, then there may be additional double tubes representing \emph{distinct} 2-torsion elements of  $\pi_1(M)$.    A crossing change lemma is proven in \S7 enabling distinct tubes following curves in the surface to be disentangled.  In \S8 the proof of Theorems \ref{main} and  \ref{2-torsion} is completed.  An extension to higher genus surfaces is given in \S9.  In particular it is shown that a closed oriented surface in $\stwostwo$ homologous to $0\times S^2$, that intersects $S^2\times 0$ transversely in one point, is isotopically standard.  Applications and questions are given in \S 10.

\begin{acknowledgements}  This work was carried out in part while attending many annual Dublin topology workshops and during preceding visits to Trinity College, Dublin.  We thank Martin Bridgeman for making this possible.  Also, this work was partially carried out while the author was a member of the Institute for Advanced Study in 2015/16. Revisions were partially written while visting OIST and SUST.  We thank all these institutions for their hospitality.  We thank Bob Edwards for his many constructive comments and for his interest over many years while this project developed. See \cite{Ed}.   We thank Abby Thompson for asking about higher genus surfaces and  Remark \ref{thompson}.  We thank Hannah Schwartz for many conversations about the first version of this paper and her work on the  2-torsion problem.  We thank Maggie Miller for correcting a reference.  We thank Rob Schneiderman and Peter Teichner for their interest in this work and informing us of their use of the Freedman - Quinn obstruction.  We are grateful to the referees for their many comments, suggestions and constructive criticisms.
\end{acknowledgements}

\section{the 4-dimensional light bulb lemma}

Unless said otherwise, all manifolds in this paper are smooth and orientable and immersions are self-transverse.

\begin{definition} A \emph{transverse sphere} $G$ to the immersed surface $R$ is a sphere with trivial normal bundle that intersects $R$ transversely in a single point.  \end{definition}

All transverse spheres in this paper are embedded.  The following is well known.  We give the proof as a warm up to the light bulb lemma. 

\begin{lemma} \label{pi injectivity} If $R$ is an immersed surface with embedded transverse sphere $G$ in the 4-manifold $M$, then the induced map $\pi_1(M\setminus R) \to \pi_1(M)$ is an isomorphism.  If $R$ is a sphere, then the induced maps  $\pi_1(M\setminus R\cup G)\to \pi_1(M)$  and $\pi_1(M\setminus G)\to \pi_1(M)$ are isomorphisms.\end{lemma}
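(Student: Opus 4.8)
The plan is to prove surjectivity of all three maps by general position, injectivity of $\pi_1(M\setminus R)\to\pi_1(M)$ by the classical trick of piping a null‑homotopy out to the transverse sphere, and to deduce the two statements involving $G$ from the same idea, with the hypothesis ``$R$ is a sphere'' entering only to provide capping disks. Write $x$ for the single point $R\cap G$.

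\emph{Surjectivity.} Choose the basepoint in $M\setminus(R\cup G)$. Any loop in $M$ is homotopic, by a small perturbation, into $M\setminus R$, into $M\setminus G$, or into $M\setminus(R\cup G)$, since $R$ and $G$ are (images of) surfaces and $1+2<4$; this also pushes the loop off the finitely many double points of the immersion $R$.

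\emph{Injectivity of $\pi_1(M\setminus R)\to\pi_1(M)$.} Let $\gamma\subset M\setminus R$ bound a map $D\to M$ of a disk; make $D$ transverse to $R$, so $D\cap R$ is a finite set of interior points, and $D$ misses the finitely many double points of $R$. For each $z\in D\cap R$ choose an embedded arc from $z$ to $x$ in the domain surface of $R$; doing this in the domain, the arcs may be taken disjoint from one another, from the remaining points of $D\cap R$, and from the preimages of the double points, so their images are disjoint embedded arcs in $R$ meeting its singular set not at all. Tube $D$ along such an arc, inside the (trivialized) normal bundle of $R$ over the arc and hence off $R$, out to a small circle on $G$ around $x$, and cap off with $G$ minus a small disk about $x$. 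Because $G\cap R=\{x\}$, this removes $z$ from $D\cap R$ and creates no new intersection with $R$, so after finitely many such moves the disk lies in $M\setminus R$.

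\emph{The maps involving $G$, using that $R$ is a sphere.} For $\pi_1(M\setminus G)\to\pi_1(M)$, apply van Kampen to $M=(M\setminus\textint\nu(G))\cup\nu(G)$ with $\nu(G)\cong S^2\times D^2$: the map $\pi_1(M\setminus G)\cong\pi_1(M\setminus\textint\nu(G))\to\pi_1(M)$ is onto with kernel normally generated by the meridian $\mu$ of $G$. Taking the normal fiber of $G$ at $x$ to be $T_xR$, the loop $\mu$ is a small circle on $R$ about $x$, and since $R$ is a sphere with $R\cap G=\{x\}$, the complementary disk ($R$ minus a small disk about $x$) is a disk in $M\setminus G$ bounding $\mu$; hence $\mu=1$ and the map is an isomorphism. (Equivalently: pipe a null‑homotopy disk off $G$ along arcs in $G$ to $x$ and cap with $R$ minus a disk about $x$, which is disjoint from $G$.) For $\pi_1(M\setminus(R\cup G))\to\pi_1(M)$, combine the two procedures: given $\gamma\subset M\setminus(R\cup G)$ bounding $D\to M$, make $D$ transverse to $R$ and $G$ and disjoint from $x$ and from the double points of $R$; first remove $D\cap R$ as above but tubing out to, and capping with, \emph{parallel} copies of $G$ (these exist since $\nu(G)$ is trivial, are disjoint from $G$, and each meet $R$ in one point near $x$, so ``parallel copy minus a small disk'' is disjoint from both $R$ and $G$), which makes $D$ disjoint from $R$ without touching $G$; then remove the finitely many points of $D\cap G$ by tubing along arcs in $G$ toward $x$ and capping with pairwise disjoint parallel copies of the disk $R$-minus-a-small-disk-about-$x$ (a disk, hence with trivial normal bundle), arranged off $R$ and $G$. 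The result is a null‑homotopy in $M\setminus(R\cup G)$, and in all cases one checks the isomorphism is induced by inclusion.

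The only real work is the bookkeeping in the last step — arranging that every tube and every parallel copy avoids both $R$ and $G$ simultaneously; the conceptual point that makes it go through is just that $R$ and $G$ are spheres, so the complement in either of a small disk about $x$ is again a disk, furnishing capping disks disjoint from the other sphere. Immersedness of $R$ causes no difficulty beyond steering arcs around its finitely many double points.
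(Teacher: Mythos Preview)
Your approach is essentially the paper's: surjectivity by general position, injectivity by tubing null-homotopy disks off to copies of the dual sphere. The paper's proof is only a few lines and yours is a more detailed version of the same argument; your van Kampen treatment of $\pi_1(M\setminus G)$ is a harmless variant.

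There is one genuine slip in the $M\setminus(R\cup G)$ case. You tube off $D\cap R$ first (using parallel $G$'s), then $D\cap G$ (using parallel copies of $R$-minus-a-disk), asserting the latter can be ``arranged off $R$ and $G$.'' But the lemma allows $R$ to be an \emph{immersed} sphere, and a normal push-off of an immersed disk meets the original in two points near each self-intersection. So your second step reintroduces $D\cap R$ intersections whenever $R$ has double points, contrary to your closing remark that immersedness only costs steering arcs. The paper avoids this by reversing the order: first tube off $D\cap G$ using $R$ (explicitly noting this may create extra $D\cap R$ when $R$ is not embedded or has nontrivial normal bundle), and \emph{then} tube off all of $D\cap R$ using copies of $G$ --- which, being embedded with trivial normal bundle, really can be pushed off both $R$ and $G$. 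Your argument is easily repaired either by adopting the paper's order or by re-running your step~1 after step~2, since step~1 creates no new intersections with $G$.
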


\begin{proof}  Surjectivity is immediate by general position.  If $\gamma$ is a loop in $M\setminus R$ bounding the singular disc $D\subset M$, then after a small perturbation we can assume that $D$ is transverse to $R$. Tubing off intersections with copies of $G$ shows that the map is also injective.  Using transverse spheres to remove intersections goes back to Norman \cite{No}.  For the second and third cases, we can assume that $D$ is transverse to $R\cup G$.  First use $R$ to tube off intersections of $D$ with $G$.  This proves injectivity for the third case.  (If $R$ does not have a trivial normal bundle or is not embedded, then the resulting disc may have extra intersections with $R$.)  Tubing with $G$ eliminates all the $D\cap R$ intersections and so the induced map in the second case is also injective.  \end{proof}

The light bulb lemma basically says that in the presence of a transverse sphere one can do an ambient isotopy of a surface R as in shown in Figure 2.1, without  introducing any self intersections.  

\begin{figure}[ht]
\includegraphics[scale=0.60]{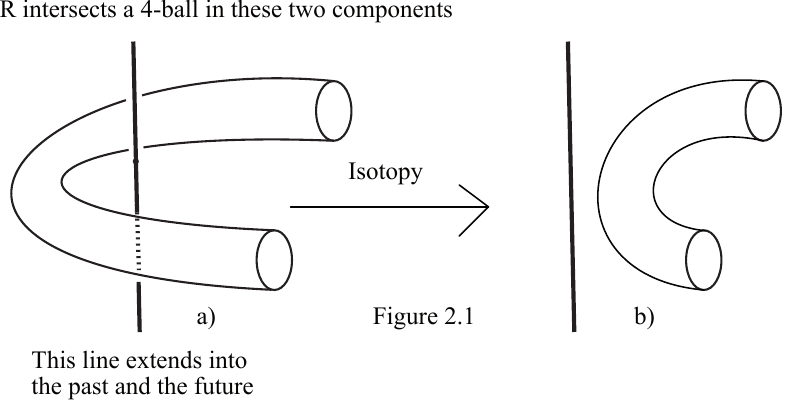}
\end{figure}

\begin{lemma}  \label{light bulb trick}(\textrm{4D-Light Bulb Lemma}) Let $R$ be an embedded surface with transverse sphere $G$ in the 4-manifold $M$ and let $z=R\cap G$.  Let $\alpha_0$ and $\alpha_1$ be two smooth compact arcs that coincide near their endpoints and bound the pinched embedded disc $E$ that is transverse to $R$ with $R\cap E=y$ and $E\cap G=\emptyset$.  See Figure 2.2 a).  Let $f_t$ be an ambient isotopy of $M$ taking $\alpha_0$ to $\alpha_1$ that corresponds to sweeping $\alpha_0$ across $E$.  Here $f_t$ is  fixed near $\partial \alpha_0$ and is supported in a small neighborhood of $E$.  Suppose that $N(\alpha_0)$ is parametrized as $B^3\times I$ and $R\cap N(E)=C\cup B$, where $C$ is the disc containing $y$ and $B\subset\inte(B^3)\times I$.  If $y$ and $z$ lie in the same  component of $R\setminus B$,  then $R$ is ambiently isotopic to $g(R)$ where $g|R\setminus B= \id$ and $g|B=f_1|B$.  The ambient isotopy fixes $G$ pointwise and the isotopy restricted to $R$ is supported in $B$.  

If $G$ has a non trivial normal bundle with even Euler class, then the conclusion holds except for the assertion that the ambient isotopy fixes $G$.  

If the Euler class is odd, then under the additional hypothesis that $B=L\times I\subset\inte(B^3)\times I$, where $L$ is an unlink in $\inte(B^3)$, the above conclusion holds with the additional modification $g(B)=f_1(B)$, where $g$ differs from $f_1$ by a Dehn twist along each of the tubes.   In general $g|N(\alpha_0)$ is the composition of the result of the standard isotopy taking $N(\alpha_0)$ to $N(\alpha_1)$, followed by the non trivial element of $\pi_1(SO(3))$ along $N(\alpha_1)$.
\end{lemma}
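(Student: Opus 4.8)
The plan is to reduce the statement to a local model and to realize the desired rearrangement of $R$ inside a tubular neighborhood $N(E)$ of the pinched disc $E$, using the transverse sphere $G$ only to repair the self-intersections that a naive isotopy would create. First I would set up coordinates: parametrize $N(\alpha_0)=B^3\times I$ so that $\alpha_0=\{0\}\times I$, and extend to a parametrization of $N(E)$ in which the sweep $f_t$ across $E$ is the ``obvious'' ambient isotopy, fixed near $\partial\alpha_0$ and supported in $N(E)$. Under this model $R\cap N(E)=C\cup B$, where $C$ is the sheet through $y=R\cap E$ and $B$ is a collection of sheets lying in $\inte(B^3)\times I$. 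Pushing $C$ across $E$ is harmless (it just drags the local sheet through $y$), so the only issue is that the component(s) $B$, being carried along by $f_1$, would be pushed through $C$ and through each other, creating double points of $f_1(R)$ with $R$ that do not exist in $f_1(R)$ itself — more precisely the naive ambient isotopy $f_t$ moves all of $R$, whereas we want an ambient isotopy that moves only $B$. The discrepancy is exactly a finite collection of ``crossing changes'' of $B$ past the stationary sheets of $R\setminus B$.

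The key step is then: each such crossing change can be undone by an ambient isotopy that introduces no new self-intersections of $R$, provided one sheet involved can be connected to $z=R\cap G$ within the surface. This is where the hypothesis that $y$ and $z$ lie in the same component of $R\setminus B$ enters. I would argue as follows. A crossing change of a sheet of $B$ past a sheet $S$ of $R\setminus B$ is realized by a finger move, i.e.\ by choosing an arc $\delta$ in $M$ from a point of $B$ to a point of $S$ and pushing $B$ along $\delta$; the obstruction to cancelling it is measured by the homotopy (indeed regular homotopy) class of the resulting Whitney-type disc, equivalently by an element of $\pi_1$ of the relevant sheet with a sign. Since the sheet through $y$ is connected to $z$ in $R\setminus B$, one can route the finger/Whitney move so that it drags the offending intersection point off $C$, across the single point $y$, out along $R\setminus B$ to a neighborhood of $z$, and then tubes it off using a parallel copy of $G$: because $G$ has trivial normal bundle and meets $R$ only at $z$, tubing with $G$ removes the intersection without creating any intersection with $G$ and without creating new self-intersections of $R$. (This is precisely the mechanism of Lemma~\ref{pi injectivity} applied fiberwise.) Doing this for every crossing change converts $f_1|R$ into the map $g$ with $g|R\setminus B=\id$, $g|B=f_1|B$, via an ambient isotopy that fixes $G$ pointwise and whose restriction to $R$ is supported in $B$.

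The main obstacle is bookkeeping the normal framings and the Euler class of $G$. When the normal bundle of $G$ is nontrivial, a parallel copy of $G$ is no longer disjoint from $G$: a push-off meets $G$ in $e$ points, where $e$ is the Euler class. Tubing with such a push-off therefore introduces $e$ intersections with $G$, so the isotopy can no longer be taken to fix $G$ — but if $e$ is even these can be paired up and the \emph{existence} of the isotopy of $R$ persists (one uses two oppositely-oriented push-offs, or equivalently a framing correction), which gives the even-Euler-class clause. When $e$ is odd there is a residual framing defect of order two: after the rearrangement the normal framing of $B$ along $\alpha_1$ differs from the standard one by the nontrivial element of $\pi_1(SO(3))=\BZ_2$, which is exactly the statement that $g|N(\alpha_0)$ is the standard repositioning of $N(\alpha_0)$ to $N(\alpha_1)$ followed by the nontrivial loop in $SO(3)$ along $N(\alpha_1)$; under the extra hypothesis that $B$ is unknotted, unlinked annuli parallel to $\alpha_0$ this defect is absorbed cleanly and one gets $g(B)=f_1(B)$. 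I would handle the trivial-bundle case in full detail and then indicate the modifications for the two nontrivial cases, since the geometry of the finger moves and $G$-tubing is identical and only the framing count changes.
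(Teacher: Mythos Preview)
Your core idea---use $G$ to reroute around the obstruction at $y$---is right and is what the paper does, but your packaging is considerably more complicated than necessary and blurs the line between isotopy and regular homotopy.

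The paper's argument is a single move. Since $y$ and $z$ lie in the same component of $R\setminus B$, tube the pinched disc $E$ itself with a parallel copy of $G$ (via a path in that component from $y$ to $z$) to obtain a disc $D$ coinciding with $E$ near $\partial E$ and satisfying $D\cap(R\cup G)=\emptyset$. Triviality of the normal bundle of $G$ gives $D$ a normal framing agreeing with that of $E$ near the boundary. Now sweep $B$ across $D$ instead of across $E$: this is an honest ambient isotopy supported near $D$, it fixes $G$ pointwise, it fixes $R\setminus B$, and it carries $B$ to $f_1(B)$. No self-intersections are ever created, so there is nothing to repair.

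Your plan---perform the naive sweep first and then fix each crossing of $B$ past $C$ by a separate $G$-tubing---would, if executed with care, unwind to the same geometry (each ``repair'' is a local rerouting of the sweep over a copy of $G$). But as written it invites real confusion: finger moves and Whitney moves are regular homotopies, not isotopies, and you have not explained how your sequence of crossing-change corrections assembles into a single ambient isotopy of $M$ rather than merely a regular homotopy of $R$. The paper sidesteps this by modifying the guiding disc once, up front. For the nontrivial-normal-bundle clauses your sketch points the right way but is less sharp than the paper's: the tubed disc $D$ now meets $G$ in $n$ points and its normal framing differs from that of $E$ by $n$ full twists along $N(\alpha_1)$; since $\pi_1(SO(3))=\BZ_2$ this untwists when $n$ is even and leaves the nontrivial element when $n$ is odd.
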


%The above paragraph modified from version 3.03
\begin{remarks}  i) After an initial isotopy of $R$ supported near $N(\alpha_0)$ we can assume that it is of the form $L\times I$ where $L$ is a link in $\inte(B^3)\times 0$.   

ii)  The hypothesis does not hold if $B$ separates $y$ from $z$ in $R$.

iii) Note that a right Dehn twist on a tube  is isotopic to a left Dehn twist on that component.  \end{remarks} 

\begin{proof}  Since $y$ lies in the same component of $z$ we can tube off $E$ with a copy of $G$ to obtain a disc $D$ that coincides with $E$ near $\partial E$ and $D\cap(R\cup G)=\emptyset$.  Since $G$ has a trivial normal bundle, there exists a framing of the normal bundle of $D$ that coincides with that of $E$ near $\partial E$.  See Figure 2.2.  Therefore, we can isotope $B$ to $f_1(B)$ by sweeping across $D$ rather than $E$. This isotopy is supported in a neighborhood of $D$ that is disjoint from a neighborhood of $G$.  

When $G$ has a nontrivial normal bundle with Euler class $n$, then $|D\cap G|=n$ and so the ambient isotopy taking $N(\alpha_0)$ to $N(\alpha_1)$ does not fix $G$. This isotopy is the composition of the standard one followed by $n$ full twists along $N(\alpha_1)$.  Since $\pi_1(SO(3))=\BZ_2$, the twisting can be isotopically undone when $n$ is even.  When $n$ is odd the twisting can be isotoped to a single full twist.  If the tubes in $B$ are unknotted and unlinked, then they can be isotoped so that $g(B)=f_1(B)$ where $g$ differs from $f_1$ by a Dehn twist along each of the tubes. \end{proof}
%The above paragraph modified from version 3.03

\begin{figure}[ht]
\includegraphics[scale=0.60]{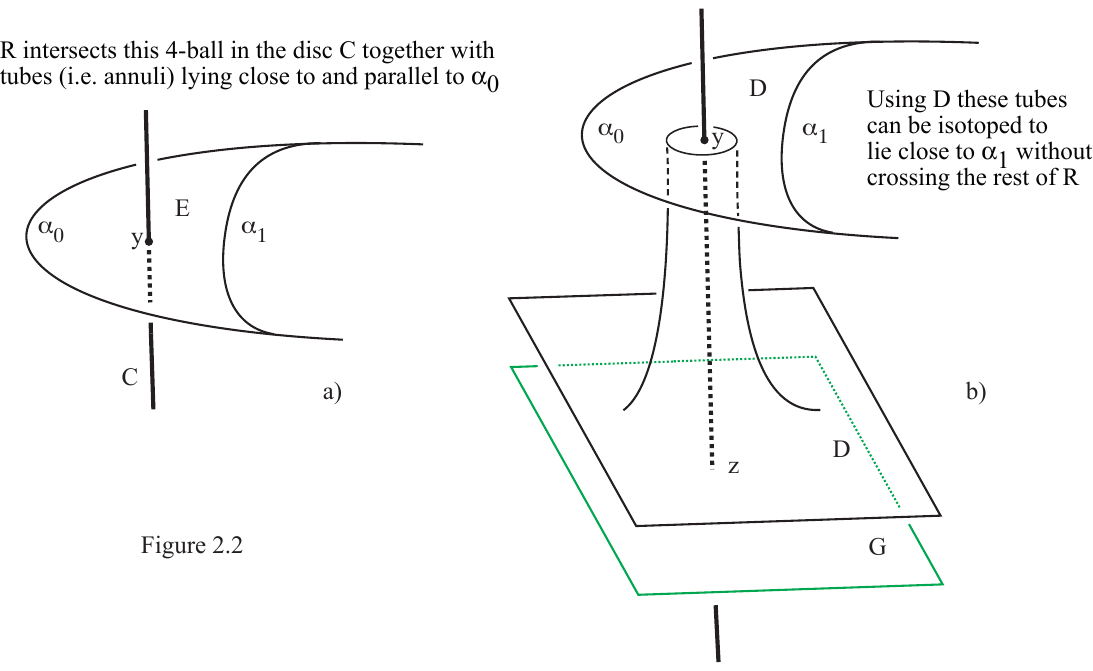}
\end{figure}

\section{The light bulb theorem for $S^2\times S^2$}

\begin{theorem} \label{4d lightbulb} If $R$ is a 2-sphere in $S^2\times S^2$, homologous to $x_0\times S^2$, transverse to $S^2\times y_0$, $R\cap (S^2\times y_0)=(x_0,y_0)$ and coincides with $x_0\times S^2$  near $(x_0,y_0)$, then $R$ is smoothly isotopic to $x_0\times S^2$ via an isotopy fixing  a neighborhood of $S^2\times y_0$ pointwise.\end{theorem}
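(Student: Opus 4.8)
## Proof Proposal

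The plan is to show that $R$ can be isotoped to a \emph{vertical sphere} with respect to a suitable decomposition of $S^2\times S^2$, and then appeal to Hatcher's theorem to conclude. Write $S^2\times S^2$ as $S^2\times S^1\times[-\infty,\infty]$ with each $z\times S^1\times\{\pm\infty\}$ collapsed to a point, so that the second $S^2$ factor is swept out by the family of $S^1\times[-\infty,\infty]$'s; the two cone points are $(x_0,y_0)$ on $S^2\times y_0$ and an antipodal point, and $S^2\times y_0$ becomes $S^2\times S^1\times 0$ union the two cone points. Since $R$ meets $S^2\times y_0$ only at $(x_0,y_0)$ and coincides with $x_0\times S^2$ there, $R$ passes through exactly one cone point, is transverse to $S^2\times S^1\times 0$, and meets it in a single circle isotopic in $S^2\times S^1\times 0$ to $x_0\times S^1\times 0$ (it is nullhomologous rel the puncture, hence—using that $R$ is a sphere homologous to $x_0\times S^2$—this intersection circle represents the generator of $H_1$).

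First I would put $R$ into a normal form with respect to the singular foliation by the $S^2\times S^1\times t$'s, in the spirit of the Kirby--Scharlemann--Siebenmann normal form referenced in the introduction: arrange that $R$ is transverse to all but finitely many of these level 3-manifolds, that near each critical level $R$ meets the foliation in a standard saddle or center tangency, and that each regular level $S^2\times S^1\times t$ meets $R$ in a collection of circles. The key leverage is that $S^2\times 0 = S^2\times\{y \text{ at a regular level}\}$ is a sphere of square zero hitting $R$ once, hence serves as a transverse sphere to $R$ (Lemma \ref{pi injectivity}), and more importantly the Light Bulb Lemma (Lemma \ref{light bulb trick}) lets us perform crossing changes and slide intersection circles of $R$ with a level around, past one another, using parallel copies of that transverse sphere, \emph{without} creating self-intersections of $R$. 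Using these moves repeatedly I would cancel all the critical points of $R$ except the one unavoidable center at the cone point: a circle of intersection with a level that bounds a disc in $R$ on the ``small'' side of a maximum or minimum can be pushed across a copy of the transverse sphere and eliminated, and inessential circles can be removed, until each regular level meets $R$ in exactly one circle, isotopic to $x_0\times S^1$ inside that $S^2\times S^1$.

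At that point $R$ is a vertical sphere: it is the union of the cone point, an annulus fibered over $[-\delta,\infty)$ by circles isotopic to $x_0\times S^1$ in each level $S^2\times S^1$, glued to a disc near the other cone point. Equivalently, $R$ determines and is determined by a loop in the space of unknotted circles in $S^2\times S^1$ based at $x_0\times S^1$ (together with framing data), and straightening $R$ to $x_0\times S^2$ amounts to contracting that loop. This is exactly where Hatcher's theorem $\Diff^+(S^3)\simeq SO(4)$ (equivalently the contractibility results it yields for spaces of unknots) is invoked: it guarantees the relevant space of vertical spheres is connected, so the loop contracts and $R$ is ambiently isotopic to $x_0\times S^2$; keeping the construction supported away from level $0$ except at the fixed intersection circle makes the isotopy fix a neighborhood of $S^2\times y_0$ pointwise.

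The main obstacle I anticipate is the reduction step, not the final appeal to Hatcher. Establishing the normal form and then \emph{actually cancelling} all the superfluous critical circles requires care: one must ensure that every intersection circle one wants to remove lies on the correct side of the relevant $B$-region (the hypothesis in Lemma \ref{light bulb trick} that $y$ and $z$ lie in the same component of $R\setminus B$ is essential and can fail), and one must handle the bookkeeping when several circles occupy the same level, possibly nested, and when crossing changes between successive levels are needed to unlink them. Getting the order of these moves right—so that eliminating one tangency does not reintroduce complications at another level—and verifying that the isotopy can be kept supported away from $S^2\times y_0$ throughout, is the technical heart of the argument.
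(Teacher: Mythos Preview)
Your two-step plan---verticalize $R$, then invoke Hatcher---is exactly the paper's strategy, and you correctly identify the cancellation of critical points as the hard part. But two things need fixing.

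First, the setup is garbled. In the paper's decomposition $y_0$ corresponds to $(z_0,0)$, a point on the \emph{equator} of the second sphere, not a pole; thus $\Gs=S^2\times y_0=S^2\times z_0\times 0$ is a single $S^2$-fiber sitting inside the level 3-manifold $S^2\times S^1\times 0$, not that 3-manifold itself, and $(x_0,y_0)$ is not a cone point. This placement matters: a neighborhood of $\Gs$ can be expanded to contain everything outside $S^2\times[z_0+\epsilon,z_0-\epsilon]\times[-10,10]$, so after an initial isotopy $R$ agrees with $\Rs$ there and all the work happens inside $S^2\times I\times[-10,10]$. The Hatcher step is then about a loop in $\Diff(S^2\times I\ \rel\ \partial)$ arising from a path of \emph{arcs}, not a loop of unknotted circles in $S^2\times S^1$; the hypothesis $[R]=[\Rs]$ is precisely what closes that path into a loop.

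Second, and more seriously, the cancellation step has no actual mechanism. The Light Bulb Lemma is a crossing-change device; it does not ``push a circle across a transverse sphere and eliminate it.'' The paper's argument (Steps~1C--1E) is specific and different from what you describe. After the normal form, to cancel a min/saddle pair $(D_1,b_1)$ one first builds auxiliary spanning discs $E_1,\dots,E_p$ for the minimum circles $c_i$ in the level $S^2\times S^1\times(-6)$ whose interiors miss $b_1$; this uses a Giffen--Goldsmith arc-clearing trick together with tubing along $S^2$-fibers. The Light Bulb Lemma is then used to push the disc $D_1$ off the projected images of $E_2,\dots,E_p$---not to move pieces of $R$ past one another. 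Once that is done, $D_1$ and $E_1$ are two spanning discs for the unknot $c_1$ in a region diffeomorphic to $\BR^3$, hence isotopic rel boundary, and the pair $(D_1,b_1)$ cancels. Your sketch contains none of this structure, and without it there is no evident way to arrange the connectivity hypothesis of Lemma~\ref{light bulb trick} at each stage or to guarantee that cancelling one pair does not obstruct the next.
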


\begin{definition}  A \emph{light bulb} in $\stwostwo$ is a smooth 2-sphere transverse to a $S^2\times y_0$ and intersects $S^2\times y_0$ in a single point.  View $S^2\times S^2$ as a quotient of $S^2\times (S^1\times [-\infty,\infty]) $  where each $x\times S^1\times -\infty$ and $x \times S^1\times \infty$ are identified with points and $y_0$ is identified with $(z_0, 0)\in S^1\times [-\infty,\infty]$.  We say that the light bulb $R$ is \emph{vertical} if it is transverse to each $S^2\times S^1\times u$, for $u\in [-\infty\times \infty]$.  Let $\Gs$ denote the sphere $S^2 \times z_0\times 0$  and $\Rs$ denote the sphere $x_0 \times S^1\times [-\infty,\infty]\subset \stwostwo$.    \end{definition}

To prove the light bulb theorem it suffices to assume that $R$ and $\Rs$ coincide in some neighborhood $U$ of $(x_0, z_0, 0)$.
\vskip 8pt
\noindent\emph{Step 1.} The light bulb $R$ is isotopic to a vertical light bulb by an isotopy fixing a neighborhood of $\Gs$ pointwise. 
\vskip 8pt

\noindent\emph{Step 1A.}  We can assume that $R$ coincides with $\Rs$ within $S^2\times (z_0-\epsilon, z_0+\epsilon)\times [-\infty, \infty]\cup (S^2\times S^1\times [-\infty, 10))\cup (S^2\times S^1\times (10,\infty]$.   

\vskip8pt
\noindent\emph{Proof.}  This follows from the fact that $R$ intersects a neighborhood of $S^2\times z_0\times 0$ as does $\Rs$ and a small regular neighborhood of $\Gs$ is naturally ambiently isotopic to $S^2\times (z_0-\epsilon, z_0+\epsilon) \times [-\infty,\infty]\cup (S^2\times S^1\times [-\infty,10))\cup S^2\times S^1\times (10,\infty])$.\qed
\vskip 8pt

From now on we will take $U$ to be the neighborhood of $(S^2, z_0, 0)$ given in the statement of Step 1A.  Note that $U$ is the complement of $S^2\times [z_0+\epsilon, z_0-\epsilon]\times [-10,10]$, where $S^1=[z_0+\epsilon, z_0-\epsilon]\cup (z_0-\epsilon, z_0+\epsilon)$.
\vskip 10pt

\noindent\emph{Step 1B.} Via an isotopy fixing $R\cap U$, $R$ can be isotoped to be  transverse to each $S^2\times S^1\times u$ except for $u=-9, -6, 6, 9$.  As $u$ increases, $p$ local minima (with respect to $u$) appear at $u=-9$, $p$ saddles appear at $u= -6$, $R\cap S^2\times S^1\times u$ is connected for $u\in (-6,6)$, $q$ saddles appear when $u=6$ and $q$ local maxima appear when $u=9$.  

\vskip 8pt

\noindent\emph{Proof.}  This is the analogy of the normal form of \cite{KSS} in our setting, stated in the smooth category.    See \cite{CKS} pages 6-8 for a modern proof of \cite{KSS}.  Swenton \cite{Sw} more or less shows uniqueness the normal form up to a certain set of moves.

What follows is a brief outline of the proof in our setting. In the usual manner $R$ can be isotoped so that it is transverse to each $\stwosone\times u$ except for $u=-9, 0, 9$ where local minima, saddles, local maxima respectively appear.  As in the above papers, up to smoothing of corners, the local minima (resp. maxima) correspond to the appearance of discs and the saddles correspond to the appearance of bands.  After further isotopy, as in Figure 1.3 \cite{CKS}, we can assume that the bands are disjoint from each other, so for $\delta$ small,  $R\cap S^2\times S^1\times \delta$ is the result of doing band sums to $R\cap S^2\times S^1\times -\delta$.    

%I.e.  $R\cap \stwosone\cap -\epsilon$ consists of one circle $C$ that is transverse to $S^2\times z_0$ and $p$ circles.  The bands attach to $C$ and the $p$ circles.  

If $p$ (resp. $q$) is the number of local minima (resp. maxima), then since $\chi(R)=2$ the total number of saddles is $p+q$.  Since $R$ is connected there exist $p$ bands such that the result of only doing these band sums to the curves corresponding to $R\cap S^2\times S^1\times -\delta$ yield a connected curve.  Push these bands to $\stwosone\times-6$ and push the remaining bands to $\stwosone\times 6$.  \qed
\vskip 8pt

In what follows we  let $C_u$ denote the \emph{core curve} i.e. the component of $R\cap \stwosone\times u$ which is transverse to $S^2\times z_0\times u$, $u\neq -6, 6$.  Define $C_{-6}=\lim_{t\to-6_-}C_t$.  We abuse terminology by calling a \emph{core curve} such  a curve $C$ without specifying $u$.   After  band sliding we can assume that all the bands at $u=-6$ have one end that attaches to the core curve.

In summary, up to smoothing corners, we can assume that $R\cap \stwosone\times [-10, -5]$ appears as follows.  For $u\in [-10,-9),\ R\cap \stwosone\times u$ 
is the standard core curve $x_0\times S^1\times u$.  At $u=-9$, discs $D_1, \cdots, D_p$ appear.  Let $c_1,\cdots, c_p$ denote their boundary 
curves. The surface $\ R\cap \stwosone\times (-9,-6)$ is the product $(C\cup c_1\cup\cdots\cup c_p)\times (-9,-6)$.  Here we again abuse notation 
by denoting a $c_i$ without specifying its $u$ level.  At $u=-6, p$ bands $b_1, \cdots, b_p$ appear where $b_i$ connects $C$ and $c_i$.  Again 
$R\cap \stwosone\times (-6,-5]$ is a product where each $u$ section is parallel to $R\cap \stwosone\times -6$ with the relative interiors of the bands 
removed. 

By a vertical isotopy push the bands $b_2, \cdots, b_p$ up to level $-5$ and the disc $D_1$ to level $-8$.   Let $\pi:\stwosone\times [-\infty,\infty]\to \stwosone$ be the projection.  To complete the proof of Step 1 we will show that after isotopy  $\pi(b_1)\cap \pi(D_1)\subset \pi(\partial D_1$).  It follows that  $b_1$ can be pushed to level $-8$ and its critical point can be cancelled with the one corresponding to $D_1$.  Step 1 then follows by induction and the usual turning upside down argument to cancel the saddles at $u=6$ with the maxima at $u=9$.

\vskip 8pt
\noindent\emph{Step 1C.} There exist pairwise disjoint discs $E_1, \cdots, E_p \subset S^2\times S^1\times -6 $ spanning $c_1, \cdots, c_p$ such that for all $i,\ \pi(\inte(E_i))\cap \pi(b_1)=\emptyset$ and $E_i\cap C\cup U=\emptyset$.  
\vskip 8pt
\noindent\emph{Proof.}  To start with, for $i= 1, \cdots, p$, let $E_i=D_i$.  A given $E_i$ projects to one intersecting  $\pi(b_1)$ in finitely many interior arcs.  View $b_1$ as a band starting at $c_1$ and sequentially hitting the various $E_i$'s before attaching to $C$.  Again we abuse notation by suppressing the fact that we should be talking about projections.   Starting at the last intersection of $b_1$ with an $E_i$, sequentially isotope the $E_i$'s to remove arcs of intersection at the cost of creating two points of intersection of an $E_i$ with $C$.  This type of argument was used in \cite{Gi} and \cite{Go}.  Next by following $C$ but avoiding the arc $b_1\cap C$ tube off these intersections with parallel copies of $S^2\times z_0$ to obtain the desired set of discs which we still call $E_1, \cdots E_p$.  See Figure 3.1\qed

\vskip 8pt
\begin{figure}[ht]
\includegraphics[scale=0.60]{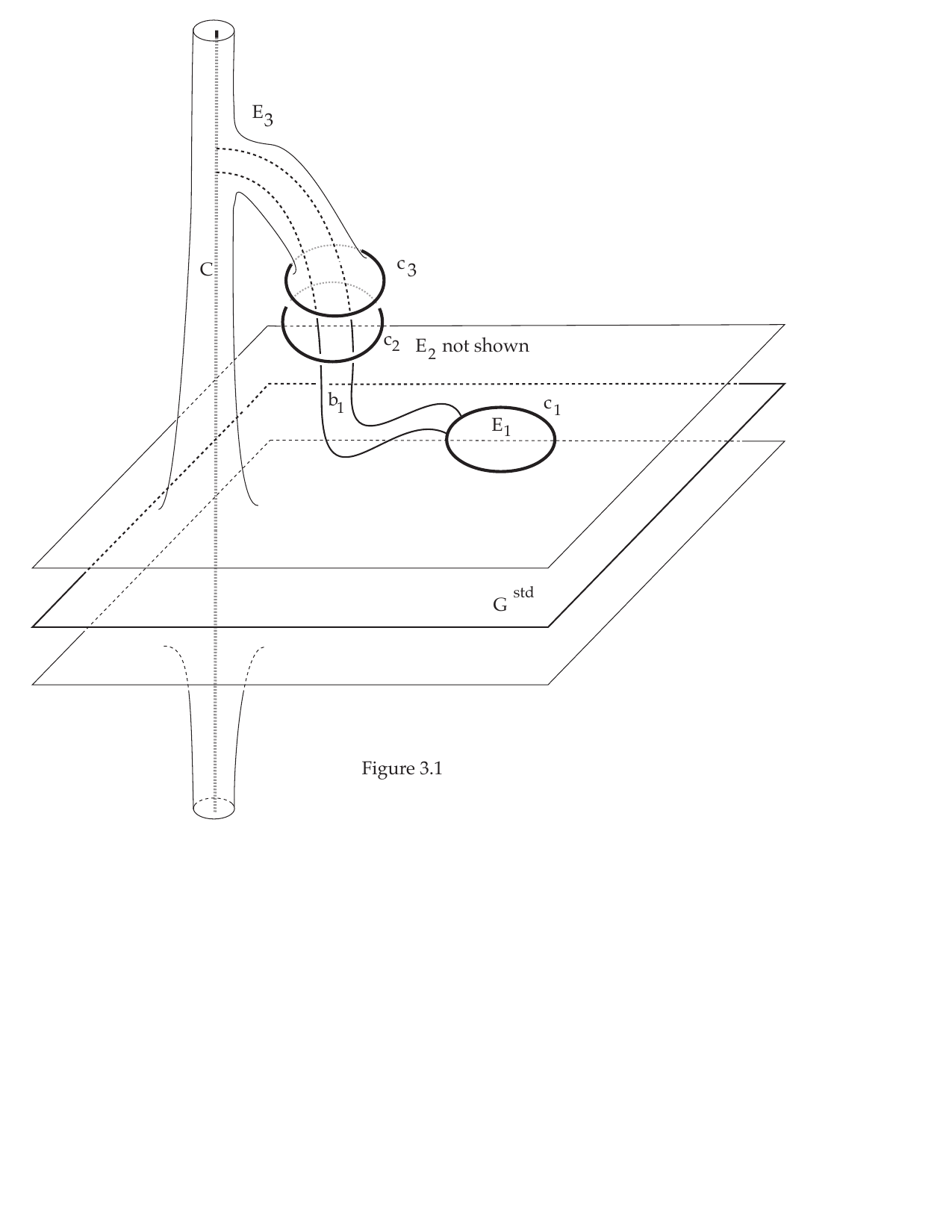}
\end{figure}

\begin{remark}\label{straight} For the purposes of visualization one can  ambiently isotope $R$ via level preserving isotopy supported in $S^2\times S^1\times [-9.5, -5.5]$ so that  the  discs $E_i $ become  small and round and $b_1$ becomes a straight band connecting $C$ and $c_1$, which is disjoint from the interior of the $E_i$'s.   Furthermore, up to rounding corners, possibly complicated discs $D_2, \cdots, D_p$ appear at level $-9$, a possibly complicated disc $D_1$ appears at level $-8$ and vertical annuli $c_1\times [-8,-6], c_2\times [-9,-6], \cdots, c_p\times [-9,-6]$ emanate from the $\partial D_i$'s.  At level $-6$, $R$ appears as in the first sentence of this remark.\end{remark}

\vskip 8pt

\noindent\emph{Step 1D.} We can assume that $\pi(D_1)\cap \pi(E_i)=\emptyset$ for $i>1$.

\vskip 8pt

\noindent\emph{Proof.}  Let $\pi_{-8}$ denote the projection of $S^2\times S^1\times -6$ to $S^2\times S^1\times -8$ fixing the first two factors.  By construction $\partial D_1=c_1$ and $D_1$ is disjoint from $C\cup U$ as well as the $c_i$'s for $i>1$.   Let $F_i$ denote $\pi_{-8}(E_i)$.  We show how to isotope $D_1$ off  $F_2$.   Let $\alpha_0\subset S^2\times S^1\times -8$ be an arc transverse to $\inte(F_2)$.   Then $N(\alpha_0)=D^2\times I\times I$ where $D^2\times I\times 1/2\subset S^2\times S^1\times -8$.  Next isotope $D_1$ so that $N(F_2)\cap D_1\subset (1/2 D^2)\times I\times 1/2$  and $D_1\cap (D^2\times I\times 1/2)$ is of the form $L\times I\times 1/2$, where $L$ is a union of circles.    Here we identify $\alpha_0$ with $t_0\times I\times 1/2$ where $t_0\notin 1/2 D^2$.  Let $\alpha_1\subset S^2\times S^1\times -8$ be an arc disjoint from $R\cup F_2\cup\cdots\cup F_p$ whose ends coincide with those of $\alpha_0$, such that $\alpha_0\cup\alpha_1$ bounds a pinched disc $E\subset S^2\times S^1\times -8$ that only intersects $R$ in a point of $\partial F_2$.  Using the fact that $R\setminus D_1$ is connected and  intersects $\Gs$ transversely once it follows from the Light Bulb Lemma \ref{light bulb trick} that $R$ is isotopic to the surface obtained by  isotoping $D_1 \cap N(\alpha_0)$ across $E$ into $N(\alpha_1)$ via an isotopy supported in $D_1\cap \inte(N(\alpha_0))$.   This isotopy moves $D_1$ off $F_2$ without introducing new intersections with other $F_i$'s.   See Figure 3.2.   Step 1D now follows by induction.   \qed
\vskip 8pt

\begin{figure}[ht]
\includegraphics[scale=0.60]{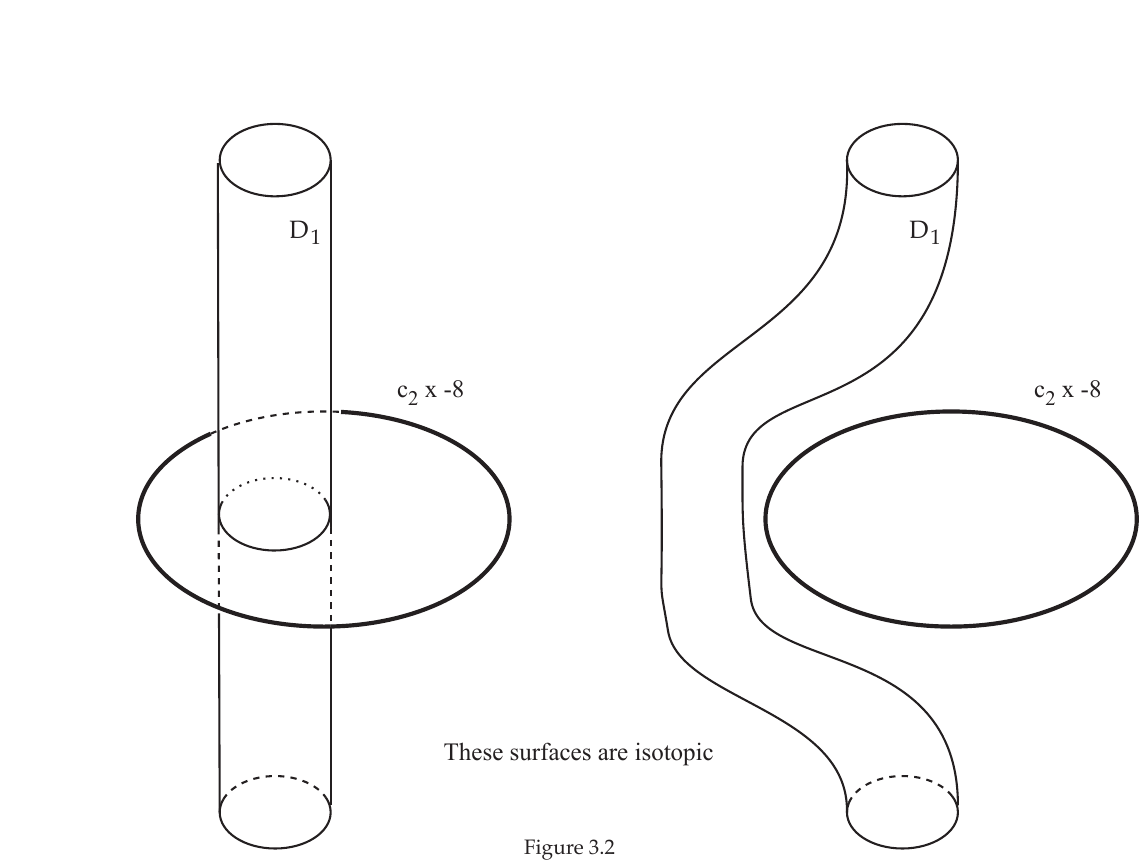}
\end{figure}

To summarize the situation at the moment:  At level $-9$ discs $D_2, \cdots, D_p$ appear, at level $-8$ disc $D_1$ appears,   vertical annuli $c_1\times [-8,-6], c_2\times [-9,-6], \cdots, c_p\times [-9,-6]$ emanate from the $\partial D_i$'s,  a band connects the core to $c_1\times -6$ disjoint from $\inte(E_1)$ and by Step 1D, for $i>1$, $\pi(E_i)\cap \pi(D_1)=\emptyset$ 

We can therefore isotope $D_2, \cdots, D_p$ and hence $\cup_{i>1}c_i\times [-9,-6]$ so that $c_2, \cdots, c_p  $ are \emph{far away} from $D_1, E_1$ and $b_1$.  This means that $\pi(c_2), \cdots, \pi(c_p)  $ lie in a 3-ball $B\subset \stwosone$ that intersects $C$ in a connected unknotted arc and $B$ is disjoint from $\pi(D_1), \pi(E_1)$ and $\pi(b_1)$.  
\vskip 8pt

\noindent\emph{Step 1E.}  Cancel the critical points corresponding to $D_1$ and $b_1$ without introducing new ones, thereby completing Step 1.
\vskip 8pt
\noindent\emph{Proof.}  Note that $(S^2\times S^1\times -6) \setminus (C\cup B\cup S^2\times z_0\times -6)$ is diffeomorpic to $\BR^3$.  Therefore, the discs $\pi(D_1)$ and $\pi(E_1)$ are isotopic rel $c_1$ via an isotopy disjoint from $\pi(C\cup B)\cup (S^2\times z_0)$ since spanning discs for the unknot in $\BR^3$ are unique up to isotopy.  After the corresponding isotopy of $D_1$, supported in $S^2\times S^1\times -8$  it follows from Remark \ref{straight} that $\pi(b_1)\cap \pi(D_1)\subset \pi(\partial D_1)$.  Therefore, $b_1$ can be pushed down to level $-8$, hence the critical points corresponding to $b_1$ and $D_1$ can be cancelled. \qed.

\vskip 10pt
\begin{remark} \label{content}  The content of Steps 1B-1E is that excess critical points of a boundary standard $I\times I\subset (S^2 \times I)\times [-10,10]$  can be cancelled rel $ \partial$.   Cancelling critical points of submanifolds of product manifolds, to the extent possible is a long sought after goal.  E.g. under suitable hypothesis in higher dimensions results have been obtained in \cite{Sh} and \cite{Pe}.  In dimension-4, Scharlemann \cite{Sc} showed that a smooth 2-sphere in $\BR^4$ with at most four critical points is smoothly isotopically standard.  \end{remark}

\begin{remark} \label{thompson} Abby Thompson pointed out that the above arguments work for higher genus surfaces to eliminate critical points of index 0 and 2. So if genus$(R)=g$, then $R$ can be isotoped so that  $2g$  bands appear at $u=-6$ and there are no other critical levels.  

From the point of view of $S^2\times -6$ these bands can be twisted and linked.  See \S 9.  \end{remark} 

\noindent\textbf{Step 2.} A vertical light bulb $R$ homologous to $\Rs$ that agrees with $\Rs$ near $\Gs$ is isotopic to $\Rs$ via an isotopy fixing a neighborhood of $\Gs$ pointwise.

\begin{remark}  It is easy to construct a vertical lightbulb homologous to $[\Rs] + n[S^2\times z_0\times 0]$ by first starting with $\Rs$, removing a neighborhood of $(x_0, z_1,0)$ and replacing it by one that sweeps across $S^2\times z_1$ $n$ times, while $u\in (-\epsilon ,\epsilon)$, where $z_1\neq z_0$.\end{remark}

\noindent\emph{Proof}  The proof of Step 1 shows that we can assume that $R$ coincides with  $\Rs$  away from $S^2 \times S^1\times [-10,10]$ and coincides with $\Rs$ near $S^2 \times z_0\times [-\infty,\infty]$.  Thus $R$ is standard outside a submanifold $W$ of the form $S^2 \times [0,1]\times [-10,10]$ and within $W$ corresponds to a smooth path of embedded smooth paths  $\rho_t:D^1\to S^2\times I$ for $t\in[-10,10]$, where $\rho_{-10}(D^1)=\rho_{10}(D^1) = (x_0, I)$ and $\rho_t$ is fixed near the endpoints of $D^1$. By identifying $D^1$ with $(x_0,  I)$ we can assume that $\rho_{-10}=\rho_{10}=\id$.  Note that $\Rs$ corresponds to the identity path.  

By the covering isotopy theorem, $\rho_t$, extends to a path $\phi_t\in \Diff(S^2\times I,\rel (\partial S^2 \times I))$ with $\phi_{-10}=\id$.    We first show that such a path can be chosen so that $\phi_{10}=\id$.  By uniqueness of regular neighborhoods we can first assume that restricted to some $D^2$ neighborhood  of $x_0$, in polar coordinates, $\phi_{10}(r,\theta,s)=(r, \theta + h(s) 2 \pi,s)$ for some $h:[0,1]\to \BR$ with $h(0)=0$.  Since $[R]=[\Rs]\in H_2(S^2\times S^2)$ it follows that $h(1)=0$ and hence after further isotopy, that $\phi_{10}|D^2\times I=\id$.  Since $S^2\times I \setminus (\inte(D^2)\times I)= B^3$, we can assume that $\phi_{10}=\id$, by \cite{Ce2} or \cite{Ha}.  

Thus $\phi_t$ is a closed loop in $\Diff(S^2\times I,\rel \partial (S^2 \times I))$ which by Hatcher \cite{Ha} is homotopically trivial since $\pi_1(\Omega(O(3))=\pi_2(O(3))=\pi_2(\BR(P^3))=0$.  Here we are using formulation (8) (see the appendix of \cite{Ha}) of Hatcher's  theorem which asserts that $\Diff(D^1 \times S^2 \rel \partial)$ is homotopy equivalent to $\Omega(O(3))$.  Restricting this homotopy to $\rho_t$ gives the desired isotopy of $\rho_t$ to id.  \qed

%\begin{remark}  Since $\pi_3(O(3))\neq 0$, the space of vertical light bulbs is not simply connected.  Perhaps a loop in the space of light bulbs can be contracted using non vertical ones.\end{remark}
%The remark as stated is not obvious because the space of vertical LB's maps into \Omega(O(3)), but not surjective, since it only includes elements corresponding to those homologous to R.

\begin{theorem}  \label{uniqueness} Let $D$ denote $x_0\times D^2\subset S^2\times D^2$.  A properly embedded disc $D_0\subset S^2\times D^2$ that coincides with $D$ near $\partial D$ is isotopic to $D$ rel $\partial D$ if and only if it is homologous to $D$ in  $H_2(S^2\times D^2, \partial D)$.\end{theorem}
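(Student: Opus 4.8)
The ``only if'' direction is immediate, since an ambient isotopy rel $\partial D$ carries the relative homology class of $D_0$ to that of $D$. For the converse the plan is to cap $D_0$ off to an embedded $2$-sphere in $S^2\times S^2$, invoke Theorem~\ref{4d lightbulb}, and then push the resulting ambient isotopy back down to $S^2\times D^2$.

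Concretely, I would view the second sphere factor of $S^2\times S^2$ as a union of two closed hemispheres $S^2=D^2\cup_{S^1}D^2_c$, so that the given $S^2\times D^2$ sits inside $S^2\times S^2$ and $D=x_0\times D^2$. Because $D_0$ coincides with $D$ near $\partial D=x_0\times S^1$, the union $R_0:=D_0\cup(x_0\times D^2_c)$ is a smoothly embedded $2$-sphere that agrees with $x_0\times S^2$ on all of $S^2\times D^2_c$ and on a neighborhood of the seam $S^2\times S^1$. I then check the hypotheses of Theorem~\ref{4d lightbulb}: picking $y_0$ in the interior of $D^2_c$, the sphere $S^2\times y_0$ is disjoint from $S^2\times D^2\supset D_0$, so $R_0$ meets $S^2\times y_0$ transversely in the single point $(x_0,y_0)$ and coincides with $x_0\times S^2$ near it; and since $H_2(x_0\times S^1)=0$, the map $H_2(S^2\times D^2)\to H_2(S^2\times D^2,\partial D)$ is injective, so the hypothesis $[D_0]=[D]$ forces the absolute $2$-cycle $D_0-D$ to be null-homologous in $S^2\times D^2$; hence $R_0-(x_0\times S^2)=D_0-D$ is null-homologous in $S^2\times S^2$, i.e. $R_0$ is homologous to $x_0\times S^2$. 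Theorem~\ref{4d lightbulb} then supplies an ambient isotopy $\Psi_t$ of $S^2\times S^2$ with $\Psi_0=\id$, $\Psi_1(R_0)=x_0\times S^2$, fixing a neighborhood of $S^2\times y_0$ pointwise.

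The step I expect to be the real obstacle is the descent from $S^2\times S^2$ back to $S^2\times D^2$, because Theorem~\ref{4d lightbulb} only fixes a \emph{small} neighborhood of the transverse sphere, not the whole capping region one would naively like to leave untouched. The device is to excise an open tube: shrinking the fixed neighborhood I may assume $\Psi_t$ is the identity on $S^2\times V$ for a small open disc $V$ with $y_0\in V$ and $\overline V\subset\inte D^2_c$, and I set $\widetilde D:=S^2\setminus\inte V$, a closed disc containing $D^2$. Then $\Psi_t$ is the identity off $S^2\times\widetilde D$, so it restricts to an ambient isotopy of $S^2\times\widetilde D$ rel $\partial(S^2\times\widetilde D)$ carrying $R_0\cap(S^2\times\widetilde D)=D_0\cup(x_0\times(D^2_c\setminus\inte V))$ to $x_0\times\widetilde D$. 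Since $S^2\times\widetilde D\cong S^2\times D^2$, I choose $\delta$ with $D_0=D$ on $S^2\times\{\,|z|\ge 1-\delta\,\}$ and a diffeomorphism $g\colon\widetilde D\to D^2$ equal to the identity on $\{\,|z|\le 1-\delta\,\}$; then the product diffeomorphism $\id_{S^2}\times g$ carries $R_0\cap(S^2\times\widetilde D)$ exactly onto $D_0$, carries $x_0\times\widetilde D$ onto $D$, and carries $\partial(S^2\times\widetilde D)$ onto $\partial(S^2\times D^2)$. Conjugating the restricted isotopy by $\id_{S^2}\times g$ produces an ambient isotopy of $S^2\times D^2$ fixing $\partial(S^2\times D^2)$ pointwise and carrying $D_0$ to $D$; in particular it fixes $\partial D$, which finishes the proof. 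Beyond this descent, the only things needing care are the bookkeeping checks that $R_0$ is genuinely smooth and genuinely homologous to $x_0\times S^2$, and that after deleting the tube the capped disc is identified with $D_0$ itself rather than with some other disc in the annular collar region.
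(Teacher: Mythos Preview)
Your argument is correct and follows the same strategy as the paper: double $S^2\times D^2$ to $S^2\times S^2$, cap $D_0$ off to a sphere $R_0$, verify the hypotheses of Theorem~\ref{4d lightbulb}, and restrict the resulting isotopy back. The homology check and the smoothness of $R_0$ are handled properly.

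The one place you and the paper diverge is the descent. You treat the fixed neighborhood of the transverse sphere as possibly tiny and therefore insert the reparametrization $g:\widetilde D\to D^2$ to absorb the annular collar; this works, but it is more than is needed. In the paper the doubling is set up so that the entire second copy $d(S^2\times D^2)$ is a tubular neighborhood $N(G)$ of $G=d(S^2\times 0)$, and since $R_0$ already coincides with $x_0\times S^2$ on all of $N(G)$, the proof of Theorem~\ref{4d lightbulb} (see Step~1A, where the neighborhood $U$ is whatever region the sphere is already standard in) produces an isotopy fixing all of $N(G)$ pointwise. One then simply restricts to the original $S^2\times D^2$. So your ``real obstacle'' is not an obstacle once you read Theorem~\ref{4d lightbulb} as fixing any prescribed neighborhood on which $R$ is already standard, not merely some small one; your reparametrization trick is a correct but avoidable workaround.
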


\begin{proof}  Homologous is certainly a necessary condition.  Let $M=S^2\times D^2\cup d(S^2\times D^2)=S^2\times S^2$ be obtained by doubling $S^2\times D^2$ with $d(S^2\times D^2)$ denoting the other $S^2\times D^2$.   This $d(S^2\times D^2)$ can be viewed as a regular neighborhood $N(G)$ of $G=d(S^2\times 0)$.    Let  $R$ denote the sphere  $D_0\cup d(x_0\times D^2)$ and $\Rs$ denote $D\cup d(x_0\times D^2)$.  $G$ is a transverse sphere to the homologous spheres $R$ and $\Rs$.  By Theorem \ref{light bulb} there is an  isotopy of $M$ fixing $N(G)$ pointwise taking $R$ to $\Rs
$.  Restricting to $S^2\times D^2$ yields the desired isotopy.  \end{proof}

\begin{conjecture}  The space of light bulbs is not simply connected.\end{conjecture}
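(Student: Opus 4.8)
The plan is to pin down the relevant homotopy type of the space of light bulbs and then exhibit a non-contractible loop in it. Write $\mathcal{L}$ for the space of smoothly embedded $2$-spheres in $\stwostwo$ that are homologous to $x_0\times S^2$ and meet $S^2\times y_0$ transversely in a single point, topologized as a space of submanifolds. Theorem \ref{4d lightbulb} says $\pi_0(\mathcal{L})=1$ with basepoint $\Rs=x_0\times S^2$, so the conjecture asserts $\pi_1(\mathcal{L},\Rs)\neq 1$. I would attack this by reducing $\pi_1(\mathcal{L})$, through a chain of fibrations, to a homotopy group of an arc-embedding space (equivalently of $\Diff(B^4,\rel\,\partial)$), and then producing a nonzero class either by a homotopy-theoretic input or by a one-parameter double-point invariant.

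First I would promote Step 1 of the proof of Theorem \ref{4d lightbulb} to families: carried out over a parameter ball, the normal-form construction of \S3 together with the cancellation of index $0$ and $2$ critical points should, after the elementary fibrations that let the intersection point move in $S^2\times y_0\cong S^2$ and that kill the germ of $R$ near it, present the $\Rs$-component of $\mathcal{L}$ as weakly equivalent to a space of vertical light bulbs, hence to the space of paths rel endpoints in $\Emb(D^1,S^2\times I,\rel\,\partial)$, the space of smooth arcs in $S^2\times I$ with the fixed standard endpoints. Consequently $\pi_1(\mathcal{L})$ is governed by $\pi_2\bigl(\Emb(D^1,S^2\times I,\rel\,\partial)\bigr)$ based at $x_0\times I$. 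Feeding this into the parametrized covering isotopy fibration
\[
\Diff(B^4,\rel\,\partial)\ \longrightarrow\ \Diff(S^2\times I,\rel\,\partial)\ \longrightarrow\ \Emb(D^1,S^2\times I,\rel\,\partial)_0
\]
(the fiber being $\Diff$ rel boundary of $S^2\times I\setminus\nu(x_0\times I)\cong B^4$), and using Hatcher's theorem in the form invoked in Step 2, namely $\Diff(S^2\times I,\rel\,\partial)\simeq\Omega O(3)$, so that $\pi_1=\pi_2 SO(3)=0$ and $\pi_2=\pi_3 SO(3)=\BZ$, the long exact sequence collapses to
\[
0\ \to\ \mathrm{coker}\bigl(\pi_2\Diff(B^4,\rel\,\partial)\to\BZ\bigr)\ \to\ \pi_1(\mathcal{L})\ \to\ \pi_1\Diff(B^4,\rel\,\partial)\ \to\ 0 .
\]
Thus it would suffice to prove that $\pi_1\Diff(B^4,\rel\,\partial)\neq 0$, or that the comparison map $\pi_2\Diff(B^4,\rel\,\partial)\to\pi_3 SO(3)=\BZ$ is not onto. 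A second, and perhaps more tractable, route bypasses this reduction and detects an element of $\pi_1(\mathcal{L})$ by hand: using the Light Bulb Lemma \ref{light bulb trick} one can build explicit loops of light bulbs out of $1$-parameter families of cancelling finger/Whitney pairs along a chosen Whitney disc, just as isotopies were built in \S3, and then define a one-parameter refinement of the Freedman--Quinn / Dax double-point invariant (the second-order analogue of the invariants used in \cite{ST} and \cite{FQ}) on $\pi_1(\mathcal{L})$, living in a quotient of $\BZ[\pi_1(\stwostwo)]=\BZ$ — essentially a Hopf-type count of the double-point locus of the associated family of arcs — and exhibit a loop on which it is nonzero; one might also manufacture loops by applying the evaluation fibration $\Diff(\stwostwo,S^2\times y_0)\to\mathcal{L}$ to nontrivial classes in $\pi_1$ of the diffeomorphism group.

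The family version of Step 1 is bookkeeping-heavy but, using parametrized general position and the parametrized covering isotopy theorem, should present no essential difficulty, and the construction of candidate loops in the direct route is routine given \S3. The real obstacle, on the first route, is that it replaces the conjecture by a question about $\Diff(B^4,\rel\,\partial)$ — nontriviality of $\pi_1$, or non-surjectivity of the comparison map to $\pi_3 SO(3)$ — which lies in the same inaccessible territory as the smooth $4$-dimensional Schoenflies and Smale problems, for which the only plausible tools are configuration-space / Kontsevich-type invariants in the spirit of Watanabe. On the second route the crux is proving that the one-parameter double-point invariant is well defined on $\pi_1(\mathcal{L})$ — in particular that it is unchanged under the parametrized isotopies used to verticalize and independent of the chosen Whitney disc — together with the verification that the finger/Whitney pair on the candidate loop cannot be undone in a $1$-parameter family. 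That computation is where I expect the real work to lie.
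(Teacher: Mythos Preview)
The statement you are addressing is a \emph{conjecture}: the paper offers no proof, and none is expected of you. What you have written is not a proof but a research outline, and you say as much yourself --- your first route terminates in questions about $\pi_1\Diff(B^4,\rel\,\partial)$ and the map $\pi_2\Diff(B^4,\rel\,\partial)\to\pi_3 SO(3)$, which you correctly flag as being on a par with the smooth Schoenflies and Smale problems; your second route requires inventing, and proving well-defined, a one-parameter double-point invariant that does not yet exist in the literature. Neither route, as written, establishes $\pi_1(\mathcal{L})\neq 1$.

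Beyond that, one step in your plan deserves more caution than you give it. You assert that Step~1 of the proof of Theorem~\ref{4d lightbulb} promotes to families with ``no essential difficulty,'' yielding a weak equivalence between $\mathcal{L}$ and a path space in $\Emb(D^1,S^2\times I,\rel\,\partial)$. But the cancellation of index-$0$ and index-$2$ critical points in \S3 involves discrete choices --- which band to push first, which disc to slide over which --- and the Light Bulb Lemma is invoked at each stage with a chosen arc $\sigma$ from $y$ to $z$. Over a parameter family these choices collide: births and deaths of critical points, and changes in the combinatorics of the bands and discs, occur generically in codimension one, so the normal-form procedure is not obviously continuous. Promoting such arguments to families is exactly where higher obstructions (pseudoisotopy-type phenomena) enter, and is typically the hard part, not bookkeeping. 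If this step fails, your identification of $\pi_1(\mathcal{L})$ with $\pi_2$ of the arc-embedding space collapses, and the exact sequence you wrote down no longer computes what you want. So even as a reduction, the argument has a genuine gap at the parametrized-verticalization step.
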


\section{Regular homotopy of embedded spheres in 4-manifolds}

The main result of this section is essentially Theorem D of \cite{Sm1}.  First recall \cite{Sp} that a smooth \emph{immersion} $f:M\to N$ is a smooth map of maximal rank at each $x\in M$.  A smooth \emph{regular homotopy} $F:M\times I\to N$ is a smooth map such that each $F_t$ is an immersion.

\begin{theorem} (Smale (1957))  Two smooth embedded spheres in an orientable 4-manifold are regularly homotopic if and only if they are homotopic.\end{theorem}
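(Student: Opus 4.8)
The plan is to deduce this from the immersion-classification results of \cite{Sm1} (essentially its Theorem D). The forward implication is trivial, since a regular homotopy is in particular a homotopy, so let $f_0,f_1\colon S^2\hookrightarrow M$ be homotopic embeddings into the orientable $4$-manifold $M$. By \cite{Sm1} the derivative sets up a bijection between regular homotopy classes of immersions $S^2\to M$ and path components of the space $\operatorname{Mono}(TS^2,TM)$ of fiberwise-injective bundle maps $TS^2\to TM$. Thus it suffices to show that $df_0$ and $df_1$ can be joined by a path of such bundle maps.

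First I would fix a homotopy $H\colon S^2\times I\to M$ from $f_0$ to $f_1$, form the rank-$4$ bundle $E:=H^*TM$ over $S^2\times I$ (orientable, since $S^2\times I$ is simply connected and $M$ is orientable), and write $\pi\colon S^2\times I\to S^2$ for the projection. A path in $\operatorname{Mono}(TS^2,TM)$ from $df_0$ to $df_1$ lying over $H$ is the same data as a fiberwise-injective bundle map $\pi^*TS^2\to E$ restricting to $df_0\sqcup df_1$ over $S^2\times\partial I$. Producing one is an extension problem for a section of the associated bundle, whose fiber is the space of injective linear maps $\BR^2\to\BR^4$ and so deformation retracts onto the Stiefel manifold $V_{4,2}$. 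From the fibration $SO(2)\to SO(4)\to V_{4,2}$ one computes $\pi_0(V_{4,2})=\pi_1(V_{4,2})=0$ and $\pi_2(V_{4,2})\cong\BZ$, so every obstruction to extending the section from $S^2\times\partial I$ across $S^2\times I$ vanishes automatically except one, which lies in $H^3(S^2\times I,\,S^2\times\partial I;\,\pi_2 V_{4,2})\cong\BZ$; since its indeterminacy group $H^2(S^2\times I,\,S^2\times\partial I;\BZ)$ is zero, this obstruction $\omega$ is a well-defined integer.

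It remains to check that $\omega=0$, and this is the one place where embeddedness is used. The class $\omega$ is the difference obstruction between the two end sections, and in this range of dimensions it is computed from the Euler numbers of the cokernel bundles $\nu_i:=\operatorname{coker}(df_i)$: after identifying $H^2(S^2\times 0)$ with $H^2(S^2\times 1)$ by means of $H$, one obtains $\omega=\pm\bigl(e(\nu_1)-e(\nu_0)\bigr)$, a difference that is automatically even (hence really lands in $\pi_2 V_{4,2}\cong 2\BZ\subset\BZ=\pi_1 SO(2)$) because $w_2(\nu_0)=w_2(E)-w_2(TS^2)=w_2(\nu_1)$ on $S^2$. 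Since $f_0$ and $f_1$ are embeddings, each $\nu_i$ is the genuine normal bundle of the embedded sphere $f_i(S^2)\subset M$, whence $e(\nu_i)$ equals the self-intersection number $f_{i*}[S^2]\cdot f_{i*}[S^2]$; as $f_0\simeq f_1$ forces $f_{0*}[S^2]=f_{1*}[S^2]$ in $H_2(M)$, the two Euler numbers agree and $\omega=0$. Hence the section extends, $df_0$ and $df_1$ lie in the same path component of $\operatorname{Mono}(TS^2,TM)$, and $f_0$ and $f_1$ are regularly homotopic.

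The step I expect to be the main obstacle is the identification of $\omega$ with the difference of normal Euler numbers: fixing the normalization and, above all, verifying that $\omega$ carries no hidden contribution from the homotopy $H$ itself. The cleanest way to dispose of the latter is to use the deformation retraction of $S^2\times I$ onto $S^2\times 0$ to write $E\cong\pi^*\bigl(E|_{S^2\times 0}\bigr)$, which turns the extension problem into comparing two sections of a single $V_{4,2}$-bundle over $S^2$; the difference class of two such sections depends only on the isomorphism types of their cokernel bundles, hence only on Euler numbers, so nothing transported along $H$ can intervene. Everything else---the appeal to \cite{Sm1}, the low-dimensional homotopy of $V_{4,2}$, and the identity $e(\nu_f)=[f(S^2)]\cdot[f(S^2)]$ for an embedded sphere in an oriented $4$-manifold---is standard.
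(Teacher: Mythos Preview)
Your argument is correct. The paper itself gives no proof of this statement at all: it simply records the theorem as ``essentially Theorem~D of \cite{Sm1}'' and moves on. Your proposal supplies exactly the standard deduction one would expect---invoke the Smale--Hirsch identification of regular homotopy classes with path components of $\operatorname{Mono}(TS^2,TM)$, run obstruction theory over $S^2\times I$ with fiber $V_{4,2}$, and identify the single surviving obstruction in $H^3(S^2\times I,S^2\times\partial I;\BZ)$ with the difference of normal Euler numbers. The key observation, which you isolate correctly, is that embeddedness forces $e(\nu_i)$ to equal the homological self-intersection $[f_i(S^2)]\cdot[f_i(S^2)]$, and homotopy then forces these to agree. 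Your closing paragraph about trivializing $E$ along the product direction is the right way to see that no extra contribution sneaks in from $H$. In short, you have written out what the paper leaves as a citation; there is nothing to compare it against.
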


\begin{definition}  Let $S$ be a smooth immersed self transverse surface in the smooth 4-manifold $Z$.  A \emph{finger move} is the operation of regularly homotoping a disc in $S$ along an embedded arc to create a pair of new transverse self intersections.  A \emph{Whitney move} is a regular homotopy supported in a neighborhood of a framed Whitney disc to eliminate a pair of oppositely signed self intersections.  By an \emph{isotopy} of $S$ we mean a regular homotopy through self transverse surfaces.  In particular, no new self intersections are either created or cancelled.\end{definition}

See \cite{FQ} for more details of these operations.

\begin{proposition} (e.g. \cite{FQ} P. 19-20, \cite{Qu} P. 353)  Let $A$ and $B$ be smooth embedded surfaces in the smooth 4-manifold $Z$.  If $A$ is regularly homotopic to $B$, then up to isotopy, the regular homotopy can be expressed as the composition of finitely many \emph{finger moves}, \emph{Whitney moves} and \emph{isotopies}.  \qed\end{proposition}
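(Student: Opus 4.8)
The plan is to put the given regular homotopy into general position and read the decomposition off its generic self-intersection data. Write the homotopy as a smooth map $F\colon M\times I\to Z$ with $F_0,F_1$ parametrizing $A$ and $B$; after a harmless reparametrization in $t$ we may assume $F_t$ is independent of $t$ for $t$ near $0$ and near $1$. Since $A$ and $B$ are embedded, $F_t$ is an embedding for $t$ in neighborhoods $[0,\epsilon_0)$ and $(1-\epsilon_0,1]$ of the endpoints, so $F$ restricted there is already an isotopy in the sense of the preceding definition, and we shall modify $F$ only on $M\times[\epsilon_0,1-\epsilon_0]$. Being an immersion is a $C^1$-open condition and $M\times I$ is compact, so a sufficiently small $C^\infty$-perturbation of $F$, supported away from the ends, is again a regular homotopy with the same endpoints; this is where the qualifier ``up to isotopy'' is consumed (one also allows a small ambient isotopy near the ends), and it reduces the problem to exhibiting the decomposition for a generic such perturbation, which we continue to call $F$.

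First I would arrange genericity of the self-intersections. Consider the (unordered) double-point locus
\[
\Sigma=\{(x,y,t):\ t\in(0,1),\ x\neq y,\ F_t(x)=F_t(y)\}\big/\bigl((x,y,t)\sim(y,x,t)\bigr).
\]
Because each $F_t$ is an immersion it is locally injective uniformly in $t$, so $\Sigma$ stays a definite distance from the diagonal and is compact. By Mather's multijet transversality theorem, applied within the space of regular homotopies --- an open subset of $C^\infty(M\times I,Z)$, so that all perturbations used stay regular homotopies --- a generic $F$ has: (i) $\Sigma$ is a smooth $1$-manifold on which the projection $\pi\colon\Sigma\to I$, $\pi(x,y,t)=t$, is a Morse function with distinct critical values, necessarily of index $0$ or $1$; (ii) no triple points occur, since for distinct $x,y,z$ the condition $F_t(x)=F_t(y)=F_t(z)$ cuts a subset of the $7$-dimensional space (triples off the diagonals, times the parameter) of codimension $2\cdot 4=8$, hence is empty; (iii) for $t$ off the critical values, $\pi^{-1}(t)$ is a finite set of transverse double points of $F_t$, so $F_t$ is a self-transverse immersion. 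Thus the critical values $t_1<\cdots<t_k$ of $\pi$ are exactly the times at which the self-intersection pattern changes, and at each $t_i$ a single Morse critical point of $\pi$ occurs.

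Next I would cut $I$ along the $t_i$. On every closed subinterval of $I$ disjoint from $\{t_1,\dots,t_k\}$ each $F_t$ is a self-transverse immersion, so $F$ restricted there is an \emph{isotopy}. Near a critical value $t_i$, the Morse lemma for $\pi$ together with the fact that the two sheets of $F_{t_i}(M)$ meet with a generic ($A_1$) tangency puts $F$, on $M\times[t_i-\epsilon,t_i+\epsilon]$, into the standard local model: a ball $B_i\subset Z$ outside of which $F$ is an isotopy, and inside of which --- up to reversing $t$ --- one sheet is $\BR^2\times 0\subset\BR^4$ and the other is $(x_1,y_2)\mapsto(x_1,0,\,t-x_1^2,\,y_2)$, in which a pair of transverse double points is born as $t$ increases through $t_i$. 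Hence an index-$0$ event is a \emph{finger move} and an index-$1$ event is its reverse, a \emph{Whitney move}; in the oriented category this explicit local picture exhibits the born (respectively destroyed) double points as oppositely signed and displays the relevant Whitney disc (the visible half-disc $0\le y_1\le t-x_1^2$ in the model) as embedded, framed, and with interior disjoint from the surface, so each event is a finger or Whitney move in precisely the stated sense. Concatenating the isotopies on the subintervals with the finger and Whitney moves at $t_1,\dots,t_k$ (and the small isotopies from the perturbation step) expresses the given regular homotopy in the required form.

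The hard part is the general-position input of the second and third paragraphs: setting up the correct multijet strata and verifying that in a generic one-parameter family of immersions of a surface in a $4$-manifold the only codimension-$\le 1$ phenomena are births and deaths of transverse double-point pairs --- in particular ruling out triple points and the coincidence of two events at a single time, and confirming that the Morse data along $\Sigma$ together with the $A_1$-tangency condition really does yield the standard finger/Whitney local model. This is classical Thom--Mather--Whitney singularity theory (see also \cite{FQ} p.~19), and once it is in hand the remaining points --- the opposite signs of the born pair and the framing of the death disc --- are immediate from the explicit model.
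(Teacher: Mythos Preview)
Your argument is correct and is precisely the generic--regular--homotopy argument the paper invokes: the paper does not actually supply a proof but simply declares the proposition well known ``by considering a generic regular homotopy, e.g.\ see \cite{FQ} p.~19'' and places a \qed. Your write-up spells out exactly this standard general-position analysis (multijet transversality, Morse function on the double-point locus, local models at the critical times), so there is nothing to compare beyond the fact that you have filled in what the paper leaves implicit.
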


\begin{remark} It is well known by  the usual reordering argument that   if $A$ is regularly homotopic to $B$, then the regular homotopy can be chosen to consist of finger moves followed by Whitney moves in addition to intermediate isotopies.  See \S4 \cite{Mi} for reordering in the context of Morse functions.\end{remark}

%\begin{remarks}  We record the following well-known facts.

%i)  Upon reversing the points of view of $B$ and $A$ the Whitney moves become finger moves and vice versa. 

%ii)  By the usual ordering by index argument all the finger moves can be done first. \end{remarks}

%\begin{corollary} \label{fingerwhitney} If $A$ and $B$ are regularly homotopic smooth embedded surfaces in the smooth 4-manifold $Z$, then there exists an immersed self transverse surface $C$ and systems $ \mF$ of finger discs and $\mW$ of Whitney discs so that Whitney moves applied to the finger (resp. Whitney) discs transforms $C$ to $A$ (resp. $B$).\end{corollary}

%\section{Diversion:  2-Spheres in 4-space}
%This section is disjoint from the main thrust of the paper and can be skipped, though the results might be of independent interest.

%By Smale \cite{Sm1} any embedded 2-sphere in 4-space is regularly homotopic to the unknotted one, hence as noted in previous section any 2-sphere in 4-space is obtained from the standard sphere (or vice versa) by finger moves followed by Whitney moves.  This gives a very interesting and unexplored way of thinking about knots in the 4-sphere.

%\begin{example}  \textbf{Ribbon 2-knots}  These are knots obtained by starting with a disjoint set of unknotted 2-spheres and tubing them together by tubes which link the various spheres.  \end{example}

%\section{Multi-Light Bulbs in $\#_k \stwostwo\# Y$ where $\pi_1(Y)=1$}\label{pi1trivial}

%XXXX

\section{Shadowing regular homotopies by tubed surfaces}\label{shadow}

%Section 5.1 Introduction
In this section we show that if $f_0:A_0\to M$ is an embedding of a smooth surface with embedded transverse sphere $G$ into a smooth 4-manifold  $M$ and  $f_t:A_0\to M$ is a generic \emph{regular homotopy} supported away from $G$, then $f_t$ can be \emph{shadowed} by a \emph{tubed surface}.  Roughly speaking there is a smooth \emph{isotopy} $g_t:A_0\to M$ with $g_0(A_0)=f_0(A_0)$ such  that when $f_t$ is self transverse,  $g_t(A_0)$ is approximately $f_t(A_0) $ with tubes connecting to copies of $G$.  As $f_t(A_0)$ undergoes a finger or Whitney move, $g_t(A_0)$ changes by isotopy that adds tubes that connect to copies of $G$ or modifies existing tubes.  In particular, if $f_1(A_0)=A_1$ is an embedding, then $A_0$ is isotopic to $A_1$ with tubes connecting to copies of $G$. Sections \S6 - \S8 are about how to eliminate or normalize these tubes by isotopy.  In this paper there are different types of tubes.  Tubes may follow arcs in a surface as in the proof of Lemma \ref{light bulb trick}, but they may follows paths in $M$ away from $A_1$.

This section is motivated by the following lemma.  

\begin{lemma} \label{finger shadow} Let $R$ be a connected embedded smooth surface in the smooth 4-manifold $M$.  If $R$ has an embedded transverse sphere $G$ and $R_1$ is obtained from $R$ by a finger move, then $R$ is isotopic to a surface $R_2$ consisting of $R_1$ tubed to two parallel copies of $G$.  \end{lemma}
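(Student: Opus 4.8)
\emph{Plan of proof.} The idea is to track what a single finger move does and then undo it by an ambient isotopy that costs us some tubes connecting to copies of $G$. Recall that a finger move pushes a small disc $D\subset R$ along an embedded arc $\beta$ in $M$ (with both endpoints of a neighborhood of $\beta$ on $R$, away from $D$) so that it punches through the rest of $R$, creating a pair of oppositely signed transverse double points. First I would set up coordinates: let $W=N(\beta)\cong B^3\times I$ be a regular neighborhood of the arc along which the finger is pushed, chosen so that $R\cap W$ consists of the finger disc (a thin slab, roughly $B^2\times I$ pushed across $W$) together with a ``wall'' $R_{\mathrm{wall}}$ that the finger pierces twice. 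After the finger move, inside $W$ the surface $R_1$ looks like the standard finger-move picture, with the two double points in $\inte(W)$.

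The key step is to observe that the finger disc, together with a small collar of the wall it has punched through, bounds an obvious embedded $3$-ball (or rather, the finger-move modification of $R$ near $W$ differs from $R$ near $W$ by an isotopy of $M$ supported in $W$ \emph{if we are allowed to drag the wall}). More precisely, I would write $R_1 = (R\setminus D)\cup D'$ where $D'$ is the pushed-in finger, and note that there is a pinched embedded disc $E\subset M$, transverse to $R$, realizing the homotopy from the original position of (part of) $D$ to its new position $D'$, with $E$ meeting $R$ in exactly one point (a point of the wall $R_{\mathrm{wall}}$) and $E\cap G=\emptyset$. This is exactly the input of the $4$D-Light Bulb Lemma \ref{light bulb trick}: we have two arcs (cross-sections of the before/after finger) cobounding a pinched disc $E$ meeting $R$ once. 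Since $R$ is connected, $y$ (the point $R\cap E$) and $z=R\cap G$ lie in the same component of $R$ minus the relevant piece $B$; here $B$ is the annulus/slab that is the finger disc itself, and $R\setminus B$ is still connected because $R$ is connected and $B$ is a small innermost piece. Applying Lemma \ref{light bulb trick}, $R$ is ambiently isotopic to the surface obtained by sweeping $B$ across the tubed-off disc $D = E\#_{\text{tube}}G$ instead of across $E$; sweeping across $E$ would just push the finger, while sweeping across $D$ pushes the finger \emph{and} carries along two parallel copies of $G$ joined by tubes to $R$ where the disc crosses $R_{\mathrm{wall}}$.

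Carrying this out: the disc $D$ used for the light-bulb move is $E$ tubed to one copy of $G$ to kill the intersection point $E\cap R$, but the \emph{trace} of the isotopy sweeping $B$ across $D$ is a surface that agrees with $R_1$ away from $W$ and, inside $W$, consists of $R_1$ together with a tube running along $\beta$ and out to a parallel copy of $G$ at each of the two points where the finger pierced the wall --- giving the two parallel copies of $G$ in the statement. (Each piercing contributes one tube-to-a-copy-of-$G$; the two copies are parallel because they are both pushoffs of $G$ along disjoint normal framings.) Thus $R$ is isotopic to $R_2 := R_1$ tubed to two parallel copies of $G$, as claimed.

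The main obstacle I anticipate is bookkeeping the framings and making sure the isotopy genuinely fixes $G$ and produces a clean ``$R_1$ tubed to copies of $G$'' in the sense that will be needed in \S6--\S8 --- i.e. that the tubes follow embedded arcs and the copies of $G$ are honestly parallel and unknotted/unlinked from the rest. This is exactly where the hypotheses of Lemma \ref{light bulb trick} must be checked carefully: that $B$ (the finger slab) does not separate $y$ from $z$ in $R$, and that $G$ has trivial normal bundle so no extra twisting is introduced. Connectedness of $R$ handles the separation issue, and $G$ being a transverse sphere gives the trivial normal bundle, so both hypotheses hold; the remaining work is the routine but somewhat delicate local picture near $W$, which I would present with a figure analogous to Figure 2.2.
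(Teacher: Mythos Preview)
Your invocation of Lemma~\ref{light bulb trick} does not do what you claim. The output of that lemma is an ambient isotopy from $R$ to a surface $g(R)$ in which the piece $B$ has merely been relocated to $f_1(B)$; the tubed disc $D=E\#G$ appears only as the \emph{guide} for the isotopy, not as part of the resulting surface. So ``sweeping $B$ across $D$'' cannot ``carry along two parallel copies of $G$ joined by tubes''---the sweep is an isotopy, and $g(R)$ has exactly the topology of $R$, with no tubes attached. Worse, the input does not match either: in Lemma~\ref{light bulb trick} one has $R\cap E=\{y\}$ and $B\subset N(\alpha_0)\cong B^3\times I$ of the form $L\times I$ (annuli parallel to $\alpha_0$), whereas a finger move pushes a \emph{disc} along an arc $\kappa$ whose track is $3$-dimensional, not a pinched $2$-disc. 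Your ``cross-sections of the before/after finger'' lie in $R$, which already violates $R\cap E=\{y\}$; and if instead you take $B$ to be the finger disc and push it fully through the wall, then $f_1(B)$ meets $C$ in two points---so $g(R)$ would be $R_1$, immersed, contradicting the lemma's conclusion.

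The paper's proof is different and does not invoke Lemma~\ref{light bulb trick}. It first \emph{builds} $R_2$ explicitly: along a path $\sigma\subset R$ from $x$ (the target of the finger) to $z=R\cap G$, form a disc $T$ consisting of a tube following $\sigma$ capped by a pushoff $G'$ of $G$; then $R_2$ is $R_1$ with two small discs $D',D''$ (near the new double points) replaced by two parallel copies of $T$. The isotopy $R_2\simeq R$ is a direct ``box'' move: the slab between the two parallel copies of $T$ is an $I\times I\times[0,\epsilon]$, and near the obvious Whitney disc the surface $R_2$ occupies three faces $F=I\times I\times 0\cup I\times I\times\epsilon\cup 0\times I\times[0,\epsilon]$ of its boundary; isotoping $F$ across the box to the complementary three faces $H$ visibly returns $R$. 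Your underlying instinct---use $G$ to trade the regular homotopy for an isotopy at the cost of tubes---is exactly right, but it must be implemented by this explicit construction rather than by an appeal to Lemma~\ref{light bulb trick}.
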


\begin{proof}  Let $z=R\cap G$, $x, y \in R\setminus G$ and $\kappa$ a path from $y$ to $x$ with $\inte(\kappa)\cap (R\cup G)=\emptyset$.   Let  $\sigma\subset R\setminus y$ be an embedded path from $x$ to $z$ and $R_t$ a regular homotopy starting at $R$ corresponding to a finger move along $ \kappa$ that is supported very close to $\kappa$.  Then $R_1$ has two points of self intersection and a Whitney move along the obvious Whitney disc $E$ undoes the finger move, up to isotopy supported in the neighborhood of the finger.  

Let $D$ be a small  disc transverse to $R$ with $D\cap R=x$.  Let $T$ be the disc disjoint from $R$ which is the union of a tube that starts at $\partial D$ and follows $\sigma$ and then attaches to a parallel copy $G'$ of $G\setminus\inte(N(z))$ disjoint from $G$.  The tube should lie very close to $\sigma $ and $G'$ should be very close to $G$.  Let $R_2$ be the embedded surface obtained from $R_1$ by removing two discs $D', D''$ and attaching parallel copies of $T$  as in Figure 5.1.  The key observation is that $R_2$ is isotopic to $R$ via an isotopy supported near the union of $T$ and the 4-ball $B$ which is the support of the finger move.  This isotopy is essentially the following one.  If $F=I\times I \times 0\cup I\times I \times \epsilon\cup 0\times I\times [0,\epsilon]$ and $H$ the closed complement of $F$ in 
$\partial (I\times I\times [0,\epsilon])$, then up to rounding corners, $F$ is isotopic to $ H$ via an isotopy supported in $I\times I\times [0,\epsilon]$.  In our setting the product between the two copies of $T$ corresponds to $I\times I\times [0,\epsilon]$ and, after a small isotopy, $N(E)\cap R_2$  corresponds to $F$.  Now isotope $F$ to the corresponding $H$.  The resulting surface is readily seen to be isotopic to $R$ via an isotopy supported in $B$.\end{proof}

\begin{figure}[ht]
\includegraphics[scale=0.60]{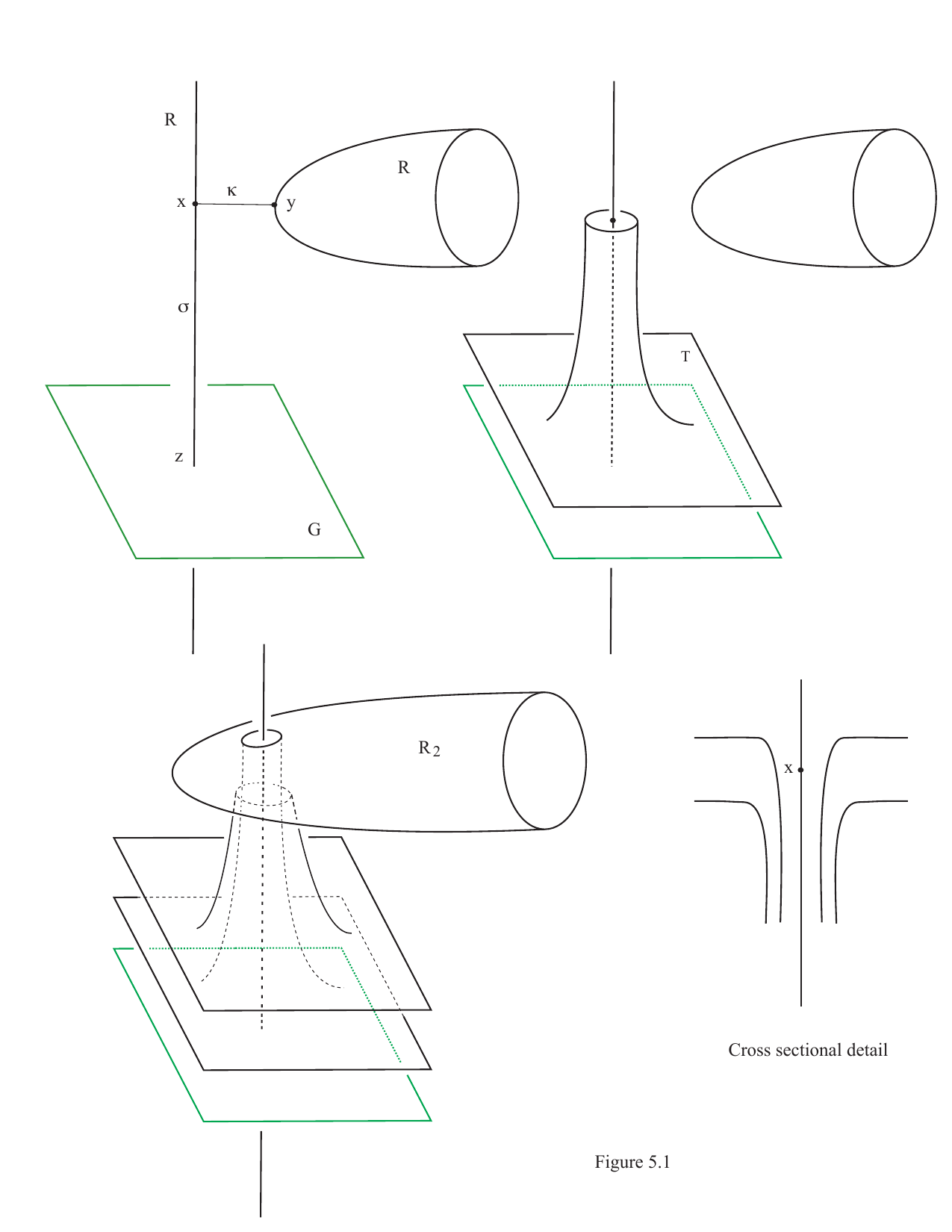}
\end{figure}

\begin{remarks}  Reversing the above isotopy gives an isotopy  $R'_t$  from $R$ to $R_2$.  Both $R_t$ and $R'_t$ start at $R$, the former ends at an immersed surface and the latter at an embedded one $R'_1$  obtained from the immersed one by removing some discs near the double points and tubing off with copies of $G$.    In what follows we will have  a regular homotopy $R_t$ that starts at an embedded surface $R$.  We will construct an isotopy  $R'_t$ starting at $R$ such that, except at times near finger and Whitney moves, the immersed surface will be closely approximated by an embedded surface $R'_t$ not counting a multitude of discs, which are essentially tubes that connect to parallel copies of $G$.  For reasons of organization, we define a \emph{tubed surface} as the data needed to define such a surface with tubes, rather than the surface itself.  Unlike the model case of Lemma \ref{finger shadow}, the tubes may follow intersecting paths in the surface or embedded paths away from the surface.  Note that if a tube $T_1$ (resp. $T_2$) follows path $\kappa_1\subset R'_t$ (resp. $\kappa_2\subset R'_t$) and $\kappa_1\cap \kappa_2\neq\emptyset$, then $T_1\cap T_2 =\emptyset$, provided that one of the tubes is  closer to $R'_t$ than the other.  

If $R_1$ is embedded, then the resulting tubed surface $R'_1$, (or more precisely what we call its \emph{realization}) will look like $R_1$ together with a jumble of tubes connecting to parallel copies of $G$.  Most of this paper is about how to clean up the mess, i.e. to show that under appropriate hypotheses this surface is actually isotopic to $R_1$ or some normal form.  The rest of the chapter is organized as follows.  We first give a definition of \emph{tubed surface}, second describe how to construct an embedded surface, called the \emph{realization}, from tubed surface data, third describe moves on the data that give isotopic surfaces, fourth use all this to show how to shadow a regular homotopy and fifth describe tubed surfaces that are in \emph{normal form}.  \end{remarks}

\begin{remarks} \label{two vs four}Given an embedded  path $\phi\subset M$ with $\phi\cap R=\partial \phi$ and  whose ends are orthogonal to the possibly disconnected surface $R$, then we can tube $R\setminus \inte(N(\partial \phi))$ by attaching an annulus $T$ that follows $\phi$.  Up to isotopy supported in $N(\phi)$ there are two ways to do this if $R$ is oriented and the new surface maintains the orientation induced from $R$.  That is because $\pi_1(SO(3))=\BZ_2$ and we can insist by construction that for $t\in I, T\cap B^3\times t$ is an equator where $N(\phi)=B^3\times I$.  While the resulting two surfaces are equivalent as unparametrized surfaces, we keep the distinction since we may want to tube up other things such as links.  In general there are four ways up to isotopy.  Thus we have the next definition which  enables us to keep track of how to attach tubes to pairs of circles and more generally to attach pairs of tubes to pairs of Hopf bands.  \end{remarks}

\begin{definition} \label{framed arc} A \emph{framed embedded path} is a smooth embedded path $\tau(t), t\in [0,1]$ in the 4-manifold $M$ with a framing $\mF(t)=(v_1(t), v_2(t), v_3(t))$  of its normal bundle.  Let $(C(0), x(0))$ consist of a smooth embedded circle $C(0)$, with base point $x(0)$, lying in the normal disc to $\tau$ through $\tau(0)$ that is spanned by the vectors $(v_1(0), v_2(0))$  with $x(0)$  lying in direction $v_1(0)$.  Define $(C(t), x(t)$) a smoothly varying family having similar properties for each  $t\in [0,1]$.   Call the annulus $(C(t), x(t))$, $t\in [0,1]$ the \emph{cylinder connecting} $C(0)$ and $C(1)$.  It should be thought of as lying very close to  $\tau$.  \end{definition}

%\begin{remark}  Without the framing information, the cylinder might connect $C_0$ to an oppositely oriented $C_1$.  Without keeping track of the base point, parametrizations of two given cylinders might \emph{differ} by an element of SO(3).  In what follows, $\tau$ may determine a pair of cylinders connecting a pair of Hopf bands, in which case twisting matters. \end{remark}  

As a warmup to the following long definition, the reader is encouraged to look at Figures 5.8 and 5.9 which show how different types of tubes can arise in the course of an isotopy and thus the need for an elaborate definition. See Figures 5.2 - 5.5 which show some of the data in the definition of a \emph{tubed surface} and exhibit realizations of this data.

\begin{definition}\label{tube surface} A \emph{tubed surface} $\mA$ in the 4-manifold $M$ consists of

i) a generic self transverse immersion $f:A_0\to M$, where $A_0$ is a closed surface based at $z_0$ with $A_1$ denoting  $f(A_0)$.  The preimages $(x_1, y_1), \cdots, (x_n, y_n)$ of the double points are pairwise ordered.  $A_0$ is called the \emph{underlying} surface and $A_1$ the \emph{associated} surface to $\mA$.

ii)  An embedded transverse 2-sphere $G$ to $A_1$, with $A_1\cap G=z=f(z_0)$.  

iii)  For each $i=1, \cdots, n$, an immersed path $\sigma_i\subset A_0$ from $x_i$ to $z_0$. See Figure 5.2 for views in $A_0$ and $A_1$.

iv)  immersed paths $\alpha_1, \cdots, \alpha_r$ in $A_0$ with both endpoints at $z_0$ and for each $i=1, \cdots, r$,  pairs of points $(p_i, q_i)$ with $p_i\in \alpha_i$ and $q_i\in A_0$ and a framed embedded path, $\tau_i\subset M$ from $f(p_i)$ to $f(q_i)$ with $\inte(\tau_i)\cap (G\cup A_1)=\emptyset$.  See Figure 5.4.

v) pairs of immersed paths $(\beta_1, \gamma_1), \cdots, (\beta_s, \gamma_s)$ in $A_0$ where $\beta_i$ goes from $z_0$ to $b_i $ and $\gamma_i$ goes from $g_i $ to $z_0$ and framed embedded paths $\lambda_i\subset M$ from $f(b_i)$ to $f(g_i)$ with $\inte(\lambda_i)\cap (G\cup A_1)=\emptyset$.  See Figure 5.5.

Curves of the form $\sigma_i, \alpha_j, \beta_k, \gamma_l$ are called \emph{tube guide} curves and the $\tau_p$ and $\lambda_q$ curves are called \emph{framed tube guide} curves.  The union of all of these curves is  the \emph{tube guide locus}.  The $\sigma_i, \alpha_j, \beta_k, \gamma_l$ curves are required to be self transverse and transverse to each other with interiors disjoint from the $z_0, x_i, q_j, b_k, g_l$ points and disjoint from the $p_j$ points except where required in iv).  At points of intersection and self intersection of these curves, except $z_0$, one curve is determined to be \emph{above} or \emph{below} the other curve.  The various points $z_0, x_i, y_j, p_k, q_l, b_m, g_n$ are all distinct.

The curves $\tau_i$ and $\lambda_j$ are pairwise disjoint, disjoint from $G$ and intersect $A_1$ only at their endpoints.  Additional  conditions on the framings of the $\tau_i$ and $\lambda_j$ curves will be given in Construction \ref{tube construction}.  They dictate the placement of the $C(0)$ and $C(1)$ curves.  This ends the definition of a tubed surface.\end{definition}

\begin{remark}  The data i) - iii) are what's needed to create a tubed surface arising from a finger move as in Lemma \ref{finger shadow}.  Data iv) and v) are needed to describe tubed surfaces arising from Whitney moves.  Crossings of tube guide curves may occur in preparation for Whitney moves and in the process of transforming pairs of double tubes to pairs of single tubes in \S 6. \end{remark}
\vskip 8pt
We now show how a tubed surface gives rise to an embedded surface. 

\begin{construction}  \label{tube construction} Associated to the tubed surface $\mA$ construct an embedded surface $A$, called the \emph{realization} of $\mA$  as follows.  For each $i$, remove from $A_1$ the image of a small $D^2$ neighborhood of $y_i$.  Attach to $f(\partial D^2)$ a disc $D(\sigma_i)$ consisting of a tube $T(\sigma_i)$ that follows $f(\sigma_i)$ and connects to a slightly pushed off copy of $G\setminus \inte(N(z))$.  See Figure 5.2. These copies of $G$ are sufficiently close to $G$ so that the closed product region between each of them and $G$ is disjoint from all the framed tube guide curves.  If $u\in \sigma_i\cap \sigma_j$, $u\neq z_0$, and $\sigma_i$ lies above $\sigma_j$ at $u$, then near $f(u)$, construct $T(\sigma_j)$ to lie closer to $A_1$ than does $T(\sigma_i)$.  With abuse of notation, this allows for the case $i=j$.  See Figure 5.3.  Let $\hat A$ be the embedded surface thus far constructed.  
\begin{figure}[ht]
\includegraphics[scale=0.60]{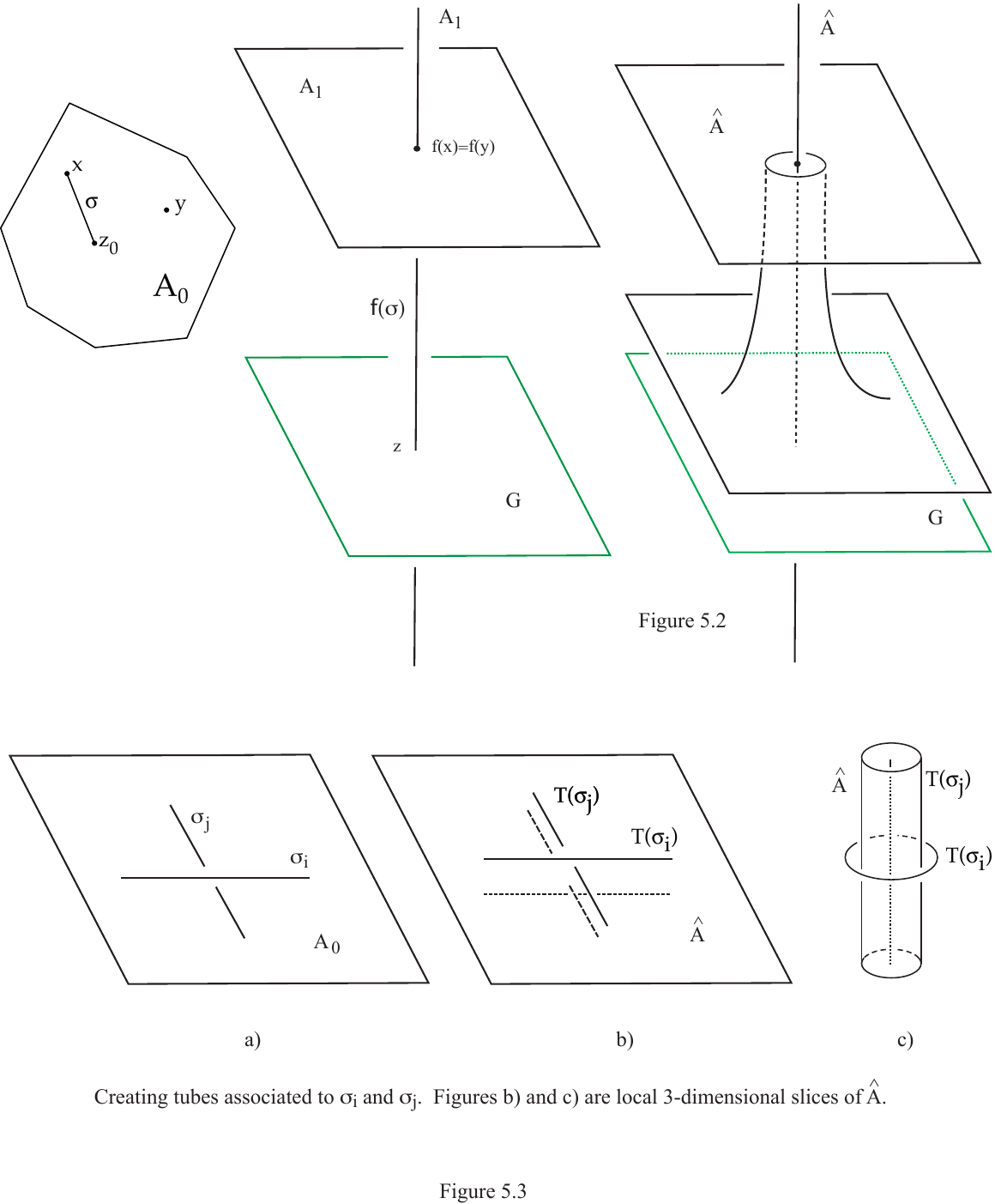}
\end{figure}

In a similar manner, associated to the path $\alpha_i$ is a 2-sphere $P(\alpha_i)$ with $P(\alpha_i)\cap A_1=\emptyset$,  consisting of two parallel copies  of $G\setminus \inte(N(z))$ connected by a tube $T(\alpha_i)$ that follows the path $f(\alpha_i)$. Again these copies of $G$ are sufficiently close to $G$ that the closed product region between each of them and $G$ is disjoint from all the framed tube guide curves.   Next attach a tube $T(\tau_i)$ following the framed embedded path $\tau_i$ from $C(0)=P(\alpha_i)\cap\partial N(\tau_i)$ to $C(1)=\hat A\cap \partial N(f(q_i))$.  Note that the previous condition implies that $T(\tau_i)$ attaches to the tube part of $P(\alpha_i)$.  Here we assume that $\tau_i$ approaches $f(p_i)$ normally to $\hat A$ and is parametrized by $[-1/4,1]$ and framed so that restricting to $[0,1]$,\ $C(0)$ (resp. $C(1)$) is in the plane spanned by $v_1(0)$ and $v_2(0)$ (resp. $v_1(1)$ and $v_2(1)$) as in Definition \ref{framed arc}.  This assumption is the condition on the framing on $\tau_i$ that is required but not explicitly stated at the end of Definition \ref{tube surface}. The tube $T(\tau_i)$ is called a \emph{single tube}.  See Figure 5.4.  Let $\hat A'$ the embedded surface constructed at this stage.
\begin{figure}[ht]
\includegraphics[scale=0.60]{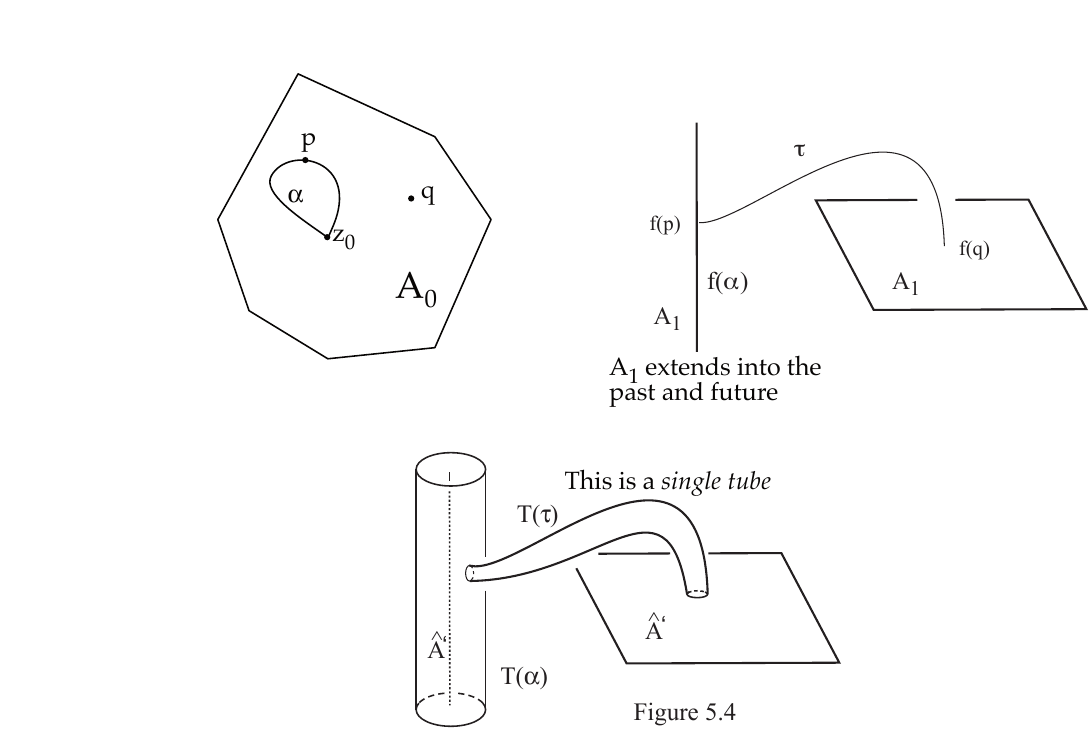}
\end{figure}

Next for each $i$, construct discs $D(\beta_i)$ and $D(\gamma_i)$ consisting of slightly pushed off copies of $G\setminus \inte(N(z))$ tubed very close to and respectively along $f(\beta_i)$ and $f(\gamma_i)$ with boundary  lying in discs normal to $\hat A'$ at $f(b_i) $ and $f(g_i)$.   Roughly speaking the rest of the construction of $A$ from $\hat A'$ proceeds as follows.  Appropriately sized 4-balls $N(f(b_i))$ and $N(f(g_i))$ have the property that $\partial N(f(b_i)))\cap (\hat A'\cup D(\beta_i))$ and $\partial N(f(g_i))\cap (\hat A'\cup D(\gamma_i))$ are Hopf links.  Connect these links by  tubes that parallel $\lambda_i$ such that $\partial N(f(g_i))\cap \hat A'$ (resp. $\partial N(f(b_i))\cap \hat A'$) connects to $\partial N(f(b_i))\cap D(\beta_i)$  (resp. $\partial N(f(g_i))\cap D(\gamma_i)$).  See Figure 5.5.  
\begin{figure}[ht]
\includegraphics[scale=0.60]{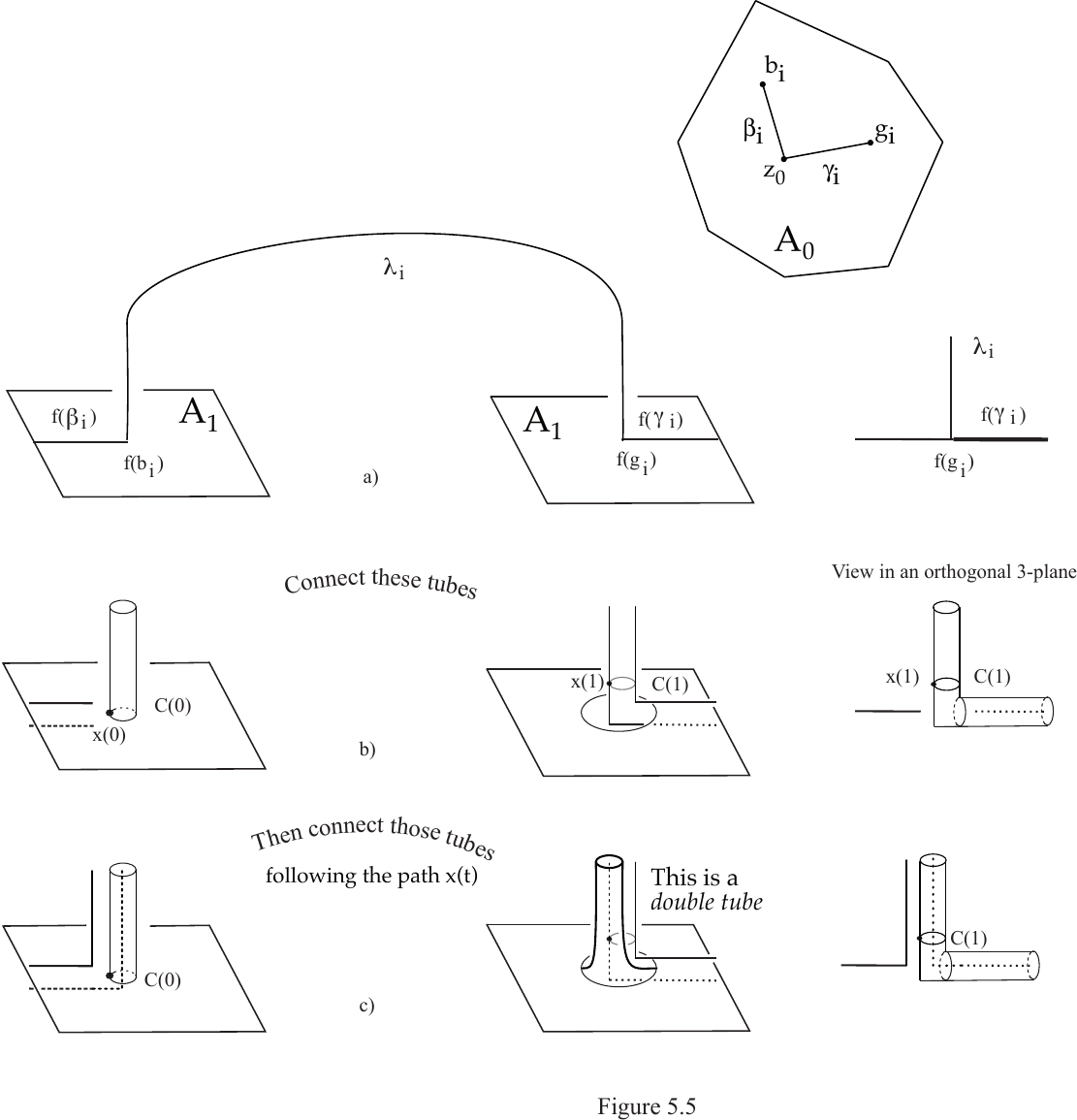}
\end{figure}

More precisely, delete $\inte(N(f(b_i)))$ from $D(\beta_i)$ and continue to call $D(\beta_i)$ the disc that remains.  Next remove $\inte(1/2N(f(b_i)))$ from $\hat A'$ and let $C(0)=\partial((1/2N(f(b_i)))\cap \hat A')$.  Also remove $\inte(N(f(g_i)))$ from $\hat A' $ and $\inte(1/2N(f(g_i)))$ from $D(\gamma_i)$ and call $D(\gamma_i)$ what remains.  Now bend $D(\gamma_i)$ near $\partial D(\gamma_i)$ in the direction of $\lambda_i$ and then let  $C(1)=\partial D(\gamma_i)$.  See Figure 5.5 b).  Suppressing the epsilonics, $1/2N(f(g_i))$ and $1/2N(f(b_i))$ are \emph{half radius} 4-balls about $f(b_i)$ and $f(g_i)$ and with respect to that scale, the tube of $D(\beta_i)$ is very close to $f(\beta_i)$ and the tube of $D(\gamma_i)$ is very close to $f(\gamma_i)$ together with a short segment of $\lambda_i$.

We assume that $\lambda_i$ approaches $\hat A'$ in geodesic arcs near $f(g_i)$ and $f(b_i)$, and in the two 3-planes spanned by these arcs and $\hat A'$, it approaches $\hat A'$ orthogonally. 
We assume that $\lambda_i$ is parametrized by $[0,1]$ and framed so that $C(0)$ is in the plane spanned by $v_1(0)$ and $v_2(0)$ as in Definition \ref{framed arc}.  Also $x(0)=C(0)\cap \beta_i$ and $v_1(0)$ points towards $x(0)$.  Again $C(1)=\partial D(\gamma_i)$ with $x(1)$ the point indicated in Figure 5.5 b) and assume that  $C(1)$ lies in the plane spanned by $v_1(1)$ and $v_2(1)$ with $x(1)$ lying in the arc spanned by $v_1(1)$.  

As in Definition \ref{framed arc} use $\lambda_i$ to build a tube connecting $C(0)$ and $C(1)$.  Using a tube that follows the path $x(t), t\in [0,1]$ connect $\partial D(\beta_i)$ to $\partial N(g_i)\cap \hat A'$  as in Figure 5.5 c).  The union of these two tubes, a Hopf link $\times I$, is called a \emph{double tube}.  This completes the construction of the realization $A$ of $\mA$. \end{construction}

\begin{remark} \label{tubes unknotted} The single and double tubes do not \emph{link} with other parts of the realization.  In particular,  except for the spots where they attach to and/or near the associated surface $A_1$, the single tubes and double tubes stay a uniformly bounded distance away from $A_1$ and the transverse sphere $G$.  Further, the tubes following the $\sigma, \alpha, \beta, \gamma$ curves stay  within this distance to the associated surface and the parallel copies of $G$ also stay  within this uniform distance to $G$.\end{remark}

We now describe operations on a tubed surface $\mA$ that correspond to isotopies of the realizations.

\begin{definition} \label{tube sliding} We enumerate  \emph{tube sliding moves} on a tubed surface $\mA$ corresponding to redefining the location and crossing information of tube guide curves in the underlying surface $A_0$.

i)  Type 2), 3) Reidemeister moves on tube guide curves.  See Figure 5.6 a).

ii) Reordering tube guide curves near $z_0$.  See Figure 5.6 b).  

%We require that there are no tube guides or double points within the bigon defined by the tube guides $\kappa_j$ and $\kappa_k$.

iii)  Sliding a tube guide curve across a double point.  See Figure 5.6 c).  There are two cases depending on whether or not the tube guide $\kappa$ lies in the sheet through $y_i$ or the sheet through $x_i$.  In the former case we require that $\kappa\neq\sigma_i$.  

iv) Sliding across a tube guide curve $\kappa$ across a double tube.  See Figure 5.6 d).  Here $\kappa\neq \gamma_i$ (resp. $\beta_i)$ can slide over $b_i$ (resp. $g_i$). 

v)  Sliding tube guide curves across a single tube.  Here a tube guide curve $\kappa\neq \alpha_i$ can slide across $q_i$ and over $p_i$.  Any tube guide curve can slide under $p_i$.  See Figure 5.6 e).\end{definition}

\begin{figure}[ht]
\includegraphics[scale=0.60]{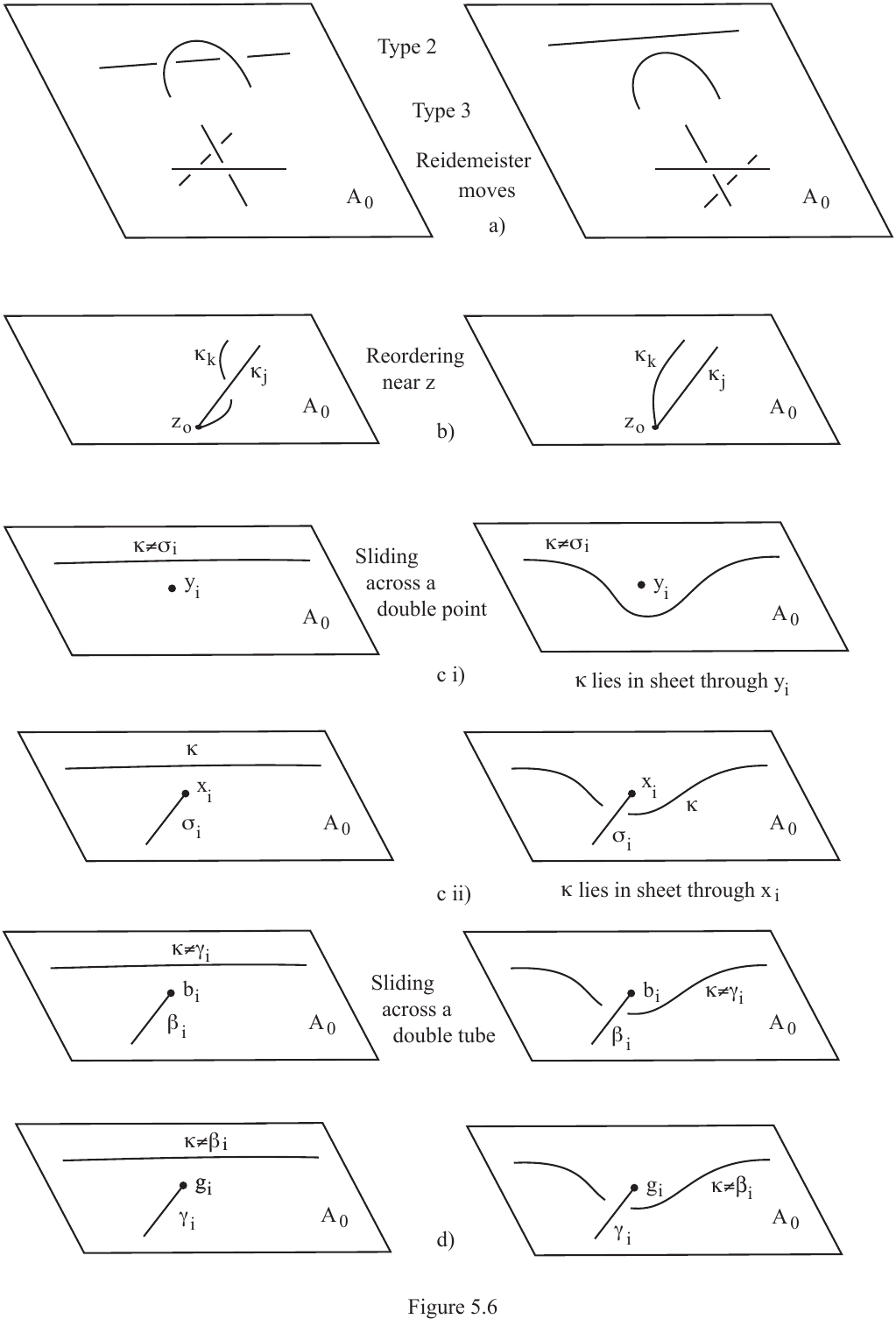}
\end{figure}

\begin{figure}[ht]
\includegraphics[scale=0.60]{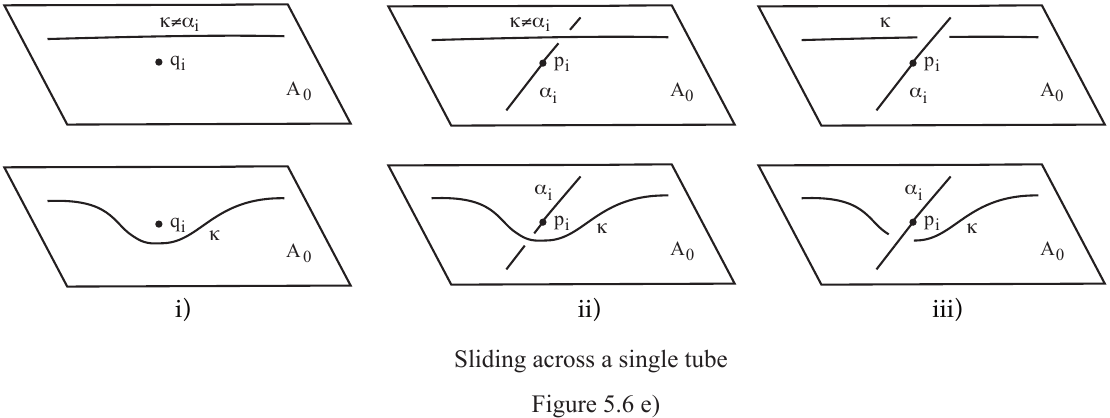}
\end{figure}

\begin{remark}  Sliding $\sigma_i$ across $y_i$ has the self referential problem analogous to a handle sliding over itself.  Similarly for sliding $\beta_i$ (resp. $\gamma_i$, resp. $\alpha_i$) across $g_i$ (resp. $b_i$, resp. $q_i$).\end{remark}  

\begin{lemma}\label{tube sliding lemma}  If  $\mA$ and $\mA'$ are tubed surfaces that differ by tube sliding, then their realizations $A$ and $A'$ are isotopic. \end{lemma}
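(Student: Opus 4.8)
The plan is to verify, move by move, that each of the five tube sliding operations of Definition \ref{tube sliding} produces a realization isotopic to the original, and then observe that the general statement follows since any two tubed surfaces differing by a finite sequence of such moves are connected by a composition of isotopies. The unifying principle is that a tube sliding move only changes the \emph{guide data} in $A_0$ (positions of the $\sigma_i,\alpha_j,\beta_k,\gamma_l$ curves, their crossing labels, and which side of a double point or tube they pass) while leaving the immersion $f$, the transverse sphere $G$, and the framed paths $\tau_i,\lambda_j$ untouched. By Remark \ref{tubes unknotted}, every tube in the realization lies in a uniformly thin neighborhood of the curve it follows (the guide curve pushed slightly off $A_1$, or a parallel copy of $G$, or $\tau_i$, or $\lambda_i$), and distinct tubes that follow overlapping guide curves are nested according to the above/below labels. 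Consequently each move is realized by an ambient isotopy supported in a small neighborhood of the portion of $A_1$ (or of a double point, or a tube) where the guide data changes, and I will exhibit this support and the isotopy in each case.

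The individual cases go as follows. For (i), a type 2 or type 3 Reidemeister move on guide curves: since $\dim M = 4$, a homotopy of arcs in the surface $A_0$ that sweeps across a triangle (type 3) or cancels a bigon (type 2), respecting the above/below labels, lifts to an isotopy of the thin tubes following those arcs, because the two tubes involved at a crossing are disjoint (one is strictly closer to $A_1$) — this is the observation recorded in the remarks preceding Definition \ref{framed arc}. For (ii), reordering near $z_0$: the tubes all terminate at parallel copies of $G\setminus\inte(N(z))$ stacked at different distances from $G$, and permuting two adjacent copies is an isotopy supported near $N(z)$, exactly the standard ``handle reordering'' picture, here carried out in the normal circle bundle. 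For (iii), pushing a guide curve $\kappa$ across a double point $(x_i,y_i)$: near $f(y_i)$ the realization consists of the two sheets of $A_1$ meeting transversely, with a tube following $\kappa$ running along one sheet; an isotopy of $M$ supported in a small ball about the double point slides this tube from one side of the crossing sheet to the other — the restriction $\kappa\neq\sigma_i$ in the $y_i$-sheet case is precisely to avoid the self-referential ``handle slides over itself'' issue of the subsequent Remark. For (iv), sliding $\kappa$ across a double tube: the double tube is a Hopf-link-times-$I$ along $\lambda_i$, and pushing a nearby thin tube across it is an isotopy in a neighborhood of $\lambda_i$; again the stated side conditions ($\kappa\ne\gamma_i$ sliding over $b_i$, etc.) rule out the degenerate self-slide. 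For (v), sliding across a single tube $T(\tau_i)$: one checks that passing over the attaching point $p_i$ (with $\kappa\ne\alpha_i$) or $q_i$, or under $p_i$ for any $\kappa$, corresponds to an isotopy in a neighborhood of $\tau_i$ together with the end of $P(\alpha_i)$; the asymmetry between ``over'' and ``under'' reflects which of the two nested tubes one is moving.

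The main obstacle is bookkeeping rather than conceptual: I must be careful that when a guide curve is slid, the \emph{nesting order} of all tubes that share a stretch of that guide curve is consistently maintained, so that the local isotopy does not create a spurious intersection of the realization with itself or with $G$ or with the $\tau_i,\lambda_j$ tubes. The key inputs that make this routine are Remark \ref{tubes unknotted} (all tubes confined to a uniformly thin tubular neighborhood, unlinked with everything else) and the convention in Construction \ref{tube construction} that at a crossing of guide curves the ``below'' curve's tube is built strictly closer to $A_1$; with these, each of the five moves is visibly local and the supporting isotopy is the evident one drawn in Figure 5.6 a)--e). Finally, I note that the support of each isotopy can be taken disjoint from $G$ except in case (ii), and even there it fixes $G$ itself, so the realizations are isotopic by an ambient isotopy of $M$, which is the assertion of the lemma.
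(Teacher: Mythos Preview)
Your case-by-case outline is too coarse and misses the one genuinely nontrivial step in the paper's proof. The crucial omission is in case (v), specifically the move in Figure 5.6 e ii): sliding a guide curve $\kappa\neq\alpha_i$ \emph{over} $p_i$. You describe this as ``an isotopy in a neighborhood of $\tau_i$ together with the end of $P(\alpha_i)$,'' but that is not what happens. The single tube $T(\tau_i)$ leaves $P(\alpha_i)$ at the point $p'$ sitting above $f(p_i)$; when $T(\kappa)$ slides over $f(p_i)$ it passes \emph{between} $T(\tau_i)$ and $A_1$, and the two tubes become linked. No local isotopy near $\tau_i$ undoes this while keeping the realization embedded and keeping Remark \ref{tubes unknotted} valid. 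The paper resolves this entanglement by invoking the 4D Light Bulb Lemma (Lemma \ref{light bulb trick}) to pass $T(\tau_i)$ through $T(\kappa)$ using the transverse sphere $G$. That lemma requires a path in the realization from the apparent crossing point to $z=A\cap G$ avoiding the tube being moved, and this is exactly why the hypothesis $\kappa\neq\alpha_i$ is needed --- not merely to avoid a self-slide, as you suggest.

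There is a secondary inaccuracy in your treatment of (iii), (iv), and the $q_i$-part of (v). You assert the isotopy is ``supported in a small ball about the double point'' or ``in a neighborhood of $\lambda_i$.'' In the paper's argument these moves (Figures 5.6 c i), d), e i)) are handled by sliding $T(\kappa)$ over an entire capping disc --- $D(\sigma_i)$, $D(\beta_i)$ or $D(\gamma_i)$, or $P(\alpha_i)$ minus an open disc --- which runs along a guide curve and out to a parallel copy of $G$. These are not small-ball isotopies; only the subcases c ii) and e iii) are genuinely local. Your ``unifying principle'' that all moves are local and Remark \ref{tubes unknotted} makes the bookkeeping routine therefore understates what must be checked, and in the e ii) case it leads you to overlook the essential use of the Light Bulb Lemma.
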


\begin{proof}  We consider the effect on the realization of $\mA$ by the various tube sliding moves.  The under/over crossing data in $A_0$ reflects how close one tube is to $A_1$ compared with the another.  As the Reidemeister 2), 3) moves respect this closeness it follows that they induce an isotopy from $A$ to $A'$.    

Next we consider reordering near $z_0$.  Since $G$ has a trivial normal bundle, there is an $S^1$ worth of directions that it can push off itself.  These directions correspond to the directions that the image of tube guide curve $f(\kappa)\subset A_1$ can approach $z$.   We can assume that the various parallel copies of $G\setminus\inte(N(z))$ are equidistant from $G$ at angle that of the angle of approach of the various $f(\kappa)$'s.   Let $D\subset A_1$ denote a disc which is a small neighborhood of the bigon that defines the reordering.   Let $K_i$ denote the disc consisting of a parallel copy $G_i$ of $G\setminus \inte(N(z))$ together with its tube that follows the arc $f(\kappa_i)\cap D$.  If $\kappa_j$ is above $\kappa_k$ as in Figure 5.6 b) and $K'_j$ and $K'_k$ are the discs resulting from the reordering, then there is an isotopy of $A$ to $A'$ supported on  $K_k$  where  $G_k$ is first pushed radially close to $G$, then  rotated to the angle defined by $f(\kappa'_k)$ and then pushed out.   Here $\kappa'_k$ is the reordered $\kappa_k$.

The proof that $A$ is isotopic to $A'$ for the operations of Figures 5.6 c i), d), e i) are all the same.  Here we are sliding a tube in $A$ that parallels a curve $f(\kappa)\subset A$ across a disc.     In the case of Figure 5.6 c i) (resp. Figure 5.6 d)) that disc includes the disc $D(\sigma_i)$ (resp.  $D(\gamma)$ or $D(\beta)$).  In the case of Figure e i) that disc includes $P(\alpha_i)$ minus an open disc.

The proof that $A$ is isotopic to $A'$ for the operations of Figures 5.6 c ii) and e iii) are the same and are local operations.  Here we are sliding a tube paralleling a curve $f(\kappa)\subset A$ across a small disc.  The slid tube is very close to $A$, closer than other tubes in the vicinity.

The proof that $A$ is isotopic to $A'$ for the operation of Figure 5.6 e ii)  requires Lemma \ref{light bulb trick} for we want Remark \ref{tubes unknotted} to continue to hold.  Let $T(\tau_i)$ (resp. $T(\kappa))$ denote the tube in $A$ that parallels $\tau_i$ (resp. $\kappa\neq \alpha_i$).  Sliding $T(\kappa)$ over $f(p)$ entangles $T(\tau_i)$ with $T(\kappa)$.  The entangled $T(\tau_i)$ is the tube being isotoped using Lemma \ref{light bulb trick}.  Lemma \ref{light bulb trick} requires that there be a path, in the notation of that lemma, from $y$ to $z$.   This requires that $\kappa\neq\alpha_i$. \end{proof}

We now define operations on tubed surfaces corresponding to finger and Whitney moves.

\begin{definition/construction}  Let $A_1$ be the associated surface to the tubed surface $\mA$.  To a generic finger move from $A_1$ to $A_1'$ with corresponding regular homotopy from $f$ to $f'$ we obtain a new tubed surface $\mA'$ said to be obtained from $\mA$ by a \emph{finger move}.   By \emph{generic} we mean that the support of the homotopy is away from all the framed tube guide curves and images of tube guide curves of $\mA$. $ \mA'$ will have the same underlying surface $A_0$ as $\mA$ and $A_1'$ will be its associated surface.  Let $(x_1,y_1)$ and $(x_2, y_2)$ be  the new pairs of $f'$ preimages of double points in $A_0$, where both $f'(x_1)$ and $f'(x_2)$ (resp. $f'(y_1)$ and $f'(y_2)$) lie in the same local sheet of $A_1'$.  Let $\sigma_1$ and $\sigma_2$ be parallel embedded paths from $x_1$ and $x_2$ to $z_0$ transverse to the existing tube guide paths.   The tube guide locus of $\mA'$ consists of that of $\mA$ together with  $\sigma_1$ and $\sigma_2$ where all crossings of these $\sigma_i$'s with pre-existing tube guide curves are under crossings.  See Figure 5.7.\end{definition/construction}
\begin{figure}[ht]
\includegraphics[scale=0.60]{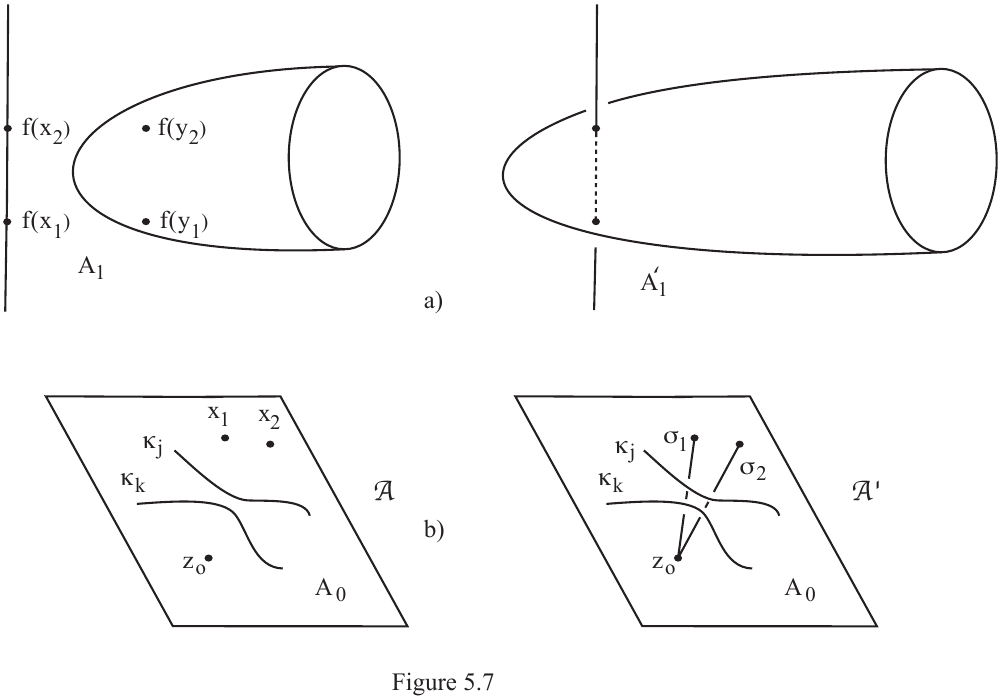}
\end{figure}

\begin{remark}  There is flexibility in the construction of $\mA'$ from $\mA$ in the choice of which pair of points are called $x_i$ points and in the choice of the $\sigma_i$ paths.\end{remark}

\begin{lemma}  \label{finger shadow one} If $\mA'$ is obtained from $\mA$ by a finger move, then their associated realizations are isotopic.\end{lemma}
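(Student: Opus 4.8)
The plan is to show that the realization $A'$ of $\mathcal{A}'$ is isotopic to the realization $A$ of $\mathcal{A}$, where $\mathcal{A}'$ is obtained from $\mathcal{A}$ by a finger move. The key observation is that this is essentially the content of Lemma \ref{finger shadow}, now carried out in the presence of pre-existing tubes. First I would reduce to a local picture: since the finger move is \emph{generic}, its support is a 4-ball $B$ disjoint from $G$, from all the framed tube guide curves $\tau_i, \lambda_j$, from all parallel copies of $G$ appearing in the construction, and from the images of all tube guide curves. Hence inside $B$ the realization $A$ looks exactly like $A_1$ (a local sheet), and the realization $A'$ is obtained from $A$ by first performing the finger move on that sheet and then attaching the two new tubes $T(\sigma_1), T(\sigma_2)$ that follow $f'(\sigma_1), f'(\sigma_2)$ out of $B$, through the existing tube guide locus, to two new parallel copies $G', G''$ of $G\setminus\inte(N(z))$.

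The second step is to apply the isotopy of Lemma \ref{finger shadow} verbatim inside a neighborhood of $B$ together with the two new tubes. Recall the model: with $F = I\times I\times 0 \cup I\times I\times\epsilon \cup 0\times I\times[0,\epsilon]$ and $H$ its complement in $\partial(I\times I\times[0,\epsilon])$, $F$ is isotopic to $H$ rel the rest of the boundary via an isotopy supported in $I\times I\times[0,\epsilon]$. In our setting the product region between the two parallel new tubes $T(\sigma_1), T(\sigma_2)$ (which run parallel since $\sigma_1,\sigma_2$ are parallel embedded paths) plays the role of $I\times I\times[0,\epsilon]$, and after a small isotopy $N(E)\cap A'$ — where $E$ is the obvious Whitney disc cancelling the finger — corresponds to $F$; isotoping $F$ to $H$ undoes the finger move and absorbs the two tubes, producing precisely $A$. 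The isotopy is supported in $B$ together with a neighborhood of the two new tubes, hence disjoint from everything else in the realization, so it extends by the identity to an ambient isotopy of $M$.

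The one point requiring care — and the main obstacle — is that the new tubes $T(\sigma_1), T(\sigma_2)$ are not straight: the paths $\sigma_1, \sigma_2$ may cross the existing tube guide curves (all such crossings being under-crossings, per the Definition/Construction), and they run all the way to fresh parallel copies of $G$. I would argue that this does not interfere: by Remark \ref{tubes unknotted} the new tubes, being closer to $A_1$ than any pre-existing tube paralleling the same curve and running to parallel copies of $G$ even closer to $G$ than the existing ones, stay within a uniform neighborhood of $A_1\cup G$ and are unknotted and unlinked from the rest of the realization. Therefore the model isotopy can be performed by first (via an ambient isotopy supported near $\sigma_1\cup\sigma_2\cup G'\cup G''$, disjoint from the rest) shrinking the two new tubes and their capping copies of $G$ down into $B$, reducing exactly to the situation of Lemma \ref{finger shadow}, and then applying that lemma's isotopy. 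A brief remark that the crossing conventions make this shrinking possible without creating intersections — precisely because all crossings of $\sigma_i$ with other tube guide curves are under-crossings, so $T(\sigma_i)$ passes below the corresponding tubes — completes the argument.
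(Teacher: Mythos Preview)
Your argument is correct and follows the paper's approach: this is Lemma~\ref{finger shadow} in the presence of pre-existing tubes, and the isotopy of that lemma is supported in the finger-move ball $B$ together with a small neighborhood of the product region between the two parallel discs $D(\sigma_1)$ and $D(\sigma_2)$; your under-crossing observation is exactly what guarantees this region misses the rest of the realization.

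One remark: the ``shrinking the two new tubes and their capping copies of $G$ down into $B$'' manoeuvre at the end is both unnecessary and not quite right as stated. You cannot isotope the $G$-capped discs $D(\sigma_i)$ to lie inside the ball $B$ (which is disjoint from $G$) while keeping them embedded and disjoint from the $x$-sheet of $A_1$ --- a small disc spanning $f'(\partial N(y_i))$ inside $B$ would re-intersect that sheet, which is precisely why one tubed to $G$ in the first place. But no such reduction is needed: the isotopy of Lemma~\ref{finger shadow} already takes place \emph{in situ} along the full product region between the two parallel discs (tubes plus $G$-caps), and the paper's proof is simply the one-line observation that this product region, by the under-crossing convention for the new $\sigma_i$'s and the placement of the new $G$-copies innermost to $G$, is disjoint from everything else in the realization. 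Your steps 1--2 already say this; drop the shrinking detour.
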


\begin{proof}  This lemma is a restatement of Lemma \ref{finger shadow} in our setting.  The proof is the same after recognizing that the support of the isotopy in the target is contained in the support of the finger move  together with a small neighborhood of the product region between the discs $D(\sigma_1)$ and $D(\sigma_2)$, notation as in Construction \ref{tube construction}.  \end{proof}

\begin{definition/construction}  Let $A_1$ be the associated surface to the tubed surface $\mA$.  A Whitney move from $A_1$ to $A_1'$ corresponding to the regular homotopy from $f$ to $f'$ with Whitney disc $w$ is said to be \emph{tube locus free} if $\inte(w)$ is disjoint from the framed tube guide curves of $\mA$ and $\partial w$ intersects the images of tube guide curves of $\mA$ only at double points of $A_1$.  Let $(x_1,y_1), (x_2, y_2)$ denote the pairs of points in $A_0$ corresponding to these double points with notation consistent with that of Definition \ref{tube surface} and let $E_1, E_2$ denote the local discs involved in the Whitney move.  We say that the Whitney move is \emph{uncrossed} if both $f(x_1)$ and $f(x_2)$ lie in the same $E_i$ and \emph{crossed} otherwise.  If $w$ is an uncrossed Whitney disc, then we obtain the tubed surface $\mA'$ as indicated in Figure 5.8 and if $w$ is crossed, then $\mA'$ is obtained as in Figure 5.9.  Accordingly $\mA'$ is said to be obtained from $\mA$ by an \emph{uncrossed} or \emph{crossed} Whitney move.\end{definition/construction}

\begin{remark}  An uncrossed Whitney move gives rise to a single tube while a crossed Whitney move gives rise to a double tube.  In the former case two $\sigma$ curves become an $\alpha$ curve.  In the latter case, the $\sigma$ curves become $\beta$ and $\gamma$ curves. \end{remark}

\begin{lemma}  \label{whitney shadow} If $\mA'$ is obtained from $\mA$ by a tube locus free Whitney move, then its realization is isotopic to that of $\mA$.\qed\end{lemma}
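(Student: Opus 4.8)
The plan is to treat this as a parametrized analogue of Lemma \ref{finger shadow}, handling the uncrossed and crossed cases separately but in parallel, using the same local model at the Whitney disc that was used for the finger-move lemma. In both cases the hypothesis that the Whitney move is tube locus free guarantees that the framed tube guide curves $\tau_p$, $\lambda_q$ are untouched, and that the images of the ordinary tube guide curves meet the move only at the double points being cancelled; so the only pieces of the realization $A$ that are involved are the two local discs $E_1, E_2$, the Whitney disc $w$, and the discs $D(\sigma_1), D(\sigma_2)$ attached along the copies of $G$ coming from the two double points $(x_1,y_1)$, $(x_2,y_2)$. First I would describe the isotopy in the uncrossed case of Figure 5.8: since $f(x_1)$ and $f(x_2)$ lie in the same sheet $E_i$, the tube $D(\sigma_1)$ and $D(\sigma_2)$ can be regarded as two parallel tubes running from $E_i$ to a pair of parallel copies of $G$, and the product region between them plays the role of the $I\times I\times[0,\epsilon]$ block in the proof of Lemma \ref{finger shadow}; performing the Whitney move on $A_1$ while simultaneously dragging this block along reorganizes the two tubes into a single tube $T(\alpha)$ following the concatenated guide curve $\alpha$ (this is precisely how two $\sigma$ curves become an $\alpha$ curve). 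The Whitney disc being framed is exactly what is needed for the tube to close up with the correct framing, matching the data (iv) of Definition \ref{tube surface}.

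Next I would do the crossed case of Figure 5.9. Now $f(x_1)$ and $f(x_2)$ lie in \emph{different} local discs $E_1, E_2$, so the same block of parallel tubes now connects $E_1$ to $E_2$ rather than returning to a single sheet; carrying it along the Whitney move produces a Hopf-link$\times I$ attached via two tubes following the $\beta$ and $\gamma$ guide curves with a framed path $\lambda$ between the two local discs — i.e. exactly a double tube as in Construction \ref{tube construction}. The key local picture to verify is that the boundary configuration on the half-radius 4-balls $N(f(b_i)), N(f(g_i))$ is a Hopf link; this is forced by the geometry of the Whitney move (the two sheets cross once transversely), and the framing on $\lambda$ is inherited from the Whitney disc framing, so the realization of $\mA'$ is the surface obtained from that of $\mA$ by this local modification.

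In both cases the isotopy is supported in a neighborhood of the union of $w$, the two local discs, and the product region between $D(\sigma_1)$ and $D(\sigma_2)$, which by tube locus freeness is disjoint from all the framed tube guide curves and from the other parts of the realization; hence Remark \ref{tubes unknotted} is preserved. I expect the main obstacle to be purely bookkeeping rather than conceptual: one must check that the framings line up so that, after the move, the new $\alpha$ (respectively $\beta,\gamma,\lambda$) data really does satisfy all the conditions imposed at the end of Definition \ref{tube surface} and in Construction \ref{tube construction} — in particular that the $C(0), C(1)$ curves land in the prescribed normal 2-planes and that the tube following $x(t)$ produces the correct Hopf link in the crossed case. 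Since this framing verification is exactly the content of the local models in Figures 5.8 and 5.9, and the ``interesting'' isotopy is the same $F \rightsquigarrow H$ move as in Lemma \ref{finger shadow}, the lemma follows by inspecting those figures; this is why the statement is marked \qed with the details left to the reader.
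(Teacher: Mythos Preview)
Your proposal is correct and takes essentially the same approach as the paper: the paper gives no written proof at all (the statement carries a bare \qed, and in Lemma~\ref{elementary}(d) the author simply writes ``This is illustrated in Figures 5.8 and 5.9''), so the entire argument is delegated to those two figures. Your write-up is an accurate unpacking of what the figures encode --- the product-region $F\rightsquigarrow H$ move from Lemma~\ref{finger shadow}, the concatenation of the two $\sigma$-curves into an $\alpha$ in the uncrossed case, and the Hopf-link$\times I$ double tube in the crossed case --- and your identification of the framing verification as the only nontrivial bookkeeping is exactly why the paper is content to leave it to the pictures.
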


\begin{figure}[ht]
\includegraphics[scale=0.70]{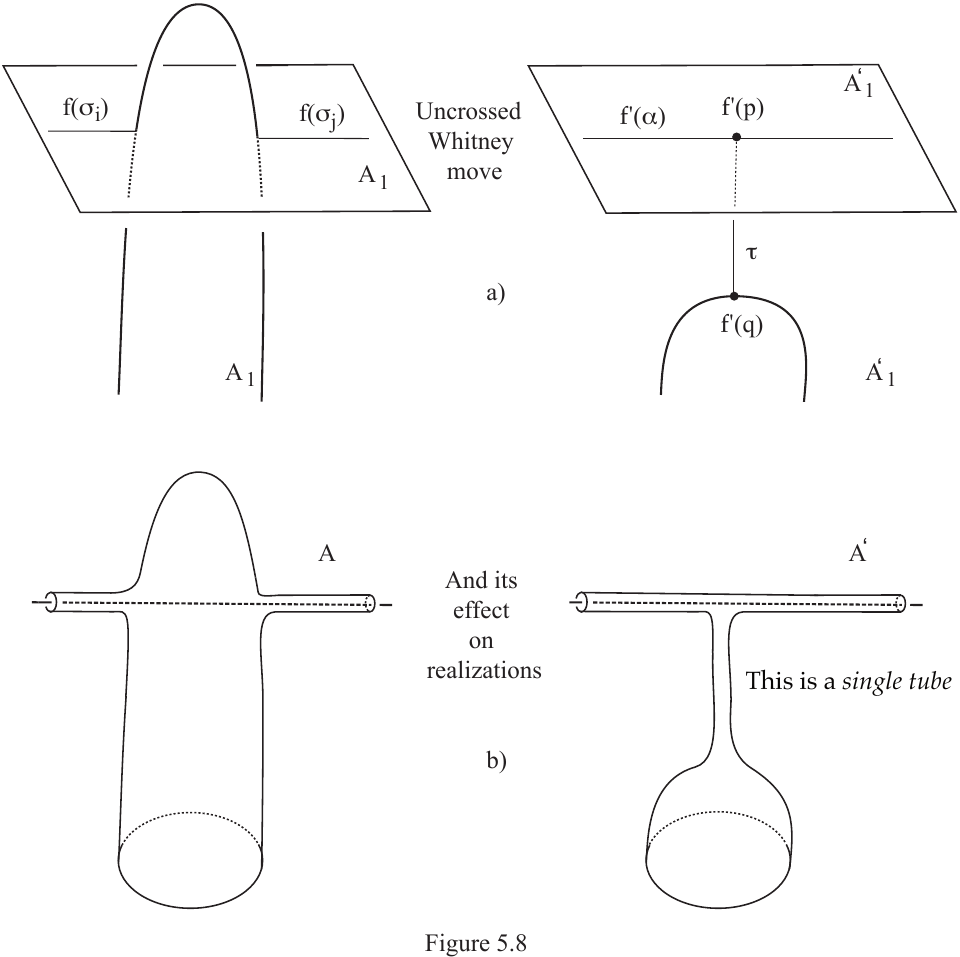}
\end{figure}
\begin{figure}[ht]
\includegraphics[scale=0.70]{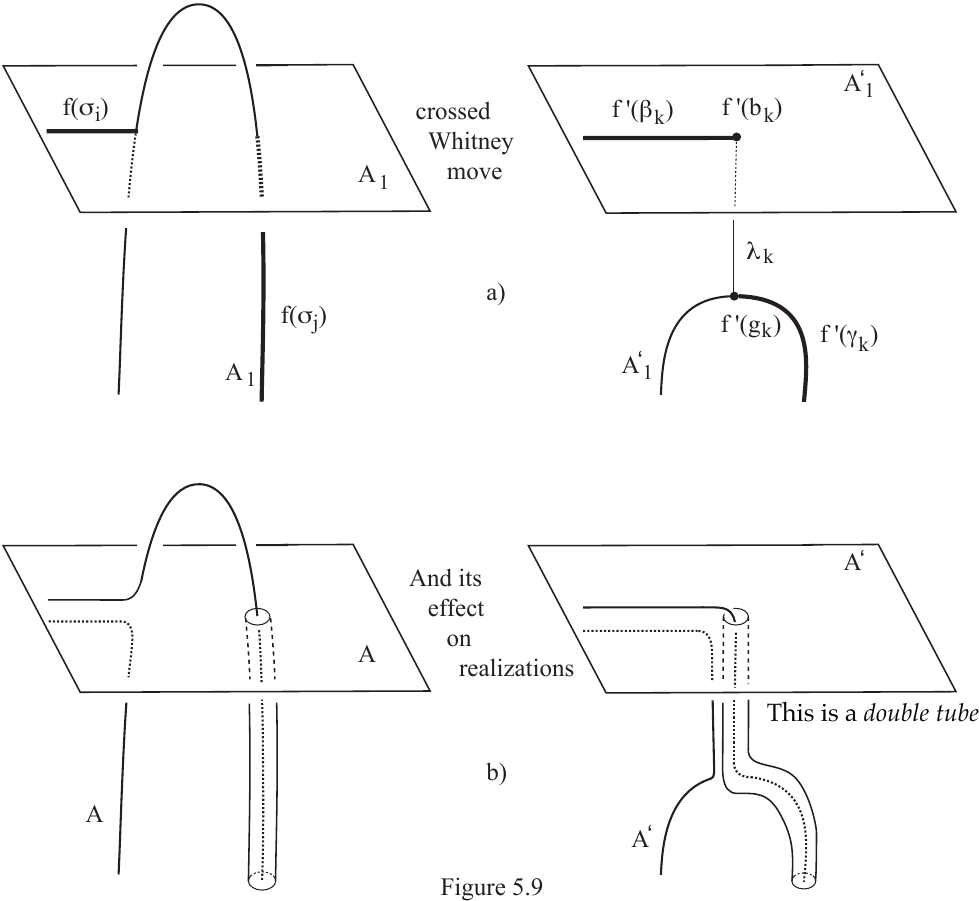}                    
\end{figure}

\begin{definition} \label{elementary isotopy}  We define an \emph{elementary tubed surface isotopy}, or \emph{elementary isotopy} for short, on the tubed surface $\mA$ as any of the following operations on $\mA$.

a)  The defining data changes smoothly without combinatorial change.  In particular, at no time are there new tangencies or new intersections among the various objects.

b) tube sliding moves.

c) finger moves

d) tube locus free Whitney moves\end{definition}

%(Figure 5.7b is slightly distorted in that the isotopy is supported in a small, but not extremely small neighborhood of the Whitney disc.)

\begin{lemma} \label{elementary} If  $\mA$ and $\mA'$ are tubed surfaces that differ by an elementary  isotopy, then their realizations are isotopic. \end{lemma}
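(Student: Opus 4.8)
The statement to prove is Lemma \ref{elementary}: if $\mA$ and $\mA'$ are tubed surfaces differing by an elementary isotopy, then their realizations are isotopic. This is essentially a bookkeeping lemma that assembles the four cases already handled separately.

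\textbf{Plan of proof.} The proof is a case analysis following the four clauses (a)--(d) of Definition \ref{elementary isotopy}, invoking the lemmas already established for the nontrivial cases. First I would dispatch case (b): tube sliding moves produce isotopic realizations by Lemma \ref{tube sliding lemma}. Case (c): a finger move on $\mA$ produces a tubed surface whose realization is isotopic to that of $\mA$ by Lemma \ref{finger shadow one}. Case (d): a tube locus free Whitney move produces a tubed surface whose realization is isotopic to that of $\mA$ by Lemma \ref{whitney shadow}. So the only case needing genuine (though still routine) argument is (a), where the defining data of $\mA$ varies smoothly through tubed surfaces with no combinatorial change.

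\textbf{Handling case (a).} For a smooth family $\mA_s$, $s\in[0,1]$, of tubed surfaces with $\mA_0=\mA$, $\mA_1=\mA'$ and no combinatorial change, I would argue that the realization construction of Construction \ref{tube construction} can be performed continuously in the parameter $s$: the immersion $f_s$, the transverse sphere $G_s$, the tube guide curves, the framed embedded paths $\tau_i, \lambda_j$ and their framings all vary smoothly, and since there are no new tangencies or intersections, all the auxiliary choices in the construction (sizes of the deleted discs $D^2$ near the $y_i$, the pushed-off copies of $G$, the tube radii, the half-radius balls $1/2N(f(b_i))$, etc.) can be made to depend smoothly on $s$ while retaining the required disjointness and transversality properties. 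This yields a smooth family of embedded surfaces $A_s$ with $A_0$ the realization of $\mA$ and $A_1$ the realization of $\mA'$. A smooth family of embedded surfaces is an isotopy (one can appeal to the isotopy extension theorem, noting everything happens in a compact region of $M$), so $A\cong A'$. I would also remark that since the other cases' proofs themselves produce isotopies supported in controlled compact regions, the composite gives a well-defined isotopy of realizations.

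\textbf{Expected main obstacle.} The only real subtlety is verifying in case (a) that the auxiliary data of Construction \ref{tube construction} can be chosen to vary smoothly and that the compactness/properness needed to upgrade a smooth family of embeddings to an ambient isotopy holds; but since all tubes, parallel copies of $G$, and neighborhoods stay within a uniformly bounded region (Remark \ref{tubes unknotted}) and $M$ need not be compact only away from a compact set, this is straightforward. The statement is really a packaging lemma, so I expect the proof to be short: one sentence per case (b), (c), (d) citing the relevant prior lemma, and a short paragraph for case (a).
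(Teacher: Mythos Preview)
Your proposal is correct and follows essentially the same approach as the paper: a case-by-case dispatch invoking Lemma \ref{tube sliding lemma} for (b), Lemma \ref{finger shadow one} for (c), and Lemma \ref{whitney shadow} (equivalently Figures 5.8--5.9) for (d), with case (a) handled by the observation that smooth variation of the defining data yields smooth variation of the realizations. Your treatment of case (a) is in fact more detailed than the paper's one-line justification, but the content is the same.
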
 

\begin{proof} a) Smooth changes in the defining data induce smooth changes in the realizations.

b) This is Lemma \ref{tube sliding lemma}.

c) This is Lemma \ref{finger shadow}.

d) This is illustrated in Figures 5.8 and 5.9. \end{proof}

\begin{definition}  \label{shadow surface} Let $R_0$ be an immersed surface in the smooth 4-manifold $M$ with embedded transverse sphere $G$.  Let $f_t: R\to M^4$ be a generic regular homotopy supported away from $G$ which is a self-transverse immersion except at times $\{t_i\}$ where $0=t_0<t_1< \cdots <t_{m-1}<t_m=1$ and $i\notin\{0,m\}$.   Let $R_0=f_0(R), R_1, \cdots, R_m=f_1(R)$ be such that  for $i=1, \cdots, m-1$, $R_i$ is a surface $f_s(R)$ for some $s\in (t_{i}, t_{i+1})$.  We say that the regular homotopy $f_t$ is \emph{shadowed by tubed surfaces} if there exists a sequence $\mR_0, \mR_1, \cdots, \mR_m$ of tubed surfaces such that for all  $i, R_i$ is the associated surface to $\mR_i$ and for $i\neq m$, $\mR_{i+1}$ is obtained from $\mR_i$ by   elementary isotopies.  The tubed surfaces $\mR_0, \mR_1, \cdots, \mR_m$ are called  $f_t$-\emph{shadow tubed surfaces}.   \end{definition}

\begin{theorem} \label{shadow isotopy} If $f_t:A_0\to M$ is a generic regular homotopy with $f_0(A_0)$ an embedded surface, $M$ a smooth 4-manifold, $G$ a transverse embedded sphere to $f_0(A_0)$ and $f_t$ is supported away from $G$, then $f_t$ is shadowed by tubed surfaces.\end{theorem}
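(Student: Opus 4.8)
The plan is to build the shadow tubed surfaces $\mR_0, \dots, \mR_m$ inductively along the regular homotopy, using the decomposition of $f_t$ into isotopies, finger moves and Whitney moves supplied by the Proposition in \S4. First I would set $\mR_0$ to be the tubed surface with underlying immersion $f_0$, no double points, transverse sphere $G$, and empty tube guide locus; its realization is $f_0(A_0)$ itself. The inductive hypothesis at stage $i$ is that $R_i$ is the associated surface of a tubed surface $\mR_i$ whose realization is isotopic to $f_0(A_0)$. Passing from stage $i$ to stage $i+1$ means we must absorb exactly one of the elementary events of the regular homotopy.

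The three cases are handled as follows. If $R_i \to R_{i+1}$ is an isotopy (a regular homotopy through embeddings/self-transverse surfaces with no change in double point count), then the defining immersion $f$ deforms smoothly; I would carry all the tube guide data along by the ambient isotopy extension theorem, possibly subdividing the time interval so that at each sub-interval the change is combinatorial-free (type a) of Definition \ref{elementary isotopy}) or consists of a tube sliding move (type b)) — in particular when an image tube guide curve would be forced to cross a double point, a single tube, or a double tube, this is exactly one of the moves enumerated in Definition \ref{tube sliding}. If $R_i \to R_{i+1}$ is a finger move, I perturb it to be generic with respect to the finitely many framed tube guide curves and images of tube guide curves (these form a measure-zero set, so a small perturbation of the finger's support arc achieves this), and then apply the finger move construction of the Definition/Construction preceding Lemma \ref{finger shadow one}, adjoining the two new $\sigma$-curves with all crossings under; Lemma \ref{finger shadow one} gives that the realization is unchanged up to isotopy. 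If $R_i \to R_{i+1}$ is a Whitney move, I first need the Whitney disc $w$ to be \emph{tube locus free}: $\inte(w)$ disjoint from the framed tube guide curves and $\partial w$ meeting images of tube guide curves only at double points of $A_1$.

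The hard part, and the step I expect to be the main obstacle, is arranging genericity of the Whitney move — making $w$ tube locus free. The interior condition is easy by general position, since $\inte(w)$ is a $2$-disc in a $4$-manifold and the framed tube guide curves $\tau_i, \lambda_j$ are $1$-dimensional, so they generically miss $\inte(w)$; and $w$ can be pushed off $G$ since the homotopy is supported away from $G$. The boundary condition is the delicate one: $\partial w$ is a $1$-manifold lying in $A_1$, and the images of the tube guide curves $\sigma, \alpha, \beta, \gamma$ also lie in $A_1$, so within the surface these are two families of curves meeting in isolated points, and a generic push of $\partial w$ within $A_1$ cannot remove these intersections. The resolution is to use the tube sliding moves of Lemma \ref{tube sliding lemma} to first drag the offending tube guide arcs out of a neighborhood of $w$ — sliding them across double points, single tubes, and double tubes as needed (moves iii), iv), v) of Definition \ref{tube sliding}) — so that after this preliminary isotopy of $\mR_i$ (which does not change its realization up to isotopy) the Whitney disc becomes tube locus free; the move v) case of Figure 5.6 e ii) is precisely where Lemma \ref{light bulb trick} and the transverse sphere $G$ are essential, since a tube guide curve must be kept connectible to $z_0$ while being slid past a single tube. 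Once $w$ is tube locus free, I apply the crossed or uncrossed Whitney move construction (Figures 5.8, 5.9) and invoke Lemma \ref{whitney shadow}. Finally, composing over all $i$, and using that elementary isotopies preserve the realization up to isotopy by Lemma \ref{elementary}, I conclude $f_0(A_0)$ is isotopic to the realization of $\mR_m$ whose associated surface is $f_1(A_0)$, completing the proof that $f_t$ is shadowed by tubed surfaces.
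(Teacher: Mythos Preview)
Your overall architecture matches the paper's: initialize $\mR_0$ with empty tube guide locus, carry data through isotopy intervals by type a) elementary isotopies, shadow finger moves via Lemma \ref{finger shadow one}, and shadow Whitney moves after first making the Whitney disc tube locus free. The interior condition on $w$ is indeed handled by general position, as you say.

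The gap is in your treatment of the boundary condition for the Whitney disc. You propose to clear tube guide curves off $\partial w$ by ``sliding them across double points, single tubes, and double tubes as needed,'' but you do not confront the restriction built into move iii) of Definition \ref{tube sliding}: a tube guide curve $\kappa$ in the sheet through $y_i$ may be slid across that double point only if $\kappa\neq\sigma_i$. The paper's argument is organized precisely around this obstruction. One arc $\phi$ of $\partial w$ can always be arranged to have an $x_i$ endpoint, so everything slides off over $x_i$ with no restriction. The other arc $\phi'$ may have both endpoints of $y$-type, say $y_i$ and $y_j$; then for a crossing curve $\kappa$ one uses that $i\neq j$, so $\kappa$ cannot equal both $\sigma_i$ and $\sigma_j$, and hence can be slid over whichever of $y_i$, $y_j$ is permitted (after Reidemeister 2 moves to bring it adjacent). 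Without this observation your clearing procedure could stall on $\kappa=\sigma_i$ near $y_i$.

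Two smaller points. First, moves iv) and v) (sliding over single and double tubes) are not what clears $\partial w$; only move iii) across the double points at the ends of $\phi,\phi'$ is used, so your remark about Figure 5.6 e ii) and the light bulb lemma, while true of that move, is not the mechanism relevant here. Second, during a pure isotopy interval the tube guide curves live in $A_0$ and do not need to cross anything; the paper handles this with a single type a) move, so the tube sliding you anticipate in that phase is unnecessary.
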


\begin{proof} Define the tubed  surface $\mA_0$ as follows.  $A_0$ is the underlying surface, $f_0:A_0\to M$ is the self transverse immersion with $f_0(A_0)=A^0_1$ the associated surface. The tube guide locus $=\emptyset$.  Let $0<t_1< \cdots <t_{m-1}<1$ be the non self transverse times of $f_t$.  If there are no singular times in $[s,s']$ and a tubed surface $\mA_s$ has been constructed whose associated surface $A_1^s=f_s(A_0)$, then an elementary isotopy of type a) transforms it to one with $A_1^{s'}=f_{s'}(A_0)$.  Thus we need only show how to shadow regular homotopies near non self transverse  times.   Now each non self transverse  time corresponds to either a finger or Whitney move.  Since $f_0(A_0)$ is embedded,  $t_1$ is the time of a finger move.  The shadowing of the initial finger move is given in the proof of Lemma \ref{finger shadow one}.  More generally, that lemma shows how to shadow any  finger move.  By induction we  assume that the conclusion holds through time $t$, where $t_{k-1}<t<t_{k}$ and $t_k$ is the time of a Whitney move.

Let $\mA_{k-1}$  denote the tubed surface with underlying surface $A_1^{k-1}=f_t(A_0)$.  By Lemma \ref{elementary} we can assume that $t$ is a time just before the Whitney move.  Let $w$ be a Whitney disc for the Whitney move.  Being 2-dimensional we can assume that $w$ is disjoint from the framed tube guide curves to $\mA_{k-1}$.  Suppose that $w$ cancels the $f_t$ images of the  points $\{u,v,u',v'\}\subset A_0$, where  $f_t(u)=f_t(v)$ and $f_t(u')=f_t(v')$.  Here $u$ and $u'$ (resp. $v$ and $v'$) are the endpoints of disjoint arcs $\phi$ and $\phi'$ in $A_0$ which map to $\partial w$ under $f_t$ and $(u,v) = (x_i,y_i)$ and $(u',v')=(x_j,y_j)$, notation as in Definition \ref{tube surface}.  By switching $\phi$ and $\phi'$ and/or $i$ and $j$ if necessary, we can assume that the first equality is that of an ordered pair and the second is setwise.

Next we show that after tube sliding moves $w$ becomes a tube locus free Whitney disc, i.e. no tube guide curve crosses $\inte\phi$ or $\inte\phi'$.  Now $x_i\in \partial \phi$, so all the tube guide crossings with $\inte(\phi)$ can be eliminated by sliding across the double point $x_i$.  If $x_j\in \partial \phi'$, then we can clear $\inte(\phi')$ of tube guide curves in a similar manner.  If $\partial\phi'=(y_i, y_j)$ and the tube guide $\kappa$ crosses $\inte(\phi')$ we clear it from $\phi'$ as follows.  Since $i\neq j$, it follows that $ \kappa\neq \sigma_p$ for some $p\in \{i,j\}$. Apply a sequence of Reidemeister 2) moves supported in a small neighborhood of $\phi'$ to make $\kappa$  adjacent to $y_p$ and  then slide it across the double point $y_p$.  

Since $w$ is now a tube locus free Whitney disc we can shadow the Whitney move by an uncrossed (if $u'=x_j$) or crossed (if $u'=y_j$) Whitney move.  Thus $\mA_{k}$ is obtained from $\mA_{k-1}$ by a sequence of tube sliding moves and a tube locus free Whitney move.\end{proof}

\begin{remarks}   i)  If the regular homotopy $f_t$ is of the form finger moves followed by Whitney moves, then  the  tubed surface following the finger moves can be chosen so that the curves $\sigma_i$ are embedded and pairwise disjoint away from $z_0$.

ii)  If $f_1(A_0)$ is embedded, then the final tubed surface has no $\sigma_i$ curves. 

iii)  There is no restriction on the surface $A_0$.  Below and in the next section we require that $A_0$ be a 2-sphere.\end{remarks}

%Check if Smale's theorem is true for non orientable surfaces in orientable 4-manifolds.  Check that it does not matter, since not using anything, e.g. existence of immersions.  And it's obvious that generic regular homotopy is a concatenation of finger and whitney moves.

%This section is trivial.  Main point is that it sets the language and the pictures.

The rest of this section is relevant for  4-manifolds $M$ whose fundamental group has 2-torsion.  See Figure 5.10.

\begin{figure}[ht]
\includegraphics[scale=0.70]{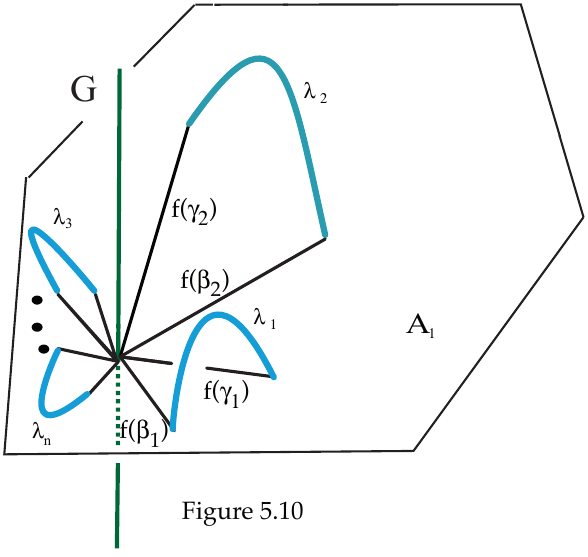}
\end{figure}

\begin{definition} \label{normal form} We say that the tubed surface $\mA$ is in \emph{normal form} if in addition to the conditions of Definition \ref{tube surface} we have

a) The immersion $f:A_0 \to M$ is an embedding with associated surface $A_1=f(A_0)$ and $A_0$ is a 2-sphere.

b) There are no $\alpha$ curves.

c) The paths $\beta_1, \gamma_1, \beta_2, \gamma_2, \cdots, \beta_n, \gamma_n$ are embedded and  cyclicly arrayed in $A_0$ about the common endpoint $z_0$.

d) The framed embedded paths $\lambda_1, \lambda_2, \cdots, \lambda_n$ represent distinct nontrivial 2-torsion elements of $\pi_1(M)$.   

We say that the surface $A$ is in \emph{normal form} with respect to the embedded surface $A_1$ \emph{representing} elements $[\lambda_1], \cdots, [\lambda_n]$ if $A$ is the realization of $\mA$ with data as in  a)-d).  We say two normal forms are \emph{equivalent} if they represent the same set of elements.   \end{definition}

The following result is immediate by our realization construction.

\begin{proposition}\label{all realized} If $A_1$ is an embedded 2-sphere with transverse sphere $G$ and  given any finite set of distinct nontrivial 2-torsion elements of $\pi_1(M)$, then there exists an embedded 2-sphere $A$ in normal form with respect to $A_1$ representing these elements of $\pi_1(M)$.\qed\end{proposition}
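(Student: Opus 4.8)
The plan is to obtain $A$ by simply assembling the data of a tubed surface in normal form and then applying Construction \ref{tube construction}; all the definitions have been set up so that the statement really is a matter of bookkeeping, and the only genuine (but routine) content is producing the framed embedded paths $\lambda_i$ carrying the prescribed classes. So I would proceed by exhibiting data satisfying Definition \ref{tube surface} and conditions a)--d) of Definition \ref{normal form}, and then quote Construction \ref{tube construction}.

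First I would fix the easy parts of the data. Take $f:A_0\to M$ to be the given embedding with $f(A_0)=A_1$ and $A_0$ a $2$-sphere; then $f$ has no double points, so there are no $\sigma_i$ curves, consistent with condition a). Take no $\alpha$ curves and no $\tau_i$ (condition b)), and keep the given transverse sphere $G$ with $z=A_1\cap G$ and $z_0=f^{-1}(z)$. It remains to produce, for $i=1,\dots,n$, the pairs of embedded arcs $(\beta_i,\gamma_i)$ in $A_0$ with common endpoint $z_0$, and the framed embedded paths $\lambda_i\subset M$ from $f(b_i)$ to $f(g_i)$ with $\inte(\lambda_i)\cap(G\cup A_1)=\emptyset$, pairwise disjoint.

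Given the prescribed distinct nontrivial $2$-torsion elements $h_1,\dots,h_n\in\pi_1(M)$, I would first use Lemma \ref{pi injectivity}: since $A_1$ is a sphere, $\pi_1(M\setminus(A_1\cup G))\to\pi_1(M)$ is an isomorphism, so there are loops $\ell_1,\dots,\ell_n$ based at $z$ with $\inte(\ell_i)\subset M\setminus(A_1\cup G)$ and $[\ell_i]=h_i$. Since arcs are $1$-dimensional, general position in the $4$-manifold $M$ lets me perturb the $\ell_i$, rel a small neighborhood of $z$, so that they become embedded and pairwise disjoint; cutting out a small neighborhood of the basepoint turns each $\ell_i$ into an embedded path $\lambda_i$ from $f(b_i)$ to $f(g_i)$, where the $2n$ points $b_i,g_i$, together with $z_0$, can be taken distinct and clustered near $z_0$ with prescribed cyclic order of departure directions, and with $\inte(\lambda_i)\cap(G\cup A_1)=\emptyset$. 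The removed collar segments, pushed into $A_1$ near $z$, give embedded arcs $\beta_i$ from $z_0$ to $b_i$ and $\gamma_i$ from $g_i$ to $z_0$ in $A_0$, pairwise disjoint away from $z_0$ and cyclically arrayed about $z_0$ (condition c)). Each $\lambda_i$ has trivial normal bundle, so pick any framing and adjust it near the endpoints to meet the placement conditions on $C(0),C(1)$ demanded in Construction \ref{tube construction}; because $A_1$ is simply connected, the element of $\pi_1(M,z)$ determined by $\lambda_i$ and any connecting arcs in $A_1$ is well defined and equals $h_i$, so the $\lambda_i$ represent the prescribed distinct nontrivial $2$-torsion elements (condition d)).

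This data constitutes a tubed surface $\mA$ meeting Definition \ref{tube surface} and conditions a)--d) of Definition \ref{normal form}. Its realization $A$, produced by Construction \ref{tube construction}, is an embedded $2$-sphere (one attaches $n$ double tubes to the sphere $A_1$), and by Definition \ref{normal form} it is in normal form with respect to $A_1$ representing $[\lambda_1],\dots,[\lambda_n]=h_1,\dots,h_n$. The only step with any content is the general-position argument making the $\lambda_i$ embedded, pairwise disjoint, and disjoint in their interiors from $A_1\cup G$ while still carrying the prescribed classes — combining Lemma \ref{pi injectivity} with transversality in dimension $4$ — and that is exactly why the proposition is ``immediate'' once the realization machinery is in place.
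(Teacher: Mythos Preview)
Your proposal is correct and is exactly the expansion the paper has in mind: the paper's own proof is just the sentence ``immediate by our realization construction,'' and you have spelled out the assembly of normal-form data (embedded $\beta_i,\gamma_i$ arrayed about $z_0$, framed paths $\lambda_i$ obtained from loops in $M\setminus(A_1\cup G)$ via Lemma~\ref{pi injectivity} and general position) and the invocation of Construction~\ref{tube construction}. There is no difference in approach, only in level of detail.
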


\begin{lemma}  \label{normal permutation} The isotopy class of the realization is independent of the cyclic ordering of the $(\beta_i, \gamma_i)$'s arrayed about $z_0\in A_0$.\end{lemma}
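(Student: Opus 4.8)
\textbf{Proof proposal for Lemma \ref{normal permutation}.}

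The plan is to show that swapping two cyclically adjacent pairs $(\beta_i,\gamma_i)$ and $(\beta_{i+1},\gamma_{i+1})$ about $z_0$ does not change the isotopy class of the realization $A$; since adjacent transpositions generate the cyclic group of reorderings, this suffices. First I would reduce to a local model: because the tube guide curves $\beta_j,\gamma_j$ are embedded and pairwise disjoint away from $z_0$ (Definition \ref{normal form}(c)), and because by Remark \ref{tubes unknotted} the double tubes and the parallel copies of $G$ all stay within a uniform distance of $A_1$ and $G$ respectively, all of the data involved in a swap of the two adjacent pairs is supported in a small disc neighborhood $D\subset A_1$ of $z_0$ together with the nearby portions of the two double tubes and the four parallel copies of $G\setminus\inte(N(z))$ that they use. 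The framed embedded paths $\lambda_i,\lambda_{i+1}$ themselves are fixed throughout — only the \emph{initial segments} near $f(b_i),f(b_{i+1}),f(g_i),f(g_{i+1})$ and the corresponding copies of $G$ get rearranged — so the homotopy classes in part (d) are untouched.

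The key geometric step is exactly the reordering-near-$z_0$ move analyzed in the proof of Lemma \ref{tube sliding lemma} (case Figure 5.6 b): to move the pair $(\beta_{i+1},\gamma_{i+1})$ past $(\beta_i,\gamma_i)$ in the cyclic order, one pushes the two parallel copies $G_{i+1}$ (one coming from $D(\beta_{i+1})$, one from $D(\gamma_{i+1})$) radially in very close to $G$, rotates them around the $S^1$ of push-off directions for $G$ past the angular positions occupied by $G_i$ and its partner, and then pushes them back out to the new angular slots. Since the closed product regions between each parallel copy of $G$ and $G$ itself are disjoint from all the framed tube guide curves (guaranteed in Construction \ref{tube construction}), this rotation sweeps out an embedded region and does not create intersections with $\lambda_i,\lambda_{i+1}$ or with the bulk of $A_1$. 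The attaching tubes $T$ following $f(\beta_{i+1})$ and $f(\gamma_{i+1})$ drag along, remaining embedded and remaining closer to $A_1$ than — or farther from, as dictated by the crossing conventions — the tubes of the $(\beta_i,\gamma_i)$ pair; one uses the freedom in the crossing data exactly as in the proof of Lemma \ref{tube sliding lemma} to keep everything disjoint. This produces an ambient isotopy of $M$ carrying the realization of the original $\mA$ to the realization of the reordered $\mA'$.

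I expect the main obstacle to be bookkeeping rather than a genuine difficulty: one must check that when the two copies of $G$ belonging to the moving double tube are pushed in and rotated, the \emph{Hopf-link clasp} structure of the double tube (the interlocking of the $D(\beta_{i+1})$ piece with the $\hat A'$ piece near $f(b_{i+1})$, and symmetrically near $f(g_{i+1})$, as in Figure 5.5) is preserved — i.e. the rotation does not accidentally unclasp or reclasp the double tube, which would change the realization. This is handled by performing the push-in/rotate/push-out move on the two copies of $G$ simultaneously and rigidly (as a single rigid motion in the normal $S^1$-family of $G$), so the clasp, being a local feature near $f(b_{i+1})$ and $f(g_{i+1})$ on the far side of the tubes from $G$, simply rides along undisturbed. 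Once this is set up, the conclusion is immediate: adjacent transpositions act trivially on the isotopy class, hence so does the full cyclic reordering. $\qed$
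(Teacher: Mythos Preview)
Your proposal is correct and takes essentially the same approach as the paper: reduce to swapping cyclically adjacent pairs and invoke the tube sliding operations of Definition \ref{tube sliding} / Lemma \ref{tube sliding lemma}. The paper's own proof is a single sentence (``Using the tube sliding operations any two adjacent pairs $(\beta_i,\gamma_i),(\beta_{i+1},\gamma_{i+1})$ can be permuted''), and your write-up is simply an expanded account of the geometry underlying the reordering-near-$z_0$ move (ii). One small point you gloss over: to get back into normal form after the reordering you must also let the endpoints $b_{i+1},g_{i+1}$ (or dually the curves $\beta_i,\gamma_i$) pass one another, which is move (iv) of Definition \ref{tube sliding}; this is permitted since $\beta_i\neq\gamma_{i+1}$, $\gamma_i\neq\beta_{i+1}$, etc., but it is worth naming explicitly rather than folding into ``freedom in the crossing data.''
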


\begin{proof} Using the tube sliding operations any two adjacent pairs $(\beta_i, \gamma_i), (\beta_{i+1},\gamma_{i+1}) $  can be permuted.\end{proof} 

\begin{remark} In \S8 we will show that the isotopy class is also independent of the choice of framings on the framed tube guide curves.  Thus equivalent normal form surfaces are isotopic, as stated in Theorem \ref{2-torsion}. \end{remark}

\section {From double tubes to single tubes}  \label{from}

In what follows $\mA$ is a tubed surface in $M$ with realization $A$ whose associated surface $A_1$ is an embedded 2-sphere.    In this section we show that $\mA$ can be transformed to $\mA'$ with isotopic realizations without changing $A_1$ so that, if $\pi_1(M)$ has no 2-torsion, then $\mA'$ has at most  one  double tube, which is homotopically inessential.  If $\pi_1(M)$ has 2-torsion, then $\mA' $ may additionally have finitely many double tubes representing distinct elements of order two.  This is done by appropriately replacing pairs of double tubes with  pairs of single tubes. 

\begin{lemma}  \label{basepoint}Suppose that $R_0$ and $R_1$ are spheres with common transverse sphere $G$ in the 4-manifold M.  If $R_0$ and $R_1$ coincide near $G$ and are homotopic in $M$, then they are homotopic via a homotopy whose support in $M$ is disjoint from $G$, in particular the homotopy is basepoint preserving.\end{lemma}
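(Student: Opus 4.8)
The plan is to start with an arbitrary free homotopy $H: S^2 \times I \to M$ from $R_0$ to $R_1$ and modify it so that its trace in $M$ avoids $G$ entirely; since $G$ meets each $R_t$ in the single point $z = R_t \cap G$ (which is fixed because $R_0 = R_1$ near $G$), the homotopy will then automatically be supported away from a neighborhood of $G$ and in particular will fix $z$ throughout, hence be basepoint preserving for the basepoint $z$. First I would put $H$ in general position with respect to $G$. Since $\dim(S^2 \times I) + \dim G = 3 + 2 = 5 < 6 = 2\dim M$, a generic $H$ meets $G$ in a $1$-manifold $\Gamma \subset S^2 \times I$. The boundary behavior is controlled: on $S^2 \times \{0,1\}$ the map $H$ restricts to the embeddings $R_0, R_1$, each of which meets $G$ transversally in one point, so $\partial \Gamma$ consists of exactly two points, one in each boundary sphere, and they are the preimages of $z$. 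Thus $\Gamma$ is a single properly embedded arc from $(z_0, 0)$ to $(z_0, 1)$ together with possibly some circles in the interior $S^2 \times (0,1)$.

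The key step is to remove the interior circles and then to remove the arc, using that $G$ has a trivial normal bundle and is a $2$-sphere. For an innermost circle $c \subset S^2 \times (0,1)$ bounding a disc $\delta$ there, $H(c)$ is a loop in $G \cong S^2$, hence bounds a disc $\delta'$ in $G$; one reglues $H$ along $\partial N(\delta)$ by replacing $H|_\delta$ with a map into a pushoff of $\delta'$ (a Whitney-type trick, using the trivial normal bundle of $G$ to make the pushoff disjoint from $G$), reducing the number of intersection circles. This is exactly the tubing-off argument used in the proof of Lemma~\ref{pi injectivity}, now applied one dimension up. After all circles are gone, $\Gamma$ is the single arc $a$. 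Take a product neighborhood $N(a) \cong D^3 \times I$ in $S^2 \times I$ meeting the boundary correctly; $H|_{N(a)}$ is a map to a neighborhood of the arc $H(a) \subset G$, and one wants to push it off $G$. Since the arc runs between the two copies of the fixed point $z$ and $G$ is simply connected with trivial normal bundle, $H|_{\partial N(a)}$ extends over $N(a)$ into $M \setminus G$ rel a neighborhood of the endpoints: one uses that $\pi_3(M, M\setminus G)$-type obstruction vanishes because a tubular neighborhood of $G$ is $S^2 \times D^2$ and the relevant relative homotopy/linking can be trivialized—concretely, reroute the trace of the homotopy through the boundary $\partial N(G) \cong S^2 \times S^1$ and then off, keeping $z$ fixed.

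The main obstacle I anticipate is the very last push-off: removing the arc $a$ is not purely formal the way removing the circles is, because the arc is essential (it is forced to be there by the transverse intersection at $z$), so one cannot simply cap it off. The right way to think about it is that the "excess" of $H$ near $G$ over the trivial behavior of $R_0 \equiv R_1$ near $G$ is measured by a map of a $3$-ball rel its boundary into the pair $(N(G), \partial N(G)) = (S^2 \times D^2, S^2 \times S^1)$, and this must be shown to be homotopically trivial so that the homotopy can be altered to avoid $\textrm{int}\, N(G)$; here the trivial normal bundle of $G$ and $\pi_3(S^2 \times D^2, S^2 \times S^1) $ considerations (equivalently, that the relevant obstruction lives in a group that vanishes, or can be absorbed into a small isotopy near $z$) do the job. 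Once $H$ is homotoped off $\textrm{int}\, N(G)$, its support lies in $M \setminus \textrm{int}\, N(G)$, which is disjoint from $G$, and since $R_0$ and $R_1$ already agree (with $z$ fixed) on the deleted neighborhood, the modified homotopy is the desired basepoint-preserving one.
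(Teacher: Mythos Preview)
Your overall strategy—make $H$ transverse to $G$, analyze the $1$-manifold $\Gamma=H^{-1}(G)$, and remove it piece by piece—is reasonable and overlaps with the paper's approach, but the execution has a genuine gap at exactly the point you flagged.

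First, two smaller issues. Your circle-removal is not quite right as written: replacing $H|_\delta$ by ``a map into a pushoff of $\delta'$'' does not match boundary values, since $H(\partial\delta)\subset G$ while the pushoff lies off $G$. What actually works is to take a slightly larger disc $\delta_+$, note that $H(\partial\delta_+)$ is a meridian of $G$ in $\partial N(G)$, and cap it off in $M\setminus G$ using Lemma~\ref{pi injectivity} (the meridian is null there because $R_0$ is dual to $G$). You gesture at this, so it is fixable. Second, you silently assume the arc $a$ is unknotted in $S^2\times I$ when you take a product neighborhood $D^3\times I$; this requires the $3$-dimensional light bulb theorem, which the paper invokes explicitly.

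The real problem is the last step. You claim the ``excess'' of $H$ near the arc is measured by an element of $\pi_3(S^2\times D^2, S^2\times S^1)$, but this is not the correct obstruction group. You are comparing $H|_{N(a)}$ to the standard constant-in-$t$ map $H^{std}$, but these agree only on the two end discs $D^2\times\partial I$, \emph{not} on all of $\partial N(a)$; on the lateral annulus $\partial D^2\times I$ the map $H$ need not be constant in $t$. So the difference is not a relative $3$-sphere, and the vanishing of $\pi_3(N(G),\partial N(G))$ is beside the point. Concretely, even after removing circles and straightening the arc, $H|_{(S^2\setminus D)\times I}$ gives only a \emph{free} homotopy in $M\setminus G$ between $R_0|_{S^2\setminus D}$ and $R_1|_{S^2\setminus D}$, not a rel-boundary one; the boundary trace $H|_{\partial D\times I}$ is a nontrivial loop of circles that you have not controlled.

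The paper handles this cleanly and differently: after unknotting the arc via the $3$-dimensional light bulb theorem and making $H$ basepoint-preserving, it arranges $H_t|_D$ to be independent of $t$ for a small disc $D$, then reduces the problem to showing that the sphere $R_3=(R_0\setminus D)\cup_\partial(R_1\setminus D)\subset M\setminus G$, which is null-homotopic in $M$, is also null-homotopic in $M\setminus G$. This is proved by a homology argument in the universal cover, using $H_1(\tilde G)=0$. That homological step (or an equivalent, such as carefully trivializing the loop of jets $t\mapsto dH_t|_{z_0}$ by a domain reparametrization) is the missing ingredient in your outline.
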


\begin{proof} Let $K:S^2\times I\to M$ be a homotopy from $R_0$ to $R_1$.  Let $E\subset R_0 $ be a disc containing $R_0\cap G$ that  coincides with a disc of $R_1$.  After an initial homotopy  making $K$ transverse to $G$ we see that $K^{-1} (G)$ is 1-manifold with  exactly one component $\psi$ that goes from $S^2\times 0$ to $S^2\times 1$.  By the 3-dimensional light bulb theorem $\psi$ is unknotted, so  after reparametrization of $S^2\times I$, we can assume that $K$ is a basepoint preserving homotopy.   Next, replace $K$ by another, also called $K$, such that there exists a disc $D\subset S^2$ where $K_t|D$ is independent of $t$ with $K_0(D)=\frac{1}2 E$.

To complete the proof it suffices to show that if a sphere $R_3$ in $M\setminus G$ is homotopically trivial in $M$ it is homotopically trivial in $M\setminus G$.  The relevant $R_3$ is obtained from $R_0\cup R_1$ by deleting $\frac{1}2 E$ from each and gluing along the resulting boundaries.  Let $\tilde M$ denote the universal covering of $M$ and $\tilde G$ the preimage of $G$.  By Lemma \ref{pi injectivity} the universal cover of $M\setminus G$ is $\tilde M\setminus \tilde G$, hence if $\tilde R_3$ denotes a lift of $R_3$ to $\tilde M$, it suffices to show that $\tilde R_3$ is homologously trivial in $\tilde M\setminus \tilde G$.  Let $Z\subset \tilde M$ be a 3-chain transverse to $\tilde G$ with $\partial Z=\tilde R_3$.  Since $H_1(\tilde G)=0, [Z\cap \partial N(\tilde G)]=[(Z\cap \tilde G)\times \partial D^2]=0\in   H_2(\partial N(\tilde G))$ and hence by replacing  $Z| N(\tilde G)$ by one supported in $\partial N(\tilde G)$ with the same boundary, there exists a 3-chain $Z'$ with $\partial Z'=\tilde R_3$ and $Z'\cap \tilde G=\emptyset$.  \end{proof}

\begin{remark} If $\mA$ is a tubed surface whose associated surface $A_1$ is an embedded sphere, then we can view double tubes as representing elements of $\pi_1(M)$ as follows.  First $A_1$ itself can be viewed as the basepoint for $ \pi_1(M)$.  Recall Lemma \ref{pi injectivity}. Next a double tube corresponds to a path $ \lambda_i$ in $M$ from some $b_i\in A_1$ to some $g_i\in A_1$.  Thus the double tube gives rise to an element of $\pi_1(M)$.\end{remark}

\begin{definition} \label{tube orientation} Let $\mA$ be a tubed surface with realization $A$.  Let $\kappa$ denote one of $\sigma_i, \beta_j$ or $\gamma_k$ and $y$ the corresponding $x_i, b_j$ or $g_k$.  If we compress the tube in $A$ that follows $f(\kappa)$ near $f(y)$ we obtain an immersed surface,  one component of which is an embedded 2-sphere $P=P(\kappa)$, that is homotopic to the transverse sphere $G$.  This sphere has an induced orientation that coincides with $A$ away from the compressing disc.  Define $\epsilon(P)=\epsilon(\kappa)=+1$ if $[P]=[G]$ and $-1$ otherwise. Here $M, A$ and $G$ are oriented so that  $<A, G>=+1$. 

Similarly compressing $A$ near a point of $\tau_i$ gives rise to an embedded surface one component of which is an embedded 2-sphere $P(\alpha_i)$ isotopic to two oppositely oriented copies of $G$ tubed together along $\alpha_i$.   \end{definition} 

%Orient $\alpha_i$ so that the sphere at the positive end of $f(\alpha_i)$ equals $[G]$. 

\begin{lemma} \label{signs} If $P=P(\kappa)$ is constructed as above and $D$ is the compressing disc that splits off $P$ and oriented to coincide with that of $P$, then $\epsilon(P)=<D',A>$  Here $D'$ is $D$ shrunk slightly to have boundary disjoint from $A$.  \qed\end{lemma}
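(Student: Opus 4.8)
The plan is to compare the two ways of detaching the sphere $P = P(\kappa)$ from the realization $A$, and to read off the sign $\epsilon(P)$ by an intersection-number computation in homology. Recall that $P$ is obtained by compressing, near $f(y)$, the tube in $A$ that follows $f(\kappa)$; the compressing disc is a small disc $D$ meeting the tube in its boundary circle, and $D$ is oriented to agree with the induced orientation on $P$. The first step is to make precise, via a local model, the picture near $f(y)$: the tube is a thin annulus attached to $A_1$ along the boundary of a little $2$-disc neighborhood of $f(y)$, and $D$ is a $2$-disc running ``across'' that attaching annulus, so that $D$ together with a parallel copy of the capped-off disc on $A_1$ bounds a $3$-ball in which $D$ meets $A$ transversely in a single point (the point $f(y)$ itself, up to pushoff). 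Thus $\langle D', A\rangle = \pm 1$, and the sign records on which side of $A$ the disc $D$ emerges relative to the coorientation of $A$.

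Next I would identify $\epsilon(P) = \langle D', A\rangle$ by a gluing/additivity argument in $H_2$. Cutting $A$ along $\partial D$ separates it (locally) into $P$ and the rest of the realization; reglued with two parallel copies of $D$ one recovers $[A] = [P] + [A \setminus P \text{ part}]$ in the relevant relative homology, where the capping discs carry the orientation of $D$ on one side and its reverse on the other. Now pair both sides with the class $[G]$ (or rather compute the local intersection of $D'$ with $A$): since $P$ is homotopic to $G$ and $\langle A, G\rangle = +1$ by our orientation convention, $[P] = [G]$ exactly when the capping disc $D$ is cooriented so that $\langle D', A\rangle = +1$, and $[P] = -[G]$ otherwise. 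This is precisely the assertion $\epsilon(P) = \langle D', A\rangle$.

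The main obstacle I anticipate is purely bookkeeping: keeping the orientation conventions consistent between (i) the induced orientation on $P$ inherited from $A$ away from $D$, (ii) the orientation chosen on $D$ to match $P$, and (iii) the ambient orientations of $M$, $A$, $G$ normalized by $\langle A, G\rangle = +1$. Once the local model near $f(y)$ is fixed — tube $=$ annulus, $D$ $=$ meridional disc of that annulus, pushed-off $D'$ meeting $A$ in one transverse point — everything is a single local sign computation, so the lemma should follow with essentially no global input beyond Lemma \ref{pi injectivity} (which guarantees the relevant homotopy/homology classes are well defined) and the orientation normalization already in force. I would present it as a short direct verification, deferring any finer naturality statements to the later sections where framings are discussed.
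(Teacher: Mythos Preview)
The paper supplies no proof beyond the \qed; the lemma is treated as an immediate local sign check. Your instinct that everything reduces to a single transverse intersection in a local model is correct, and your first and last paragraphs capture this well.

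The middle paragraph, however, takes a detour that does not work as written. Pairing the decomposition $[A]=[P]+[\text{rest}]$ with $[G]$ yields nothing about $\epsilon(P)$: since $G$ has trivial normal bundle, $\langle P,G\rangle=\epsilon(P)\langle G,G\rangle=0$. The clean route is instead to pair $[P]$ with $[A_1]$ (equivalently with $[A]$, as these differ only by multiples of $[G]$). From $[P]=\epsilon(P)[G]$ and the normalization $\langle A,G\rangle=\langle A_1,G\rangle=1$ one reads off $\langle P,A_1\rangle=\epsilon(P)$. Now compute $\langle P,A_1\rangle$ geometrically: by construction the tube along $f(\kappa)$ and the pushed-off copy of $G\setminus\inte(N(z))$ are disjoint from $A_1$, so the only contribution to $P\cap A_1$ is the single transverse point where the cap $D$ meets the local sheet of $A_1$; near that point $A$ and $A_1$ coincide, giving $\langle P,A_1\rangle=\langle D',A\rangle$. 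This is presumably the one-line verification the author intended. Your write-up would be fixed by replacing ``pair with $[G]$'' by ``pair $[P]$ with $[A_1]$'' and then carrying out exactly this geometric count.
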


\begin{lemma}  If $\mA$ is a tubed surface, then for all $i$, \ $[P(\beta_i)]=-[P(\gamma_i)]=\pm [G]\in H_2(M)$ and for all $j$, \   $[P(\alpha_j)]=0$.  \qed\end{lemma}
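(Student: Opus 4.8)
The plan is to unwind the definitions of $P(\beta_i)$, $P(\gamma_i)$, and $P(\alpha_j)$ from Definition \ref{tube orientation} and track homology classes carefully. First I would recall that $P(\kappa)$ is obtained by compressing the tube following $f(\kappa)$ near its endpoint $f(y)$; away from that compressing disc it agrees with the realization $A$, and by construction this tube connects a pushed-off parallel copy $G_i$ of $G$ (oriented as $A$ dictates near $b_i$, resp.\ $g_i$) to the body of $A_1$. So $P(\beta_i)$ is isotopic to a single pushed-off copy of $G$, with some orientation, while $P(\gamma_i)$ is the pushed-off copy associated to $g_i$. The crucial point is that in the double-tube construction (Construction \ref{tube construction}), the two copies of $G$ forming $D(\beta_i)$ and $D(\gamma_i)$, once connected up through the Hopf-link-times-$I$ double tube in a way compatible with the orientation of $A$, must carry \emph{opposite} orientations: the double tube is built from $\partial N(f(b_i))\cap \hat A'$ connecting to $\partial N(f(b_i))\cap D(\beta_i)$ and $\partial N(f(g_i))\cap \hat A'$ connecting to $\partial N(f(g_i))\cap D(\gamma_i)$, and for the resulting surface to inherit a coherent orientation from $A$ the two parallel $G$-copies enter with reversed coorientation along $\lambda_i$.

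Concretely, I would argue as follows. Consider the closed surface $D(\beta_i)\cup D(\gamma_i)\cup(\text{double tube})\cup(\text{relevant annular piece of }A_1)$ — i.e.\ the part of $A$ living in a neighborhood of $\lambda_i$ together with the two $G$-copies. This is a piece of the embedded surface $A$, hence coherently oriented. The double tube is a Hopf link $\times I$: its two boundary circles on the $A_1$ side bound the deleted region, and on the $G$-copy side one circle bounds in $D(\beta_i)$ and the other in $D(\gamma_i)$. Capping off both $D(\beta_i)$ and $D(\gamma_i)$ to their respective full $G$-spheres $G_\beta$, $G_\gamma$, and noting that the annular part of $A_1$ is null-homologous rel its two boundary circles (it is a product annulus in $A_1$), I get $[P(\beta_i)]+[P(\gamma_i)]=0$ in $H_2(M)$ — because the whole local piece, suitably capped, is the boundary of the $I$-worth of Hopf links, a 3-chain. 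That gives $[P(\gamma_i)]=-[P(\beta_i)]$. Since each $P(\kappa)$ is a pushed-off copy of $G$ (up to orientation), $[P(\beta_i)]=\epsilon(\beta_i)[G]=\pm[G]$, establishing the first assertion. For $P(\alpha_j)$: by Definition \ref{tube orientation} it is isotopic to two \emph{oppositely oriented} copies of $G$ tubed together along $\alpha_j$, so $[P(\alpha_j)]=[G]+(-[G])=0\in H_2(M)$ directly.

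The main obstacle I expect is the careful bookkeeping of orientations in the double-tube construction — verifying rigorously that the two $G$-copies $D(\beta_i)$ and $D(\gamma_i)$ necessarily enter with opposite coorientation when $A$ is given a global orientation, rather than the same one. This is where Lemma \ref{signs} is the right tool: it identifies $\epsilon(P(\kappa)) = \langle D', A\rangle$ where $D'$ is the (slightly shrunk) compressing disc. So the cleanest route is to exhibit the compressing discs $D_\beta$ for $P(\beta_i)$ and $D_\gamma$ for $P(\gamma_i)$ as lying on opposite sides of the double tube, with coorientations forced by the product structure $\text{Hopf}\times I$ to satisfy $\langle D'_\beta, A\rangle = -\langle D'_\gamma, A\rangle$; then Lemma \ref{signs} immediately yields $\epsilon(\beta_i) = -\epsilon(\gamma_i)$, hence $[P(\beta_i)] = -[P(\gamma_i)] = \pm[G]$, and everything else is formal. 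The $\alpha_j$ case needs essentially no work beyond quoting Definition \ref{tube orientation}. I would keep the write-up short, as the statement carries a \qed in the excerpt indicating the authors regard it as routine.
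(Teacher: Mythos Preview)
Your proposal is essentially correct and is \emph{more} than what the paper provides: note the \qed\ immediately following the statement --- the author gives no proof at all and regards the lemma as an immediate consequence of Definition~\ref{tube orientation} and Construction~\ref{tube construction}. Your reading of the $P(\alpha_j)$ case is exactly right (it is literally stated in Definition~\ref{tube orientation}), and your route to $[P(\beta_i)]=-[P(\gamma_i)]$ via Lemma~\ref{signs} is the natural one and almost certainly what the author has in mind.

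One small comment: your ``3-chain'' argument (argument 1) is a bit muddled as written. The two circles on $\hat A'$ at $f(b_i)$ and $f(g_i)$ do not cobound an ``annular piece of $A_1$'' in any canonical way, and the $I$-worth of Hopf links is a surface, not a 3-chain, so the sentence ``the whole local piece, suitably capped, is the boundary of the $I$-worth of Hopf links'' does not parse. The clean version of this homological argument is: compress $A$ along both $T(\beta_i)$ and $T(\gamma_i)$ to obtain $P(\beta_i)\sqcup P(\gamma_i)\sqcup A''$, and then observe that $A''$ differs from the realization $A_{-i}$ (with the $(\beta_i,\gamma_i,\lambda_i)$ data deleted) only by replacing two small discs in $\hat A'$ by two discs of the form (tube of the double tube)$\cup$(cap), each of which lies in the contractible set $N(\lambda_i)$; hence $[A'']=[A_{-i}]$ and the difference $[A]-[A_{-i}]=[P(\beta_i)]+[P(\gamma_i)]$ lies in the image of $H_2(N(\lambda_i)\cup N(G))\cong\BZ[G]$ and is detected by intersection with $A_1$. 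But this is really circular unless you already know $\epsilon(\beta_i)=-\epsilon(\gamma_i)$, so your second approach via Lemma~\ref{signs} is the one to keep. There the point you want to make precise is that the compressing discs $D_\beta, D_\gamma$ are meridian discs to the tubes $T(\beta_i), T(\gamma_i)$, each meeting $A_1$ once, and the orientations they inherit from $A$ (traced through tube~A and tube~B of the double tube respectively, which connect to $\hat A'$ at $g_i$ and $b_i$) are forced by the Hopf linking of the two tube boundaries to give opposite local intersection signs with $A_1$. That is the entire content, and your write-up should indeed be short.
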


\begin{remark}  Up to isotopy there are four framings on the framed embedded path $\lambda_i$, hence four ways of constructing a double tube from $\lambda_i$.  See Figure 5.5 and Remark \ref{two vs four}.  Note that two give $1=\epsilon(P(\beta_i))=-\epsilon(P(\gamma_i))$ while two give $-1=\epsilon(P(\beta_i))=-\epsilon(P(\gamma_i))$.\end{remark}

% As a check, Note that compressing one of the tubes in the double tube about \lambda_i cuts off T(\beta_i) and the other tube cuts off T(\gamma_i) and if done at appropriate disjoint spots, then T(\beta_k)\cap T(\gamma_i)=2 pts of opposite sign.  These points individually correspond to the intersections of the previous lemma, hence are of opposite sign.

\noindent\textbf{Sign Convention:}  By switching $\beta_i$ and $\gamma_i$, if necessary, we can assume that $\epsilon(\beta_i)=-1$ and $\epsilon(\gamma_i)=+1$.  Orient $\beta_i$ to point from $z$ to $b_i$, $\lambda_i$ to point from $f(b_i)$ to $f(g_i)$ and $\gamma_i$ to point from $g_i$ to $z$.

\vskip 8pt

We next calculate $[A]\in \pi_2(M)$.  Since $A$ and $G$ are 2-spheres, each distinct element of $\pi_1(M)$ gives rise to a distinct pair of geometrically dual spheres in $\tilde M$, the universal cover of $M$, that projects to the pair $(A,G)$.  Thus, $\pi_2(M)=\pi_2(\tilde M)=H_2(\tilde M)$ which contains the group ring  $\mH= H_2(G)\pi_1(M)$ as a submodule.  
%so in fact H_2(G)\pi_1(M) has no relations in H_2(\tildeM), the latter is a pi_1(M) module.
Since $\mA$ is a tubed surface, $[A]$ lies in the coset $\mH+[A_1]$.    Since $\pi_1(A_1)=0$, each $\lambda_i$ determines a well defined element, $[\lambda_i]\in \pi_1(M)$.  With the above conventions the triple $(\beta_i, \lambda_i,\gamma_i)$ gives rise to the element $ [G][\lambda_i]-[G][\lambda_i]^{-1}\in \mH$.  On the other hand, each $\alpha_j, \tau_j$ gives rise to the trivial element.  We therefore have:

\begin{lemma} \label{homology formula} $[A]=[A_1]+\sum_{i=1}^s [G][\lambda_i]-[G][\lambda_i]^{-1}\in \pi_2(M)$.\qed\end{lemma}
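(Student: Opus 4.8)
The plan is to compute the class $[A]\in\pi_2(M)$ directly from the realization construction of Definition \ref{tube construction}, by treating each piece of $A$ as a singular $2$-chain in $\tilde M$ (or a $2$-sphere in $M$) and summing the contributions. First I would fix the identification $\pi_2(M)=\pi_2(\tilde M)=H_2(\tilde M)$ and recall that, since $A_1$ and $G$ are $2$-spheres meeting once, $[A_1]$ and $[G]$ together generate geometrically dual classes, so the submodule $\mH=H_2(G)\pi_1(M)$ of $H_2(\tilde M)$ makes sense and $[A]-[A_1]$ will be shown to lie in $\mH$ (this is the assertion that $[A]$ lies in the coset $\mH+[A_1]$, already recorded just before the statement).

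Next I would decompose $A$ according to its construction: it is $A_1$ with small discs removed and replaced by (i) the discs $D(\sigma_i)$, (ii) the spheres $P(\alpha_i)$ together with single tubes $T(\tau_i)$, and (iii) the double-tube configurations built from $(\beta_i,\gamma_i,\lambda_i)$, all of these extra pieces lying a bounded distance from $A_1$ and $G$ by Remark \ref{tubes unknotted}. Since homology is additive over such a connect-sum-type decomposition, $[A]=[A_1]+\sum(\text{contribution of each tube piece})$. For a single $D(\sigma_i)$: compressing as in Definition \ref{tube orientation} splits off $P(\sigma_i)$, but $\sigma_i$ has no framed tube and the contribution to $[A]-[A_1]$ from replacing a trivial disc by $D(\sigma_i)$ is exactly $[P(\sigma_i)]$ — however in the present lemma there are no $\sigma$ curves implicitly needed (they only appear transiently; a tubed surface whose associated surface is embedded and which we care about has the $\sigma_i$ paired off), so I would instead note the honest statement: each $\sigma$-contribution, if present, is $\epsilon(\sigma_i)[G]$, but for the realization relevant to Lemma \ref{homology formula} we may assume (by the earlier normal-form discussion and Remark ii after Theorem \ref{shadow isotopy}) there are none. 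For each $\alpha_j$: $P(\alpha_j)$ is two oppositely oriented parallel copies of $G$ tubed along $\alpha_j$, so $[P(\alpha_j)]=[G]-[G]=0$ in $H_2(\tilde M)$ regardless of which lift, and attaching the single tube $T(\tau_j)$ is an isotopy-type modification that changes nothing homologically; hence each $\alpha_j,\tau_j$ contributes $0$, which is the content of the already-quoted Lemma.

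The main computation is the double-tube contribution. Here I would argue that the double-tube piece attached along $(\beta_i,\lambda_i,\gamma_i)$ is homologous, after the compressions of Definition \ref{tube orientation}, to the disjoint union $P(\beta_i)\sqcup P(\gamma_i)$ together with a tube following $\lambda_i$ connecting them; lifting to $\tilde M$, the base sphere $P(\beta_i)$ lies over the basepoint and, by the Sign Convention, $[P(\beta_i)]=-[G]$ in the copy of $H_2(G)$ over $1\in\pi_1(M)$ after orientation, while $P(\gamma_i)$ lies over $[\lambda_i]$ (the tube along $\lambda_i$ carries it there) with $[P(\gamma_i)]=+[G]$; so the pair contributes $-[G]\cdot 1 + [G]\cdot[\lambda_i]$ — wait, I must be careful with which end sits over $1$. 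Choosing (per the Sign Convention) $\beta_i$ to run from $z$ to $b_i$, $\lambda_i$ from $f(b_i)$ to $f(g_i)$, and $\gamma_i$ from $g_i$ back to $z$, the sphere $P(\beta_i)$ sits over the identity coset and $P(\gamma_i)$, reached by traversing $\lambda_i$, sits over $[\lambda_i]$; combined with $\epsilon(\beta_i)=-1$, $\epsilon(\gamma_i)=+1$ via Lemma \ref{signs}, the net contribution is $[G][\lambda_i]-[G][\lambda_i]^{-1}$ once one also accounts for the fact that the basepoint of $A$ is $A_1$ and the relevant loop through the double tube is $\beta_i\lambda_i\gamma_i$ traversed and back, giving the $[\lambda_i]$ and $[\lambda_i]^{-1}$ conjugate terms. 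Summing over $i=1,\dots,s$ yields $[A]=[A_1]+\sum_{i=1}^{s}\big([G][\lambda_i]-[G][\lambda_i]^{-1}\big)$.

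The step I expect to be the real obstacle is bookkeeping the group-ring element precisely: getting the right coset ($[\lambda_i]$ versus $[\lambda_i]^{-1}$ versus both), the right sign, and making sure the two ways of traversing the double tube genuinely produce the $-[G][\lambda_i]^{-1}$ term rather than, say, $-[G][\lambda_i]$. I would resolve this by working in $\tilde M$ with an explicit choice of lift of $A_1$ as basepoint, lifting the double-tube configuration, and reading off which components of $\tilde G$ the two copies of $G$ are parallel to; the Sign Convention plus Lemma \ref{signs} then pin down the signs, and the orientation-reversal inherent in one of the two tube strands of a double tube (a Hopf band $\times I$, as in Construction \ref{tube construction}) is what converts $[\lambda_i]$ into $[\lambda_i]^{-1}$ with the opposite sign. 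Everything else — additivity of homology over the decomposition, vanishing of the $\alpha,\tau$ contributions, and the reduction to the case with no $\sigma$ curves — is routine given the earlier lemmas.
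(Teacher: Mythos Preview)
Your proposal is correct and follows essentially the same approach as the paper. In fact the paper gives no separate proof at all: the lemma is marked with a \qed\ immediately after its statement, and the entire argument is the short paragraph preceding it, which asserts (exactly as you do) that each triple $(\beta_i,\lambda_i,\gamma_i)$ contributes $[G][\lambda_i]-[G][\lambda_i]^{-1}$ while each $(\alpha_j,\tau_j)$ contributes $0$. Your discussion of the $\sigma$-curves is slightly more careful than the paper's (which simply does not mention them here), and your instinct to set them aside via Remark~ii after Theorem~\ref{shadow isotopy} is appropriate, since the lemma is only ever applied when $A_1$ is embedded.
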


\begin{remarks} \label{pairing} i)  If $A$ is homotopic to $A_1$, then $ \sum_{i=1}^s [G][\lambda_i]-[G][\lambda_i]^{-1}=0$.  Therefore, if $[\lambda_i]=1$ whenever $[\lambda_i]^2=1$ holds, then we can reorder the $\lambda_i$'s so that $[\lambda_1]=[\lambda_2]^{-1}, \cdots, [\lambda_{2p-1}]=[\lambda_{2p}]^{-1}$ and $[\lambda_s]=1$ if $s=2p+1$.

ii)  In the 2-torsion case we can reorder the $\lambda_i$'s so that $[\lambda_1]=[\lambda_2]^{-1}, \cdots, [\lambda_{2p-1}]=[\lambda_{2p}]^{-1}$,  and $[\lambda_{2p+1}], [\lambda_{2p+2}], \cdots, [\lambda_s]$ represent distinct 2-torsion elements of $\pi_1(M)$, though possibly $[\lambda_s]=1$.  Distinctness follows by observing that a 2-torsion element is its own inverse, hence all but at most one of the $\lambda_i$'s representing the same 2-torsion element lies within the first $2p$ elements.   \end{remarks}
\vskip 8pt

The following is the main result of this section.   It's the crucial point in the proof of Theorem \ref{main} where the no 2-torsion condition is used.  It also essentially uses that $A_1$ is a 2-sphere.  

\begin{proposition} \label{double to single} Let $\mA$ be a tubed surface in the 4-manifold $M$ whose associated surface $A_1$ is an embedded sphere homotopic to the realization $A$ of $\mA$.  Let $G$ denote the transverse sphere to $A_1$.  Then via an isotopy supported away from $G$, $A$ is isotopic to the realization $A'$ of a tubed surface $\mA'$ with associated surface $A_1$ such that if $\pi_1(M)$ has no 2-torsion, then $\mA'$ has at most one double tube, in which case the unique double tube is homotopically trivial.  If $\pi_1(M)$ has 2-torsion, then $\mA'$ additionally may   have $n\ge 0$  double tubes, each representing distinct  2-torsion elements of $\pi_1(M)$.
\end{proposition}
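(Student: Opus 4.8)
The plan is to reduce everything to a single local move, replacing a pair of ``inverse'' double tubes by single tubes, surrounded by group-ring bookkeeping that explains why, in the absence of $2$-torsion, the move clears all but at most one double tube. First I would normalize: by Lemma \ref{basepoint} the homotopy from $A$ to $A_1$ may be taken supported away from $G$, hence basepoint preserving, so $[A]=[A_1]$ in $\pi_2(M)$. Feeding this into Lemma \ref{homology formula} gives
\[ \sum_{i=1}^{s}\big([\lambda_i]-[\lambda_i]^{-1}\big)=0 \quad\text{in}\ \mH\cong\BZ[\pi_1(M)] . \]
Comparing coefficients at each $g$ with $g\ne g^{-1}$ shows that the double tubes with $[\lambda_i]=g$ are as numerous as those with $[\lambda_i]=g^{-1}$, so (as in Remarks \ref{pairing}) after reindexing $[\lambda_1]=[\lambda_2]^{-1},\dots,[\lambda_{2p-1}]=[\lambda_{2p}]^{-1}$ while $[\lambda_{2p+1}],\dots,[\lambda_s]$ are distinct self-inverse elements, each $2$-torsion or trivial. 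If $\pi_1(M)$ has no $2$-torsion, $g=g^{-1}$ forces $g=1$, so every leftover $[\lambda_i]$ is trivial, and since two trivial double tubes also satisfy $[\lambda_i][\lambda_j]=1$ they may be absorbed in pairs, leaving at most one double tube, homotopically trivial.

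The geometric heart is the claim: if $\mA$ has double tubes along framed paths $\lambda,\lambda'$ with $[\lambda][\lambda']=1$, then the realization of $\mA$ is ambiently isotopic, by an isotopy fixing $A_1$ and supported away from $G$ and the remaining tubes, to the realization of a tubed surface in which this pair of double tubes is replaced by single tubes. Granting this, Proposition \ref{double to single} follows by applying the claim to the $p$ inverse pairs, and in the torsion-free case also to the trivial leftovers until at most one survives; what remains are single tubes together with, in the torsion case, the double tubes along $\lambda_{2p+1},\dots,\lambda_s$ representing distinct $2$-torsion elements.

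To prove the claim I would work in three stages. First, using tube-sliding moves (Lemma \ref{tube sliding lemma}) bring the two double tubes and their guide arcs $\beta,\gamma,\beta',\gamma'$ side by side near $z_0$; pushing those arcs slightly off $A_1$ turns $f(\beta)\cdot\lambda\cdot f(\gamma)\cdot f(\beta')\cdot\lambda'\cdot f(\gamma')$ into a loop $\ell\subset M\setminus(A_1\cup G)$ representing $[\lambda][\lambda']=1$; by the $\pi_1$-injectivity of Lemma \ref{pi injectivity} this loop is null-homotopic in $M\setminus(A_1\cup G)$, so by general position it bounds an embedded disc $W$ there whose interior is disjoint from $A_1\cup G$ and from the other framed tube guide curves. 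Second, use $W$ as a Whitney-type disc, realized by a finger move followed by a tube-locus-free Whitney move and hence by elementary isotopies (Lemma \ref{elementary}), with Lemma \ref{light bulb trick} invoked so that the tubes stay unknotted and unlinked as in Remark \ref{tubes unknotted}, to homotope $\lambda'$ onto a reversed parallel copy of $\lambda$; a framing discrepancy of $W$ costs at most a twist along one of the resulting tubes, which is immaterial up to isotopy. Third, once $\lambda'$ runs antiparallel to $\lambda$ with parallel guide arcs, the configuration is a standard local picture of two double tubes along a trivial pair, and a direct check shows it is isotopic to a configuration of single tubes, the oppositely oriented copies of $G$ cancelling in pairs.

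The main obstacle is the second and third stages: carrying out the merge as a genuine ambient isotopy, embedded at every instant, while controlling the framings so the output really is a union of unknotted, unlinked single tubes in the sense of Remark \ref{tubes unknotted}, rather than something knotted into or linked with the rest of the realization. The $2$-torsion hypothesis itself is used only in the bookkeeping above: it is exactly what forces every leftover double tube to be trivial once the pairing has been carried out.
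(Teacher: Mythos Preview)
Your group-ring bookkeeping is correct and matches the paper exactly: the hypothesis $[A]=[A_1]$ in $\pi_2(M)$ feeds into Lemma~\ref{homology formula} and Remarks~\ref{pairing} to pair up the $[\lambda_i]$'s, leaving in the torsion-free case at most one trivial leftover. (Invoking Lemma~\ref{basepoint} here is unnecessary, since homotopy of $A$ to $A_1$ is already hypothesized.)

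The geometric part, however, has two genuine gaps. In your Stage~2 you write ``use $W$ as a Whitney-type disc, realized by a finger move followed by a tube-locus-free Whitney move and hence by elementary isotopies.'' This misapplies the machinery: the finger and Whitney moves of Definition~\ref{elementary isotopy}~(c),(d) act on the \emph{associated surface} $A_1$---they create or cancel double points of $A_1$---and have nothing to do with moving the framed tube guide paths $\lambda,\lambda'$. Here $A_1$ is embedded and stays embedded throughout; what you actually need is to isotope the \emph{arc} $\lambda'$ to run antiparallel to $\lambda$. The paper does this directly and more simply: since $\pi_1(M\setminus(A_1\cup G))\to\pi_1(M)$ is an isomorphism (Lemma~\ref{pi injectivity}) and $[\lambda_1]=[\lambda_2]^{-1}$, the arcs are homotopic rel their endpoint conditions, and \emph{homotopy implies isotopy for arcs in a $4$-manifold} by general position. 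This immediately yields an embedded square $D$ with opposite edges $\lambda_1,\lambda_2$ and the other two edges on $A_1$. No spanning disc $W$ for a concatenated loop is needed, and no finger/Whitney move is involved. The framing discrepancy is then handled by an explicit isotopy (the paper's Figure~6.3), and tube guide curves crossing $D\cap A_1$ are cleared by tube-sliding; you do not address either of these.

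Your Stage~3 is where the actual content lives, and ``a direct check'' does not suffice. Once the two double tubes sit over the square $D$, the paper exhibits a specific, somewhat intricate ambient isotopy supported in $N(D)$ (Figure~6.4) that converts the pair of double tubes into a pair of single tubes. This is not a cancellation of $G$-copies---the four parallel copies of $G$ present in the two double tubes are reorganized into the two $P(\alpha)$ spheres of the resulting single tubes---so your phrase ``the oppositely oriented copies of $G$ cancelling in pairs'' misdescribes what happens. The explicit isotopy is the heart of the proposition and cannot be waved away.
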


\begin{proof} By Remark \ref{pairing} we can reorder the $\lambda_i$'s so that $[\lambda_1]=[\lambda_2]^{-1}, \cdots, [\lambda_{2p-1}]=[\lambda_{2p}]^{-1}$ and $[\lambda_{2p+1}], [\lambda_{2p+2}], \cdots, [\lambda_s]$ are distinct 2-torsion elements of $\pi_1(M)$ with possibly $[\lambda_s]=1$.  Thus if $\pi_1(M)$ has no 2-torsion, then $s=2p$ or $2p+1$ in which case all but at most one of the double tubes are paired up.  The remaining one, if it exists,  is homotopically trivial.  We will show that an isotopy of $A$ transforms $\mA$ to $\mA'$ with the same $A_1$ but with the double tubes $\lambda_1, \lambda_2$ eliminated.    The proposition then follows by induction on the number of double tubes.   

We now  decipher Figure 6.1, Figure 6.2 b), Figure 6.3 and 6.4 which are crucial to the proof. Each figure describes the intersection of a properly embedded planar surface with a 4-ball  $B^3\times I$.  In all cases the surface consists of two components one called $F$ which lies entirely in the present $B^3 \times \frac{1} 2$ and the other $E$.  In Figures 6.1 a),b) and 6.3 a),b) $F$ is an annulus which looks like a square with a tube attached, while in Figures 6.2 b) and  6.4 a),b),c)  $F$ looks like a square with a pair of pants attached, hence is a pair of pants.  In  Figures 6.1 a),b) and Figures 6.3 a),b) $E$ is an annulus comprised of a square $E_1$ with a tube $U$ attached.  Here $E_1$ is of the form $\alpha \times I$, where $\alpha\times \frac{1} 2\subset B^3 \times \frac{1} 2$ is a properly embedded horizontal arc.  If $x=E_1\cap F$, then $U$  follows an arc $\beta\subset F$ from $x$ to some $y\in F\cap B^3 \times \frac{1} 2$.  Thus $E$ is obtained by  attaching $U$ to $E_1\setminus \inte(D(x))$ along $\partial D(x)$, where $D(x)$ is a neighborhood of $x$ in $E_1$.  Note that $U$ is defined similarly to part  of $T(\sigma_i)$ defined in Construction \ref{tube construction}.  In both Figure 6.1 and Figure 6.3 the dark, possibly dotted, lines indicate $E\cap B^3 \times \frac{1} 2$.  The shaded regions indicate the parts of $U$ which lie in the future or past.  Note that $E$ is symmetric with respect to reflection in the $I$-factor.  The $E$ of Figure 6.2 b) is similar, except that it is a pair of pants consisting of two tubes attached to $E_1$.

We now describe the pairs of pants $E$ of Figures 6.4 a),b),c), henceforth respectively denoted $E_a, E_b, E_c$.  Let $\delta_a,\delta_b, \delta_c$  denote properly embedded  horizontal arcs in the $F$'s of Figures 6.4 a),b),c), each parallel to the bottom edge of the square referred to in the previous paragraph.   Let $U_a, U_b, U_c$ denote properly embedded tubes in $B^3\times I$, following the arcs $\delta_a, \delta_b, \delta_c$.   Up to a small isotopy of $E_a$ near the junction of $U_a$ and $t_a$, our $E_a, E_b, E_c$ are obtained by respectively attaching  tubes $T_a, T_b, T_c$ between $U_a, U_b, U_c$ and $E_1$.  These tubes follow arcs $t_a, t_b, t_c$ respectively from $U_a, U_b, U_c$ to $E_1$.    Here  $t_a, t_b \subset B^3 \times \frac{1} 2$.   We again follow the convention that dark, possibly dotted, lines indicate the intersection of $E$ with $B^3\times I$.  Here the shaded regions indicate the parts of the joined $U_a\cup T_a, U_b\cup T_b, U_c\cup T_c$ which lie either in the future or past.  Note that both $E_a$ and $E_b$ are symmetric with respect to reflection in the $I$ factor.    $E_c$ is similarly constructed.  Here the projection of $t_c$ to $B^3 \times \frac{1} 2$ equals $t_b$.  It coincides with $t_b$ near $U_b$ and thereafter lies in the future.  Similarly the projection of $T_c$ to $B^3 \times \frac{1} 2$ equals $T_b$.  It coincides with $T_b$ near $U_b$ and away from a small neighborhood of the coincidence locus lies completely in the future.

We resume the proof of the Proposition.  To start with we consider another model for a  double tube as shown in Figure 6.1.  See Remarks \ref{figure 6.1} i) for further explanation of this figure. 
%XXXX

%XXXX

\begin{figure}[ht]
\includegraphics[scale=0.60]{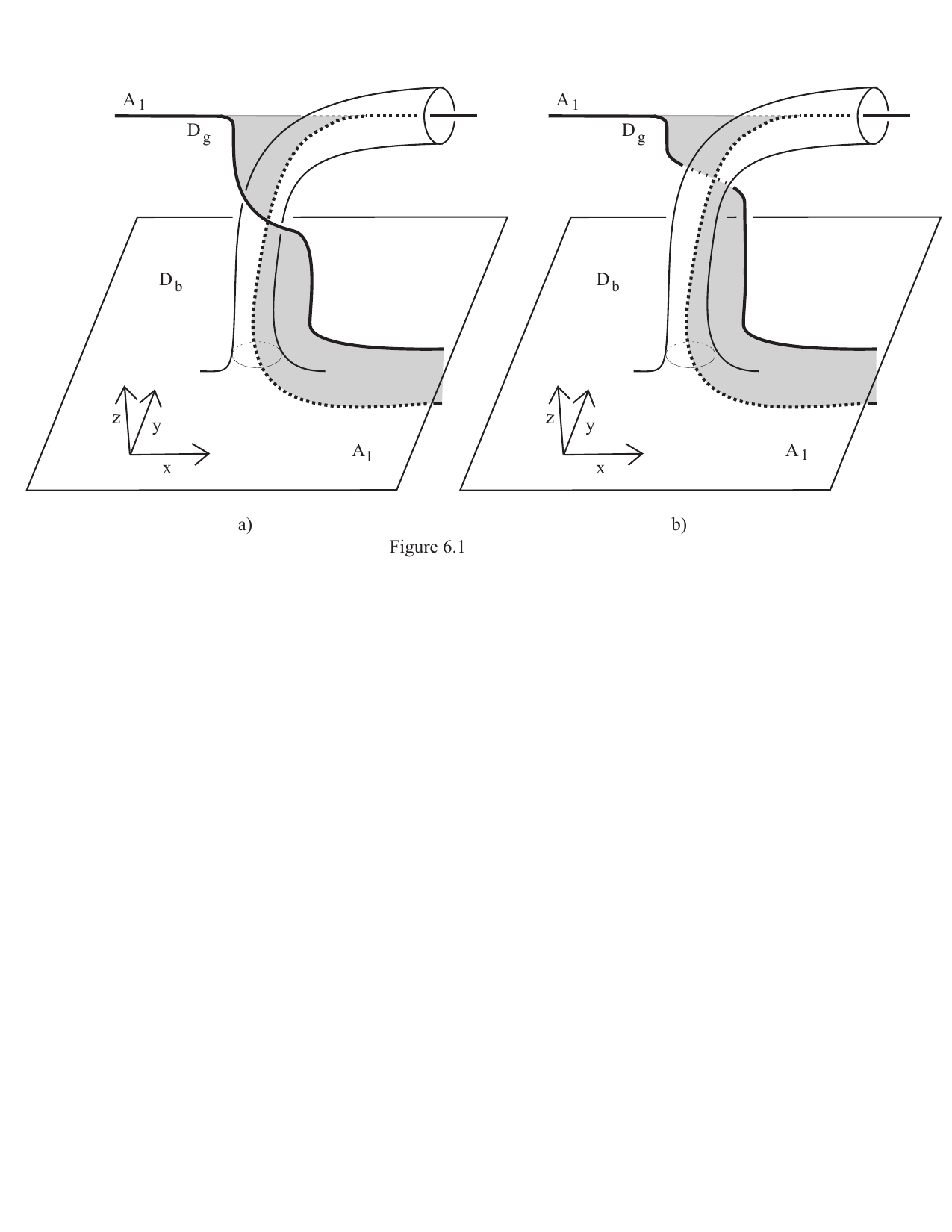}
\end{figure}

\begin{remarks}  \label{figure 6.1} i)  Fixing an orientation on $M$ and $G$ our sign conventions determine an orientation on $A_1$ and hence $A\cap A_1$ as well as the orientations on $A$ near the ends of a double tube associated to $\lambda_i$.  The orientation near $g_i$  is induced from $A_1$ and the orientation on $T(\gamma_i)$ is determined from the fact that $\epsilon(\gamma_i)=1$.   Similarly, for the $b_i$ end of the double tube, except that $\epsilon(\beta_i)=-1$.  Therefore, by Remark \ref{two vs four},  up to isotopy, there are two rather than four, ways of constructing a double tube associated to  $\lambda_i$.  Representatives are shown in Figures 6.1 a) and b).  

ii) Figures 6.1 a) and b) each show the projection into the $x,y,z$ plane of a neighborhood of  a double tube associated to  $\lambda$.  This consists of tubes emanating from two discs $D_b$ and $D_g$ lying in $A_1$ that are respectively neighborhoods of $f(b)$ and $f(g)$, where $\lambda$ connects $f(b)$ to $f(g)$.    In the figure, $D_b$ lies in the $x,y$ plane and $D_g$ lies in the $x,t$ plane.  Here $D_g$ corresponds to the horizontal lines at the top of the subfigures.  The shaded regions are projections of the tube from $D_g$ into the $x,y,z$ plane.  The preimage of the interior of each shaded region consists of two components, one in the future and one in the past.  Except where it is twisted, this tube lies in the $x,z,t$ plane.  Its intersection with the $x,y,z$ plane is the union of the  thick solid and dashed lines.  

\end{remarks} 

%iii) If $D_b$ is oriented by vectors $(i,j)$, $D_g$ is oriented by $(i,-l)$ and $\lambda$ goes from $D_b$ to $D_g$, then $ \epsilon(\lambda)=+1$, consistent with the convention stated above.\end{remarks}

%Since $\pi_1(SO(3))=\BZ_2$, up to isotopy, t

%iv)  If a double tube \mT is determined by the framed embedded path \mF=(v_1(t), v_2(t), v_3(t)) then another framed embedded path \mF'=(v_1'(t), v_2'(t), v_3'(t)) with \mF=\mF' near t=0 and (v_1'(t), v_2'(t), v_3'(t))=(v_1(t), -v_2(t), -v_3(t)) near t=1 will give a double tube T' with \epsilon(T)=-\epsilon(T').

%iv)  A given \lambda can determine four different double tubes up to isotopy within N(\lambda).  The orientation convention on \lambda restricts  this to the two double tubes as shown in Figure 6.1.\end{remarks}

Since $A_1$ has the transverse 2-sphere $G$ it follows from Lemma \ref{pi injectivity} that the induced map $\pi_1(M\setminus (A_1\cup G))\to \pi_1(M)$ is an isomorphism.  Since homotopy implies isotopy we can isotope $\lambda_1$ and $\lambda_2$ to be anti-parallel.  I.e. there exists an embedded square  $D$ with opposite edges respectively on $A_1$ and $\lambda_1$ and $\lambda_2$.  See Figure 6.2 a).  Here $E_1$ and $F_1$ denote the components of $A_1\cap N(D)$.  Note that $F_1$ lies in the present and $E_1$ is of the form $\alpha\times I\subset B^3\times I$ where $\alpha\times \frac{1} 2$ is a horizontal arc in the present.

We now show that it suffices to assume that $A\cap N(D)$ appears as in Figure 6.2 b).  Figure 6.3 shows how to isotope the surface to effect a change of the framed embedded path corresponding to the non trivial element of $\pi_1(SO(3))$.    Thus we can assume that $A$  appears near $\lambda_1$ and $\lambda_2$  as depicted in Figure 6.2 b).    Now $A\cap N(D)$ may fail to appear as in Figure 6.2 b) because the images of tube guide paths may cross the interior of $D\cap A_1$, however by doing tube sliding moves in a manner similar to those in the proof of Theorem \ref{shadow isotopy} we can clear such curves from the neighborhood.

%There $N(\lambda_1$) (resp. $N(\lambda_2)$) has the double tube model of Figure 6.1 a) (resp. Figure 6.1 b)). 
  
\begin{figure}[ht]
\includegraphics[scale=0.60]{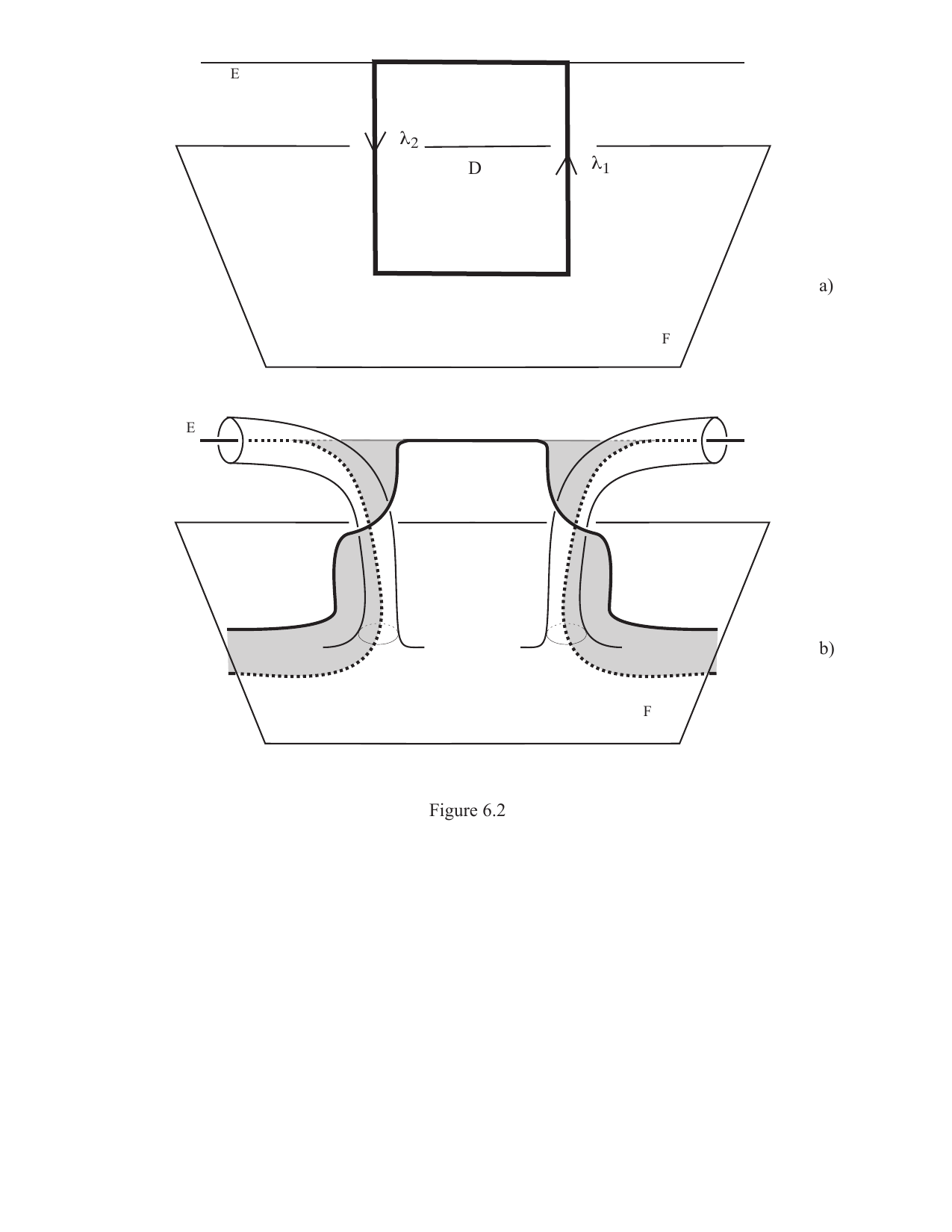}
\end{figure}
 
\begin{figure}[h]
\includegraphics[scale=0.60]{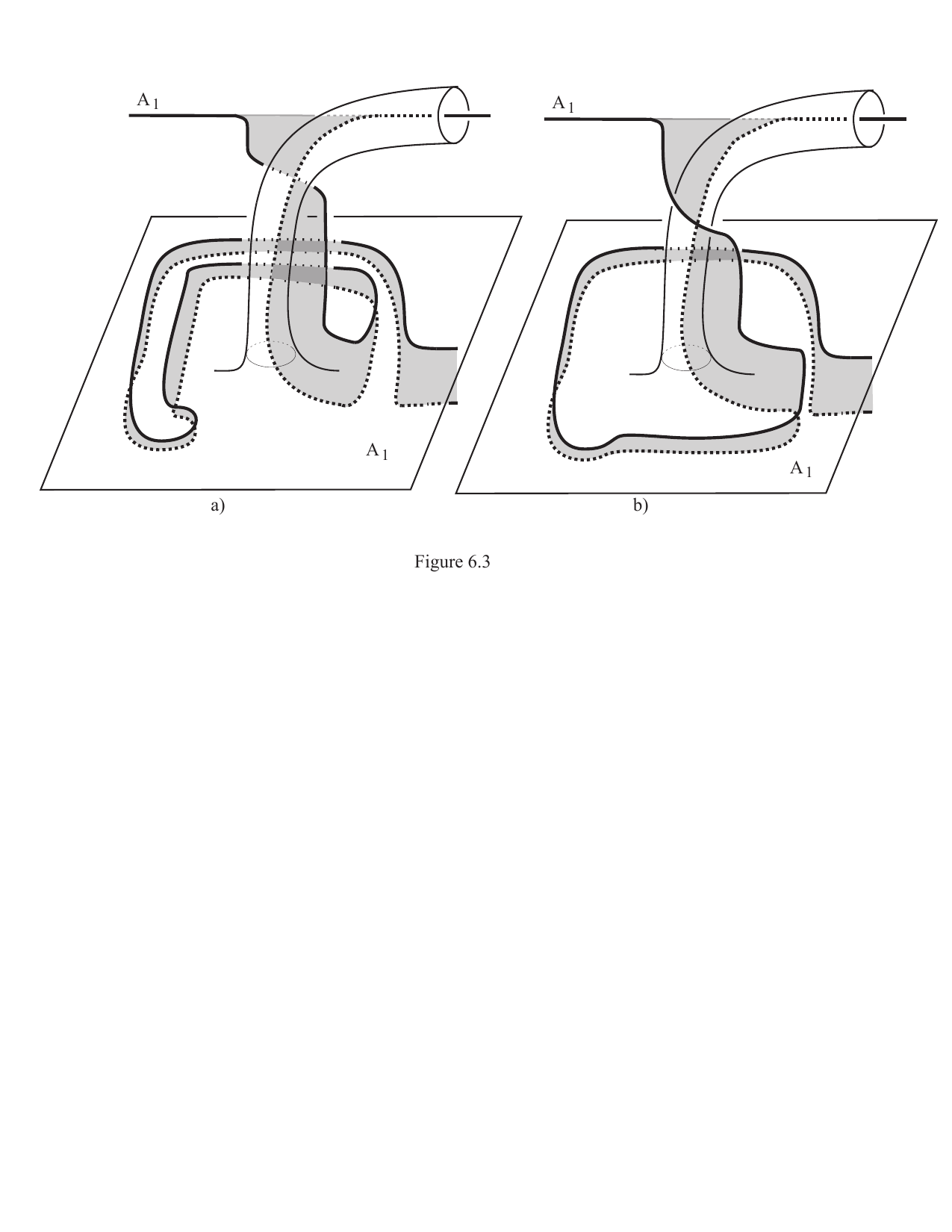}
\end{figure}

\begin{figure}[h]
\includegraphics[scale=0.60]{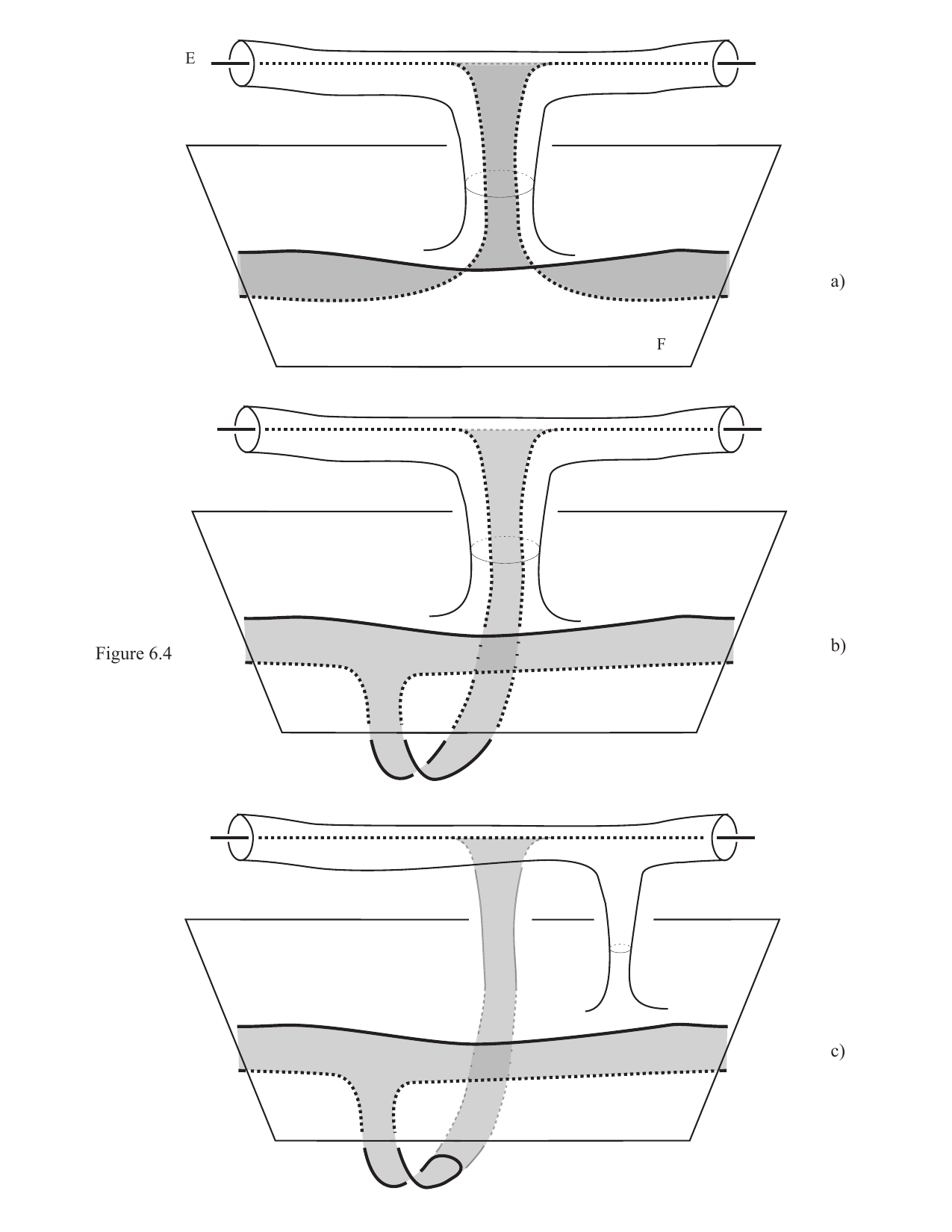}
\end{figure}

We now do an isotopy of $A$, supported in $N(D)$, that transforms a pair of double tubes as in Figure 6.2 b) into a pair of single tubes as shown in Figure 6.4 c).  Call the \emph{E-component} of $A\cap N(D)$ the one that intersects $E_1$ and call the other the \emph{F-component}.  Keeping in mind that in Figure 6.2 and Figure 6.4 the intersection of the $E$-component with the present is drawn in thick possibly dashed lines, isotope the $A$ of Figure 6.2 b) to that of Figure 6.4 a) via a $t$-coordinate preserving isotopy.  Note that during the isotopy the projections to the present of the $E$ and $F$ components can intersect provided the intersections avoid the thick lines.  Next isotope $A$ as in Figure 6.4 b) to that of Figure 6.4 c).  Note that this isotopy is supported in the $E$ component.   The isotopy from Figure 6.4 b) to Figure 6.4 c) is as follows.    Without changing the projection to the present, first push most of the tube we earlier called $T_b$ into the future and then isotope the $F$ component as indicated. \end{proof}

\section {Crossing Changes}  

In this section we show that crossing changes involving distinct tube guide curves do not change the isotopy class of the realization of a tubed surface.

\begin{lemma}(Crossing Change Lemma)\label{crossing} If the tubed surface $\mA'$ is obtained from the tubed surface $\mA$ by a crossing change involving either distinct tube guide paths or distinct components of $\alpha_i\setminus p_i$, then the corresponding realizations $A'$ and $A$ are isotopic via an isotopy supported away from the transverse surface $G$.  See Figure 7.1 a). \end{lemma}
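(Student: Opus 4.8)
The plan is to reduce the crossing change to an application of the 4D-Light Bulb Lemma (Lemma \ref{light bulb trick}). First I would set up local coordinates near the crossing: let $\kappa_1$ and $\kappa_2$ be the two tube guide curves (or the two components of $\alpha_i\setminus p_i$) whose crossing is being changed, and let $T(\kappa_1)$, $T(\kappa_2)$ denote the corresponding tubes in the realization $A$. By the construction of realizations and Remark \ref{tubes unknotted}, near the crossing point these tubes lie in a small $4$-ball, are unknotted and unlinked, and one of them — say $T(\kappa_2)$ — is closer to the associated surface $A_1$ than the other. Since the curves are \emph{distinct}, the tube being moved, $T(\kappa_1)$, is a genuine tube following a path that contains $z_0$ as an endpoint (for $\sigma,\beta,\gamma$ guides), or in the $\alpha_i\setminus p_i$ case is part of the sphere $P(\alpha_i)$; in either case there is a path in the realization from a point on this tube back to a point meeting $G$ transversely exactly once — this is precisely the hypothesis needed to invoke the Light Bulb Lemma, and it is where distinctness is used (compare the parenthetical in Lemma \ref{tube sliding lemma} about the self-referential problem when $\kappa = \sigma_i$, etc.).

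Next I would identify the local model. A crossing change of $\kappa_1$ over/under $\kappa_2$ in the underlying surface $A_0$ corresponds, in $A$, to pushing a finger of $T(\kappa_1)$ across a small pinched disc $E$ whose only intersection with the rest of the realization is a single transverse point on $T(\kappa_2)$ (or on $P(\alpha_i)$). Parametrize a neighborhood of the relevant arc $\alpha_0$ of $T(\kappa_1)$ as $B^3\times I$, with $B$ the portion of $A$ involved. The two positions of $\alpha_0$ (before and after the crossing change) are connected by sweeping across $E$, exactly in the notation of Lemma \ref{light bulb trick}. Because the realization contains the transverse sphere $G$ and $A_1$ is connected with $A_1\cap G$ a single transverse point, the endpoint $y$ of $E$ lies in the same component of $A\setminus B$ as $z = A\cap G$, so the hypothesis of the Light Bulb Lemma is satisfied.

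Then I would apply Lemma \ref{light bulb trick}: it gives an ambient isotopy of $M$, fixing $G$ pointwise and supported near $E\cup B$ (hence away from $G$), carrying $A$ to the surface in which $T(\kappa_1)$ has been pushed across $E$ — that is, to the realization $A'$ of $\mA'$. Finally I would check the framing/normal-bundle bookkeeping: the tube $T(\kappa_1)$ is constructed (Definition \ref{tube construction}) with a specified framing, and I must verify that sweeping across $E$ does not alter it, i.e. that the resulting surface is genuinely the realization of $\mA'$ with its prescribed framings and not a framing-twisted variant; this follows because $G$ has trivial normal bundle, so the copy of $G$ used to tube off $E$ in the proof of Lemma \ref{light bulb trick} carries a framing matching that of $E$, and the $SO(3)$-twisting issue of that lemma does not arise. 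The main obstacle I anticipate is precisely this framing/closeness bookkeeping — ensuring that after the isotopy the tubes still satisfy Remark \ref{tubes unknotted} (uniformly bounded distance from $A_1$ and $G$, no linking), which is why the argument must route through the Light Bulb Lemma rather than a naive general-position crossing change; but this is exactly what Lemma \ref{light bulb trick}'s conclusion is engineered to deliver. One then repeats the argument for each crossing change in turn.
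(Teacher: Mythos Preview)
Your broad strategy---reduce the crossing change to an application of the Light Bulb Lemma---is the right one, and it is indeed what the paper does. But your local model for the reduction does not work, and this is where the argument breaks.

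You assert that the crossing change corresponds to pushing a piece $B\subset T(\kappa_1)$ across a pinched disc $E$ meeting $A$ in a single point of $T(\kappa_2)$. Near the crossing, however, each circle fibre of the tube $T(\kappa_1)$ \emph{links} $A_1$ in the normal $2$--plane to $A_1$. Hence any $B^3\times I$ neighbourhood $N(\alpha_0)$ large enough to contain the relevant annular piece of $T(\kappa_1)$ (which you need, since the crossing change requires the \emph{whole} circle fibre at the crossing to pass from outside $T(\kappa_2)$ to inside) must also contain a disc of $A_1$. The Light Bulb move then carries both $T(\kappa_1)$ and that disc of $A_1$ across $E$, and the result is not the realization $A'$. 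If instead you shrink $N(\alpha_0)$ to miss $A_1$, it captures only an arc of a circle fibre of $T(\kappa_1)$, and moving that arc does not effect the crossing change. So the single--move reduction you describe cannot be set up locally at a generic crossing.

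The paper resolves this by a genuinely different maneuver. Using Reidemeister 2),~3) moves (which preserve the realization by Lemma~\ref{tube sliding lemma}) the crossing is first slid adjacent to $z_0$. There the tube $T(\kappa')$ is capped by its parallel copy of $G$, so the relevant piece $K'$ of $A$ is a \emph{disc}, not an annulus linking $A_1$. The argument then takes place inside $N(G)\cong D^2\times S^2$: a specific sequence of isotopies of $K'$ (Figures~7.2\,a)--e) and~7.3), only one step of which (b)$\to$c)) is the Light Bulb move, while the others exploit the product structure of $N(G)$ and the disc property of $K'$. It is precisely this extra structure near $G$---unavailable at a generic crossing---that allows the Light Bulb Lemma to be applied; your proposal skips this reduction and so the hypothesis you need (a disc $E$ with $E\cap A$ a single point and $B$ isolating just $T(\kappa_1)$) is never actually available.
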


\begin{figure}[ht]
\includegraphics[scale=0.60]{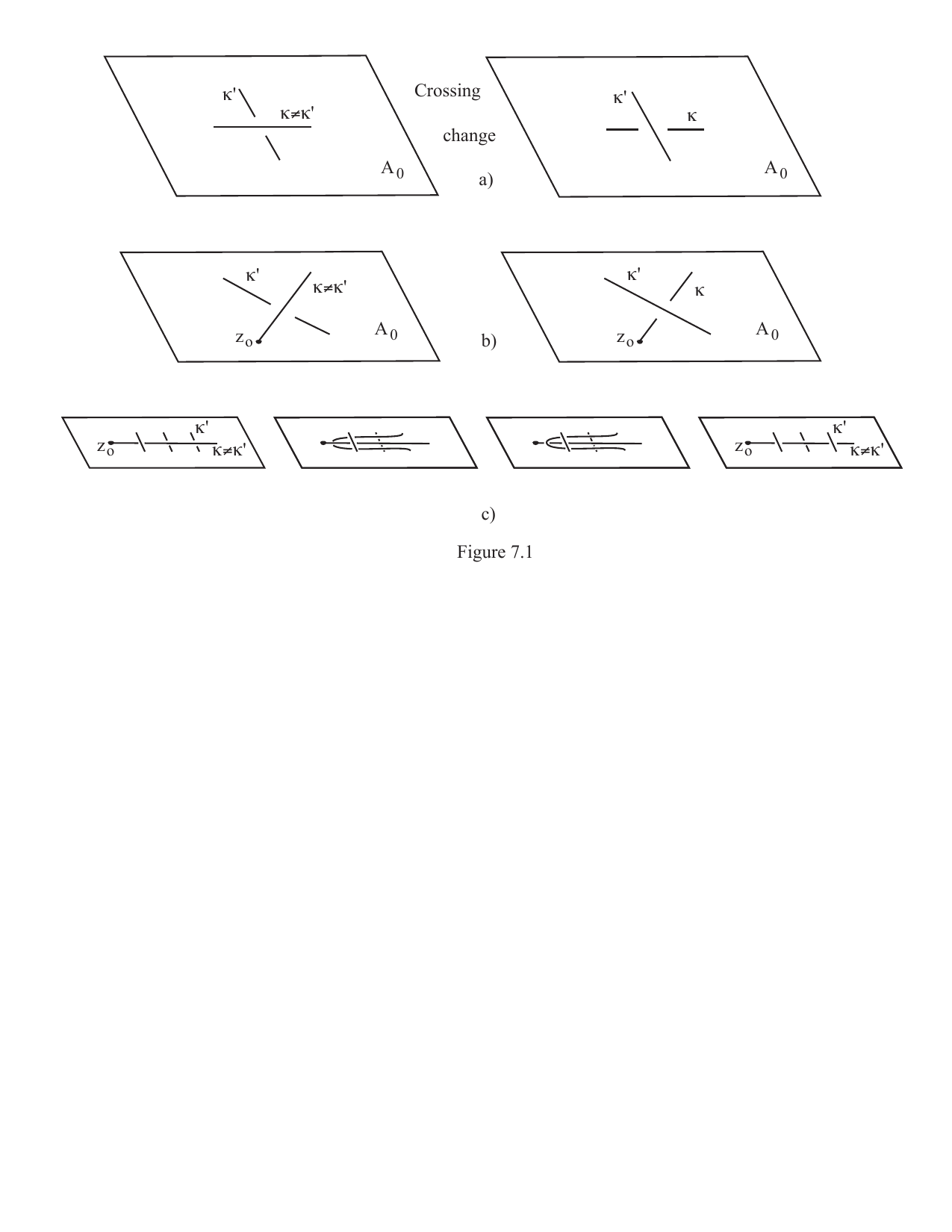}
\end{figure}

\begin{proof} Let $\kappa$ and $\kappa'$ denote either the distinct tube guide paths or the different components of $\alpha_i\setminus p_i$. Since by Lemma \ref{tube sliding lemma}  changing a tubed surface by type 2) and 3) Reidemeister moves does not change the isotopy class of the realization, it suffices to assume that the crossing is adjacent to $z_0$ as in Figure 7.1 b), e.g. see Figure 7.1 c).  Let $D_0\subset A_0$ be a small disc neighborhood of $z_0$ that contains the crossing and $D=f(D_0)$.  We can assume by Remark \ref{tubes unknotted} that there are no framed embedded paths in $N(G)=D\times G$.  We shall see that except  when Lemma \ref{light bulb trick} is invoked, the isotopy from $A$ to $A'$ is supported in $N(G)$.  Identify $N(G)$ with $D\times S^2=D\times S^1\times [-\infty,+\infty]/\sim$, where each $x\times S^1\times -\infty$ and each $x\times S^1\times +\infty$ is identified to a point. Also $f(z_0)=z=(0,t_0,0)$ and $D$ is identified with $D \times t_0 \times 0$ which we continue to call $D$.   By making $\kappa'$ very close to $z_0$ and the slightly pushed off copy of $G$ corresponding to $\kappa$ very close to $G$ we can  assume that $A\cap N(G)$ consists of exactly three components $D, K$ and $K'$ where the latter two respectively arise from $\kappa$ and $\kappa'$.
\begin{figure}[ht]
\includegraphics[scale=0.60]{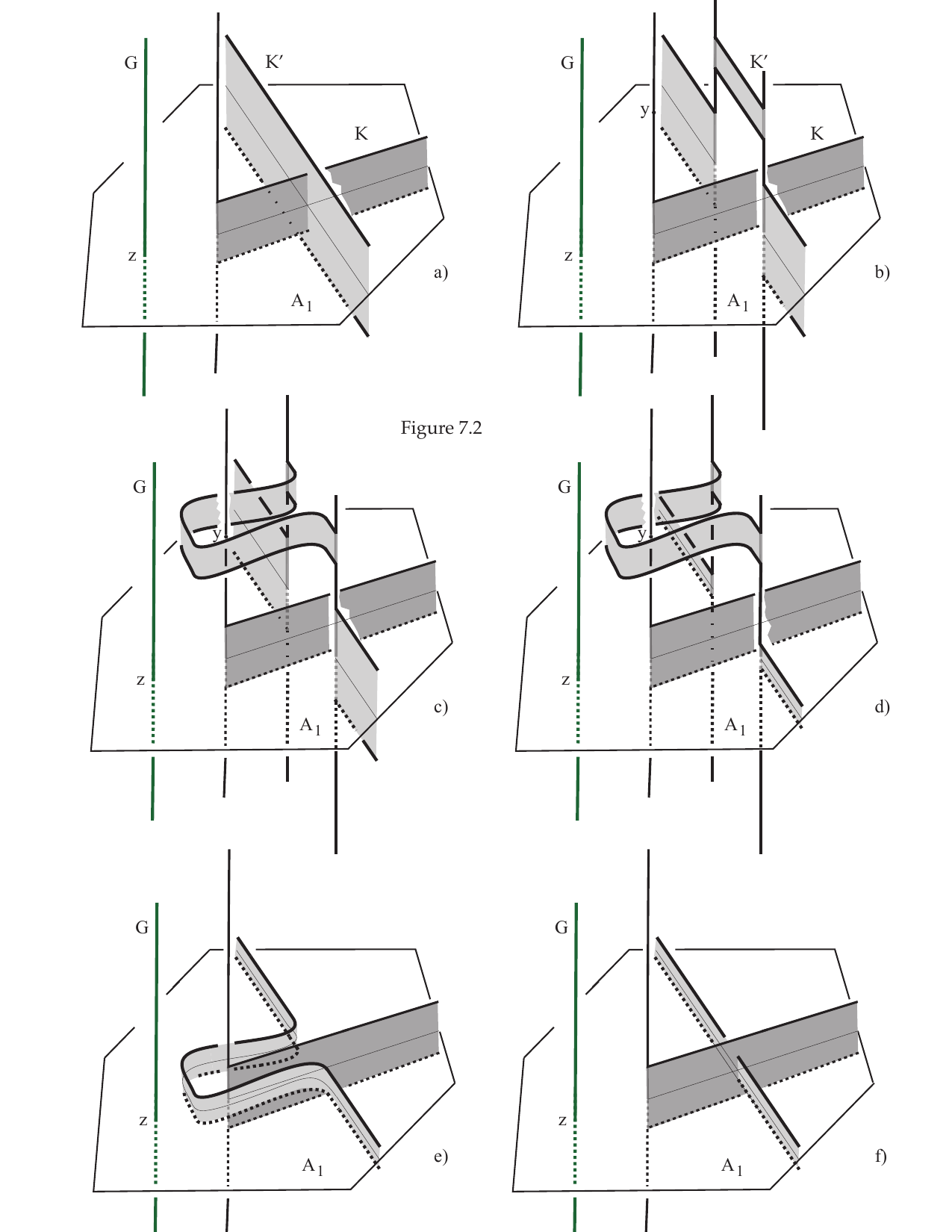}
\end{figure}

\begin{figure}[ht]
\includegraphics[scale=0.60]{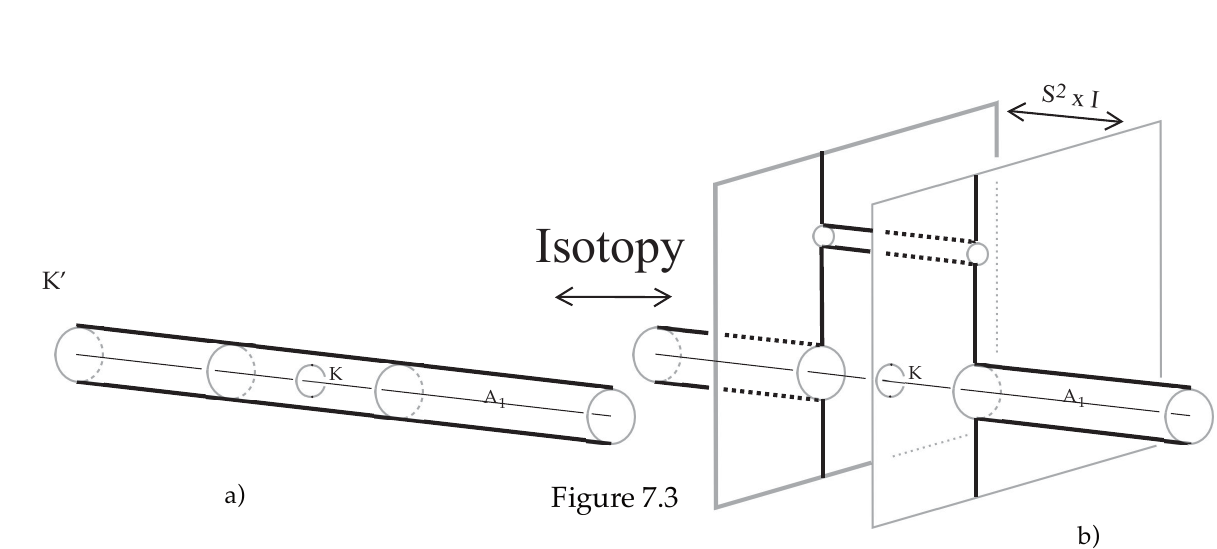}
\end{figure}

The isotopy from $A$ to $A'$  is demonstrated in Figure 7.2.  
The various subfigures  of Figure 7.2 show a 3-dimensional subset $H$ of the form $D\times [t_0-\epsilon, t_0+\epsilon] \times 0$ and the projection of $(G\cup A) \cap (H \times [-\infty,\infty]) $ to $H$, where the $A$ changes by isotopy as we progress from a) to f). Before offering more detail we decipher the figure.  Common to each  of a)-f) are  $D, G\cap H$ and the projection of $\hat K= K\cap (H \times [-\infty,\infty])$ to $H$.  Each figure also contains the projection of  $\hat K'$ to $H$, where $\hat K'=K'\cap (H \times [-\infty,\infty])$ and $K'\subset D\times G$. We abuse notation by calling $K'$ a surface that  changes during the progression of a) to f).  In each of a)-f) the projection of $K$ (resp. $K'$) to $D$ is an arc.   $K$ is obtained by attaching a tube $T_u$ to the sphere $u\times G$, where $u\in D$ and $T_u$ follows a path $t_u\subset D$ from $u$ to $\partial D$.  We again use the convention that thick lines indicate intersection with the present, i.e. $H$ and that the  shaded points of $T_u$ indicate those coming from the past or future.  In each of a)-f)  $K'$ is an annulus.  In a), e), f) it is a tube $T$ that follows a properly embedded arc in $D$.  To make $T$ and $T_u$ look more distinct we shaded them differently.  In Figures e) and f) (resp. a)) $T$ lies closer (resp. further) to $D$ than $T_u$.  Compare Figures a) and f) with Figure 5.3.  Finally, the $ K'$'s of b) and c) are obtained by attaching three tubes to the spheres $u_1\times G$ and $u_2\times G$.  One tube $T'$ connects the spheres to each other.  The other tubes respectively follow paths in $D$ from $u_1$ and $u_2$ to $\partial D$.  For all a)-f) each of $G, D, K$ and $K'$ are invariant under the reflection of $[-\infty,\infty]$ about $0$.

We return to the proof of the Lemma.  Figure 7.2 a) shows the projections of the components of  $A\cap H\times [-\infty,\infty]$ to $H$.    Let $\delta=f(\kappa')\cap D$.   The preimage $W$ of $\delta$ in $D\times S^2$ is an  $I\times S^2$ which intersects $A$ in three components; an annulus from $K'$, a $S^1$ from $K$ and the arc $\delta$, all of which are shown in the Figure 7.3 a).  From this point on, the support of the isotopy, in the source, is within $K'$.  The isotopy  from   Figures 7.2 a) to 7.2 b) is supported in $W$ and can be viewed in more detail in  Figure 7.3, where corresponding dark lines in Figures 7.2 a), b) and Figure 7.3 a), b) coincide.  The dots on $W\cap K$ in Figure 7.3 are the intersections of the dark lines on $K$ with $W$.  The passage from Figure 7.2 b) to Figure 7.2 c) is the 4-dimensional light bulb move, Lemma \ref{light bulb trick}, whereby the tube $T'$ appears to be crossing $A$ at the point $y$.  This  requires that there be a path $\sigma$ in $A$ from $y$ to $z$ disjoint from $T'$ which in turn requires that $\kappa\neq \kappa'$.  The isotopy from Figure 7.2 c) to d) simply squeezes the indicated tubes, and commutes with the previous one.  The isotopy corresponding to Figures 7.2 d) and e) is essentially the reverse of that from 7.2 a) and 7.2 b).   Here the projection of the $S^2\times I$, corresponding to this isotopy, to $D$ is an arc disjoint from the projection of $K$ to $D$, so  $K$ is not in the way during that isotopy. \end{proof}

\section{Proof of Theorems \ref{main} and  \ref{2-torsion}}

\begin{figure}[ht]
\includegraphics[scale=0.60]{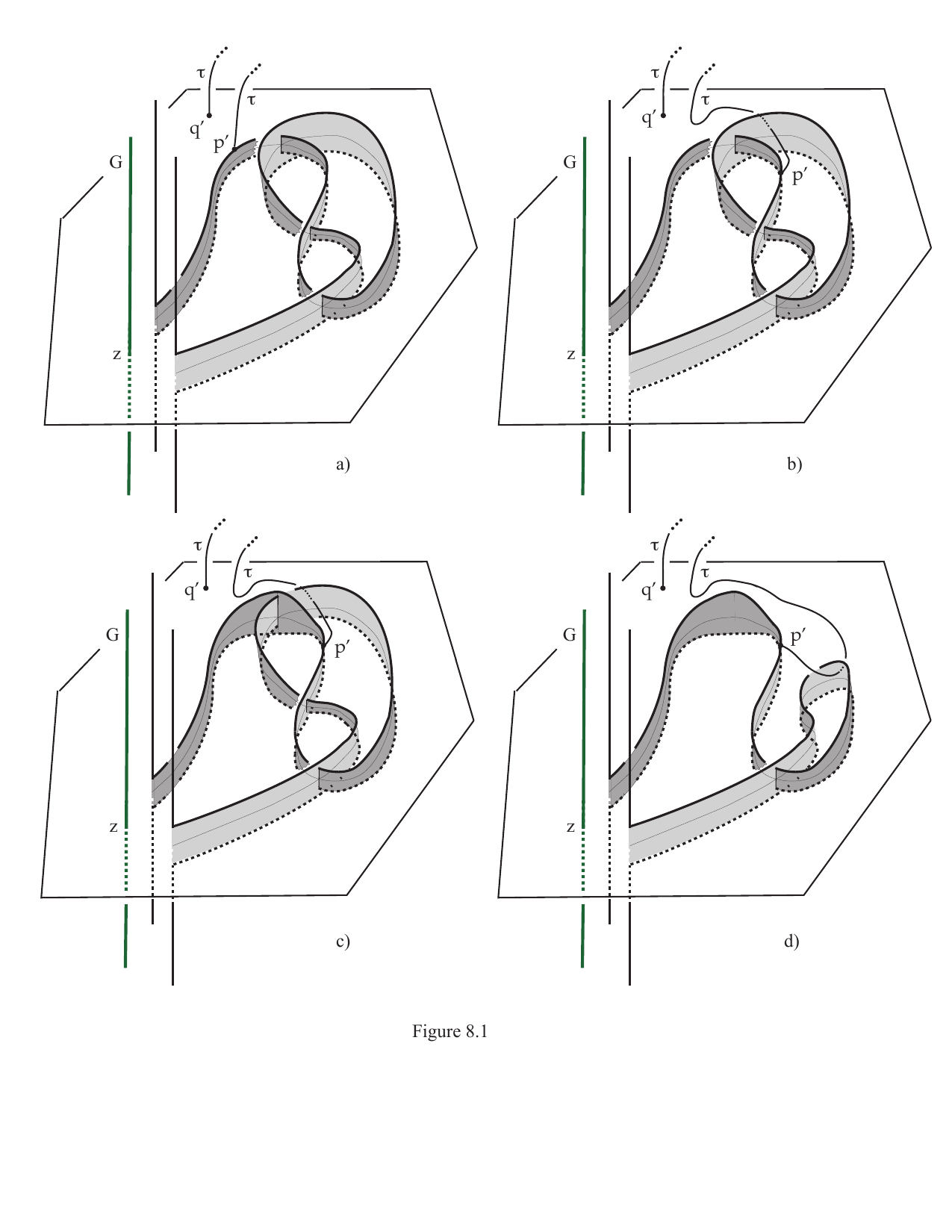}
\end{figure}

\begin{figure}[ht]
\includegraphics[scale=0.60]{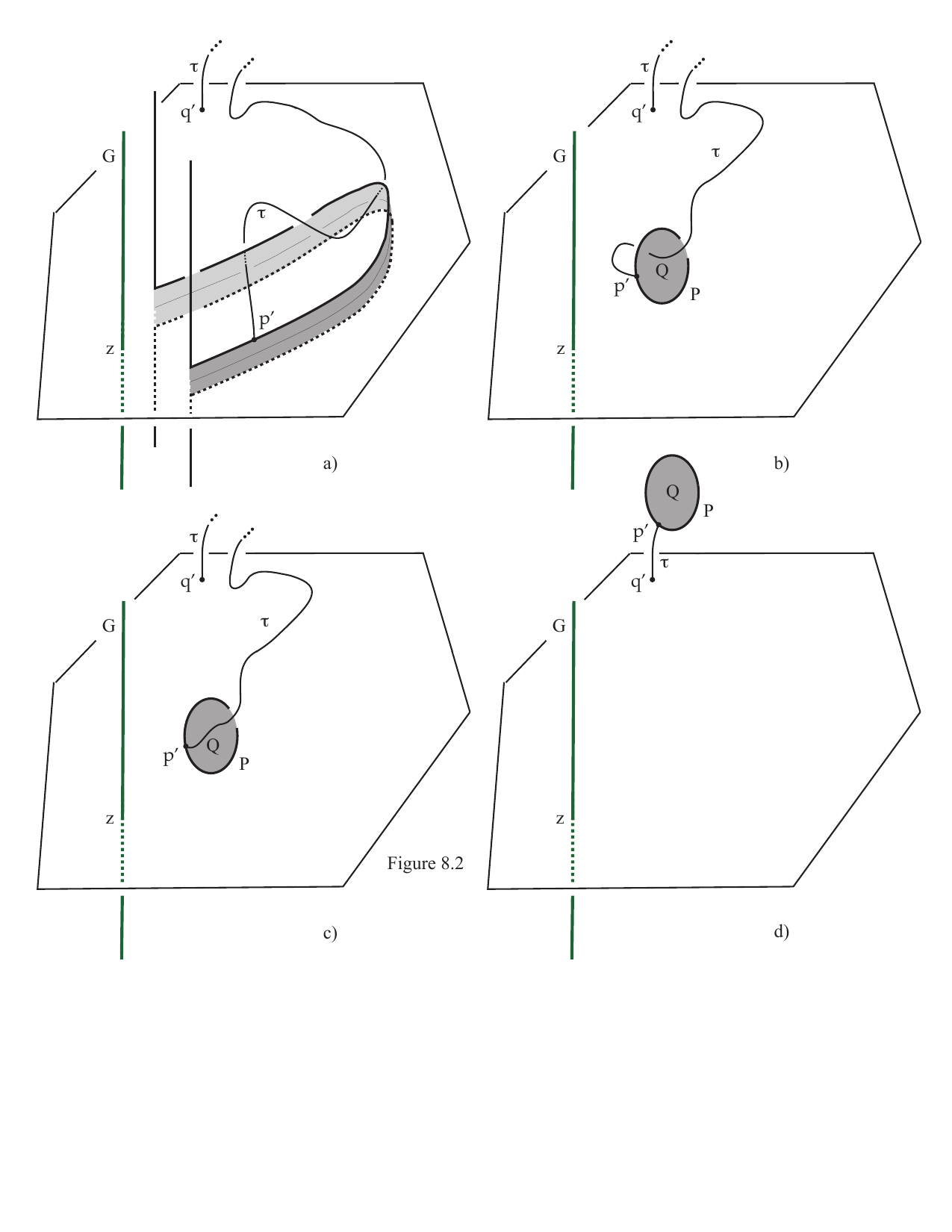}
\end{figure}

Suppose that the embedded spheres $R$ and $A_1$ are homotopic with the common transverse sphere $G$.  After an initial isotopy fixing $G$ setwise, we can assume that they coincide near $G$.  By Lemma \ref{basepoint} we can assume that  the homotopy from $R$ to $A_1$ is supported away from a neighborhood of $G$.   It follows from \cite{Sm1} that $R$ is regularly homotopic to $A_1$ via a homotopy that is also supported away from a neighborhood of $G$.  See \S4.  By Theorem \ref{shadow isotopy} the homotopy from $R$ to $A_1$ is shadowed by tubed surfaces.  Thus there exists a tubed surface $\mA$ with realization $A$, underlying surface $A_0$ and associated surface $A_1=f(A_0)$  such that $R$ is isotopic to $A$.  

To prove Theorem \ref{main} it suffices to show that $A$ is isotopic to $A_1$ via an isotopy supported away from a neighborhood of $G$.  To prove Theorem \ref{2-torsion} it suffices to show that $A$ can be isotoped into normal form with respect to $A_1$ and two surfaces $A$ and $A'$ in normal form with respect to $A_1$ are isotopic with support away from $G$ if their framed tube guide curves represent the same set of 2-torsion elements. By Proposition \ref{all realized} all finite sets of nontrivial 2-torsion elements can be realized.

By Proposition  \ref{double to single} we can assume that $A$ has finitely many double tubes, each representing distinct 2-torsion elements of $\pi_1(M)$  plus at most one double tube  which is homotopically trivial.  %, i.e. its framed tube guide curve can be homotoped rel endpoints into $A_1$ via a disc disjoint from $G$ whose interior is disjoint from $ A_1$. 
  In what follows we assume for consistency of notation that there exists one such double tube and it's associated to $(\beta_0, \gamma_0)$.  Using the crossing change and tube sliding Lemmas \ref{crossing} and \ref{tube sliding lemma} we can assume that each pair $(\alpha_1, q_1), \cdots, (\alpha_r,q_r), (\beta_0, \gamma_0), (\beta_1, \gamma_1), \cdots, (\beta_n, \gamma_n)$ lies in a distinct sector of $A_0$.  This means there exists a neighborhood $D_0$ of $z_0\in A_0$ parametrized as the unit disc in polar coordinates so that $(\alpha_i, q_i)$ lies in the subset of $D_0$ where  $((i-1)/(r+n+1))2\pi<\theta<(i/(r+n+1))2\pi$ and $(\beta_i,\gamma_i)$ lies in the region $((r+i)/(r+n+1))2\pi<\theta<((r+i+1)/(r+n+1))2\pi$.  We can further assume that $q_1\in \partial D_0$.  This requires that $R_0$ is a 2-sphere. 
  
After $r$  applications of the next lemma we can assume that $r=0$ and that the remaining data defining $\mA$ is unchanged.
  
\begin{lemma}  \label{local unknotting}  Let $\mA$ be a tubed surface such that a neighborhood $D_0\subset A_0$ of $z_0$ is parametrized as the unit disc  in polar coordinates and $\alpha_1 \cup q_1 $ is contained in the subset of $D_0$ where $0<\theta<\pi/2$ and that that region is devoid of all other tube guide curves and associated points.  Assume also $q_1\in \partial D_0$.  Then the tubed surface $\mA'$  whose data consists of that of $\mA$ with $\alpha_1\cup \tau_1\cup q_1$ deleted has realization $A'$ isotopic to the realization $A$ of $\mA.$\end{lemma}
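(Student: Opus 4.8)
The plan is to show that the single tube $T(\tau_1)$ together with its attached sphere $P(\alpha_1)$ can be isotoped into a small standard ball disjoint from the rest of the realization $A$, after which it can be removed because it bounds an obvious 3-ball. The key feature making this work is that $q_1$ lies on $\partial D$ and the sector containing $\alpha_1\cup q_1$ is free of all other tube guide curves and marked points: this means that, near $A_1$, the portions of $A$ coming from $P(\alpha_1)$ and $T(\tau_1)$ are geometrically separated from everything else, so any isotopy we perform supported near this sector and near $\tau_1$ cannot interfere with the other tubes, the associated surface $A_1$, or the transverse sphere $G$ (which by Remark \ref{tubes unknotted} stays a uniformly bounded distance away from the single tubes).

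First I would use the fact that $\alpha_1$ is an embedded arc in $A_0$ with both endpoints at $z_0$, bounding (together with a sub-arc of $\partial D$, using $q_1\in\partial D$) an embedded disc $\Delta\subset D$ whose interior meets no other tube guide locus. Thus $P(\alpha_1)$ is the realization of a pair of pushed-off copies of $G\setminus\inte(N(z))$ tubed along $f(\alpha_1)$, and the tube $T(\tau_1)$ runs from a point $f(p_1)\in P(\alpha_1)$ along the framed embedded arc $\tau_1$ to a point $f(q_1)$ on $A_1$. Compressing $A$ near $f(p_1)$ as in Definition \ref{tube orientation} exhibits $P(\alpha_1)$ as a sphere with $[P(\alpha_1)]=0\in\pi_2(M)$ by Lemma \ref{homology formula} applied to this one isolated $\tau$-tube; more precisely the two copies of $G$ in $P(\alpha_1)$ are oppositely oriented and cobound an embedded product region (a pushed-off $G\times I$) whose interior is disjoint from $A_1$, $G$, and every other tube. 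This product region is the ``cancelling disc'' we exploit: sliding one copy of $G$ across it to the other copy contracts $P(\alpha_1)$ to a small unknotted sphere bounding a 3-ball $B_1$ near the endpoint $f(p_1)$ of $\tau_1$.

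Next I would drag this small sphere $P(\alpha_1)$ along $\tau_1$ down to $f(q_1)\in A_1$, carrying the single tube $T(\tau_1)$ with it; since $\inte(\tau_1)\cap(G\cup A_1)=\emptyset$ and $\tau_1$ is disjoint from all other framed tube guide curves, this is an isotopy of $A$ supported in $N(\tau_1)$ which does not disturb the rest of the realization. After this move, $P(\alpha_1)\cup T(\tau_1)$ is contained in a small ball $N(f(q_1))$ meeting $A_1$ in a disc, and within this ball it is exactly the local model of ``$A_1$ with a small finger-tube tubed to a small $G$-bubble'' — which by the realization construction and the same $F$-to-$H$ isotopy used in the proof of Lemma \ref{finger shadow} is isotopic to the trivial disc. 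Carrying out that local isotopy removes $T(\tau_1)$ and $P(\alpha_1)$ entirely, leaving precisely the realization $A'$ of $\mA'$. All the isotopies above are supported either in the chosen sector of $A_1$, in $N(\tau_1)$, or in the small ball $N(f(q_1))$, hence away from a neighborhood of $G$.

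The main obstacle I anticipate is verifying that the copies of $G$ in $P(\alpha_1)$ genuinely cobound an embedded product region disjoint from everything relevant, and that sliding along it is an honest ambient isotopy — this is where one must use that $\alpha_1$ bounds the embedded disc $\Delta$ in the free sector and that the pushed-off $G$'s were chosen (Construction \ref{tube construction}) close enough that their product regions with $G$ avoid all framed tube guide curves. A secondary subtlety is bookkeeping the framing on $\tau_1$ so that no spurious $SO(3)$-twisting is introduced when the bubble is dragged down $\tau_1$; but since $P(\alpha_1)$ is a single unknotted sphere (not a linked pair), any framing change is absorbed by $\pi_1(SO(3))=\BZ_2$ acting trivially on an unknotted sphere, exactly as in the odd-Euler-class discussion of Lemma \ref{light bulb trick}.
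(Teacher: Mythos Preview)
Your argument assumes from the outset that ``$\alpha_1$ is an embedded arc in $A_0$''.  This is not given: in Definition~\ref{tube surface}~iv) the $\alpha_i$ are only \emph{immersed} paths, and nothing in the hypotheses of the lemma or in the sector arrangement preceding it makes $\alpha_1$ embedded.  The Crossing Change Lemma~\ref{crossing} only changes crossings between \emph{distinct} tube guide curves or between \emph{distinct components of $\alpha_i\setminus p_i$}; it does not apply to a self-crossing of $\alpha_1$ where both local strands lie in the same component of $\alpha_1\setminus p_1$.  Consequently, after the sectors are arranged, $\alpha_1$ can still have self-crossings, $P(\alpha_1)$ need not be unknotted, and there is no embedded product $G\times I$ disjoint from $A_1$ for you to slide across.  (Your claimed product region between the two pushed-off copies of $G$ in fact meets $A_1$ along an arc in $f(D)$, so even in the embedded case that particular isotopy is obstructed.)

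Removing these self-crossings is precisely the content of the paper's proof.  The paper first observes that one may isotope the attaching point $p'$ along $T(\alpha)$, which moves $p$ along $\alpha$ at the cost of entangling $\tau$ with $T(\alpha)$.  By positioning $p$ appropriately, every self-crossing of $\alpha$ can be arranged so that the two strands lie in different components $\alpha_L,\alpha_R$ of $\alpha\setminus p$, whereupon Lemma~\ref{crossing} applies and the crossing can be changed.  After all crossings are removed, $P(\alpha)$ bounds a $3$-ball $Q$ disjoint from $A'\cup G$, but now $\tau$ may link $P(\alpha)$; the paper handles this by working in a $4$-ball $B\supset Q$ with $\pi_1(B\setminus P)=\BZ$ and rotating $T(\tau)$ around $P$.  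Your outline essentially picks up only after this work has been done; you are missing the main step.
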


%\begin{remark}  The isotopy is supported away from that part of A arising from non \alpha_1 data.\end{remark}  

\begin{proof}  We first fix some terminology.  To simplify notation $\alpha_1, p_1, q_1, \tau_1$ will be respectively denoted $\alpha, p,q,\tau$. $D=f(D_0)$ and  $T(\alpha)$ will denote the tube about $f(\alpha)$. $P(\alpha)$ will denote the 2-sphere consisting of two parallel copies of $G$ tubed together along $T(\alpha)$.   $T(\tau)$ will denote the tube about $\tau$.   So $A$ is the surface obtained from $A'$ by connecting $P(\alpha)$ to $A'$ by the tube $T(\tau)$.  We will let $p'$ and $q'$ denote the points respectively on $P(\alpha)$ and $A'$ so that the ends of $T(\tau)$ connect to $\partial N(p')$ and $\partial N(q')$, where neighborhoods are taken in $A$.  Here $p'$ orthogonally projects to $f(p)\in A_1$, where $p\in \alpha\subset A_0$, and $q'=f(q)$.  We let $\alpha_L$, and $\alpha_R$ denote the components of $\alpha$ separated by $p$.

Before continuing we offer commentary on Figure 8.1 a).  That figure shows the 3-dimensional slice $D\times [t_0-\epsilon, t_0+\epsilon]\times 0\subset N(D)=D\times [t_0-\epsilon, t_0+\epsilon]\times [-\epsilon,\epsilon]$. Let $\pi:N(D)\to D\times [t_0-\epsilon, t_0+\epsilon]\times 0$ be the projection. Shown are $\pi(G\cap N(D))$, $\pi(P(\alpha)\cap N(D))$ and $\pi(\tau\cap N(D))$.  Here $\pi(\tau\cap N(D))\subset  D\times [t_0-\epsilon, t_0+\epsilon]\times 0$.  Note that $\pi(P(\alpha)\cap N(D))$ consists of two vertical arcs together with an immersed strip connecting them.  The cross sectional height of the strip indicates how far or close the corresponding tube is from $D$, i.e. away from where $T(\alpha)$ joins copies of $G$, a disc transverse to $D$ at $x\in f(\alpha)$ intersects $T(\alpha)$ in a circle whose radius is equal to the height of the strip.  We follow the convention of Construction \ref{tube construction} illustrated in Figure 5.3 whereby an undercrossing of $\alpha$ corresponds to a segment of a strip that is thinner than  the other segment it intersects.  Again, thick lines denote the intersection of $P(\alpha)\cap N(D)$ with the present and points are shaded if they come from the past or future.  The different shading of the strip is for visual purposes only, to help see it as it twists and turns.

The first observation is that by isotopy extension $p'$ can be isotoped to any point in $T(\alpha)$ at the cost of seemingly \emph{entangling} $\tau$ with $T(\alpha)$.  See Figures 8.1 a) and b).  One cannot obviously use the light bulb lemma to remove the intersection of  $\inte(\tau)$ with the projection of $T(\alpha)$ in Figure 8.1 b), since $T(\tau)$ separates $T(\alpha)$ from $z$.

By suitably moving $p$, the local arcs of $\alpha$ at a given crossing can lie in the different components $\alpha_L$ and $\alpha_R$ of $\alpha \setminus p$.    Thus the proof of the crossing change lemma allows us to change this crossing as well as any other at the cost of entangling $\tau$ with $T(\alpha)$.  This process is illustrated in Figures 8.1 a), b), c).  Similarly, at the cost of further such entanglement we can perform the reordering move, Definition \ref{tube sliding} ii) to arcs of $\alpha$.  Compare Figure 8.1 d) and Figure 8.2 a).  Thus by crossing changes, Reidemeister 2), 3) moves and the tube sliding reordering move, we can assume that $\alpha$ has no crossings.     It follows that $P(\alpha)$ can be isotoped to an unknotted 2-sphere $P$, that bounds a 3-ball $Q$ disjoint from $A'\cup G$.  See Figure 8.2 b).  Further there exists a 4-ball $B$ such that $Q\subset B$, $A'\cap B=\emptyset$, $B\cap G=\emptyset$ and $\tau\cap B$ is connected.  While it can be avoided, we note that any entanglement with $A_1$ can be eliminated by Lemma \ref{light bulb trick}.  Since $\pi_1(B\setminus P)= \BZ$  and $T(\tau)$ can be rotated about $P$ it follows that via isotopy supported within $B$, $A$ can be isotoped so that $Q\cap \inte(T(\tau))=\emptyset$ as in Figure 8.2 c).  It follows that $A$ can be isotoped to $A'$, thereby completing the proof of Lemma \ref{local unknotting}.\end{proof}

We now assume that $r=0$.  Let $\mA'$ be the tubed surface obtained from $\mA$ with the data $(\beta_0,\gamma_0)$ and $\lambda_0$ deleted.  We now show that the realization $A$ of $\mA$ is isotopic to the realization $A'$ of $\mA'$.  Since $\lambda_0$ can be homotoped rel endpoints into the associated surface $A_1$, Lemma \ref{pi injectivity} and Remark \ref{tubes unknotted} imply that $\lambda_0$  can be  isotoped to lie near the sector of $f(D_0)$ containing $f(\beta_0\cup \gamma_0)$, via an isotopy supported away from $G\cup A_1$.  Hence via isotopy supported away from $G$, $A$ can be isotoped so that the double tube following $\lambda_0$ and $D(\beta_0)\cup D(\gamma_0)$ lie   in a small regular neighborhood  $N(G')$ of a parallel translate $G'$ of $G$.  Here $N(G')=G'\times D^2$, where $A_1\cap N(G') = z'\times D^2$ and $N(G')\cap G=\emptyset$.  Also $A$  coincides with $A'$ outside of $G'\times D^2$,  $A$ coincides with both $A'$ and $A_1$ near $\partial N(G')$ and  $[A\cap N(G')]=[z'\times D^2] \in H_2(N(G'),z'\times \partial D^2)$.  By  Theorem \ref{uniqueness} $A$ is isotopic to $A'$ via an isotopy supported within $N(G')$.    This completes the proof of Theorem \ref{main}.  \qed
%XXXX
\begin{remark} \label{dependence}    With more work one can eliminate reliance on Theorem \ref{uniqueness}.  By Lemma \ref{crossing} and \ref{tube sliding lemma} we can assume that $\beta_0$ and $\gamma_0$ are pairwise disjoint.  If each is embedded, then a direct argument allows for the elimination of this data from $\mA$, via isotopy of $A$.  If either $\beta_0$ or $\gamma_0$ are not embedded, then construct $\mA'$ whose data equals that of $\mA$ except that an adjacent sector contains  $\beta_{-1}, \gamma_{-1}$ which are disjoint and embedded and whose framed embedded path $\lambda_{-1}$ is homotopically trivial.   As noted above the realizations $A $ and $A'$ of $\mA$ and $\mA'$ are isotopic by a direct argument.   The proof of Proposition \ref{double to single} shows that the  pair of double tubes associated to $\lambda_0$ and $\lambda_{-1}$ can be transformed into a pair of single tubes and these tubes and their associated data can be eliminated via isotopy as in the  proof of Theorem \ref{main} without using Theorem \ref{uniqueness}.  \end{remark}
\begin{figure}[ht]
\includegraphics[scale=0.60]{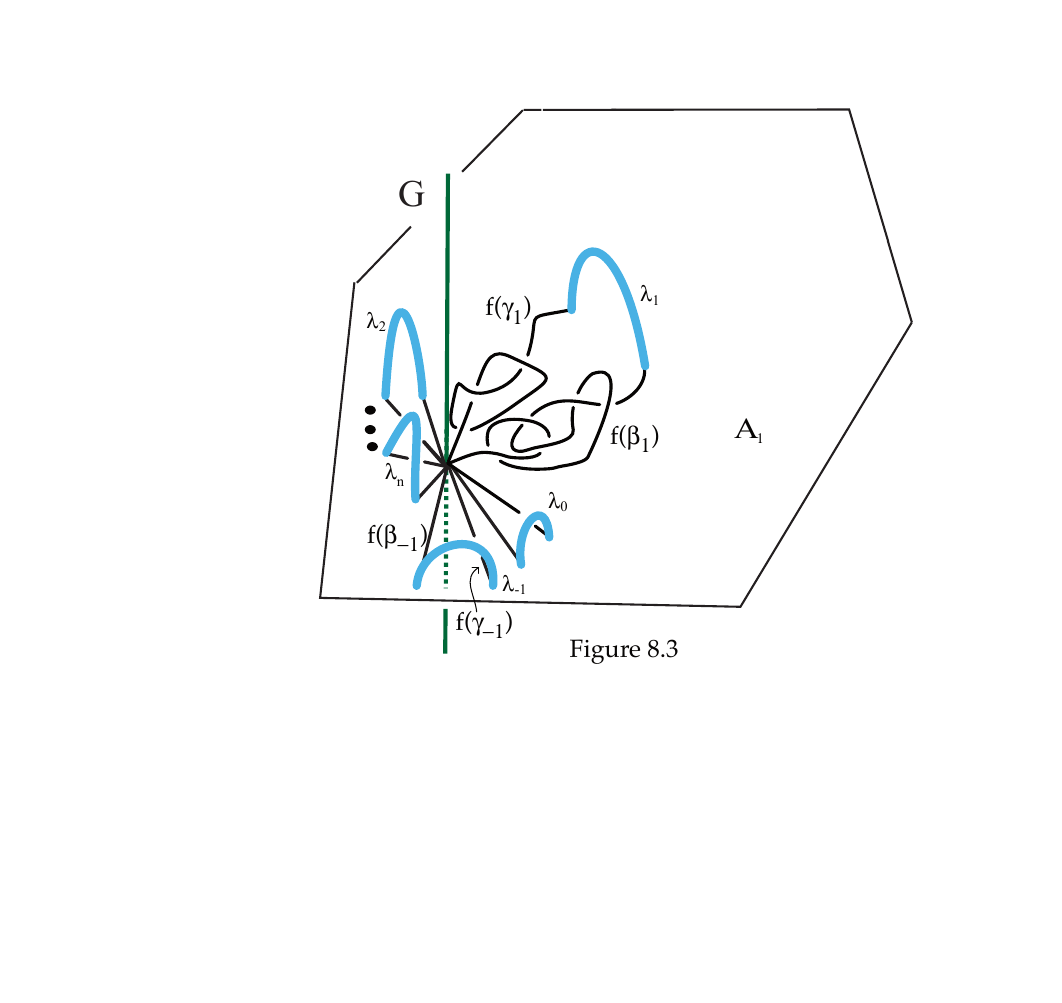}
\end{figure} 
We now complete the proof of Theorem \ref{2-torsion}.  We have a tubed surface $\mA$ whose data consists of $(\beta_1,\gamma_1) \cdots, (\beta_n,\gamma_n)$ and  framed tube guide paths $\lambda_1, \cdots, \lambda_n$ representing distinct nontrivial 2-torsion elements of $\pi_1(M)$.  As above we can assume that the $(\beta_i,\gamma_i)$'s lie in distinct sectors of $D_0\subset A_0$.  Let $\mA'$ be a tubed surface with data $(\beta'_1,\gamma'_1), \cdots, (\beta'_n,\gamma'_n)$ each lying in distinct sectors and where $\beta'_i,\gamma'_i$ are disjoint and embedded.  Further each $\lambda'_i$ is homotopic to $\lambda_i$, though its framing is allowed to be arbitrary.  

We now show that the realizations $A$ and $A'$ of $\mA$ and $\mA'$ are isotopic.  Using the idea of the previous remark we show how to replace $\beta_1,\gamma_1, \lambda_1$ by $\beta'_1, \gamma'_1$ and $\lambda'_1$ without changing the isotopy class of the realization.  The proof that $A$ and $A'$ are isotopic then follows by induction.   First observe that  there are adjacent sectors of $D_0$ containing the ray of angle 0, which are disjoint from $(\beta_n,\gamma_n)$ and $(\beta_1,\gamma_1)$.   Let $\mA''$ be the tubed surface whose data consists of that of $\mA$ with $(\beta_{-1},\gamma_{-1})$ and $(\beta_0, \gamma_0)$ added to these sectors where for $i\in\{-1,0\}$,  $\beta_i$ and $\gamma_i$ are disjoint and embedded.  Further the framed tube guide curves $\lambda_{-1}$ and $\lambda_0$ represent the homotopy class of $\lambda_1$, with the framing given by $\lambda'_1$.   See Figure 8.3.  Now the underlying surface $A''$ of $\mA''$ is isotopic to $A$ since the proof of Proposition \ref{double to single} shows how to transform the pair of double tubes associated to $\lambda_{-1}$ and $\lambda_0$ to single tubes and these single tubes can be eliminated as usual.   Similarly $A''$ is isotopic to $A'$ since the pair of double tubes associated with $\lambda_0$ and $\lambda_1$ can be transformed into a pair of single tubes.   These single tubes can then be eliminated as usual.   Further,  $(\beta_{-1}, \gamma_{-1})$ can now be rotated into the position of $(\beta'_1, \gamma'_1)$ and $\lambda_{-1}$ can be isotoped to $\lambda'_1$.  By construction $\lambda_{-1}$ has the same framing as that of $\lambda'_{1}$. It follows that $A$ is isotopic to $A'$.  

%  As stated This seems to use $[\lambda_1]^2=1$, but in fact this technique can be used to straighten out any lambda_1, by making [lambda_{-1}]=[lambda_1], [lamba_{0}]=[lambda_1]^-1.  

In summary, if $R$ and $A$ are embedded spheres in $M$ with the common transverse sphere $G$, then $R$ can be isotoped into normal form with respect to $A$ as in Definition \ref{normal form}.    If $A'$ and $A''$ are in normal form with respect to $A$ and represent the same set of elements, then Lemma \ref{normal permutation} shows that after isotopy we can assume that for all $i, \lambda'_i$ is homotopic to $\lambda''_i$.  Thus after isotopy we can assume that $A'$ and $A''$ are realizations of tubed surfaces $\mA'$ and $\mA''$ such that for all $i$, $(\alpha'_i, \beta'_i)=(\alpha''_i, \beta''_i)$ and $\lambda'_i=\lambda''_i$, however the framings of the $\lambda_i$'s might differ.   The previous paragraph shows that changing the framing of a $\lambda''_i$ does not change the isotopy class of the realization.  It follows that $A'$ and $ A''$  are isotopic and hence equivalent normal forms are isotopic. By Proposition \ref{all realized} any finite set of distinct nontrivial 2-torsion elements can be represented by a surface in normal form with respect to $A$.  This completes the proof of Theorem \ref{2-torsion}. \qed

\section{higher genus surfaces}

In this section we give a partial generalization of our main result to higher genus surfaces, that is a full generalization for $\stwostwo$.  

\begin{definition} Let $S$ be an immersed surface in the 4-manifold $M$.  We say that the embedded disc $D\subset M$ is a \emph{compressing disc} for $S$ if $\partial D\subset S$ and a section of the normal bundle to $\partial D\subset S$ extends to a section of the normal bundle of $D\subset M$. \end{definition}

\begin{lemma} \label{compressing} If $S$ is immersed in the 4-manifold $M$ and $\alpha\subset S$ is an embedded curve with a trivial normal bundle in $S$ and  is homotopically trivial in $M$, then $\alpha$ bounds a compressing disc.  \end{lemma}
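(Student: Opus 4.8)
The plan is to produce the compressing disc in two stages: first get an embedded disc in $M$ with boundary $\alpha$ (ignoring the framing condition), then fix up the framing using the transverse-sphere technology that pervades the paper — or, more honestly, since no transverse sphere is hypothesized here, using the fact that $\alpha$ lies on $S$ and the normal bundle of $\alpha$ in $S$ is trivial.

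First I would use that $\alpha$ is homotopically trivial in $M$ to get a singular disc $\Delta\colon D^2\to M$ with $\partial\Delta=\alpha$. Since $\dim M=4$, general position makes $\Delta$ a generic immersion with only transverse double points in its interior, and after a small push we may assume $\inte(\Delta)$ meets $S$ transversely in finitely many points and $\Delta$ agrees with a collar of $\alpha$ in $S$ near $\partial D^2$. The interior self-intersections of $\Delta$ can be removed by the standard Whitney-trick-free move available in this dimension for a disc: each double point can be pushed off by piping one sheet along an arc to the boundary (the boundary $\alpha$ is nullhomotopic and we are allowed to change $\Delta$ only by homotopy at this stage, so we can simply re-choose $\Delta$ to be embedded once we know the relevant obstruction — the self-intersection number in $\pi_1$ — vanishes; for a disc attached along a curve in a surface this is automatic after finger/Whitney bookkeeping). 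Thus we may assume $D=\Delta(D^2)$ is embedded with $\partial D=\alpha$, $D\cap S = \alpha \cup \{\text{finitely many interior points}\}$.

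Next I would remove the interior intersections of $D$ with $S$. Here is where I would use that $\alpha\subset S$ has trivial normal bundle in $S$, so a collar $\alpha\times[-1,1]\subset S$ is available: for each interior intersection point $x\in \inte(D)\cap S$, choose an embedded arc in $S$ from $x$ to $\alpha$ avoiding the other intersection points (possible since $S$ is connected along the relevant component — or work componentwise), and use it together with a small normal push-off in $M$ to tube $D$ off of a neighborhood of $x$, trading the intersection point for a modification of $D$ near $\alpha$ that still has boundary $\alpha$ and is still embedded. After finitely many such tubings, $D$ is an embedded disc with $D\cap S=\alpha$.

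Finally I would address the framing condition, which is the actual content of the statement: we need a section of the normal bundle of $\alpha$ in $S$ to extend over $D$. The normal bundle $\nu(D\subset M)$ is a rank-$2$ bundle over a disc, hence trivial, so it admits \emph{some} framing; restricting to $\partial D$ gives a framing of $\nu(\alpha\subset M)$, and $\nu(\alpha\subset S)$ is a line subbundle of this. The given trivialization of $\nu(\alpha\subset S)$ and the restricted one differ by an element of $\pi_1(SO(2))=\BZ$ (the relative Euler number / Whitney self-intersection of $D$ rel the $S$-framing), and the point is that we may kill it. I would do this by the usual boundary-twisting / interior-twisting move: a full twist of $D$ near $\partial D$ changes this integer by one at the cost of creating a single transverse interior intersection of $D$ with $S$ — and that new intersection is then removed by the tubing move of the previous paragraph, which (one checks) does not re-disturb the framing count. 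Iterating, we arrange the integer to be zero, i.e. the $S$-framing of $\alpha$ extends over $D$, which is exactly the definition of a compressing disc. \textbf{The main obstacle} I expect is precisely this bookkeeping interplay between the two corrections: one must verify that removing interior intersections (the second stage) and adjusting the framing (the third stage) can be done in an order, or interleaved, so that neither undoes the other — i.e. that the boundary-twist move's framing change and intersection-creation are independent enough that a finite sequence terminates with both $D\cap S=\alpha$ and the correct framing simultaneously.
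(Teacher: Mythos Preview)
You have misread the definition of \emph{compressing disc} in this paper: it requires only that $D$ be embedded, that $\partial D\subset S$, and that the $S$--framing of $\partial D$ extend over $D$.  There is \emph{no} requirement that $\inte(D)\cap S=\emptyset$; that extra condition is imposed only in the next lemma, and is achieved there using the transverse spheres which are hypothesized there but not here.  Your entire second stage (tubing interior $D\cap S$ points off along arcs in $S$) is therefore unnecessary --- and it is also the source of your difficulty.  The ``tubing'' move you describe is not really available without a dual sphere: interpreted literally it changes the topology of $D$, and interpreted as pushing the intersection point along an arc in $S$ into $\alpha$ it is exactly an inverse boundary twist, which changes the framing by $\pm 1$.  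That is why your final paragraph cannot terminate: each boundary twist to fix the framing creates a new $\inte(D)\cap S$ point, and each attempt to remove that point undoes the framing correction.

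Once you drop the spurious requirement, the argument is short and the interplay you worry about disappears.  Span $\alpha$ by an immersed disc; boundary--twist (in the sense of \cite{FQ}) to arrange the framing condition --- this adds points to $\inte(D)\cap S$ but no self--intersections of $D$; then remove the self--intersections of $D$ by pushing them one at a time off across the boundary (your ``piping one sheet along an arc to the boundary'').  This last move is a regular homotopy rel $\partial$ which may add further points to $\inte(D)\cap S$, but a $2$--dimensional finger and the $1$--dimensional $S$--pushoff $\alpha'$ are generically disjoint in $M^4$, so the relative Euler number $e(D)=D\cdot\alpha'$ is unchanged.  This is exactly the paper's three--sentence proof.  (Your aside in step~2 about the $\pi_1$--valued self--intersection obstruction is also off target: that obstruction governs removing double points by Whitney moves in the interior, whereas piping to the boundary works unconditionally for a disc.)
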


\begin{proof}  First span $\alpha$ by an immersed disc $D_0$.  Using boundary twisting \cite{FQ} we can replace $D_0$ by $D_1$ that satisfies the normal bundle condition.  Eliminate the self intersections of $D_1$ by applying finger moves.\end{proof}

\begin{lemma} Let $S$ be an orientable embedded surface in the 4-manifold $M$ whose components have pairwise disjoint transverse spheres.  Let $\alpha_1, \cdots, \alpha_k \subset S$ be pairwise disjoint simple closed curves, disjoint from the transverse spheres, such that for each component $S'$ of $S$, $S'\setminus \{\alpha_1,\cdots, \alpha_k\}$ is connected.  Suppose that for each $i$, $\alpha_i$ is homotopically trivial in the complement of the transverse spheres.  Then there exist pairwise disjoint compressing discs $D_1, \cdots, D_k$ such that for each $i$, $D_i\cap S=\alpha_i$.  \end{lemma}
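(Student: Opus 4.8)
The plan is to obtain the compressing discs one curve at a time, using Lemma \ref{compressing} to produce each disc and then cleaning up the intersections with $S$ and with the previously chosen discs. First I would apply Lemma \ref{compressing} to $\alpha_1$: since $\alpha_1$ is homotopically trivial in $M$ (it is even homotopically trivial in the complement of the transverse spheres, and $\alpha_1$ has trivial normal bundle in $S$ because $S$ is orientable and $\alpha_1$ is a two-sided simple closed curve), it bounds a compressing disc $D_1$. A priori $\inte(D_1)$ may meet $S$ and may meet the transverse spheres. The transverse spheres can be removed immediately: $\alpha_1$ being null-homotopic in the complement of the transverse spheres lets us take the spanning immersed disc of the proof of Lemma \ref{compressing} to lie in that complement, so after boundary-twisting and finger moves $D_1$ stays disjoint from all the transverse spheres.

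Next I would remove the intersections of $\inte(D_1)$ with $S$. This is where the transverse spheres and the connectivity hypothesis enter, and it is the main obstacle. Each intersection point of $\inte(D_1)$ with a component $S'$ of $S$ can be tubed off: pick a path in $S'$ from the intersection point to the point $S' \cap G'$, where $G'$ is the transverse sphere to $S'$, staying in the complement of $\alpha_1, \dots, \alpha_k$ and of the other chosen discs' boundaries, and tube $D_1$ along this path to a parallel copy of $G'$. This is exactly the tubing-off-with-copies-of-$G$ device already used in Lemma \ref{pi injectivity} and throughout the paper. The hypothesis that $S' \setminus \{\alpha_1, \dots, \alpha_k\}$ is connected is precisely what guarantees such a path exists avoiding all the $\alpha_i$; and since the transverse spheres are embedded with trivial normal bundle, parallel copies are available and disjoint from $S$. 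After tubing, $D_1$ is a genuine compressing disc with $D_1 \cap S = \alpha_1$ and $D_1$ disjoint from the transverse spheres.

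Then I would induct: having chosen disjoint compressing discs $D_1, \dots, D_{j-1}$ with $D_i \cap S = \alpha_i$ and all disjoint from the transverse spheres, produce $D_j$ for $\alpha_j$ by the same construction, and additionally make $\inte(D_j)$ disjoint from $D_1, \dots, D_{j-1}$. To arrange the latter, note $\bigcup_{i<j} D_i$ is a $2$-complex in the $4$-manifold, so after a small perturbation $D_j$ meets it transversely in finitely many interior points; each such point lies on some $D_i$, and we push it off $D_i$ by an isotopy of $D_j$ across $D_i$ — but this is cleaner handled at the source: choose the immersed spanning disc for $\alpha_j$ and all subsequent modifications (boundary twist, finger moves, tubing along paths in $S$) to avoid the finitely many $D_i$ by general position, which is possible since each $D_i$ is $2$-dimensional inside a $4$-manifold. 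The tubes we add follow paths in $S$ and run to parallel copies of the transverse spheres, all of which can be taken disjoint from the $D_i$ by the same dimension count. When $j = k$ this yields the desired $D_1, \dots, D_k$. The one point demanding care throughout is the bookkeeping of which curves and discs the tubing paths must avoid; the connectivity hypothesis on each $S' \setminus \{\alpha_1, \dots, \alpha_k\}$ is exactly tailored so that this bookkeeping succeeds.
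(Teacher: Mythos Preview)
Your overall architecture is right: produce a compressing disc for each $\alpha_i$ via Lemma~\ref{compressing} away from the transverse spheres, then tube off interior intersections with $S$ using parallel copies of the transverse spheres along paths in $S'\setminus\{\alpha_1,\dots,\alpha_k\}$, invoking the connectivity hypothesis. That part matches the paper. The gap is in your inductive step, where you claim you can ``choose the immersed spanning disc for $\alpha_j$ and all subsequent modifications \dots\ to avoid the finitely many $D_i$ by general position, which is possible since each $D_i$ is $2$-dimensional inside a $4$-manifold.'' This dimension count is wrong: two $2$-dimensional objects in a $4$-manifold generically meet in isolated points, not in the empty set, so general position gives you finitely many transverse intersections of $D_j$ with $\bigcup_{i<j}D_i$, not disjointness. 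Your alternative phrase ``push it off $D_i$ by an isotopy of $D_j$ across $D_i$'' is closer to an actual mechanism, but you abandon it and do not confront the fact that such a push (along an arc in $D_i$ to $\partial D_i=\alpha_i$) reintroduces intersections of $D_j$ with $S$, so one must then re-tube and argue the process terminates.

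The paper sidesteps this by reversing the order of operations: it first constructs \emph{all} the compressing discs $A_1,\dots,A_k$, then makes them pairwise disjoint via finger moves (pushing intersection points along arcs in one disc over its boundary, not caring that this creates new intersections with $S$), and only \emph{after} the $A_i$'s are pairwise disjoint does it tube off the intersections with $S$. The point is that the tubing step, done last and all at once, preserves pairwise disjointness---the tubes run close to $S$ along paths avoiding the $\alpha_i$'s, and the parallel copies of the transverse spheres can be taken at distinct pushoff parameters---whereas your order forces you to repeatedly repair disjointness from $S$ after each push-off.
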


\begin{proof}  Construct compressing discs $A_1, \cdots, A_k$ for the $\alpha_i$'s as in Lemma \ref{compressing}.  These discs can be chosen to be disjoint from the transverse spheres.  Using finger moves they can be made disjoint from each other.  Finally use the transverse spheres to tube off unwanted intersections of the $A_i$'s with $S$ to create the desired $D_i$'s.\end{proof}

\begin{definition} We say that the surface $S_1$ is obtained from $S$ by \emph{compressing} along $D$ if $S_1 = S\setminus \inte(N(\partial D)) \cup D'\cup D''$ where $D',D''$ are two pairwise disjoint parallel copies of $D$.\end{definition}

\begin{lemma}  Surfaces can be compressed along compressing discs.  If $S_1$ is obtained by compressing the embedded surface $S$ along the compressing disc $D$ and $D\cap S=\partial D$, then  $S_1$ is embedded.\qed\end{lemma}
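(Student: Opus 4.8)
The plan is to verify both assertions by constructing an explicit local model of the compression near $D$, the only input being the defining framing condition of a compressing disc together with the hypothesis $D\cap S=\partial D$ for the embeddedness claim.

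First I would fix a tubular neighborhood $\nu=\nu(D\subset M)$. Since $D$ is a disc, $\nu$ is trivial, so we may write $\nu\cong D\times B^2$ with $D$ identified with $D\times\{0\}$. Along $\partial D$ the surface $S$ meets $\nu$ in the normal annulus $N(\partial D\subset S)$, and its normal line field to $\partial D$ inside $S$ selects, over each point of $\partial D$, a line in the $B^2$-fibre; choosing a side (this is possible because $\partial D$ is two-sided in $S$, which is part of the compressing-disc hypothesis) gives a nonvanishing section $s_0$ of $\nu|_{\partial D}$. By the definition of a compressing disc, $s_0$ extends to a nonvanishing section $s\colon D\to B^2\setminus\{0\}$ of $\nu$. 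For small $\epsilon>0$ put $D'=\{(x,\epsilon s(x)):x\in D\}$ and $D''=\{(x,-\epsilon s(x)):x\in D\}$. These are embedded discs in $\nu$, disjoint from one another and from $D\times\{0\}$, and because $s_0$ points in the $S$-normal direction, $\partial D'$ and $\partial D''$ lie on opposite sides of $\partial D$ in $S$, i.e.\ on the two boundary circles of a thin annulus $A\subset S$ about $\partial D$. Excising $\inte A$ from $S$ and gluing in $D'\cup D''$, then smoothing the corner along the two circles $\partial D'=\partial A\cap D'$ and $\partial D''=\partial A\cap D''$, yields a smooth surface $S_1=S\setminus\inte(N(\partial D))\cup D'\cup D''$. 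This establishes that surfaces can be compressed along compressing discs, and uses only the framing condition.

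For embeddedness I would bring in $D\cap S=\partial D$: since $D$ is compact and $S\setminus U$ is disjoint from $D$ for any neighborhood $U$ of $\partial D$ in $S$, shrinking $\nu$ makes $S\cap\nu$ lie in an arbitrarily small neighborhood of $\partial D$ in $S$, so we may choose the excised annulus $A=N(\partial D\subset S)$ to contain all of $S\cap\nu$. Then $(S\setminus\inte A)\cap\nu\subset\partial A$, and since $D'$ and $D''$ lie inside $\nu$ and meet $\partial A$ only in their own boundary circles, while $D'\cap D''=\emptyset$ by construction, the three pieces $S\setminus\inte A$, $D'$, $D''$ have pairwise disjoint interiors and meet solely along the gluing circles; hence $S_1$ is embedded. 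There is no real obstacle here; the one point that wants a sentence of care is arranging, via the compressing-disc framing, that the two push-offs attach on opposite sides of $\partial D$ in $S$ so that $S_1$ is again a surface (with Euler characteristic raised by $2$ on the relevant component) rather than having a pinch along $\partial D$.
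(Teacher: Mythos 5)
Your proposal is correct and is exactly the standard local-model verification that the paper treats as immediate (the lemma is stated with no proof): the framing condition in the definition of a compressing disc supplies the nonvanishing section used to form the two parallel push-offs $D'$, $D''$, and the hypothesis $D\cap S=\partial D$ together with shrinking the tubular neighborhood gives embeddedness. No gaps; nothing differs in substance from what the paper intends.
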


\begin{definition} We say that the surface $S\subset M$ is $G$-\emph{inessential} if the induced map $\pi_1(S\setminus G)\to \pi_1(M\setminus G)$ is trivial.\end{definition}

The following is a generalization to higher genus surfaces of Theorem \ref{main}.

\begin{theorem}  \label{genus} Let $M$ be an orientable 4-manifold such that $\pi_1(M)$ has no 2-torsion.  Two homotopic, embedded, $G$-inessential surfaces $S_1, S_2$ with common transverse sphere $G$  are ambiently isotopic.  If they coincide near $G$, then the isotopy can be chosen to fix a neighborhood of $G$ pointwise. \end{theorem}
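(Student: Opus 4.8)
The plan is to reduce Theorem~\ref{genus} to Theorem~\ref{main} by first compressing both surfaces down to spheres along a suitable collection of compressing discs, applying the sphere case, and then "un-compressing" to recover the isotopy of the original surfaces. First I would set up the data: after an initial isotopy I may assume $S_1$ and $S_2$ coincide near $G$, and by the $G$-inessential hypothesis together with Lemma~\ref{compressing} and the lemma following it, for each $S_i$ I can pick pairwise disjoint simple closed curves $\alpha_1^i,\dots,\alpha_{g}^i\subset S_i$ (a genus-reducing system), disjoint from $G$, each homotopically trivial in $M\setminus G$, bounding pairwise disjoint compressing discs $D_1^i,\dots,D_g^i$ meeting $S_i$ only in their boundaries and disjoint from $G$. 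Compressing $S_i$ along these discs produces embedded spheres $\Sigma_i$, each still having $G$ as a transverse sphere (the compressions happen away from the point $S_i\cap G$), and each $\Sigma_i$ comes with a standard collection of "compression tubes"—small trivial tubes—so that re-attaching parallel copies of the $D_j^i$ recovers $S_i$.

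Next I would check that $\Sigma_1$ is homotopic to $\Sigma_2$. A homotopy $H$ from $S_1$ to $S_2$ can be assumed (as in Lemma~\ref{basepoint}) to be supported away from $G$; since compressing a surface along a disc that is homotopically trivial (rel its boundary, in the relevant sense) does not change the homotopy class in a controlled way, $\Sigma_1$ and $\Sigma_2$ are homotopic spheres with common transverse sphere $G$, coinciding near $G$. By Theorem~\ref{main} there is an ambient isotopy of $M$, fixing a neighborhood of $G$ pointwise, carrying $\Sigma_1$ to $\Sigma_2$. After applying this isotopy I may assume $\Sigma_1=\Sigma_2=:\Sigma$, so that $S_1$ and $S_2$ are both obtained from the \emph{same} sphere $\Sigma$ by attaching $g$ tubes along compressing discs; the two tube systems are disjoint from $G$ and the attaching regions can, after a further isotopy supported near $\Sigma$, be taken to be a fixed collection of $2g$ small discs on $\Sigma$.

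The remaining task is to show that any two such "tube systems" on $\Sigma$ yield isotopic surfaces. This is where I expect the main obstacle to lie, and I would handle it exactly as in the body of the paper: the discs $D_j^i$ and their tubes are precisely the kind of data manipulated by the tubed-surface machinery of \S5--\S8. Concretely, I would observe that each tube-plus-compressing-disc on $\Sigma$ can be slid, using the transverse sphere $G$ and Lemma~\ref{pi injectivity}, into a small regular neighborhood $N(G')$ of a parallel translate $G'$ of $G$; inside $N(G')\cong G'\times D^2$ the configuration is a properly embedded surface-with-tube carrying the trivial element of $H_2$ relative to its boundary, and Theorem~\ref{uniqueness} (or the direct argument of Remark~\ref{dependence}) shows all such configurations are isotopic rel boundary, hence can be isotoped to the standard one. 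Doing this tube by tube, using the crossing-change Lemma~\ref{crossing} and the tube-sliding Lemma~\ref{tube sliding lemma} to disentangle distinct tubes, brings $S_1$ and $S_2$ into coincidence via an isotopy fixing a neighborhood of $G$ pointwise. The delicate point is verifying that the genus-reducing curves $\alpha_j^i$ really are homotopically trivial \emph{in $M\setminus G$}—this is exactly what the $G$-inessential hypothesis provides—and that no $2$-torsion obstruction re-enters when the tubes are normalized, which follows because we invoke Theorem~\ref{main} only for the spheres $\Sigma_i$ and the leftover tubes are homotopically trivial by construction.
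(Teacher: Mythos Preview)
Your overall strategy---compress to spheres, apply Theorem~\ref{main}, then deal with the handles---matches the paper's, and your setup through the construction of the $\Sigma_i$ is essentially correct. There is, however, a genuine gap in the last step. The machinery of \S5--\S8 (tubed surfaces, tube-sliding, crossing-change, and Theorem~\ref{uniqueness}) is built for a specific kind of tube: tubes that follow tube guide curves in an underlying surface $A_0$ and connect to parallel copies of $G$. The handles you are trying to normalize are ordinary genus-adding $1$-handles on $\Sigma$ and are not of that form; in particular Theorem~\ref{uniqueness} concerns properly embedded \emph{discs} in $S^2\times D^2$, not once-punctured tori, so your appeal to it does not go through. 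Lemmas~\ref{tube sliding lemma} and~\ref{crossing} similarly do not apply as stated.

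The paper handles this step much more directly, and in a slightly different order. Before invoking Theorem~\ref{main}, it uses the $G$-inessential hypothesis together with the 4D Light Bulb Lemma~\ref{light bulb trick} to isotope each \emph{solid} tube $D^2\times I$ (the $3$-dimensional filling of a handle) to a neighborhood of a tiny standard arc with endpoints on $T_i$, by an isotopy fixing a neighborhood of $T_i$. Thus each $S_i$ becomes $T_i$ with $k$ tiny trivial handles attached in small $4$-balls. Only then does one show $T_1\simeq T_2$ and apply Theorem~\ref{main}; since the handles are already standard and local, the isotopy of spheres immediately upgrades to an isotopy of the original surfaces. Your argument for the homotopy $\Sigma_1\simeq\Sigma_2$ is also too vague; the paper does this by passing to the universal cover $\tilde M$, observing the lifts $\tilde S_i$ are homologous (they are homotopic and $\pi_1$-inessential, hence lift to closed surfaces), and concluding the compressed spheres $\tilde T_i$ are homologous and hence homotopic.
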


\begin{proof}  For each $i\in \{1,2\}$ let $\alpha_1^i, \cdots, \alpha_k^i$ be a set of pairwise disjoint simple closed curves in $S_i$ whose complement is a connected planar surface containing $S_i\cap G$.  Let $D_1^i, \cdots, D_k^i$ be associated pairwise disjoint compressing discs with interiors disjoint from $S_i$ and let $T_i$ be the result of compressing $S_i$ along these discs.  Then $T_i$ is a 2-sphere and $S_i$ is obtained from $T_i$ by attaching $k$ tubes.  Each tube $S^1\times I$ extends to a solid tube $D^2\times I$ which intersects $T_i$ exactly at $D^2\times 0$ and $D^2\times 1$, which we call the \emph{bases of the tube}.  By construction, these tubes are pairwise disjoint.  After a further isotopy we can assume there are $k$ small pairwise disjoint 4-balls each of which intersects $S_i$ in a single standard disc and each such disc contains the bases of a single solid tube.

Since $S_i  $ is $G$-inessential, it follows by the light bulb lemma that the solid tubes can be isotoped to 3-dimensional neighborhoods of tiny standard arcs with endpoints on $T_i$.   Note that the induced ambient isotopy can be chosen to  fix a neighborhood of $T_i$ pointwise.  

To complete the proof it suffices to show that $T_1$ and $T_2$ are homotopic and hence isotopic by Theorem \ref{main}.  To see this, consider the lifts $\tilde T_1, \tilde T_2$  of $T_1,T_2$ to the universal covering $\tilde M$ of $M$ which intersect a given lift $\tilde G$ of $G$.  Since the $S_i$'s are $\pi_1$-inessential and homotopic, their corresponding lifts $\tilde S_1, \tilde S_2$ are homotopic and hence homologous.  It follows that $\tilde T_1$ and $\tilde T_2$ are homologous and hence homotopic and therefore so are $T_1$ and $T_2$.  \end{proof}  

Applying to the case of $\stwostwo$ we obtain:

\begin{theorem}  \label{higher genus} Let $R$ be a connected embedded genus-$g$ surface in $\stwostwo$ such that $R\cap  S^2\times y_0=1$.  Then $R$ is isotopically standard.  I.e. it is isotopic to the standard sphere  in its homology class, with $g$ standard handles attached, which we denote by $R_0$.  If $R$ and $R_0$ coincide near $S^2\times y_0$, then the isotopy can be chosen to  fix a neighborhood of  $S^2\times y_0$ pointwise. \qed\end{theorem}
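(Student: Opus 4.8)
The plan is to derive this as a direct consequence of Theorem~\ref{genus}. Set $G=S^2\times y_0$. Since $G$ is an embedded $2$-sphere with trivial normal bundle meeting $R$ transversely in one point, it is a transverse sphere to $R$. Moreover $(\stwostwo)\setminus G=S^2\times(S^2\setminus\{y_0\})$ is diffeomorphic to $S^2\times\BR^2$, hence simply connected, so $\pi_1((\stwostwo)\setminus G)$ is trivial. Therefore \emph{any} embedded surface in $\stwostwo$ having $G$ as a transverse sphere is automatically $G$-inessential; in particular $R$ is.

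Next I would fix the model $R_0$. Writing $[R]=[x_0\times S^2]+a\,[S^2\times y_0]\in H_2(\stwostwo)$, the coefficient of $[x_0\times S^2]$ is forced to be $1$ by $[R]\cdot[S^2\times y_0]=1$. As the ``standard sphere in the homology class of $R$'' take the graph $\{(h(y),y):y\in S^2\}$ of a smooth degree-$a$ map $h\colon S^2\to S^2$ (realized as a branched covering); this is an embedded $2$-sphere meeting every fiber $S^2\times\{y\}$ transversely in one point, in particular meeting $S^2\times y_0$ transversely in $(h(y_0),y_0)$, with $[\text{graph}]=a\,[S^2\times y_0]+[x_0\times S^2]=[R]$. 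Attaching $g$ small, unknotted, unlinked handles in a ball disjoint from $G$ yields an embedded genus-$g$ surface $R_0$ with $[R_0]=[R]$; after an isotopy we may assume $R_0$ agrees with $R$ near $G$. As above, $R_0$ is $G$-inessential, and $G$ is a common transverse sphere for $R$ and $R_0$.

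It remains to see that $R$ and $R_0$ are homotopic. Since $\stwostwo$ is simply connected and $\pi_2(\stwostwo)=H_2(\stwostwo)$, obstruction theory identifies the set of homotopy classes of maps from a closed orientable surface $\Sigma$ into $\stwostwo$ with $H^2(\Sigma;\pi_2(\stwostwo))\cong\pi_2(\stwostwo)$, the class being recorded by the image of the fundamental class, i.e. by the induced homology class. Identifying the domains by an orientation-preserving diffeomorphism, the inclusions of $R$ and $R_0$ thus represent the same element of $[\Sigma_g,\stwostwo]$ because $[R]=[R_0]$, so they are homotopic. Theorem~\ref{genus}, applicable because $\pi_1(\stwostwo)=1$ has no $2$-torsion, now produces an ambient isotopy of $\stwostwo$ carrying $R$ to $R_0$, and fixing a neighborhood of $G$ pointwise when $R$ and $R_0$ coincide there.

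Essentially all the content resides in Theorem~\ref{genus}; the only points requiring care are the bookkeeping above: that the complement of $G$ is simply connected (yielding $G$-inessentiality for free), that a standard embedded sphere exists in the relevant homology class and can be equipped with trivial handles, and that homotopy of surfaces into $\stwostwo$ is detected by homology. I do not expect any genuine obstacle here; the one place a subtlety might hide is the implicit assertion that ``the'' standard model $R_0$ is well defined up to isotopy, but this follows a posteriori from the theorem itself applied with $R$ taken to be a second such model.
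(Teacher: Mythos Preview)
Your proposal is correct and follows the same route as the paper: the theorem is stated with a \qed\ immediately after the line ``Applying to the case of $\stwostwo$ we obtain,'' so it is intended precisely as the specialization of Theorem~\ref{genus} that you carry out. You simply make explicit the details the paper leaves to the reader---that $\pi_1(M\setminus G)=1$ forces $G$-inessentiality, that a standard genus-$g$ model exists in each homology class, and that homology determines the homotopy class of a surface in the simply connected target.
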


\section{applications and questions}

We begin by stating the main result for multiple spheres.

\begin{theorem}\label{multi main}  Let $M$ be an orientable 4-manifold such that $\pi_1(M)$ has no 2-torsion.  Let $G_1, \cdots, G_n$ be pairwise disjoint embedded spheres with trivial normal bundles.  Let $R_1, \cdots, R_n$ be pairwise disjoint embedded spheres transverse to the $G_i$'s such that $|R_i\cap G_j|=\delta_{ij}$.  Let $S_1, \cdots, S_n$ be another set of spheres with the same properties and coinciding with the $R_i$'s near the $G_i$'s. If for each $i$,\ $R_i$ is homotopic to $S_i$, then there exists an isotopy of $M$ fixing a neighborhood of the $G_i$'s pointwise such that for all $j$, $R_j$ is taken to $S_j$.

Under corresponding hypotheses, the same conclusion holds when the $R_i$'s and $S_i$'s are $G$-inessential connected surfaces, where $G=\cup_{i=1}^n G_i$.     \end{theorem}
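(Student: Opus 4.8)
The plan is to carry the pairs $(R_i,S_i)$ into coincidence one at a time, at the $k$-th stage working entirely in the complement of the spheres already placed so that earlier progress is not disturbed. After a preliminary small isotopy assume each $R_i$ and each $S_i$ is disjoint from $N(G_j)$ for $j\neq i$. Suppose inductively that an ambient isotopy of $M$ fixing $N(G_1)\cup\cdots\cup N(G_n)$ pointwise has carried $R_j$ to $S_j$ for $j<k$ and $R_j$ to some $R_j'$ for $j\ge k$, where the $R_j'$ are pairwise disjoint, disjoint from $S_1,\dots,S_{k-1}$, satisfy $|R_j'\cap G_i|=\delta_{ji}$ transversely, coincide with $S_j$ near $G_j$, and satisfy $R_j'\simeq S_j$ in $M$; all of these properties are preserved by such an isotopy. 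It then suffices to isotope $R_k'$ to $S_k$ by an ambient isotopy of $M$ fixing each $N(G_i)$ and all of $S_1,\dots,S_{k-1}$ pointwise, and then to compose over $k=1,\dots,n$.

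Let $W$ be $M$ with the interiors of small pairwise disjoint neighborhoods of $G_1,\dots,G_n,S_1,\dots,S_{k-1}$ removed, so that $R_k',S_k$ and $G_k$ lie in the interior of $W$. Since each $G_i$ has transverse sphere $R_i$ and each $S_j$ has transverse sphere $G_j$, which is disjoint from the $G_i$ and from $S_1,\dots,S_{j-1}$, iterating Lemma \ref{pi injectivity} gives $\pi_1(W)\cong\pi_1(M)$, so $\pi_1(W)$ has no $2$-torsion. Next I claim $R_k'$ is homotopic to $S_k$ in the interior of $W$; this is a relative version of Lemma \ref{basepoint}. One applies the $3$-dimensional light bulb theorem at each $G_i$ exactly as in that proof, and at each of the frozen spheres (every $G_i$, and every $S_j$, is genuinely a sphere) pushes a bounding $3$-chain in the universal cover off the preimage — whose components are spheres and hence have vanishing first homology — exactly as in the proof of Lemma \ref{basepoint}; this reduces the claim to the fact that a sphere in the interior of $W$ that is null-homotopic in $M$ is null-homotopic in the interior of $W$.

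Now run the proof of Theorem \ref{main} for $R_k'$ and $S_k$ inside the interior of $W$: by Smale's theorem (\S4) the homotopy just found is a regular homotopy supported away from a neighborhood of $G_k$; shadow it by a tubed surface (Theorem \ref{shadow isotopy}); reduce the double tubes using Proposition \ref{double to single}, which is exactly where the absence of $2$-torsion in $\pi_1(W)$ enters; separate the tube data into sectors and eliminate the single tubes using Lemmas \ref{crossing}, \ref{tube sliding lemma} and \ref{local unknotting}; and absorb the final homotopically trivial double tube using Theorem \ref{uniqueness}. All of the auxiliary data introduced is either the associated surface $S_k$, a parallel copy of $G_k$, or a tube lying close to a $1$-dimensional path; since $S_k$ and the parallel copies of $G_k$ are already disjoint from the frozen spheres, and a tube close to a path disjoint from a $2$-complex is itself disjoint from that complex, general position keeps the whole construction, and hence the resulting ambient isotopy, disjoint from $\partial W$. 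That isotopy therefore fixes $N(G_1),\dots,N(G_n)$ and $S_1,\dots,S_{k-1}$ pointwise and carries $R_k'$ to $S_k$, completing the induction; after $n$ stages every $R_j$ has been carried to $S_j$.

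For the second assertion one reduces to the sphere case as in the proof of Theorem \ref{genus}. Using the lemma preceding that theorem, choose for $\{R_1,\dots,R_n\}$ (and likewise for $\{S_1,\dots,S_n\}$) pairwise disjoint compressing discs that are disjoint from all the transverse spheres and, after tubing off with the $G_i$, from the other surfaces; compress to pairwise disjoint spheres $T_1,\dots,T_n$ (resp.\ $T_1',\dots,T_n'$), and use the light bulb lemma together with $G$-inessentiality to straighten the resulting tubes into standard position near the $T_i$ (resp.\ $T_i'$) without disturbing neighborhoods of the $T_i$; the universal-cover argument of Theorem \ref{genus} shows $T_i\simeq T_i'$. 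Applying the sphere case to $\{T_i\}$ and $\{T_i'\}$ and then carrying the standardized tubes along yields the conclusion. The main obstacle throughout is the relative form of Lemma \ref{basepoint} in the second paragraph — getting the homotopy between $R_k'$ and $S_k$ into the complement of everything that has been frozen; once this is available, the third step is only the routine observation that nothing in the proof of Theorem \ref{main} obstructs pushing the construction off $\partial W$, since its tubes are thin neighborhoods of arcs and the relevant parallel $2$-spheres avoid the frozen locus.
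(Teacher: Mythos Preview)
Your argument is correct and follows essentially the same inductive strategy as the paper: at stage $k$ work in the complement of the already-placed $S_j$ ($j<k$) together with the $G_i$ for $i\neq k$, check via Lemma \ref{pi injectivity} that this complement inherits the no-2-torsion condition, and apply Theorem \ref{main} (you re-run its proof, the paper simply cites it) there; the higher-genus reduction via \S\ref{from}--9 is likewise the same. One slip to fix: in your definition of $W$ you list $G_1,\dots,G_n$ among the spheres whose neighborhoods are removed but then assert $G_k$ lies in $\inte(W)$ --- you mean to remove only the $G_i$ with $i\neq k$.
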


\begin{proof} The methods of \S 9 reduce the general case to the case that all the $R_i$'s and $S_i$'s are spheres.  \vskip 8pt

\noindent Proof by induction on $n$. 
\vskip8pt 

\noindent\emph{Step 1:} $R_1$ is ambient isotopic to $S_1$ via an isotopy that fixes the $G_i$'s pointwise. \vskip 8pt

\noindent\emph{Proof}  After a preliminary isotopy we can assume that $R_1$ and $S_1$ coincide near $R_1\cap G$ and that the homotopy from $R_1$ to $S_1$ is supported away from a neighborhood of $\cup G_i$.  %Hint:  Pass to the universal cover where homotopy corresponds to homology.
Step 1 follows by applying Theorem \ref{main} to the manifold $M\setminus\cup_{i=2}^n N(G_i)$.   Note that the inclusion of $M\setminus\cup_{i=2}^n N(G_i)\to M$ induces a fundamental group isomorphism so the no 2-torsion condition is satisfied.    \qed
\vskip 8pt

\noindent\emph{Induction Step}:  Suppose that we have for $j<k$,\ $R_j = S_j$.  There exists an isotopy of $M$ fixing $\cup_{i=k}^n G_i$ pointwise and supported away from $\cup_{j=1}^{k-1}( G_j\cup S_j)$ such that $R_k$ is taken to $S_k$. 
\vskip 8pt
\noindent\emph{Proof} After a preliminary isotopy we can assume that $R_k$ and $S_k$ coincide near $G_k$ and that $R_k$ is homotopic to $S_k$ via a homotopy supported away from $\cup_{j=1}^{k-1} (S_j\cup G_j)$.  Next apply Step 1 to $R_k\subset M\setminus N(\cup_{j=1}^{k-1} (S_j \cup G_j))$.  Again, the argument of Lemma \ref{pi injectivity} implies that the inclusion $M\setminus N(\cup_{j=1}^{k-1} (S_j \cup G_j))\to M$ induces a fundamental group isomorphism, so the no 2-torsion condition is satisfied.  \end{proof}

An analogous argument combined with the proof of Theorem \ref{2-torsion} yields the corresponding result for manifolds with 2-torsion in their fundamental groups.

\begin{theorem}\label{multi 2-torsion}  Let $M$ be an orientable 4-manifold.  Let $G_1, \cdots, G_n$ be pairwise disjoint embedded spheres with trivial normal bundles.  Let $R_1, \cdots, R_n$ be pairwise disjoint embedded spheres transverse to the $G_i$'s such that $|R_i\cap G_j|=\delta_{ij}$.  Let $S_1, \cdots, S_n$ be another set of spheres with the same properties and coinciding with the $R_i$'s near the $G_i$'s. If for each $i$,\ $R_i$ is homotopic to $S_i$, then there exists an isotopy of $M$ fixing a neighborhood of the $G_i$'s pointwise such that each $R_i$ can be put into normal form with respect to $S_i$.   Here each $R_i$ has double tubes representing elements $\{[\lambda^i_1], \cdots, [\lambda^i_{n_i}]\}$, where for fixed $i$ the $[\lambda^i_j]$'s are distinct nontrivial 2-torsion elements of $\pi_1(M)$ and $R_i=S_i$ if this set is empty.  Finally, for each $i$ any finite set of distinct 2-torsion elements gives rise to an $R_i$ in normal form with double tubes representing this set and two sets of $R_i$'s are isotopic if  their corresponding sets of $[\lambda^i_j]$'s are pairwise equal. \qed \end{theorem}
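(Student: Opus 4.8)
The plan is to overlay the single-surface normal form argument of \S5--\S8 onto the inductive scheme already used for Theorem \ref{multi main}. I would induct on $n$. For the base step, work in the submanifold $M_1=M\setminus\cup_{i=2}^n N(G_i)$; by repeated application of Lemma \ref{pi injectivity} (each $G_i$ is a sphere with transverse sphere $R_i$) the inclusion $M_1\hookrightarrow M$ is a $\pi_1$-isomorphism, so the nontrivial $2$-torsion elements of $\pi_1(M_1)$ are exactly those of $\pi_1(M)$. After a preliminary isotopy, arrange that $R_1$ and $S_1$ coincide near $G_1$ and that $R_1$ is homotopic to $S_1$ by a homotopy supported in $M_1$ — this is the multi-sphere analogue of Lemma \ref{basepoint}, proved by the same universal cover argument, tubing a $2$-chain bounding the difference of the lifted surfaces off the discrete family of lifts of the deleted $G_i$ using their transverse spheres. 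Then Theorem \ref{2-torsion}, applied with ambient manifold $M_1$, isotopes $R_1$ rel a neighborhood of $G_1$ into normal form with respect to $S_1$, with double tubes representing a set $\{[\lambda^1_1],\dots,[\lambda^1_{n_1}]\}$ of distinct nontrivial $2$-torsion elements of $\pi_1(M_1)=\pi_1(M)$. Since the supporting isotopy lies in $M_1$ it fixes every $G_i$ pointwise and carries $R_2,\dots,R_n$ to a new collection retaining all the hypothesised properties (pairwise disjoint embedded spheres, $|R_i\cap G_j|=\delta_{ij}$, $R_i$ homotopic to $S_i$, and $R_i=S_i$ near $G_i$).

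For the induction step, suppose $R_1,\dots,R_{k-1}$ have been placed in normal form with respect to $S_1,\dots,S_{k-1}$ by an ambient isotopy fixing neighborhoods of all the $G_i$. Now work in $M_k=M\setminus N(\cup_{j=1}^{k-1}(R_j\cup G_j)\cup\cup_{j=k+1}^n G_j)$: each deleted sphere carries a transverse sphere in the current manifold ($G_j$ for $R_j$ with $j<k$, and $R_j$ for $G_j$ with $j\neq k$), so Lemma \ref{pi injectivity} again gives $\pi_1(M_k)\cong\pi_1(M)$. After a preliminary isotopy, arrange $R_k$ and $S_k$ to coincide near $G_k$ and to be homotopic in $M_k$ (again the analogue of Lemma \ref{basepoint}), then apply Theorem \ref{2-torsion} with ambient manifold $M_k$ to bring $R_k$ into normal form with respect to $S_k$. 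The supporting isotopy lies in $M_k$, hence fixes the $G_i$ and the already-normalized $R_1,\dots,R_{k-1}$. This completes the induction and produces, for each $i$, the set $\{[\lambda^i_1],\dots,[\lambda^i_{n_i}]\}$ of distinct nontrivial $2$-torsion elements, with $R_i=S_i$ precisely when $n_i=0$.

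It remains to treat realizability and uniqueness. For realizability, given for each $i$ a finite set of distinct nontrivial $2$-torsion elements of $\pi_1(M)$, apply Proposition \ref{all realized} with $A_1=S_i$ and transverse sphere $G_i$ inside a small neighborhood of a parallel copy $G_i'$ of $G_i$; carrying this out in pairwise disjoint such neighborhoods keeps the resulting $R_i$'s pairwise disjoint and equipped with the required double tubes. For uniqueness, suppose $\{R_i\}$ and $\{R_i'\}$ are two collections in normal form with respect to $\{S_i\}$ whose sets $\{[\lambda^i_j]\}$ agree for every $i$. Matching them one index at a time: at stage $k$, having already arranged $R_j=R_j'$ for $j<k$, apply the uniqueness half of Theorem \ref{2-torsion} in the submanifold $M_k'$ obtained by deleting neighborhoods of all the $G_i$, of $R_1',\dots,R_{k-1}'$, and of the current $R_{k+1},\dots,R_n$. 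Here one uses that a normal form surface can be arranged to differ from $S_k$ only inside a small neighborhood of a parallel copy of $G_k$ (as in the localization step in the proof of Theorem \ref{main}), so that the isotopy matching $R_k$ to $R_k'$ can be taken inside $M_k'$ and disturbs nothing already matched.

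The main obstacle, which is also implicit in the proof of Theorem \ref{multi main}, is verifying that the homotopy between $R_k$ and $S_k$ can be realized inside $M_k$: deleting the spheres $R_j$ ($j<k$) and $G_j$ ($j\neq k$) does change $\pi_2$, so this is not purely formal. The remedy is to pass to the universal cover of $M_k$ — which is the preimage of $M_k$ in $\tilde M$ since $\pi_1$ is unchanged — and push a bounding $2$-chain off the discretely many lifts of the deleted spheres by tubing along their transverse spheres, exactly as in the proofs of Lemma \ref{pi injectivity} and Lemma \ref{basepoint}. Everything else is bookkeeping, layering the results of \S5--\S8 over the induction of Theorem \ref{multi main}.
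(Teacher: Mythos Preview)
Your inductive scheme is exactly what the paper indicates by ``an analogous argument combined with the proof of Theorem~\ref{2-torsion},'' and the main part of your proposal (base step, induction step, the $\pi_1$-isomorphism via Lemma~\ref{pi injectivity}, and the Lemma~\ref{basepoint}-style argument pushing the homotopy into the deleted submanifold) is correct and more detailed than what the paper supplies.

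There is, however, a genuine slip in your realizability and uniqueness paragraphs. You assert that a normal form surface ``can be arranged to differ from $S_k$ only inside a small neighborhood of a parallel copy of $G_k$,'' citing the localization step in the proof of Theorem~\ref{main}. That localization applies only to the \emph{homotopically trivial} double tube: in \S8 the author pushes the $\lambda_0$-tube into $N(G')$ precisely because $\lambda_0$ is null-homotopic. A double tube representing a nontrivial $2$-torsion element cannot be confined to a simply connected neighborhood of $G_k$, since its framed path $\lambda$ must traverse a nontrivial loop in $M$. So your argument for making the realized $R_i$'s pairwise disjoint, and for confining the uniqueness isotopy to $M_k'$, does not work as written. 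The fix is easy but different: the $\lambda$-paths are $1$-dimensional, so by general position and the framing-independence proved at the end of \S8 you can choose them to avoid all the other $S_j$'s and $R_j$'s; disjointness of the resulting normal form surfaces then follows. The same remark repairs a smaller gap you did not flag in your induction step: after normalizing $R_1,\dots,R_{k-1}$ you need $S_k$ to lie in $M_k$, i.e.\ to be disjoint from the normalized $R_j$'s, and this again requires choosing the $\lambda$-paths at each earlier stage to avoid the remaining $S_j$'s.
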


\begin{definition} An essential simple closed curve in $S^2\times S^1$ is said to be \emph{standard} if it is isotopic to $x\times S^1$ for some $x\in S^2$. \end{definition}

\begin{theorem}  \label{boundaries coincide}Two properly embedded discs $D_0$ and $D_1$ in $S^2\times D^2$ that coincide near their standard boundaries are isotopic rel boundary if and only if they are homologous in  $H_2(S^2\times D^2, \partial D_0)$.\end{theorem}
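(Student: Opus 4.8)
The plan is to reduce the statement to Theorem \ref{main} by doubling, exactly as in the proof of Theorem \ref{uniqueness}. Being homologous is plainly necessary, so suppose $[D_0]=[D_1]$ in $H_2(S^2\times D^2,\partial D_0)$. First I would apply an ambient isotopy of $S^2\times D^2$ so that the common standard boundary is the round curve $\gamma=x_0\times S^1$ and $D_0,D_1$ literally agree in a neighborhood of $\gamma$.

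Next I would double: set $M=S^2\times D^2\cup d(S^2\times D^2)=S^2\times S^2$ and regard $d(S^2\times D^2)$ as a regular neighborhood $N(G)$ of $G=d(S^2\times 0)$. Put $R_i=D_i\cup d(x_0\times D^2)$. Each $R_i$ is an embedded $2$-sphere, $G$ is a transverse sphere to it (it meets $R_i$ in one point and has trivial normal bundle), and $R_0,R_1$ coincide near $G$ because they share the disc $d(x_0\times D^2)$ and agree near $\gamma$. Capping off the circle $\gamma$ with $d(x_0\times D^2)$ defines a homomorphism $H_2(S^2\times D^2,\gamma)\to H_2(S^2\times S^2)$ carrying $[D_i]$ to $[R_i]$ (well defined since the capping piece is a disc); hence $[R_0]=[R_1]$, and since $\pi_2(S^2\times S^2)=H_2(S^2\times S^2)$ the spheres $R_0$ and $R_1$ are homotopic.

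Now $\pi_1(S^2\times S^2)=1$ has no $2$-torsion, so Theorem \ref{main} supplies an ambient isotopy of $M$, fixing a neighborhood of $G$ pointwise, that carries $R_0$ to $R_1$. Since this isotopy is the identity on $N(G)=d(S^2\times D^2)$, it preserves the complementary copy $S^2\times D^2$ setwise, is the identity on $\partial(S^2\times D^2)\supset\gamma$, and therefore restricts to an isotopy of $S^2\times D^2$ rel boundary taking $D_0=R_0\cap(S^2\times D^2)$ to $D_1=R_1\cap(S^2\times D^2)$. That completes the plan.

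Given Theorem \ref{main}, the argument is essentially immediate, and there is no serious obstacle. The one point worth being careful about is the homological step: the hypothesis is genuine equality $[D_0]=[D_1]$ in $H_2(S^2\times D^2,\gamma)$, not merely equality of the boundary classes in $H_1(\gamma)$ (which would leave an $S^2\times 0$ ambiguity), and this is what forces $[R_0]=[R_1]$. This is also the reason the result does not follow from Theorem \ref{uniqueness} alone: the $R_i$ need not be homologous to $x_0\times S^2$, so one uses Theorem \ref{main} in full generality rather than just the light bulb theorem (Theorem \ref{light bulb}) for $S^2\times S^2$.
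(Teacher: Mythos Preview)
Your argument is correct and follows essentially the same route as the paper: double $S^2\times D^2$ to $S^2\times S^2$, cap the $D_i$ to spheres $R_i$ with common transverse sphere $G=d(S^2\times 0)$, and apply Theorem~\ref{main} to get an ambient isotopy fixing a neighborhood of $G$ that restricts to the desired isotopy of $S^2\times D^2$. Your added commentary on why $[D_0]=[D_1]$ in $H_2(S^2\times D^2,\gamma)$ forces $[R_0]=[R_1]$, and on why Theorem~\ref{main} (rather than merely Theorem~\ref{light bulb}) is needed here, is accurate and a nice clarification.
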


\begin{proof}  Homologous is certainly a necessary condition.  In the other direction, after reparameterizing, we can assume that $\partial D_0=x_0\times S^1\subset S^2\times S^1$. Let $M=S^2\times D^2\cup d(S^2\times D^2)=S^2\times S^2$ be obtained by doubling $S^2\times D^2$ with $d(S^2\times D^2)$ denoting the other $S^2\times D^2$.   This $d(S^2\times D^2)$ can be viewed as a regular neighborhood $N(G)$ of $G=d(S^2\times 0)$.    Let  $R_i$ denote the sphere  $D_i\cup d(x_0\times D^2)$ which we can assume is smooth for $i=0,1$.  $G$ is a transverse sphere to the homologous spheres $R_0$ and $R_1$.  By Theorem \ref{main} there is an  isotopy of $M$ fixing a neighborhood of $ G$ pointwise taking $R_0$ to $R_1$.  Since $R_0$ and $R_1$ coincide in a  neighborhood of $N(G)$ there is  an  isotopy of $S^2\times D^2$ taking $D_0$ to $D_1$ that fixes a neighborhood of $S^2\times S^1$ pointwise.  \end{proof}

\begin{theorem} \label{disc unknotted} A properly embedded disc $D$ in $S^2\times D^2$ is properly isotopic to a $D^2$-fiber if and only if its boundary is isotopic to the standard vertical curve.\end{theorem}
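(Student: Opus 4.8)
The plan is to reduce the assertion to Theorem~\ref{uniqueness}, but only after removing a homological obstruction that is invisible to that theorem. The forward implication is immediate: if $D$ is properly isotopic to a $D^2$-fiber, then $\partial D$ is isotopic in $S^2\times S^1$ to $x_0\times S^1$, which is the standard vertical curve. For the converse, assume $\partial D$ is isotopic to the standard vertical curve. First I would apply the isotopy extension theorem in $S^2\times S^1$, and then extend the resulting ambient isotopy across a collar $S^2\times S^1\times[0,\epsilon)$ of the boundary, to properly isotope $D$ so that $\partial D=x_0\times S^1$ exactly. A further isotopy supported in a collar of the boundary, using uniqueness of collars, then arranges that $D$ coincides with the fiber $D^{std}:=x_0\times D^2$ near $\partial D$.

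Next I would pin down the obstruction. Since $S^2\times D^2$ deformation retracts onto $S^2\times 0$, the long exact sequence of the pair gives $H_2(S^2\times D^2,x_0\times S^1)\cong\BZ^2$, and the class $[D]$ is determined by $\partial[D]=[x_0\times S^1]$ together with a single integer $n$, which may be taken to be the signed count of intersection points of $D$ with a parallel fiber $x_1\times D^2$, equivalently the degree of the map $D/\partial D\to S^2$ induced by projection to the first factor. Thus $[D]=[D^{std}]+n\,[S^2\times \mathrm{pt}]$, and, crucially, $n$ need not vanish: the graph of a degree-$n$ self-map of $S^2$ that is constant near a closed hemisphere already gives an embedded disc with standard boundary realizing any prescribed $n$. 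So one cannot simply invoke Theorem~\ref{uniqueness} at this point.

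The heart of the argument is to kill $n$ by a proper isotopy. Let $\rho_\theta\in SO(3)$ be rotation by angle $\theta$ about the axis through $\pm x_0$. The loop $\theta\mapsto\rho_{2\theta}$ is null-homotopic in $SO(3)$ (it is an even multiple of the generator of $\pi_1(SO(3))=\BZ_2$), so it extends to a smooth $\Psi\colon D^2\to SO(3)$; set $F(x,w)=(\Psi(w)(x),w)$. Then $F$ is a self-diffeomorphism of $S^2\times D^2$ which restricts on $S^2\times S^1$ to the ``double Gluck twist,'' fixes $x_0\times S^1$ pointwise, and is isotopic to the identity: because $D^2$ is contractible and $SO(3)$ is connected, $\Psi$ is homotopic through maps $D^2\to SO(3)$ to the constant map $\id$, giving an isotopy from $F$ to $\id$ through product diffeomorphisms, each of which preserves $\partial(S^2\times D^2)$ and so moves $D$ by a \emph{proper} ambient isotopy. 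A short homology computation---cap $F(D^{std})$ off with $d(x_0\times D^2)$ inside the double $S^2\times S^2$, and use that the circle bundle $SO(3)\to S^2$ (the unit tangent bundle of $S^2$) has Euler number $\pm2$---shows $[F(D^{std})]=[D^{std}]\pm[S^2\times \mathrm{pt}]$. Applying the proper isotopy $F_t$, or its reverse, $|n|$ times to the standardized $D$ produces a properly isotopic disc $D'$ which still coincides with $D^{std}$ near $\partial D$ (since $F$ fixes $x_0\times S^1$ and hence the collar $x_0\times(D^2\setminus\tfrac12 D^2)$ setwise) and satisfies $[D']=[D^{std}]$.

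Finally, $D'$ and $D^{std}$ coincide near their standard boundary and are homologous in $H_2(S^2\times D^2,\partial D)$, so Theorem~\ref{uniqueness} (equivalently Theorem~\ref{boundaries coincide}) provides an isotopy rel boundary from $D'$ to $D^{std}$; composing with the proper isotopies above shows $D$ is properly isotopic to the fiber. I expect the main obstacle to be the third step: first recognizing that a homological obstruction is genuinely present (so that the naive ``standardize and quote uniqueness'' strategy fails), and then constructing the twisting diffeomorphism and verifying both that it is isotopic to the identity---so that it acts by a proper ambient isotopy---and that it shifts $[D^{std}]$ by exactly $\pm[S^2\times \mathrm{pt}]$. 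The first, second, and fourth steps are routine uses of isotopy extension, of the homology of the pair, and of the already-established uniqueness theorem for discs.
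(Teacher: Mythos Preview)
Your proof is correct and follows the same architecture as the paper's: standardize $\partial D$ to $J=x_0\times S^1$, identify the integer obstruction $[D]=[D^{std}]+n[S^2\times\mathrm{pt}]$ in $H_2(S^2\times D^2,J)$, eliminate $n$ by a proper isotopy, then invoke the uniqueness theorem for homologous discs coinciding near their standard boundary. The only substantive difference is how $n$ is killed. The paper does this in one line: perform a proper isotopy of $D$ whose boundary track in $S^2\times S^1$ is approximately $J\cup(S^2\times y_0)$---i.e.\ drag $\partial D$ once around the $S^2$-factor and return to $J$---which shifts $[D]$ by $[S^2\times y_0]$. Your route via the null-homotopy of the doubled Gluck twist and the Euler number of $SO(3)\to S^2$ is more elaborate but makes the mechanism explicit; indeed, during your isotopy $F_t$ from $\id$ to $F$ the curve $F_t(x_0\times S^1)$ sweeps once over an $S^2$-fiber, so this is the paper's move in disguise.

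One small imprecision: $F$ fixing $x_0\times S^1$ pointwise does not by itself imply that $F$ fixes the collar $x_0\times(D^2\setminus\tfrac12 D^2)$, since for interior $w$ the rotation $\Psi(w)$ need not fix $x_0$. Either choose $\Psi$ to agree with the boundary loop $\rho_{2\theta}$ on an annular neighborhood of $\partial D^2$ (so the collar is fixed pointwise), or simply re-standardize $D'$ near its boundary after applying $F$; the latter is a collar isotopy and does not change the relative class.
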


\begin{proof} After a preliminary isotopy we can assume that $\partial D$ is the standard vertical curve $x_0\times S^1$ which we denote by $J$.  Let $F$ be a $D^2$ fiber of $S^2\times D^2$.  Now $0\to H_2(S^2\times D^2)\to H_2(S^2\times D^2,  J)\to H_1(J)\to 0$ is split and exact, so the coset $H$ mapping to the generator $[\partial F]$ of $H_1(J)$ equals $Z$ and is represented by the classes $[F]+n[S^2\times y_0]$, where $y_0\in \partial D^2$.  By properly isotoping $D$ to $D'$ where $\partial D'=J$ and so that the track of the homotopy restricted to the boundary is approximately $J\cup S^2\times y_0$ it follows that $[D']=[D]+[S^2\times y_0]\in H_2(S^2\times D^2,  J)$.  Therefore any class in $H$ is represented by a disc properly isotopic to $D$.  In particular after proper isotopy we can assume that $[D]=[F]$.  After a further isotopy we can assume that $D$ coincides with $F$ near $\partial D$.  The result now follows by Theorem \ref{boundaries coincide}.  The other direction is immediate.\end{proof}

Recall that $\Diff_0(X)$ denotes the group of diffeomorphisms properly homotopic to the identity.

\begin{corollary}  $\pi_0(\Diff_0(S^2\times D^2)/\Diff_0(B^4))=1$.\end{corollary}

\begin{remark}  This means that a diffeomorphism of $S^2\times D^2$ properly homotopic to the identity is isotopic to one that coincides with the identity away from a compact 4-ball disjoint from $S^2\times S^1$.\end{remark}

\begin{proof}  Let $D$ denote a $x\times D^2$ and let  $f:S^2\times D^2\to S^2\times D^2$ be properly homotopic to the identity.  Since homotopic diffeormorphisms of $S^2\times S^1$ are isotopic \cite{La}, $f(\partial D)$ is isotopic to $\partial D$ in $S^2\times S^1$ and hence isotopically standard.   Next apply Theorem \ref{disc unknotted} to isotope $f$ so that $f(D)=D$.  After a further isotopy, using \cite{Sm3}, we can assume that $f|D=$ id and after another that $f|N(D)=$ id. Since a diffeomorphism of $D^2\times S^1$ that fixes a neighborhood of the boundary pointwise is isotopic to the identity rel $\partial$ it follows that we can be further isotope  $f$ so that $\partial f$ is also the identity.  After another isotopy we can additionally assume that $f|N(\partial (S^2\times D^2))= \id$.     %The key point here is that the unit normal bundle T^1(D^2) to D^2\times z=D^2\times S^1  the restriction of the linearized f to a section of  T^1(D^2) gives a map of D^2 to S^1 which restricts is constant at the boundary.  Thus after isotopy of f, the linearized map is constant, hence f can be straightened out on N(D^2).
Since the closure of what's left is a $B^4$, the result follows.\end{proof}
%Dec13: Maybe Sm3 should be cerf. No!  Cerf came later.  This was proven earlier by Munkres, that a C^2 such map is C^1 isotopic to id.

The following is an immediate consequence of our main result.
% (\textrm{4D-Lightbulb Theorem})

\begin{theorem}  (\textrm{4D-Lightbulb Theorem}) If $R$ is an embedded 2-sphere in $S^2\times S^2$, homologous to $x_0\times S^2$,  that intersects $S^2\times y_0$ transversely and only at the point $(x_0, y_0)$, then $R$ is isotopic to $x_0\times S^2$ 
via an isotopy fixing $S^2\times y_0$ pointwise.\qed\end{theorem}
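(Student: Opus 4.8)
The plan is to deduce the statement immediately from Theorem \ref{main}, taking $M = S^2 \times S^2$, setting $G = S^2 \times y_0$, and letting the two embedded $2$-spheres be $R$ and $x_0 \times S^2$. First I would check the hypotheses of Theorem \ref{main}: $M$ is orientable and $\pi_1(M)$ is trivial, so in particular it has no $2$-torsion; and $G = S^2\times y_0$ has trivial normal bundle in $S^2\times S^2$, meets $R$ transversely in the single point $(x_0,y_0)$ by hypothesis, and meets $x_0\times S^2$ transversely in the single point $(x_0,y_0)$ as well. Hence $G$ is a transverse sphere common to $R$ and $x_0\times S^2$.

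Next I would verify that $R$ and $x_0\times S^2$ are homotopic. Since $S^2\times S^2$ is simply connected, the Hurewicz homomorphism $\pi_2(S^2\times S^2)\to H_2(S^2\times S^2)$ is an isomorphism, so two maps $S^2\to S^2\times S^2$ are homotopic precisely when they represent the same homology class. As $R$ is homologous to $x_0\times S^2$ by hypothesis, $R$ and $x_0\times S^2$ are homotopic. (One could instead quote Smale's theorem from \S4 to upgrade this to a regular homotopy, but only the homotopy is needed as input to Theorem \ref{main}.)

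To obtain the sharper conclusion that the isotopy fixes $S^2\times y_0$ \emph{pointwise} rather than merely setwise, I would first arrange that $R$ coincides with $x_0\times S^2$ near $G$: since $R$ meets $G$ only at $(x_0,y_0)$, a sufficiently thin tubular neighborhood $N(G)=G\times D^2$ has $R\cap N(G)$ equal to a single disc transverse to $G$, and by the standard straightening of a disc transverse to a codimension-$2$ submanifold this disc can be isotoped to the fiber $x_0\times D^2$ by an ambient isotopy supported in $N(G)$ and fixing $G$ pointwise (the germ of $R$ at $(x_0,y_0)$ is a graph over the normal disc, removed by a shear; and $R\cdot G = 1 = (x_0\times S^2)\cdot G$, so the orientations match). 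Once $R$ agrees with $x_0\times S^2$ near $G$ and is homotopic to it, the last sentence of Theorem \ref{main} provides an ambient isotopy of $M$ fixing a neighborhood of $G$ pointwise and carrying $R$ to $x_0\times S^2$; composing it with the preliminary straightening gives an isotopy fixing $S^2\times y_0$ pointwise.

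All of the genuine content is already absorbed into Theorem \ref{main}; the only point requiring attention is the bookkeeping for the preliminary straightening and the observation that it, and hence the composite isotopy, can be taken to fix $G$ pointwise. This is routine, so I do not expect a substantive obstacle.
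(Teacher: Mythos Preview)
Your proposal is correct and matches the paper's own treatment of this statement: in \S10 the paper simply records the 4D-Lightbulb Theorem as ``an immediate consequence of our main result'' and appends \qed, i.e.\ it is deduced directly from Theorem~\ref{main} with $M=S^2\times S^2$, $G=S^2\times y_0$, and the two spheres $R$ and $x_0\times S^2$. Your added bookkeeping---checking homotopy via Hurewicz and performing a preliminary straightening near $G$ that fixes $G$ pointwise so as to invoke the second sentence of Theorem~\ref{main}---is exactly the routine verification the paper leaves implicit (compare the opening of \S8, where the paper only claims the initial isotopy fixes $G$ setwise; your slightly sharper observation that it can be taken to fix $G$ pointwise is correct and needed for the conclusion as stated).
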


Litherland \cite{Li} proved that there exists a diffeomorphism pseudo-isotopic to the identity that takes $R$ to $x_0\times S^2$.  

Another version of the light bulb theorem was obtained in 1986 for PL discs in $S^4$ by Marumoto \cite{Ma} where the isotopy is topological.  He makes essential use of Alexander's theorem that any homeomorphism of $B^n$ that is the identity on $S^{n-1}$ is (topologically) isotopic to the identity.  Here we prove a general form of  the smooth version. 

%Outline of Marumoto's proof.  Start with D_1, D_2 where D_2 is the standard 2-disc in S^4 and \partial D_1= D_2.  Then by Cerf - Palais there is a diff f:S^4\to S^4 taking D_1 to D_2 fixing \partial D_1.  We can assume that f fixes pointwise a neighborhood of D_1.  Next remove a small ball about a point in \partial D_1.  Restriction and reparametrizing gives a map, we still call f, s.t. f:B^4\to B^4 and f(E_1)=E_2 where the E_i's are the restricted reparametrized D_i's.  Further \partial D_1 is a straight properly embedded arc connecting antipodal points of \partial B^4 and f|\partial B_4=id.  By alexander, f is topologically isotopic to the identity and his isotopy fixes \partial D_1 pointwise!  Note that this isotopy simply "cones down" D_1 until it is standard.  Doesn't even deserve to be called locally flat.  

\begin{theorem}  (Uniqueness of Spanning Surfaces) \label{marumoto} If $R_0$ and $R_1$ are smooth embedded surfaces in $S^4 $ of the same genus  such that $\partial R_0 = \partial R_1=\gamma$, where $\gamma$ is connected, then there exists a smooth isotopy of $S^4$ taking $R_0$ to $R_1$ that fixes $\gamma$ pointwise.\end{theorem}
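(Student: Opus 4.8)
The plan is to reduce the statement about spanning surfaces in $S^4$ to the light bulb theorem (Theorem \ref{main}) by a doubling/capping construction, exactly in the spirit of the proof of Theorem \ref{uniqueness} and Theorem \ref{boundaries coincide}. The curve $\gamma$ is an unknotted circle in $S^4$ (every smoothly embedded circle in $S^4$ is unknotted by general position), so a regular neighborhood $N(\gamma)$ is a standard $S^1\times D^3$, and $V := S^4\setminus \inte(N(\gamma))$ is diffeomorphic to $S^2\times D^2$. After an initial ambient isotopy I would arrange that $R_0$ and $R_1$ meet $N(\gamma)$ in the same standard collar $\gamma\times [0,\epsilon)\subset S^1\times D^3$, so that $R_0':=R_0\cap V$ and $R_1':=R_1\cap V$ are properly embedded surfaces in $S^2\times D^2$ whose boundaries are the same standard curve $\partial R_0' = \partial R_1'$ in $S^2\times S^1$ and which coincide near that boundary.

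The genus $g$ copies of the surfaces are handled by the generalization in \S9 (Theorem \ref{genus} / the higher-genus machinery): first I would show $R_0'$ and $R_1'$ are homologous in $H_2(S^2\times D^2,\partial R_0')$. Since $\gamma$ is connected, $H_1(\partial R_0')\to H_1(\gamma)$ is onto and the relevant relative homology group sits in a split exact sequence $0\to H_2(S^2\times D^2)\to H_2(S^2\times D^2,\partial R_0')\to H_1(\partial R_0')\to 0$, so the coset of classes restricting to $[\partial R_0']$ is a copy of $\BZ$; the genus-reducing trick from the proof of Theorem \ref{disc unknotted} (isotope $R_0'$ dragging its boundary once around an $S^2\times y_0$, changing its relative class by $[S^2\times y_0]$) lets me assume $[R_0']=[R_1']$ on the nose. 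Then I would cap off: double $S^2\times D^2$ to get $S^2\times S^2 = (S^2\times D^2)\cup_{S^2\times S^1} d(S^2\times D^2)$, where $d(S^2\times D^2)$ is a regular neighborhood $N(G)$ of $G = d(S^2\times 0)$, and set $\hat R_i := R_i'\cup d(N(\gamma)\cap R_i)$ — but here $N(\gamma)\cap R_i$ has genus-contribution from the handles, so more carefully I cap $R_i'$ with the same standard surface-with-boundary in $d(S^2\times D^2)$ whose complement there is a disc meeting $G$ once. The result is a pair of closed genus-$g$ surfaces $\hat R_0,\hat R_1$ in $S^2\times S^2$ sharing the common transverse sphere $G$, coinciding near $G$, and homologous; since $S^4$ (hence $V$) is simply connected they are $G$-inessential and homotopic, so Theorem \ref{higher genus} (equivalently Theorem \ref{genus}) gives an isotopy of $S^2\times S^2$ fixing $N(G)$ pointwise and carrying $\hat R_0$ to $\hat R_1$. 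Restricting this isotopy to $S^2\times D^2 = V$ and then re-gluing $N(\gamma)$ (on which everything is the identity) yields the desired isotopy of $S^4$ taking $R_0$ to $R_1$ rel $\gamma$.

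The main obstacle I anticipate is the bookkeeping in the capping/doubling step for surfaces of positive genus: I must make sure that the standard surface-with-boundary I glue in $d(S^2\times D^2)$ is chosen so that (a) $\hat R_i$ is genuinely embedded and smooth across $S^2\times S^1$, (b) $G$ is a genuine transverse sphere to each $\hat R_i$ (intersecting it once, transversely, with $\hat R_i$ coinciding with the standard model near $G$), and (c) the homology hypothesis of Theorem \ref{genus} in $S^2\times S^2$ is equivalent to the relative homology hypothesis for $R_0',R_1'$ that I arranged above — i.e. the Mayer–Vietoris comparison of $H_2(S^2\times S^2)$ with $H_2(S^2\times D^2,\partial)$ is exactly what makes "homologous rel boundary" upgrade to "homologous, hence homotopic" after capping. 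A secondary point is verifying $G$-inessentiality: since $\pi_1(S^4\setminus \gamma)=1$ one has $\pi_1(\hat R_i\setminus G)\to\pi_1(S^2\times S^2\setminus G)$ factoring through $\pi_1(V)=\pi_1(S^2\times D^2)=1$, so this is immediate, but it should be stated. Everything else is a direct application of the already-proven Theorems \ref{main}, \ref{genus}, \ref{higher genus} together with the elementary fact that circles in $S^4$ are unknotted.
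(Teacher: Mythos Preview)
Your approach is essentially identical to the paper's: remove a neighborhood of $\gamma$ to get $S^2\times D^2$, adjust the relative homology class by dragging the inner boundary around the $S^2$-factor while fixing $\gamma$, then double to $S^2\times S^2$ and invoke Theorem~\ref{boundaries coincide} (for discs) or Theorem~\ref{higher genus} (in general). The only cosmetic difference is that the paper treats the disc case first and then says the general case ``similarly follows using Theorem~\ref{higher genus}'', whereas you spell out the higher-genus capping and $G$-inessentiality directly; your momentary worry about ``genus-contribution'' in $N(\gamma)\cap R_i$ is unfounded---that intersection is just an annular collar, so you simply cap $R_i'$ with the disc $d(x_0\times D^2)$.
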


\begin{proof}  First consider the case that $R_0$ and $R_1$ are discs.  After a preliminary isotopy of $S^4$ that fixes  $\gamma$ pointwise, we can assume that $R_0$ and  $R_1$ coincide in an annular neighborhood of their boundaries.  Now $S^4\setminus\inte(N(\gamma))=S^2\times D^2$.  Thus $R_0$ and $R_1$ restrict to properly embedded discs $E_0$ and $E_1$ in $S^2\times D^2$ that coincide near their boundaries.  

Arguing as in the proof of Theorem \ref{disc unknotted} we can assume that after an isotopy of $R_0$, $[E_0]=[E_1]\in H_2(S^2\times D^2, \gamma)$ also holds.  This isotopy fixes $\partial R_0$ pointwise but moves it's annular neighborhood.   Here are more details.  Let $\alpha_0$  denote $\partial E_0\subset S^2\times S^1$.  Let $A_0$ be the annulus bounded by  $\alpha_0$ and $\partial R_0$.   An isotopy of $\alpha_0$ induces an isotopy of $A_0$ fixing  $\gamma$ pointwise that extends to $R_0$ by isotopy extension.  The resulting $R'_0$ has annular boundary coinciding with that of $R_1$ and the class of the resulting $E'_0$ in the coset $H\subset H_2(S^2\times D^2, \gamma)$ changes according to the number of times the isotopy of $\alpha_0$ algebraically sweeps across the $S^2$-factor.  Here $H$ is as in the proof of Theorem \ref{disc unknotted}.

It follows by Theorem \ref{boundaries coincide} that $E_0$ can be isotoped to $E_1$ via an isotopy supported away from a neighborhood of $S^2\times S^1$.

The general case similarly follows using Theorem \ref{higher genus}.  \end{proof}

\begin{remark} By induction Marumoto \cite{Ma} proved  more generally that two locally flat PL $m$-discs in an $n$-sphere, $n>m$ with the same boundary are topologically isotopic rel boundary.  Here is an outline of his argument for smooth discs in the $n$-sphere for the representative case $m=2, n=4$, where we use \cite{Ce1}, \cite{Pa} to avoid his induction steps. Actually, the below argument works in all dimensions and codimensions since the same is true of \cite{Ce1}, \cite{Pa} and the Alexander isotopy.

Start with $D_0, D_1$ where $D_1$ is the standard 2-disc in $S^4$ and $\partial D_0= \partial D_1$.  Then by \cite{Ce1}, \cite{Pa} there is a diffeomorphism $f:S^4\to S^4$ taking $D_0$ to $D_1$ fixing $\partial D_0$.  We can assume that $f$ fixes pointwise a neighborhood of $\partial D_0$.  Next remove a small ball about a point in $\partial D_0$.  After restricting and reparametrizing we obtain a map $g:B^4\to B^4$ such that $g(E_0)=E_1$ where the $E_i$'s are the restricted reparametrized $D_i$'s.  Here $B^4$ is the unit ball in $\BR^4, \partial E_0$ is a straight properly embedded arc connecting antipodal points of $\partial B^4$ and $g|\partial B^4=\id$.  Finally apply the Alexander isotopy to obtain  a topological isotopy of $g$ to the identity  which fixes $\partial E_0$ pointwise.  \qed\end{remark}  

More generally we have the following uniqueness of spanning discs in simply connected 4-manifolds.

\begin{theorem} \label{uniqueness simple} If $D_0$ and $D_1$ are smooth embedded discs in the simply connected 4-manifold $M$ such that $\partial D_0 = \partial D_1=\gamma$, then there exists a smooth isotopy of $M$ taking $D_0$ to $D_1$ fixing $\gamma$ pointwise if and only if the mapped sphere $S=D_0\cup_{\gamma} D_1$ is inessential in $M$.\end{theorem}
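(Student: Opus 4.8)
The plan is to surger $\gamma$ out of $M$, turning the spanning discs into $2$-spheres with a common transverse sphere, and then apply Theorem \ref{main}; this is the same manoeuvre by which the proofs of Theorems \ref{disc unknotted} and \ref{boundaries coincide} reduce to Theorem \ref{main}. Throughout, "inessential" means null-homotopic, and since $M$ is simply connected this is equivalent to $[S]=0\in H_2(M)$. One direction is immediate: an ambient isotopy of $M$ carrying $D_0$ to $D_1$ and fixing $\gamma$ pointwise restricts to a homotopy rel $\gamma$ from $D_0$ to $D_1$, and such a homotopy is exactly a null-homotopy of the mapped sphere $S=D_0\cup_\gamma D_1$.

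For the converse, after an initial ambient isotopy fixing $\gamma$ I would arrange that $D_0$ and $D_1$ coincide on a tubular neighborhood of $\gamma$ slightly larger than a fixed $N(\gamma)\cong S^1\times D^3$, with $\gamma=S^1\times\{0\}$. Let $\hat M=(M\setminus\inte N(\gamma))\cup_{S^1\times S^2}(D^2\times S^2)$ be obtained by surgery on $\gamma$ using the framing coming from $D_0$, with the gluing chosen so that the $D_0$-longitude of $\gamma$ becomes $\partial D^2\times\{\mathrm{pt}\}$. Then $E_i=D_i\cap(M\setminus\inte N(\gamma))$ are properly embedded discs with $\partial E_0=\partial E_1=\partial D^2\times\{\mathrm{pt}\}$, and capping both off with $F=D^2\times\{\mathrm{pt}\}$ produces embedded spheres $\hat R_0,\hat R_1\subset\hat M$ that coincide on a neighborhood of $D^2\times S^2$. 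The sphere $G=\{0\}\times S^2$ has trivial normal bundle, meets each $\hat R_i$ transversely in one point, and $D^2\times S^2$ is a tubular neighborhood of $G$ on which $\hat R_0=\hat R_1$. Since $\gamma$ has codimension three, $\pi_1(M\setminus\inte N(\gamma))=\pi_1(M)=1$, so by van Kampen $\pi_1(\hat M)=1$; in particular $\hat M$ has no $2$-torsion in $\pi_1$ and $\pi_2(\hat M)=H_2(\hat M)$.

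Next I would match homology classes. A Mayer--Vietoris computation shows $j_*\colon H_2(M\setminus\inte N(\gamma))\to H_2(\hat M)$ is injective (the meridian $2$-sphere $m$ of $\gamma$ maps to a generator of $H_2(D^2\times S^2)$), so $\hat R_0$ is homotopic to $\hat R_1$ in $\hat M$ if and only if $[\hat R_0-\hat R_1]=[E_0-E_1]=0$ in $H_2(M\setminus\inte N(\gamma))$. The hypothesis $[S]=0$ only gives that $[E_0-E_1]$ lies in the kernel of $H_2(M\setminus\inte N(\gamma))\to H_2(M)$, which by the long exact sequence of the pair $(M,M\setminus\inte N(\gamma))$ is the cyclic subgroup generated by $[m]$; write $[E_0-E_1]=c\,[m]$. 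As in the proof of Theorem \ref{disc unknotted} (where one sweeps the boundary across the $S^2$ factor), tubing $c$ parallel copies of $m$ onto $E_0$ changes its class by $c\,[m]$ and is realized by an ambient isotopy of $M$ fixing $\gamma$ pointwise: push the resulting fingers out to $\partial N(\gamma)$ and absorb them. After this preliminary isotopy of $D_0$ we have $[\hat R_0]=[\hat R_1]$, hence $\hat R_0$ is homotopic to $\hat R_1$ in $\hat M$. Theorem \ref{main} then yields an ambient isotopy of $\hat M$ carrying $\hat R_0$ to $\hat R_1$ and fixing the neighborhood $D^2\times S^2$ of $G$ pointwise; being supported in $\hat M\setminus\inte(D^2\times S^2)=M\setminus\inte N(\gamma)$, it extends by the identity over $N(\gamma)$ to an ambient isotopy of $M$ fixing $\gamma$ pointwise and carrying $E_0$, hence $D_0$, to $D_1$. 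Composing with the preliminary isotopy finishes the proof.

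The main obstacle I anticipate is twofold. First, I must confirm that the isotopy produced by Theorem \ref{main} can genuinely be taken to fix all of the chosen (large) tubular neighborhood $D^2\times S^2$ of $G$, not merely a small one; this is exactly what lets the isotopy descend to $M$, and it should follow because $\hat R_0$ and $\hat R_1$ agree on that entire region and the tubed-surface constructions in the proof of Theorem \ref{main} keep the realizations away from any prescribed tubular neighborhood of $G$. Second, there is the homological bookkeeping around the meridian sphere: pinning down the integer $c$ and verifying that the "tube-off and absorb" operation really is an ambient isotopy of $D_0$ rel $\gamma$ realizing the class change --- precisely the point that in Theorem \ref{disc unknotted} is handled by the sweeping argument. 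Everything else is routine once the surgery picture is in place.
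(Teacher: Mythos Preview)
Your proof is correct and follows essentially the same route as the paper: arrange $D_0$ and $D_1$ to agree near $\gamma$, adjust by the meridian-sweeping isotopy (the paper phrases this as making the null-homotopy $3$-ball $B$ meet $\gamma$ algebraically zero and then clearing the intersections with immersed Whitney discs), surger $\gamma$ to obtain homotopic spheres with common transverse sphere $G$, and apply Theorem \ref{main}. The only difference is that where the paper argues geometrically that $S$ becomes null-homotopic in $M\setminus\gamma$, you instead verify $[\hat R_0]=[\hat R_1]$ via the long exact sequence and Mayer--Vietoris; both arguments serve the same purpose and the remaining concerns you flag (the isotopy fixing all of $D^2\times S^2$, and the meridian adjustment being realized rel $\gamma$) are handled exactly as in Theorems \ref{boundaries coincide} and \ref{marumoto}.
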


\begin{proof}  If $D_0$ and $D_1$ are isotopic, then the isotopy sweeps out a contracting ball for $S$.  Conversely, after an initial isotopy of $D_1$ we can assume that it coincides with $D_0$ near $\gamma$ and that the interior of the mapped 3-ball $B$ defining the contraction of $S$ intersects $\gamma$ algebraically zero.  Indeed,  the second isotopy in the proof of Theorem \ref{marumoto} enables modification of the intersection number.  These intersections can be eliminated using immersed Whitney discs.  Next surger  $\gamma$ to obtain the simply connected manifold $N$ so that $D_0$ and $D_1$ give rise to homotopic spheres $R_0$ and $R_1$  with common transverse sphere $G$,  that  coincide near their intersection with $G$.  By Theorem \ref{main}, $R_0$ and $R_1$ are isotopic via an isotopy fixing $G$ pointwise and hence $D_0$ and $D_1$ are isotopic rel boundary.\end{proof}

\begin{remark} In a similar manner, using Theorems \ref{multi main} and \ref{genus}, one can obtain uniqueness theorems for certain surfaces spanning simple closed curves in closed 4-manifolds with no 2-torsion in their fundamental groups.\end{remark}

One can ask the following parametrized form in the smooth category.

\begin{question}  For $i=1,2$ let $f_i:D^k\to S^4$ be smooth embeddings such that $f_1|\partial D^k=f_2|\partial D^k$.  Is there a smooth isotopy $F:S^4\times I\to S^4$ such that $F_0=\id_{S^4}, F_t(f_1(x))=f_1(x)$ for $x\in \partial D^k$ and $t\in [0,1]$ and for $y\in D^k, F_1(f_2(y))=f_1(y)$?\end{question}

\begin{remark} For $k\le 3$ the unparametrized version implies the parametrized one by \cite{Ce3} for k=3 and  \cite{Sm3} for $k=2$  with the $k=1$ case being  elementary.   The point of this question is to link various theorems, conjectures and questions.
\vskip 8pt
\noindent\emph{Case k=1:}  This is the theorem \emph{homotopy implies isotopy for curves in 4-manifolds}.
\vskip 8pt
\noindent\emph{Case k=2:}  This is Theorem \ref{marumoto}.
\vskip 8pt
\noindent\emph{Case k=3:}  This implies the Schoenflies conjecture.  Indeed the Schoenflies conjecture is equivalent to a positive resolution of the question after allowing lifting of the $f_i$'s to some finite branched covering of $S^4$ over $ \partial (f_i(D^3))$.  See \cite{Ga}.
\vskip 8pt
\noindent\emph{Case k=4:}  This is the question of connectivity of  $\Diff_0(B^4, \partial)$.\end{remark}

%Indeed  D_0 and D_1 can be isotoped and unit normal bundle N(\gamma)=S^2\times S^1 be parametrized so that the inward pointing  normal to D_1 is the constant section s_0\times S^1.  If D_1 is isotoped to D_2 so that D_2 coincides with D_1 near \gamma and the track of the homotopy restricted to the inward pointing normal gave a degree n map when projected to D^2, then [E_2]=[E_1]+n[S^2\times y_0]\in H_2(S^2\times D^2,\gamma).

%We conclude by asking about the necessity of the $\BZ_2$-condition.  

%\begin{remark}   If the answer to Question \ref{z2} is no (resp. yes), then the answer to the previous question is yes (resp. probably no). \end{remark}

%\begin{question}  If $R \subset M^4$ is a 2-sphere and $M_1\to M$ is a finite cover such that the preimage of $R$ is the unlink, is $R$ the unknot?\end{question}

\begin{question} Does Theorem \ref{multi main} hold without the $G$-inessential condition?  What if $G$-inessential is replaced by $\pi_1$-inessential?\end{question}

The following are special cases of the long standing questions of whether a sphere $R$ in $\cptwo$ homologous to $\cpone$ is equivalent up to isotopy or diffeomorphism to the standard $\cpone$.  See problem 4.23 \cite{Ki}.

\begin{questions}  \label{melvin} i) If $R$ is a smooth sphere in $\cptwo$ that intersects $\cpone$ once is $R$ isotopically standard?

ii) \cite{Me} Is $(\cptwo, R)$ diffeomorphic to $(\cptwo, \cpone)$?\end{questions}

\begin{remark}  In his unpublished 1977 thesis, Paul Melvin \cite{Me} showed that blowing down $\cptwo$ along $\cpone$ transforms $R$ to a 2-knot $T$ in $S^4$ and  Gluck twisting $S^4$ along $T $ yields $S^4$ if and only if $(\cptwo,R)$ is diffeomorphic to $(\cptwo, \cpone)$.  He gave a positive answer to ii) for 0-concordant knots.\end{remark}

\newpage

\enddocument